\documentclass[reqno]{amsart}

\usepackage{graphicx}

\usepackage{enumerate}

\makeatletter
\newcommand*\bigcdot{\mathpalette\bigcdot@{.5}}
\newcommand*\bigcdot@[2]{\mathbin{\vcenter{\hbox{\scalebox{#2}{$\m@th#1\bullet$}}}}}
\makeatother

\usepackage{amsmath}
\usepackage{amsthm}
\usepackage{amsfonts}
\usepackage{mathrsfs}
\usepackage{xcolor}

\usepackage{amssymb}
\usepackage{relsize}
\usepackage{mathtools}
\usepackage{bm}

\usepackage{latexsym,tikz-cd}
\usepackage[normalem]{ulem}
\usepackage[colorlinks=true,linkcolor=blue]{hyperref}

\numberwithin{equation}{section}

\newtheorem{theorem}{Theorem}[section]
\newtheorem{proposition}[theorem]{Proposition}
\newtheorem{corollary}[theorem]{Corollary}
\newtheorem{lemma}[theorem]{Lemma}

\newtheorem*{openquestion}{Open Question}
\theoremstyle{definition}
\newtheorem{definition}[theorem]{Definition}
\theoremstyle{remark}
\newtheorem{remark}[theorem]{Remark}

\newcommand{\nwc}{\newcommand}

\nwc{\Oph}{\operatorname{Op}_\hbar}

\renewcommand{\Re}{\operatorname{Re}}

\DeclareMathOperator{\Tr}{Tr}

\newcommand{\La}{\Lambda}

\renewcommand{\phi}{\varphi}

\nwc{\gl}{\langle}
\nwc{\gr}{\rangle}

\newcommand{\N}{{\mathbb N}}

\newcommand{\R}{{\mathbb R}}
\renewcommand{\H}{{\mathbb H}}
\newcommand{\C}{{\mathbb C}}

\newcommand{\Z}{{\mathbb Z}}

\newcommand{\D}{{\mathbb D}}

\nwc{\rest}{\restriction}

\nwc{\defeq}{\stackrel{\rm{def}}{=}}
\renewcommand{\d}{\partial}

\title[Steklov Determinant and Isospectral Compactness]{The Steklov Determinant and Compactness of Isospectral Planar Domains}

\date{}

\author{Yujun Jin, Zuoqin Wang}
\thanks{Partially supported by   NSFC no. 12571064.}
\address{School of Mathematical Sciences\\
	University of Science and Technology of China\\
	Hefei, 230026\\ P.R. China}
\email{byjyj@mail.ustc.edu.cn}

\address{School of Mathematical Sciences\\
	University of Science and Technology of China\\
	Hefei, 230026\\ P.R. China}
\email{wangzuoq@ustc.edu.cn}

\begin{document}

\begin{abstract}
We prove that every Steklov isospectral family of compact smooth planar domains is compact in the $C^\infty$ topology, answering an open question of Colbois, Girouard, Gordon, and Sher.
The proof has two main parts. First, a trace comparison principle for Dirichlet-to-Neumann operators yields monotonicity of negative Steklov zeta values and compactness within a fixed conformal class for genus-zero flat surfaces. Second, we analyze the normalized Steklov determinant on degenerating hyperbolic surfaces with geodesic boundary. Its asymptotics are expressed in terms of shrinking boundary components and small Neumann and Dirichlet eigenvalues, which in genus zero are compared with weighted graph Laplacians. This rules out degeneration under a bound on the total hyperbolic boundary length. We finally establish this bound for planar domains by geometric non-collapse estimates.
\end{abstract}
\maketitle
\vspace{-1.5em}
\tableofcontents
\section{Introduction}

Kac's question ``Can one hear the shape of a drum?'' \cite{kac}, is one of the classical starting points of inverse spectral
geometry. In its original form, it asks whether a planar domain is
determined by the spectrum of its Dirichlet Laplacian. The
counterexamples of Gordon, Webb, and Wolpert \cite{GWW92} showed that the
answer is negative. This is part of a broader non-uniqueness phenomenon
in inverse spectral geometry, illustrated earlier by Milnor's
isospectral manifolds \cite{Milnor1964}, and subsequently by the constructions of Gordon and Wilson
\cite{GordonWilson1984} and Sunada's
method \cite{Sunada1985}. More recently, Hu, Shi, and Tang
\cite{HuShiTang2026} constructed noncongruent Steklov-isospectral
strictly convex planar domains.

These non-uniqueness results naturally lead to a weaker question: even
when the spectrum does not determine the geometry uniquely, does fixing
the spectrum at least prevent the underlying geometry from degenerating?
In other words, are isospectral families compact?

Such a compactness question can be formulated for many different spectral problems. In the present paper, we study it in the setting of the Steklov spectrum.

Let $(\Omega,g)$ be a compact Riemannian surface with nonempty smooth
boundary $M=\partial\Omega$. The Steklov eigenvalue problem is given by
\[
\begin{cases}
\Delta_g u=0, & \text{in }\Omega,\\
\partial_\nu u=\sigma u, & \text{on }M,
\end{cases}
\]
where $\nu$ denotes the outward unit normal vector field along the boundary.
Equivalently, the Steklov spectrum is the spectrum of the
Dirichlet-to-Neumann operator on $M$
\[
\Lambda_{\Omega,g}f:=\frac{\partial \mathcal H_{\Omega}f}{\partial\nu}\bigg|_{M},
\]
where $\mathcal H_{\Omega}f$ denotes the harmonic extension of $f$ to
$\Omega$. It is well known that $\Lambda_{\Omega,g}$ is a self-adjoint elliptic pseudodifferential operator of order one \cite{LU1989}. Its eigenvalues form a discrete sequence
\[
0=\sigma_0(\Lambda_{\Omega,g})<\sigma_1(\Lambda_{\Omega,g})\leq \sigma_2(\Lambda_{\Omega,g})\leq \cdots,
\]
where each eigenvalue is repeated according to its multiplicity.

To formulate the compactness question precisely, we first specify the topologies in which convergence of surfaces will be understood. Throughout the paper, by a compact family of Riemannian surfaces we mean a family that is relatively compact modulo diffeomorphisms:
\begin{definition}
Let $\mathcal F$ be a family of compact smooth Riemannian surfaces with boundary.
We say that $\mathcal F$ is \emph{compact in the $C^\infty$ topology}
if every sequence $\{S_k\}\subset\mathcal F$ admits a subsequence
$\{S_{k_j}\}$ for which there exist a compact reference surface $\Omega$,
smooth metrics $g_{k_j}$ on $\Omega$, and isometries
\[
P_{k_j}:(\Omega,g_{k_j})\longrightarrow S_{k_j},
\]
such that $g_{k_j}$ converges in $C^\infty$ to a smooth metric $g$ on
$\Omega$. For $s\geq0$, \emph{compactness in the $H^s$ topology} is
defined analogously, with the convergence $g_{k_j}\to g$ understood in
$H^s(\Omega)$ with respect to any fixed smooth background metric on
$\Omega$.
\end{definition}

However, Steklov isospectral families need not be compact in this sense
for arbitrary Riemannian metrics. Indeed, if $\rho\in C^\infty(\Omega)$ is any positive function satisfying $\rho|M=1$, then the conformal invariance of harmonic functions in dimension two gives
\[
\Lambda_{\Omega,\rho g}=\Lambda_{\Omega,g}.
\]
Thus, the Steklov spectrum is insensitive to conformal deformations of the metric in the interior that leave the boundary metric unchanged. By choosing such conformal factors without any compactness control, one obtains Steklov isospectral families that fail to be compact in any of the above topologies, even within a fixed conformal class.

To eliminate this intrinsic conformal freedom, we restrict throughout the paper to flat metrics. Within this class, the preceding obstruction disappears. Indeed, if both $(\Omega,g)$ and $(\Omega,\rho g)$ are flat and $\rho=1$ on $M$, then the conformal change formula for the Gaussian curvature implies that $\log\rho$ is harmonic in $\Omega$. Since $\log\rho$ vanishes on $M$, the maximum principle therefore implies that $\rho\equiv1$ on $\Omega$.

This restriction also places our problem in direct analogy with the classical compactness theory for Laplace isospectral surfaces. Osgood, Phillips, and Sarnak \cite{OPSmodulispace} proved that bounded planar domains that are isospectral for the Dirichlet Laplacian form a $C^\infty$-compact family. Kim later extended this result to Dirichlet-isospectral flat metrics on compact orientable surfaces with boundary and negative Euler characteristic \cite{Kim2008}. See also the earlier work of Osgood, Phillips, and Sarnak on closed isospectral surfaces \cite{OSGOOD1988212}. In dimensions greater than two, compactness results are more limited and generally require additional geometric assumptions or restrictions on the class of metrics; see, for example, \cite{ChangYang,ChenXu1996,LiuWang2019}.

By contrast, substantially less is known about compactness for Steklov isospectral surfaces. For simply connected planar domains, Edward
\cite{Edward01011993} proved compactness in the $H^s$ topology for every
$s<\frac52$. This result was later strengthened by Jollivet and Sharafutdinov
\cite{JOLLIVET20181712} to compactness in the $C^\infty$ topology.
Motivated by the corresponding result for the Dirichlet Laplacian, Colbois, Girouard, Gordon, and Sher posed the following Steklov analog for multiply connected planar domains in their recent survey:
\begin{openquestion}[\cite{Colbois2024}, Open Question 8.5]
	Are families of multiply connected, compact, Steklov isospectral planar domains necessarily compact in the $C^\infty$ topology?
\end{openquestion}

One of the main results of the present paper answers this question affirmatively for planar domains of connectivity $n\geq 3$ (see Theorem~\ref{thm::planar}). Together with the simply connected case established by Jollivet and Sharafutdinov \cite{JOLLIVET20181712} and the doubly connected case proved in our previous work \cite{annular}, this yields the following complete result.
\begin{theorem}\label{main1}
    Any Steklov isospectral family of compact smooth planar domains is compact in the $C^\infty$ topology.
\end{theorem}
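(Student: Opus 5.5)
Theorem~\ref{main1} follows by splitting according to the connectivity $n$ of the domains. Steklov isospectrality fixes $n$, since the number of boundary components is a Steklov spectral invariant. Indeed, the heat trace asymptotics of $\Lambda_{\Omega,g}$ recover the total boundary length, and the eigenvalue asymptotics $\sigma_k\sim$ (roughly $2\pi k/L$ shifted per component) determine the number of boundary components. The case $n=1$ is the theorem of Jollivet--Sharafutdinov \cite{JOLLIVET20181712}, and $n=2$ is our previous work \cite{annular}. It therefore remains to treat $n\ge 3$, which is the content of Theorem~\ref{thm::planar}. I would prove that result in the following steps.

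First, I would fix the conformal class. By uniformization, a planar domain of connectivity $n\ge3$ carries a unique hyperbolic metric with geodesic boundary, conformal to the flat one, and is determined by a point in the (bordered) Teichm\"uller/moduli space of genus-zero surfaces with $n$ boundary components. Within a fixed conformal class, I would use the trace comparison principle for Dirichlet-to-Neumann operators. The spectrum determines the zeta values $\zeta_\Lambda(-k)$, and these are local boundary invariants (integrals of polynomials in the geodesic curvature of the flat boundary and its derivatives). Their monotonicity and positivity bound the $H^s$ norms of the boundary curvature for all $s$. Then the flat metric $e^{2\phi}|dz|^2$, with $\phi$ harmonic, is controlled by its boundary data, which gives $C^\infty$ compactness in each compact set of conformal classes. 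Combined with Sobolev bootstrapping, this yields compactness once the moduli stay in a compact set.

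Second, I would rule out degeneration of the conformal class, which is the core step. The normalized Steklov determinant $\det'\Lambda/L$ is a spectral invariant, so it is fixed along the family. Because $\Lambda$ is conformally invariant up to the boundary metric, the flat determinant can be related via a Polyakov-type formula to the determinant for the hyperbolic metric with geodesic boundary, modulo boundary terms controlled in the first step. Along a degenerating sequence, either some boundary geodesic shrinks or an interior separating geodesic pinches. I would expand $\log\det'\Lambda_{hyp}$ in terms of $\sum\log\ell_j$ for the shrinking boundary components and the logarithms of the small Neumann and Dirichlet eigenvalues of the hyperbolic Laplacian. In genus zero, these small eigenvalues are comparable to eigenvalues of weighted graph Laplacians on the dual tree of the pinching collar decomposition, with weights $\sim\ell$ of the pinched geodesics. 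The product of small eigenvalues is then comparable, via the matrix-tree theorem, to products of pinching lengths. This forces $|\log\det'|\to\infty$ unless the total hyperbolic boundary length goes to zero or infinity in a compensating way. So degeneration is excluded provided the total hyperbolic boundary length is bounded above.

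Third, I would prove that bound for planar domains. Since the Euclidean boundary length $L$ and the isoperimetric-type data (the area, via heat invariants, and $\sigma_1$) are fixed, I would show that no boundary component can be collapsing in the flat picture. I would use lower bounds on the Euclidean distance between components and on their diameters, derived from $\sigma_k$ bounds and a Kuttler--Sigillito type comparison. I would then convert this into an upper bound on hyperbolic boundary lengths using the comparison of the hyperbolic metric with $1/\mathrm{dist}$ near the boundary (Schwarz--Pick). I expect the second step, the uniform determinant asymptotics on degenerating hyperbolic surfaces with geodesic boundary and the graph-Laplacian comparison, to be the main obstacle. My first attempt there would be a Mayer--Vietoris/BFK gluing formula along the pinching collars, which reduces the problem to model cylinders.
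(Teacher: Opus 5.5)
Your architecture matches the paper's: compactness in a fixed conformal class, then properness of the normalized determinant when $\ell(\partial X)$ is bounded, then a planar bound on $\ell(\partial X)$. However, the mechanism you propose for the first and third steps rests on a false premise. In dimension two, $\Lambda$ and $|D|$ have the same full symbol (Lemma~\ref{symbol}). So $\Lambda-|D|$ is smoothing, and every heat invariant of the Steklov spectrum, indeed every local invariant, is a function of the boundary lengths alone. In particular, the area is not a Steklov heat invariant, and $\zeta(-k)$ is not an integral of a curvature polynomial. For example, $\zeta_{\Omega,g'}(-2m)=\Tr\bigl((a\Lambda_{\Omega,g})^{2m}-(aD_{\Omega,g})^{2m}\bigr)$ is a genuinely nonlocal trace. ``Monotonicity and positivity'' therefore has no content until you prove an inequality for such traces.

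The missing idea is the trace comparison principle (Proposition~\ref{prop::inequfortrace}, Corollary~\ref{cor::zetamonotonicity}). Filling in holes can only decrease $\Tr(\Lambda^{s}-|D|^{s})$ for $s\geq1$. Hence the spectral data of $\Omega$ bound nonnegative contributions from the annuli $\Omega_I$ with $\#I=2$ and from the disks $\Omega_{\{i\}}$. The cylinder estimates (Propositions~\ref{prop::ineq1} and~\ref{prop::ineq2}) and the Jollivet--Sharafutdinov criterion (Theorem~\ref{JS18}) apply to these pieces.

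The same gap breaks your third step. No Kuttler--Sigillito-type inequality turns bounds on finitely many $\sigma_k$ into a separation estimate. A near-contact only penalizes test functions that jump across the gap, which min--max can avoid. Moreover, lower bounds on inter-component distances and on diameters are the wrong conditions. Consider a dumbbell whose outer boundary nearly touches itself across a thin neck, with one hole on each side. It satisfies both bounds, yet the doubled neck is an annulus of huge modulus around an essential curve crossing $M_1$ twice. So a $\mathscr C_2$-type geodesic pinches and $\ell(\partial X)\to\infty$.

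The paper excludes mutual collapse using $\zeta_{\Omega_J,g}(-2)\leq\zeta_{\Omega,g}(-2)$. This bounds the cylinder height $l_J$ from below, and, via the minimum principle over the nonnegatively curved cap, bounds the conformal factor from below (Lemma~\ref{lem:noncollapseineqj}). It excludes self-collapse by flattening $\Omega_{\{i\}}$, applying Theorem~\ref{JS18} to bound the boundary curvature from above, and using Gauss--Bonnet on a short returning orthogeodesic (Lemma~\ref{lem:noncollapsei}).

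Once a uniform embedded inner collar of width $\delta$ is in hand, your conversion is a valid alternative to the paper's comparison with $\log\bigl(\frac{\pi}{2l_i}\sec\frac{\pi t}{2l_i}\bigr)$. By Ahlfors--Schwarz, $\hat g$ is dominated by the complete hyperbolic metric, whose density is at most $2/d(z,\partial\Omega)$. Since $M_i$ minimizes $\hat g$-length in its free homotopy class, this gives $L_{\hat g}(M_i)\leq 2\bigl(L_{g_E}(M_i)+2\pi\delta\bigr)/\delta$.

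In the second step, the decisive input is on the Dirichlet side, and your description would not exclude interior pinching. The small Neumann spectrum has $m$ graph modes with product $\asymp\prod_j q_j$. The Dirichlet problem, by contrast, has $m+1$ modes and carries a vertex potential $p_i\asymp\min\{1,b_i\}$ from every boundary component. Its product is $\asymp\det(\mathsf Q+\mathsf B)$, a spanning-forest sum, not a product of pinching lengths.

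If both products were $\asymp\prod r_\gamma$, the right-hand side of Theorem~\ref{thm::shortgeodesics} would reduce to $e^{\ell(\partial X)/4}\prod_{b_i<r^*}b_i$. That stays bounded below when a single interior separating geodesic pinches and all $b_i\asymp1$. In that case no Dirichlet eigenvalue is small at all, while $\lambda_1^N(X)\asymp r_\gamma$, and this asymmetry is what drives the determinant to zero.

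You need the forest expansion. The forest $\mathcal G_X\setminus\{e_j\}$ gives $\det(\mathsf Q+\mathsf B)\geq\prod_{l\neq j}q_l\,\bigl(\sum_{\alpha\in C_j^+}\mathsf B_{\alpha\alpha}\bigr)\bigl(\sum_{\alpha\in C_j^-}\mathsf B_{\alpha\alpha}\bigr)$. Genus zero guarantees that both sides of each separating curve contain boundary, so $(\prod_ip_i)(\prod_lq_l)/\det(\mathsf Q+\mathsf B)\leq Cq_j$. The whole tree gives the bound $\prod_ip_i/\sum_ip_i$ for the same ratio, which handles shrinking boundaries.

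Finally, the passage from the flat to the hyperbolic metric needs no ``boundary terms controlled in the first step.'' That would be circular, since the first step presupposes that the conformal classes stay in a compact set. By Guillarmou--Guillop\'e, $\det'\Lambda/L(\partial\Omega)$ is exactly conformally invariant (Proposition~\ref{prop::zetaconformalinv}).
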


Our proof separates the compactness problem into two parts. First, within a fixed conformal class, we establish uniform control of the conformal factor. Second, when the conformal class varies, we rule out degeneration in the corresponding moduli space. The main objective of this paper is to resolve both issues for flat genus-zero Steklov isospectral surfaces.

The first ingredient is a trace comparison principle for the Dirichlet-to-Neumann operators associated with nested collections of boundary components. More precisely, after decomposing the boundary according to a partition $\mathcal P$ of the index set $I$, we compare $\Lambda_{\Omega_I,g}$ with the direct sum of the Dirichlet-to-Neumann operators on the corresponding filled-in surfaces. For every positive smooth function $c$ on the boundary and for every $s\geq1$, we prove
\[
\operatorname{Tr}\left((c\Lambda_{\Omega_I,g}c)^s
-(c\Lambda_{\mathcal P}c)^s\right)\geq0.
\]
Together with some trace positivity estimates proved in Section~\ref{sec::compactnessfixedconclass}, this comparison also gives monotonicity properties for the negative values of the Steklov spectral zeta function, as stated in Corollary~\ref{zetasum} and Corollary~\ref{cor::zetamonotonicity}.

We apply this trace comparison to Steklov isospectral flat surfaces in a fixed conformal class. By decomposing the spectral trace invariants into nonnegative contributions and applying the cylinder estimates from \cite{annular}, we obtain uniform boundary control of the conformal factor. Flatness then extends this control to the interior and yields $C^\infty$-compactness for genus-zero flat surfaces with connectivity $n\geq3$ (see Theorem~\ref{thm::maincompactnessflat}). Together with the simply connected case \cite{JOLLIVET20181712} and the annular case \cite{annular}, this gives the following main result.
\begin{theorem}
Any Steklov isospectral family of compact smooth genus-zero flat surfaces
lying in a fixed conformal class is compact in the $C^\infty$ topology.
\end{theorem}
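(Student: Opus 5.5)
Fix a reference genus-zero surface $\Omega$ with boundary components $M_1,\dots,M_n$, and a flat reference metric $g_0$ in the conformal class. Every member of the family is then $(\Omega,e^{2\phi}g_0)$ with $\phi$ harmonic, since flatness of both metrics forces $\Delta_{g_0}\phi=0$. The cases $n=1$ and $n=2$ are covered by \cite{JOLLIVET20181712} and \cite{annular}, so the plan is to treat $n\geq3$. It suffices to bound $\phi|_M$ uniformly in every $C^k(M)$. The harmonic extension, via elliptic estimates up to the boundary for the fixed smooth surface $(\Omega,g_0)$, then bounds $\phi$ in every $C^k(\Omega)$, and Arzel\`a--Ascoli yields a $C^\infty$-convergent subsequence. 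The limit $\phi$ is still harmonic, so the limit metric is flat.

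On $M$ we have $\Lambda_{\Omega,e^{2\phi}g_0}=e^{-\phi}\Lambda_{\Omega,g_0}$, which is unitarily conjugate to $c\Lambda_{\Omega,g_0}c$ with $c=e^{-\phi/2}$. Isospectrality therefore fixes the whole heat trace, and hence all the coefficients $a_j$ in the small-time expansion of $\operatorname{Tr}e^{-tc\Lambda c}$, as well as the zeta values $\operatorname{Tr}\bigl((c\Lambda c)^s\bigr)$, interpreted by the appropriate regularization. The leading coefficients fix the boundary length $\int_M c^{-2}$, which here is the total length $L$ of $M$. Next I apply the trace comparison principle with the finest partition $\mathcal P=\{\{1\},\dots,\{n\}\}$. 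Then $\Lambda_{\mathcal P}=\bigoplus_i\Lambda_{D_i}$, where $D_i$ is the disk obtained by filling in every boundary component except $M_i$; these disks carry flat metrics conformal to $e^{2\phi}g_0$ near $M_i$. For integer $s\geq1$, the principle gives
\[
\operatorname{Tr}\bigl((c\Lambda_{\Omega}c)^s\bigr)\geq\sum_i \operatorname{Tr}\bigl((c\Lambda_{D_i}c)^s\bigr),
\]
where the difference is a trace of a smoothing operator, and the left side is a spectral invariant. I will combine this with the trace positivity estimates of Section~\ref{sec::compactnessfixedconclass}, which show that each disk term dominates a nonnegative quantity, together with the cylinder estimates of \cite{annular}. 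For a simply connected domain, Edward's and Jollivet--Sharafutdinov's argument bounds $\|\phi|_{M_i}\|_{H^{s}}$ by the zeta invariant $\zeta(-s)$, which is essentially a Sobolev-type norm of the boundary curvature or conformal factor plus lower-order terms. Since all the pieces are nonnegative and their sum is bounded by a spectral invariant, each boundary component gets an $H^{s}$ bound on $\phi|_{M_i}$ modulo an additive constant on that component.

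The main obstacle is controlling these additive constants, i.e., the scale of each boundary component relative to the others, since the disk pieces are scale-sensitive only weakly. This is where the cylinder estimates come in. I would first try to use the fixed total length $L$ together with $\sigma_1$ or a low-order heat invariant to prevent any component from shrinking or blowing up. Isolating a neighborhood of $M_i$ as an annulus of fixed conformal modulus (the conformal class is fixed), the annular compactness estimates of \cite{annular} bound $\phi$ in terms of spectral data and preclude a component's length from tending to $0$ while $L$ is fixed. With the constants controlled, the $H^s$ bounds for all $s$ give $C^k$ bounds on $\phi|_M$ by Sobolev embedding. This completes the argument as outlined in the first paragraph.
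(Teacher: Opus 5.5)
\textbf{Gap in the core step.} Your outer reduction is fine and matches the paper: reduce to uniform boundary bounds on $\phi$, then use elliptic regularity. The core step has a genuine gap. With the finest partition $\{\{1\},\dots,\{n\}\}$ only disk terms appear, and a disk term cannot control $\phi|_{M_i}$ in a fixed parametrization, even modulo additive constants.

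To see this, let $F_i:\D\to\Omega_{\{i\}}$ be the fixed uniformizing map. Replace the boundary factor by $\psi\circ\mu+\log|\mu'|$ for a degenerating Möbius automorphism $\mu$ of $\D$. This produces isometric flat disks, hence identical disk zeta values, while the boundary factor concentrates. So Edward/Jollivet--Sharafutdinov give control only modulo the noncompact group $PSU(1,1)$, which their proofs normalize away. The real obstruction is not ``scale.'' What breaks the Möbius symmetry in a fixed conformal class with $n\ge3$ is the fixed position of a second hole, and that can only be seen by a term involving two boundary components at once.

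\textbf{Why the collar annulus does not work.} The collar's inner curve is artificial, so the conformal factor there is not boundary data. The comparison also goes the wrong way. Extend the collar's harmonic extension by zero and apply the Dirichlet principle: this gives $(\Lambda_A f,f)\ge(\Lambda_\Omega f,f)$. So the collar's trace terms are bounded \emph{below}, not above, by spectral invariants of $\Omega$. The suggestion to use $\sigma_1$ or low-order heat invariants is likewise left unspecified.

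\textbf{What the paper does.} For each pair $I=\{i,j\}$ it uses the partition $\{I\}\cup\{\{k\}:k\notin I\}$. By Proposition~\ref{prop::inequfortrace} and Proposition~\ref{proppositivenessforzetafunction}, the cylinder term $\Tr\bigl((a^{1/2}\Lambda_{\Omega_I,g}a^{1/2})^s-|a^{1/2}D_{\Omega_I,g}a^{1/2}|^s\bigr)$ is bounded by the spectral quantity in \eqref{nodd}. The disk terms are simply discarded using nonnegativity. Since $\Omega_I$ is conformal to a fixed cylinder, with the strict inequality of Lemma~\ref{lem::positiveness2}, Proposition~\ref{prop::ineq1} then gives $H^{m+1/2}$ bounds on $(F_I^*a)^m$ in the fixed parametrization. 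Proposition~\ref{prop::ineq2}, using $L_{g'}(M)$, $\zeta(-1)$ and $\zeta(-2)$, gives a uniform positive lower bound on $a$. Letting $I$ range over all pairs covers $M$. This pair-partition step is the missing idea in your proposal.
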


The remaining task is to control the conformal class. For this purpose we
use the canonical hyperbolic representative of a conformal class. By the
uniformization theorem of Osgood, Phillips, and Sarnak \cite{OSGOOD1988148}, each
conformal class on a surface with negative Euler characteristic contains a
unique hyperbolic metric with geodesic boundary, up to isometry. Therefore, degeneration of conformal classes can be
studied through degeneration of hyperbolic surfaces with geodesic boundary.

A crucial step of this part is a uniform asymptotic analysis of the Steklov determinant on degenerating hyperbolic surfaces. In related work, Wentworth \cite{WentworthDN} studied the moduli space of genus-zero hyperbolic surfaces with at least three geodesic boundary components of prescribed lengths. He proved that the associated determinant height function is proper on this moduli space. Since the Steklov spectrum determines the number and the multiset of boundary lengths \cite{GIROUARD_PARNOVSKI_POLTEROVICH_SHER_2014}, this properness result implies that any Steklov isospectral family of such hyperbolic surfaces is compact in the $C^\infty$ topology.

Our analysis extends this approach in two directions: the boundary lengths may vary, and the hyperbolic surfaces may degenerate within any fixed topological type. Let $X$ be a compact orientable hyperbolic surface with geodesic boundary and negative Euler characteristic. The starting point is the formula of Guillarmou and Guillop\'e \cite[Theorem 1.3]{8180465},
\begin{equation*}
\frac{{\det}'\Lambda_X}{\ell(\partial X)}=-\frac{Z_G'(1)}{(Z_{G_0}(1))^2}
\frac{e^{\frac{\ell(\partial X)}{4}}}{2\pi\chi(X)},
\end{equation*}
where $Z_G$ is the Selberg zeta function of the double of $X$, and $Z_{G_0}$ is the Selberg zeta function associated with the bordered surface $X$ (see Section~\ref{sectiondegenration} for details). Thus, the degeneration of the normalized Steklov determinant can be studied through the asymptotic behavior of these two Selberg zeta functions.

To carry out this analysis, we distinguish three types of short geodesics: shrinking boundary components, short interior closed geodesics, and short reflected geodesics meeting the boundary orthogonally. Combining Wolpert's asymptotic formula for $Z_G'(1)$ with uniform estimates for $Z_{G_0}(1)$, we obtain the following key uniform
two-sided estimate, as stated in Theorem~\ref{thm::shortgeodesics}:
\begin{equation}\label{intro::eq1}
\frac{{\det}'\Lambda_X}{\ell(\partial X)}
\asymp_{n,\mathtt g}e^{\frac{\ell(\partial X)}{4}}
\prod_{b_i<r^*} b_i\frac{
\prod_{0<\lambda_k^N(X)<\frac14}\lambda_k^N(X)}
{\prod_{\lambda_k^D(X)<\frac14}\lambda_k^D(X)}.
\end{equation}
Here $b_i$ are the boundary lengths, while $\lambda_k^N(X)$ and $\lambda_k^D(X)$ denote the Neumann and Dirichlet eigenvalues of the hyperbolic Laplacian on $X$, respectively. The proof of this estimate is given in Section~\ref{sectiondegenration} and relies on the heat trace estimates established in Appendix~\ref{heattrace}.

The quotient of small Neumann and Dirichlet eigenvalues appearing in \eqref{intro::eq1} is then studied in the genus-zero case. Under a uniform upper bound on the total boundary length, degenerations caused by reflected short geodesics can be excluded. The remaining possible degenerations arise from pinching interior separating geodesics or from shrinking boundary components.
In Appendix~\ref{smalleigenvaluesDN}, we establish uniform two-sided comparisons between the small eigenvalues of $X$ and those of suitable weighted graph Laplacians. More precisely, the small Neumann spectrum is modeled by a graph Laplacian whose edge weights are determined by the pinched interior geodesics, whereas the small Dirichlet spectrum is modeled by a related graph problem incorporating additional weights associated with the boundary components. These graph comparisons are related to the classical theory
of small Laplace eigenvalues on boundaryless surfaces, including
the estimates of Schoen--Wolpert--Yau \cite{SWY} and the graph-comparison
results of Colbois \cite{Colbois1985},
Dodziuk--Pignataro--Randol--Sullivan
\cite{DodziukPignataroRandolSullivan1987}, and Burger
\cite{Burger1990}.

Combining the determinant asymptotics with these graph estimates, we obtain a properness result for the normalized Steklov determinant. More precisely, for every $T>0$, on the region of the moduli space defined by
$\ell(\partial X)\leq T$, the function
\[
X\longmapsto \frac{{\det}'\Lambda_X}{\ell(\partial X)}
\]
is proper when regarded as a map into $(0,\infty)$. In fact, its positive superlevel sets are compact. Indeed, we prove in Proposition~\ref{prop:ratio-tends-zero-upper-bound} that, along every
degenerating family in this region, this positive function tends to zero.
Since the normalized Steklov determinant
is conformally invariant by Guillarmou and Guillop\'e \cite{8180465},
\[
\frac{{\det}'\Lambda_{\Omega,g}}{L_g(\partial\Omega)}=\frac{{\det}'\Lambda_{X}}{\ell(\partial X)},
\]
this properness prevents degeneration of the
hyperbolic representatives associated with a Steklov isospectral family.

Combining this nondegeneration result with the compactness theorem within a fixed conformal class yields the following result (see Theorem~\ref{thm::utcompactness}).

\begin{theorem}\label{maincompactnesshyperbolic}
Fix $T>0$. Then any Steklov isospectral family of compact smooth genus-zero flat surfaces whose
associated hyperbolic representatives satisfy
\[
\ell(\partial X)\leq T
\]
is compact in the $C^\infty$ topology.
\end{theorem}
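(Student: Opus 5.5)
Given a sequence $(\Omega_k,g_k)$ in the isospectral family, I would reduce compactness to two inputs already announced in the introduction: non-degeneration of the conformal class, then compactness within a fixed conformal class.

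\textbf{Step 1: fix the topological type.} The Steklov spectrum determines the number $n$ of boundary components and the multiset of boundary lengths \cite{GIROUARD_PARNOVSKI_POLTEROVICH_SHER_2014}. Since the surfaces have genus zero, they are all diffeomorphic to a fixed reference surface $\Omega$. The cases $n=1,2$ are covered by \cite{JOLLIVET20181712} and \cite{annular}, so I assume $n\ge3$. Then $\chi<0$, and each $g_k$ has a unique hyperbolic representative $X_k$ with geodesic boundary in its conformal class \cite{OSGOOD1988148}.

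\textbf{Step 2: the conformal classes do not degenerate.} By the conformal invariance of Guillarmou--Guillop\'e,
\[
\frac{{\det}'\Lambda_{X_k}}{\ell(\partial X_k)}=\frac{{\det}'\Lambda_{\Omega_k,g_k}}{L_{g_k}(\partial\Omega_k)}.
\]
The right-hand side is a fixed positive number $c$: the determinant is a zeta-regularized spectral quantity, and the length is a heat invariant. By hypothesis $\ell(\partial X_k)\le T$. Proposition~\ref{prop:ratio-tends-zero-upper-bound} says the normalized determinant tends to $0$ along any degenerating family in the region $\{\ell(\partial X)\le T\}$. Hence $\{X_k\}$ lies in the superlevel set $\{\,\cdot\,\ge c\}$, which is compact. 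After passing to a subsequence and pulling back by diffeomorphisms $\phi_k$, I get $X_k\cong(\Omega,h_k)$ with $h_k\to h_\infty$ smoothly, where $h_\infty$ is hyperbolic with geodesic boundary. This compactness uses the Fenchel--Nielsen/Mumford-type description of the moduli space with bounded boundary lengths: no short interior geodesics, no boundary lengths tending to $0$, and boundary lengths bounded above, so the moduli stay in a compact set.

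\textbf{Step 3: compactness within a conformal class.} Write $\phi_k^*g_k=e^{2u_k}h_k$. If the $h_k$ were exactly equal, Theorem~\ref{thm::maincompactnessflat} would give $C^\infty$ bounds on $u_k$. Since the $h_k$ only converge, I would transfer the problem to the fixed class of $h_\infty$. Choose diffeomorphisms $\psi_k\to\mathrm{id}$ in $C^\infty$ such that $\psi_k^*h_k$ is conformal to $h_\infty$; these exist because quasiconformal maps with Beltrami coefficients tending to $0$ in $C^\infty$ can be normalized to converge to the identity. The flat metrics $(\phi_k\psi_k)^*g_k$ then lie in the fixed conformal class $[h_\infty]$ and remain Steklov isospectral. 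Theorem~\ref{thm::maincompactnessflat} then yields a subsequence converging in $C^\infty$ after a further isometric reparametrization.

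\textbf{Main obstacle.} The step I expect to need the most care is the uniformity in Step 3. The fixed-class theorem is stated for one conformal class, so I must either reprove its boundary trace estimates with constants depending continuously on the background metric, or justify the $\psi_k$ construction above. I would try the $\psi_k$ route first, since it reduces directly to the stated theorem. Step 2 itself is immediate from the earlier proposition, once the isospectral invariance of the normalized determinant and the compactness of bounded-length moduli are in hand.
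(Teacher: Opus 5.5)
Your Steps 1 and 2 are fine and agree with the paper: isospectral invariance of ${\det}'\Lambda/L$, then \eqref{maindet} and Lemma~\ref{lem::properness}, give $C^\infty$-precompact pullbacks $h_k$ of the hyperbolic representatives.

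The gap is in Step 3. The diffeomorphisms $\psi_k$ you want to construct do not exist in general. If $\psi_k^*h_k\in[h_\infty]$, then $\psi_k^*h_k$ is a curvature $-1$ metric with geodesic boundary in the class $[h_\infty]$. The uniqueness part of Theorem~\ref{thm::OPS} then forces $\psi_k^*h_k=h_\infty$, so every $X_k$ would be isometric to $X_\infty$.

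Conformal classes on $\Omega$ correspond bijectively to points of $\mathcal M(\Omega)$, which has dimension $3n-6>0$. For $n=3$ the class is fixed by the three hyperbolic boundary lengths, and these are not known to be Steklov invariants. Convergence $h_k\to h_\infty$ in $C^\infty$ does not make the classes coincide.

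The quasiconformal argument does not rescue this. Solving a Beltrami equation with small coefficient gives a conformal map from $(\Omega,[h_k])$ onto some nearby bordered surface with different moduli. It does not give a self-map of $\Omega$ landing in $[h_\infty]$. The normalization-to-the-identity trick works only where there are no moduli, such as the sphere or the disk. So Theorem~\ref{thm::maincompactnessflat}, stated for a single fixed class, cannot be invoked directly.

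What is needed is your fallback, and this is exactly the paper's route: rerun the proof of Theorem~\ref{thm::maincompactnessflat} with the varying background $h_k$ and check that every constant is uniform in $k$. The steps are:
\begin{enumerate}
\item Extend $h_k$ from $\Omega$ to $\mathcal S_0\cong S^2$ by a fixed extension procedure, so that the extended family is still $C^\infty$-precompact.
\item The decomposition leading to \eqref{nodd} holds for any background metric.
\item For each pair $I$, the cylinder heights $l_I$ then stay in a compact subset of $(0,\infty)$. The conformal maps $F_I$ and factors $c_I$ vary in bounded $C^\infty$ families, and the Sobolev norms defined by the $h_k$ are uniformly equivalent.
\item Hence the following constants are uniform in $k$: $d_{m,l_I}$ from Proposition~\ref{prop::ineq1}, the function $K_{l_I}$ from Proposition~\ref{prop::ineq2}, the multiplication factors $R_m\|c_I^{-m}\|_{H^{m+\frac12}}$ and $\inf c_I^{-1}$, the comparison constants in the boundary estimate, and the elliptic constant in \eqref{finalestimate}.
\item The spectral inputs $L(M_i)$, $\zeta(-1)$ and $\zeta(-2m)$ are unchanged by isometries.
\end{enumerate}
This gives $\|u_k\|_{H^m(\Omega)}\le C_m$ for every $m$. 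Rellich compactness then gives a $C^\infty$-convergent subsequence $u_k\to u_\infty$, and $\phi_k^*g_k=e^{2u_k}h_k\to e^{2u_\infty}h_\infty$. Until this uniformity is established, Step 3 is missing.
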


The final part of the paper establishes the required bound for planar domains and thereby proves Theorem~\ref{thm::planar}, which in turn completes the proof of Theorem~\ref{main1}. Let
$(\Omega,g_E)\subset\mathbb R^2$ be a smooth bounded planar domain with
$n\geq3$ boundary components. We prove two non-collapse estimates depending only
on the Steklov spectrum. First, distinct boundary components cannot approach
each other in the interior of $\Omega$. This follows from Corollary~\ref{cor::zetamonotonicity}, together with the cylinder estimates used in the proof of Theorem~\ref{thm::maincompactnessflat}. Second, a single boundary component cannot
develop an inner self-collapse. This is obtained by combining Corollary~\ref{cor::zetamonotonicity} with the compactness theorem of Jollivet and Sharafutdinov \cite{JOLLIVET20181712} for flat topological disks.
Together these estimates give a uniform embedded Euclidean collar around each
boundary component. The collar geometry then yields a uniform upper bound for
the total boundary length of the associated hyperbolic representative.
Applying Theorem~\ref{thm::utcompactness}, we obtain the main planar
compactness theorem.

We now briefly describe the organization of the paper. In
Section~\ref{traceineq}, we prove the zeta-function comparison theorem and its
trace-inequality consequences. In
Section~\ref{sec::compactnessfixedconclass}, we establish compactness for flat
genus-zero surfaces in a fixed conformal class.
Section~\ref{sectiondegenration} is devoted to the asymptotic formula for the
normalized Steklov determinant on degenerating hyperbolic surfaces, and the
heat-trace estimates needed there are proved in Appendix~\ref{heattrace}.
The graph approximation for small Neumann and Dirichlet eigenvalues is carried
out in Appendix~\ref{smalleigenvaluesDN}. These ingredients are combined in
Section~\ref{sec::compactnessflat} to prove
Theorem~\ref{thm::utcompactness}. The final section applies this result to
smooth planar domains and proves the planar compactness theorem.

\section{Comparison and monotonicity of the spectral zeta function}\label{traceineq}
Let $\mathcal{S}_{\mathtt{g}}$ be a connected closed surface of genus $\mathtt{g}$, and let $U_1,\ldots,U_n$ be open disks whose closures
$\bar U_1,\ldots,\bar U_n$ are pairwise disjoint smoothly embedded closed disks in $\mathcal{S}_{\mathtt g}$. For any nonempty index set $I \subset \{1,\dots,n\}$, define $\Omega_I\subset \mathcal{S}_{\mathtt{g}}$ to be the surface with boundary obtained by removing the disks $U_i$, $i \in I$, from $\mathcal{S}_{\mathtt{g}}$, that is,
\[
\Omega_I:=\mathcal{S}_{\mathtt{g}}\setminus\bigcup_{i\in I} U_i.
\]
The Euler characteristic of $\Omega_I$ is given by
\[
\chi(\Omega_I)=
\begin{cases}
2-2\mathtt{g}-\# I, & \text{if } \mathcal{S}_{\mathtt{g}} \text{ is orientable},\\
2-\mathtt{g}-\# I, & \text{if } \mathcal{S}_{\mathtt{g}} \text{ is non-orientable}.
\end{cases}
\]

Let $M_i := \partial U_i$ denote the boundary of $U_i$. Clearly, if $J \subset I \subset \{1,\dots,n\}$, then we have $\Omega_I \subset \Omega_J$ and $\partial \Omega_J \subset \partial \Omega_I$. In particular, we denote by
\[
\Omega:=\mathcal{S}_{\mathtt{g}}\setminus\bigcup_{i=1}^n U_i
\]
the surface obtained by removing all the disks. Its boundary is denoted by
\[
M=M_1\cup M_2\cup \cdots \cup M_n.
\]

Given any smooth Riemannian metric $g$ on $\mathcal{S}_\mathtt{g}$, we still denote by $g$ its restriction to each $\Omega_I$, and by $L_g(M_i)$ the length of $M_i$ with respect to the metric $g$.

For compact surfaces with or without boundary, Osgood, Phillips, and Sarnak \cite{OSGOOD1988148} studied the determinant of the Laplacian as a functional on a conformal class and showed that its extremals single out canonical metrics.
In particular, this yields the following uniformization result, stated in our notation.
\begin{theorem}[\cite{OSGOOD1988148}]\label{thm::OPS}
Let $(\mathcal{S}_{\mathtt{g}},g)$ be a closed surface endowed with a smooth metric $g$. Then the conformal class $[g]$ contains a metric of constant curvature, unique up to multiplication by a positive constant.

Moreover, for each compact domain $\Omega_I$ introduced above, the following canonical representatives exist uniquely in the restricted conformal class $[g]_{\Omega_I}$, up to multiplication by a positive constant:
\begin{enumerate}[(1)]
\item (Type I) a constant curvature metric with geodesic boundary;
\item (Type II) a flat metric whose boundary has constant geodesic curvature.
\end{enumerate}
Here, $[g]_{\Omega_I}$ denotes the restriction of the conformal class $[g]$ to $\Omega_I$, namely
\[
[g]_{\Omega_I}:=[g|_{\Omega_I}]=\{e^{2\phi}g|_{\Omega_I} \mid \phi\in C^{\infty}(\Omega_I;\R)\}.
\]
\end{theorem}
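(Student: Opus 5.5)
The closed case is classical uniformization, and we treat the bordered cases by reducing them to scalar PDEs in the conformal factor. Write a conformal metric as $e^{2\phi}g$ on $\Omega_I$. Then
\[
K_{e^{2\phi}g}=e^{-2\phi}(K_g+\Delta_g\phi), \qquad k_{e^{2\phi}g}=e^{-\phi}(k_g+\partial_\nu\phi),
\]
where $\Delta_g$ is the positive Laplacian and $\nu$ the outward normal. For $(\mathcal S_{\mathtt g},[g])$ we appeal to the uniformization theorem. Alternatively, we solve $\Delta_g\phi+K_g=c\,e^{2\phi}$ with $c$ of the sign of $\chi(\mathcal S_{\mathtt g})$. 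The case $\chi=0$ is linear and solvable because $\int K_g=0$. The case $\chi<0$ follows by sub- and supersolutions or by the variational method. The case $\chi>0$ needs the Riemann mapping / Moser--Trudinger argument on $S^2$, and on $\mathbb{RP}^2$ one passes to the orientation double cover. Uniqueness up to scaling comes from the maximum principle for $\chi\le0$. On the sphere it comes from Möbius transformations, which give uniqueness up to isometry, and that is what is meant here.

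For Type II we need $\Delta_g\phi=-K_g$ in $\Omega_I$ and $\partial_\nu\phi=-k_g+c\,e^{\phi}$ on $\partial\Omega_I$. We reduce this to a linear problem in two steps. First solve the Neumann problem $\Delta_g\psi=-K_g$ in $\Omega_I$, $\partial_\nu\psi=-k_g+\beta$, with the constant $\beta$ fixed by compatibility via Gauss--Bonnet: $\beta\, L_g(\partial\Omega_I)=2\pi\chi(\Omega_I)$. The metric $e^{2\psi}g$ is then flat, with boundary curvature $e^{-\psi}\beta$, which is not yet constant. Second, with respect to this flat metric, find a harmonic $h$ with $k\,e^{-h}+\partial_\nu h\,e^{-h}=c$, i.e. $\partial_\nu h=c\,e^{h}-k$. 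This is a nonlinear Robin-type problem for harmonic functions, equivalently a problem for the Dirichlet-to-Neumann operator: $\Lambda u=c\,e^{u}-k$ on the boundary. We plan to solve it by the variational method, minimizing
\[
\tfrac12\langle\Lambda u,u\rangle+\int k\,u-c\int e^{u}
\]
under a normalization such as $\int e^{u}\,ds=L$, with $c$ a Lagrange multiplier. Gauss--Bonnet forces $c\,L=2\pi\chi(\Omega_I)$. Coercivity uses the one-dimensional Moser--Trudinger inequality on the boundary curves when $\chi\ge0$. When $\chi<0$ the functional is convex in the right direction, and a monotone-operator or sub/supersolution argument gives existence. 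Uniqueness follows because the difference of two solutions is harmonic and satisfies a monotone boundary relation, so the maximum principle applies, up to the scaling normalization.

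For Type I we double $\Omega_I$ along its boundary, smoothing the conformal structure so the reflection is an anticonformal involution. We then apply the closed case on the double, and average or use uniqueness to make the constant curvature metric invariant under the reflection. The fixed set is then geodesic. This needs $\chi$ of the double to be nonzero or handled separately, and it yields uniqueness directly from uniqueness on the closed double. We expect the main obstacle to be the Type II existence in the positive-Euler-characteristic case (the disk), where the boundary exponential nonlinearity is critical. There we would instead invoke the Riemann mapping theorem directly, which gives the round disk.
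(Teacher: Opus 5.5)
\textbf{Assessment.} The paper does not prove this statement. It imports it from Osgood--Phillips--Sarnak, where the canonical metrics appear as extremals of $\log\det\Delta$ on a conformal class. Via the Polyakov--Alvarez formula, that determinant is an explicit Liouville-type functional of the conformal factor; extremals exist by sharp Moser--Trudinger-type inequalities, and their Euler--Lagrange equations are exactly the Type I and Type II conditions. Your proposal is correct in substance but takes a more classical route, with only minor errors noted below.

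\textbf{Comparison of routes.} You take closed uniformization as a black box and obtain Type I from the Schottky double. For Type II you first flatten with a linear Neumann problem and then solve $\Lambda u=c\,e^{u}-k$. Your functional $\tfrac12\langle\Lambda u,u\rangle+\int ku$ on $\{\int e^{u}\,ds=L\}$ is precisely the Polyakov--Alvarez functional restricted to flat metrics. So that step is the cited variational problem without its determinant interpretation. You gain a self-contained argument with an elementary uniqueness proof; the cited route gains the extremal characterization, which this paper never uses.

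\textbf{The case $\chi<0$.} Your existence argument works once the constant mode is eliminated. The term $-c\int e^{u}$ is constant on your constraint set, so it is redundant. The problem is equivalent to minimizing $F(u)=\tfrac12\langle\Lambda u,u\rangle+\int ku+2\pi|\chi|\log\int e^{u}\,ds$. This functional is convex, and on mean-zero $u$ it is coercive by Jensen's inequality. The one-dimensional Moser--Trudinger inequality is needed only to make $\int e^{u}$ finite on $H^{1/2}$ and for regularity, not for coercivity.

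\textbf{The other Euler characteristics.} For $\chi=0$ the problem is linear. For the disk it is exactly critical (Lebedev--Milin), which is why falling back on the Riemann mapping theorem there is the right call.

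\textbf{Correction 1: uniqueness needs $c\le 0$.} For $w=h_1-h_2$ you have $\partial_\nu w=c(e^{h_1}-e^{h_2})$. This contradicts the Hopf lemma at a positive boundary maximum only when $c<0$; for $c=0$, $w$ is simply constant. For the disk ($c>0$), uniqueness up to scaling is actually false, as on $S^2$. Take a disk automorphism $\varphi$ that is not a rotation. Then $|\varphi'|^2|dz|^2$ is flat with boundary curvature $1$, and the pullback of the hemisphere metric is again of Type I. Neither is a constant multiple of the original, so for the disk the statement must also be read modulo conformal automorphisms.

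\textbf{Correction 2: the doubling argument.} Averaging $\hat g$ and $\iota^*\hat g$ does not preserve constant curvature, so use uniqueness instead. Uniqueness works whenever the double has $\chi\le 0$, including $\chi=0$: a flat metric conformal to a flat metric on a closed surface differs from it by a constant factor, and equal areas force equality. The genuinely exceptional case is positive Euler characteristic, i.e.\ the double of a disk. There you need the hemisphere directly rather than an appeal to uniqueness.
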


This uniformization theorem allows us to select canonical representatives of conformal classes on $\Omega_I$.
As an illustration, we give two examples here.
\begin{enumerate}
    \item If $\mathtt{g}=0$, and $\# I=2$, then $\chi(\Omega_I)=0$. By the Gauss--Bonnet theorem and Theorem~\ref{thm::OPS}, there exists a unique metric (of both Type I and Type II)
$g_0 \in [g]_{\Omega_I}$ such that $(\Omega_I, g_0)$ is isometric to a flat cylinder
\[
\mathcal C_l := S^1 \times [0,l], \qquad S^1=\mathbb R/(2\pi\mathbb Z),
\]
with the product metric $d\theta^2+dt^2$, for some $l>0$.  With this normalization, $l$ uniquely determines the conformal modulus. This example plays a crucial role in our earlier paper \cite{annular}. 
\item If the Euler characteristic $\chi(\Omega_I)<0$, then by the Gauss--Bonnet theorem and Theorem~\ref{thm::OPS}, there exists a unique metric (of Type I)
$\hat{g} \in [g]_{\Omega_I}$ such that $(\Omega_I, \hat{g})$ is isometric to a hyperbolic surface with geodesic boundary.
Consequently, conformal classes on $\Omega_I$ are in one-to-one correspondence with hyperbolic metrics with geodesic boundary.
\end{enumerate}

Let $\Lambda_{\Omega_I,g}$ denote the \emph{Dirichlet-to-Neumann operator} associated with $\Omega_I$ with respect to the metric $g$. 
Let $D_{\Omega_I,g}$ denote the tangential derivative operator along $\partial\Omega_I$, namely
\[
D_{\Omega_I,g}f:=\frac{1}{\sqrt{-1}}\partial_\tau f,
\]
where, on each oriented boundary component, $\tau$ is the positively oriented arc-length parameter with respect to $g$. Reversing the orientation changes $D_{\Omega_I,g}$ to its negative and therefore does not affect $|D_{\Omega_I,g}|$.
Suppose $I'\supset I$ is another set of indices. When no confusion can arise, we use the same notation $\Lambda_{\Omega_I,g}$ and $D_{\Omega_I,g}$ for their extensions to $\partial\Omega_{I'}$ obtained by setting them equal to zero outside $\partial\Omega_I$. More precisely, for $f\in C^\infty(\partial\Omega_{I'})$,
\[
\Lambda_{\Omega_I,g}f:=
\begin{cases}
\Lambda_{\Omega_I,g}(f|_{\partial\Omega_I}),
& \text{on } \partial\Omega_I,\\
0,& \text{on } \partial\Omega_{I'}\setminus\partial\Omega_I,
\end{cases}
\]
and
\[
D_{\Omega_I,g}f:=
\begin{cases}
D_{\Omega_I,g}(f|_{\partial\Omega_I}),
& \text{on } \partial\Omega_I,\\
0,& \text{on } \partial\Omega_{I'}\setminus\partial\Omega_I.
\end{cases}
\]

It is well known that both $\Lambda_{\Omega_I,g}$ and $D_{\Omega_I,g}$ are first-order elliptic pseudodifferential operators on $\partial\Omega_I$. Moreover, their full symbols are closely related in the following sense.
\begin{lemma}[\cite{annular}, Lemma 2.1]\label{symbol}
For any smooth function $c$ on $\partial\Omega_I$, the operators $(c\Lambda_{\Omega_I,g})^2$ and $(cD_{\Omega_I,g})^2$ have the same full symbol.
\end{lemma}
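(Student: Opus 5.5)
The plan is to show that $\Lambda_{\Omega_I,g}$ and $|D_{\Omega_I,g}|$ differ by a smoothing operator. The statement then follows from the symbolic calculus. We work one boundary component at a time. Near each component $M_i$ of $\partial\Omega_I$ we introduce boundary normal (Fermi) coordinates $(\tau,r)$, where $\tau$ is $g$-arclength along $M_i$ and $r$ is the distance to $M_i$. In these coordinates $g=dr^2+h(\tau,r)^2d\tau^2$ with $h(\tau,0)=1$.

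The first step is the standard Lee--Uhlmann factorization of the Laplacian modulo smoothing operators:
\[
-\Delta_g = h^{-1}\bigl(\partial_r - E(r,\tau,D_\tau)\bigr)h\bigl(\partial_r + A(r,\tau,D_\tau)\bigr)
\]
up to lower-order conventions. This yields $\Lambda_{\Omega_I,g}\equiv A(0,\cdot,\cdot)$ modulo $\Psi^{-\infty}$. The symbol of $A$ is computed recursively from the Riccati-type equation obtained by matching terms. The principal symbol is $|\xi|$.

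The key two-dimensional observation is that we may use conformal invariance instead of an explicit computation. Near each boundary component we change to isothermal coordinates in which the metric is $e^{2\phi}(dx^2+dy^2)$ on a collar $\{0\le y<\epsilon\}\times S^1$. Alternatively, we conformally modify $g$ to a metric that is flat near $M_i$ but agrees with $g$ on $M_i$ (same boundary length element). Harmonic functions are conformally invariant in dimension two, so $\Lambda$ is unchanged exactly, as in the introduction. The collar of a flat metric with boundary arclength parametrization is, after a conformal map, a flat cylinder $S^1\times[0,\delta)$ scaled by $L_g(M_i)/2\pi$. On such a cylinder, separation of variables gives the harmonic extension of $e^{ik\theta}$ as a combination of $e^{\pm|k|t}$. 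This shows that the Dirichlet-to-Neumann map of the full surface equals $|D|$ plus an operator with rapidly decaying matrix entries. More precisely, the error comes from the reflected part and is bounded by $e^{-2|k|\delta}$, since the far side of the collar only affects $\Lambda$ through a smoothing operator (pseudolocality).

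The main obstacle is that the conformal change to a flat collar generally alters the boundary metric. We therefore need a conformal factor $e^{2u}$ with $u|_{M_i}=0$ and $u$ chosen so that the new metric has zero Gaussian curvature near $M_i$. This amounts to solving $\Delta_g u=K_g$ locally with $u=0$ on $M_i$. That is a noncharacteristic Dirichlet problem, solvable on a small collar by extension to a slightly larger domain. Once it is solved, a flat metric whose boundary arclength matches $g$ gives exactly $\Lambda_{\Omega_I,g}-|D_{\Omega_I,g}|\in\Psi^{-\infty}$. We only need flatness in a collar, with curvature $0$ there, not constant geodesic curvature. By developing the flat collar we map it isometrically into a neighborhood of the curve in the plane or cylinder. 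The symbol of $\Lambda$ then depends on the geodesic curvature $\kappa$, which we expect in general to contribute only through a smoothing correction. If this does not hold, we fall back on the explicit recursion: the zeroth-order term of $A$ is $\kappa/2$ times an odd-in-$\xi$ factor, and these terms cancel in the square.

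Finally, with $\Lambda=|D|+R$ modulo $\Psi^{-\infty}$ and $R$ of order $0$, we compute
\[
(c\Lambda)^2=(c|D|)^2+c|D|cR+cRc|D|+(cR)^2.
\]
We also have $(cD)^2=(c|D|)^2$ modulo terms coming from the commutator of $\operatorname{sgn}(D)$ with $c$, which is smoothing on the circle. Equality of the full symbols of $(c\Lambda)^2$ and $(cD)^2$ then reduces to showing that the odd terms of $R$ cancel. Expanding the symbols, $R$'s symbol being odd in $\xi$ modulo $S^{-\infty}$ gives exactly this cancellation.
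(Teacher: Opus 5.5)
You have the right target. Already for $c=1$ the lemma is equivalent to $\Lambda_{\Omega_I,g}-|D_{\Omega_I,g}|\in\Psi^{-\infty}$, and your closing argument that $(c|D|)^2\equiv(cD)^2$ because $[\operatorname{sgn}(D),c]\in\Psi^{-\infty}$ is correct. The gap is that this central claim is never established.

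Flattening the metric with $u|_{M_i}=0$ still leaves $M_i$ curved. So the collar is not isometric near $M_i$ to a flat cylinder: a conformal map that is isometric on the boundary would need a harmonic $\psi$ with Cauchy data $\psi=0$, $\partial_\nu\psi=\kappa$ on $M_i$, and odd reflection shows this forces $\kappa$ to be real-analytic. Any conformal map to a cylinder reparametrizes the boundary by some factor $a$. Separation of variables then controls $\Lambda_{\mathcal C}-|D_{\mathcal C}|$, not $\Lambda_{\Omega_I,g}-|D_{\Omega_I,g}|$.

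At that point you only ``expect'' $\kappa$ to be harmless, and your fallback is false on both counts.

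First, there is no $\kappa/2$ term. In $g=dr^2+h^2d\tau^2$ with $D_\tau=-i\partial_\tau$, write $-\Delta_g=D_r^2+iED_r+Q$ and factor it as $(D_r+iE-iB)(D_r+iB)$. Then $B^2-EB+\partial_rB\equiv Q$, with $E=-\partial_rh/h$, $q_2=h^{-2}\xi^2$, $q_1=ih^{-3}(\partial_\tau h)\xi$ and $b_1=-h^{-1}|\xi|$. This gives
\[
2b_1b_0=\bigl(q_1-\partial_\xi b_1\,D_\tau b_1\bigr)+\bigl(Eb_1-\partial_rb_1\bigr)=0+0 .
\]
Induction then gives $b_j=0$ for all $j\le 0$. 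At each lower order the only terms not already zero are $2b_1b_{m-1}$ and $\frac{1}{K!}\partial_\xi^Kb_1\,D_\tau^Kb_1$ with $K\ge 2$, and the latter vanish for $\xi\neq0$. Hence $\Lambda\equiv -B|_{r=0}$ has full symbol exactly $|\xi|$ in arclength.

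Second, even if an order-zero term $r_0$ were present, odd or not, it would not cancel in the square. It adds $2c^2|\xi|r_0$ to the order-one symbol of $(c\Lambda)^2$, whereas $(cD)^2$ and $(c|D|)^2$ both have order-one symbol $c\,(D_\tau c)\,\xi$. So your final claim that the odd terms of $R$ cancel is wrong; $R$ must genuinely be smoothing.

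There are two clean ways to close the gap.

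The first is the recursion displayed above. It is local and works in every genus, which matters because the lemma is stated for arbitrary $\mathtt g$.

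The second uses the freedom in $c$, which is why the lemma is stated for arbitrary $c$.
\begin{enumerate}
\item Localize to a collar annulus $A$ of $M_i$. The $M_i$-block of the Dirichlet-to-Neumann map changes only by a smoothing operator, and the blocks between different components are smoothing.
\item Map $A$ conformally onto a flat cylinder, $F:\mathcal C\to A$. Then $\Lambda_A=(F^{-1})^*\circ(a\Lambda_{\mathcal C})\circ F^*$ and $D_A=(F^{-1})^*\circ(aD_{\mathcal C})\circ F^*$ hold exactly, as in \eqref{eq::ladrelation} and \eqref{eq::ladrelation2}.
\item With $\tilde c=(F^*c)\,a$, it suffices to show $(\tilde c\Lambda_{\mathcal C})^2\equiv(\tilde cD_{\mathcal C})^2$ modulo $\Psi^{-\infty}$.
\item This follows from $\Lambda_{\mathcal C}-|D_{\mathcal C}|\in\Psi^{-\infty}$ (separation of variables) together with your commutator argument.
\end{enumerate}
Neither flattening the metric nor an isometric identification of the boundary is needed.
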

As a consequence, the full symbols of the positive operators $\Lambda_{\Omega_I,g}$ and $|D_{\Omega_I,g}|$ are identical. Their eigenvalues satisfy the asymptotic relation
\begin{equation}
\sigma_k(\Lambda_{\Omega_I,g})=\sigma_k(|D_{\Omega_I,g}|)+O(k^{-\infty}).
\end{equation}

We now introduce the \emph{spectral zeta function} associated with the Dirichlet-to-Neumann operator $\Lambda_{\Omega_I,g}$. It is defined by
\[
\zeta_{\Omega_I,g}(s):=\sum_{\sigma_k(\Lambda_{\Omega_I,g})>0}\sigma_k(\Lambda_{\Omega_I,g})^{-s}, \quad \Re s>1.
\]
By Weyl's law, the above series converges absolutely in the half-plane $\{\Re s>1\}$ and is therefore holomorphic there. Moreover, one has the following explicit formulas for the meromorphic continuation and special values of $\zeta_{\Omega_I,g}(s)$.
\begin{proposition}[\cite{annular}, Proposition 2.2]\label{prop::zetaexpress}
    The spectral zeta function $\zeta_{\Omega_I,g}(s)$ admits a meromorphic continuation to the whole complex plane, with a unique simple pole at $s=1$. Explicitly, for each $s\in\C$, we have
\begin{equation*}
\zeta_{\Omega_I,g}(s)=2\zeta_{R}(s)\sum_{i\in I}\left(\frac{2\pi}{L_g(M_i)}\right)^{-s}+\Tr (\Lambda_{\Omega_I,g}^{-s}-|D_{\Omega_I,g}|^{-s}),
\end{equation*}
where $\zeta_{R}(s)$ denotes the Riemann zeta function. Let $g_0=a^2g|_{\Omega_I},\ a\in C^{\infty}(\Omega_I,\R_+)$ be a smooth metric on $\Omega_I$ that belongs to $[g]_{\Omega_I}$. Then
\[
\Tr (\Lambda_{\Omega_I,g}^{-s}-|D_{\Omega_I,g}|^{-s})=\Tr \left(\left(a^{\frac12}\Lambda_{\Omega_I,g_0}a^{\frac12}\right)^{-s}-\left|a^{\frac12}D_{\Omega_I,g_0}a^{\frac12}\right|^{-s}\right).
\]
In particular, we have
\begin{equation}\label{generalzeta-2m}
	\zeta_{\Omega_I,g}(-2m)=\Tr\left((a\Lambda_{\Omega_I,g_0})^{2m}-(aD_{\Omega_I,g_0})^{2m} \right)
\end{equation} for each $m\in\Z_+$ and
\begin{equation}\label{generalzeta-1}
    \zeta_{\Omega_I,g}(-1)=-\frac{\pi}{3}\sum_{i\in I} \frac1{L_g(M_i)}+\Tr \left(a^{\frac12}\Lambda_{\Omega_I,g_0}a^{\frac12}-|a^{\frac12}D_{\Omega_I,g_0}a^{\frac12}|\right).
\end{equation}
\end{proposition}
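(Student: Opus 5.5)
The plan is to split $\Lambda_{\Omega_I,g}^{-s}$ into an explicit model part and a smoothing remainder, and to read off the continuation and the special values from that split.

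\textbf{Decomposition.} For $\Re s>1$ I would write
\[
\zeta_{\Omega_I,g}(s)=\Tr' |D_{\Omega_I,g}|^{-s}+\Tr\bigl(\Lambda_{\Omega_I,g}^{-s}-|D_{\Omega_I,g}|^{-s}\bigr),
\]
where $\Tr'$ omits the kernel and the powers are taken on the complement of the kernels. On each boundary component $M_i$ of length $L_i=L_g(M_i)$, the operator $|D|$ has eigenvalues $2\pi|k|/L_i$ for $k\in\Z\setminus\{0\}$. Hence
\[
\Tr'|D|^{-s}=2\zeta_R(s)\sum_{i\in I}(2\pi/L_i)^{-s},
\]
which continues meromorphically to $\C$. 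By Lemma~\ref{symbol}, $\Lambda_{\Omega_I,g}$ and $|D_{\Omega_I,g}|$ have the same full symbol. Both kernels are the locally constant functions on each component. Therefore $\Lambda^{-s}-|D|^{-s}$, defined via complex powers (Seeley), is a smoothing operator depending holomorphically on $s\in\C$. Its trace is thus entire, and the only pole of $\zeta_{\Omega_I,g}$ comes from $\zeta_R$ at $s=1$, which gives the first formula.

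\textbf{Conformal change.} Writing $g=a^{-2}g_0$, the harmonic extension is unchanged, while normal and tangential derivatives scale by $a$ on the boundary. So $\Lambda_{\Omega_I,g}=a\Lambda_{\Omega_I,g_0}$ and $D_{\Omega_I,g}=aD_{\Omega_I,g_0}$, with $a$ restricted to $\partial\Omega_I$. These are self-adjoint with respect to the $g$-arclength, i.e.\ $a^{-1}d s_{g_0}$. Conjugating by multiplication by $a^{1/2}$ gives a unitary map to $L^2(ds_{g_0})$ under which $a\Lambda_{g_0}$ becomes $a^{1/2}\Lambda_{g_0}a^{1/2}$, and similarly for $D$. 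The absolute value $|aD_{g_0}|$ becomes $|a^{1/2}D_{g_0}a^{1/2}|$. Traces are unitarily invariant, so the second identity holds for $\Re s>1$ and then for all $s$ by analytic continuation.

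\textbf{Special values.} At $s=-2m$ the powers become differential-type operators: $\Lambda^{2m}-|D|^{2m}=(a\Lambda_{g_0})^{2m}-(aD_{g_0})^{2m}$, since $|D|^{2m}=D^{2m}$. This difference is smoothing, so it is trace class, and its trace is the continued value. Here one must check that the projection onto the kernel contributes nothing. The definition of $\zeta$ excludes zero eigenvalues, and $\Lambda^{-s}$ on the kernel complement at $s=-2m$ agrees with $\Lambda^{2m}$, because $\Lambda^{2m}$ kills constants. Since $\zeta_R(-2m)=0$, this gives \eqref{generalzeta-2m}. At $s=-1$, $\zeta_R(-1)=-1/12$ gives
\[
2\cdot\left(-\tfrac{1}{12}\right)\frac{2\pi}{L_i}=-\frac{\pi}{3L_i},
\]
and the trace term becomes $\Tr\bigl(a^{1/2}\Lambda_{g_0}a^{1/2}-|a^{1/2}D_{g_0}a^{1/2}|\bigr)$, which is \eqref{generalzeta-1}.

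\textbf{Main obstacle.} The delicate step is justifying that $\Lambda^{-s}-|D|^{-s}$ is smoothing with trace holomorphic in $s$, uniformly in $s$, including near nonpositive integers. I would prove it via the resolvent/contour representation of complex powers. Equality of full symbols gives $(\Lambda-\mu)^{-1}-(|D|-\mu)^{-1}\in\Psi^{-\infty}$ with seminorms decaying in $|\mu|$ along the contour. Integrating against $\mu^{-s}$ then converges in every $\Psi^{-N}$ topology, yielding the required holomorphic smoothing family.
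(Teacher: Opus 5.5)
Your route is the standard one, and your special-value computations are right. You peel off $\Tr'|D_{\Omega_I,g}|^{-s}=2\zeta_R(s)\sum_{i\in I}(2\pi/L_g(M_i))^{-s}$. You show the remainder is a holomorphic family of smoothing operators because $\Lambda_{\Omega_I,g}$ and $|D_{\Omega_I,g}|$ have the same full symbol. You get the conformal identity from the unitary $f\mapsto a^{-1/2}f$ from $L^2(ds_g)$ to $L^2(ds_{g_0})$. The paper does not reprove this statement; it imports it from \cite{annular}. Two points need repair, and the second is a genuine gap in exactly the step you flagged.

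\textbf{The kernels differ.} Since $\Omega_I$ is connected, $(\Lambda_{\Omega_I,g}f,f)=\int_{\Omega_I}|\nabla\mathcal H_{\Omega_I}f|^2\,dV_g$ forces $\ker\Lambda_{\Omega_I,g}=\operatorname{span}\{1\}$. By contrast, $\ker|D_{\Omega_I,g}|$ is the $\#I$-dimensional space of locally constant functions. This is harmless for the stated formulas: each power is set to zero on its own kernel, and the two kernel projections are finite-rank smoothing operators. You still must not identify the kernels, because the mismatch shows up, e.g. $\Tr(\Lambda^0-|D|^0)=\#I-1$, which gives $\zeta_{\Omega_I,g}(0)=-1$.

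\textbf{The contour integral does not reach $s=-1$ or $s=-2m$.} The resolvent difference
$(\Lambda-\mu)^{-1}-(|D|-\mu)^{-1}=(\Lambda-\mu)^{-1}(|D|-\Lambda)(|D|-\mu)^{-1}$ behaves like $\mu^{-2}(|D|-\Lambda)$ for large $|\mu|$. So in general it decays only like $|\mu|^{-2}$ in every smoothing seminorm. Consequently $\int_\Gamma\mu^{-s}(\cdots)\,d\mu$ converges only for $\Re s>-1$, which excludes precisely the points where the proposition makes its claims.

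To continue further, use
\[
\Lambda^{-s}-|D|^{-s}=\Lambda^{k}\bigl(\Lambda^{-s-k}-|D|^{-s-k}\bigr)+\bigl(\Lambda^{k}-|D|^{k}\bigr)|D|^{-s-k},
\]
which holds with the kernel-zero convention. On $\{\Re s>-1-k\}$, the first term is a fixed $\Psi$DO composed with a holomorphic smoothing family. The second is a fixed smoothing operator composed with a holomorphic $\Psi$DO family. Hence $\Tr(\Lambda^{-s}-|D|^{-s})$ is entire.

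Taking $k=2m$ at $s=-2m$ (resp.\ $k=1$ at $s=-1$), the right-hand side collapses to $\Lambda^{2m}-|D|^{2m}$ (resp.\ $\Lambda-|D|$), using $\Lambda P_0=0$ and $|D|P_0'=0$. This is exactly what \eqref{generalzeta-2m} and \eqref{generalzeta-1} require.
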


The following standard consequence of the spectral theorem and Jensen's inequality will be used repeatedly in the proofs of the trace inequalities below.
\begin{lemma}\label{jensen}
Let $A$ be a self-adjoint operator on a Hilbert space $H$. Let $f$ be a convex function defined on an interval $I \subset \mathbb{R}$ containing the spectrum $\sigma(A)$. Then for every $u \in \operatorname{Dom}(A)\cap\operatorname{Dom}(f(A))$ with $\|u\|_{H}=1$, one has
\[
\langle f(A)u,u\rangle_H \geq f(\langle Au,u\rangle_H).
\]
\end{lemma}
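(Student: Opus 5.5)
The plan is to reduce the statement to the scalar Jensen inequality through the spectral measure of $A$ associated with $u$. By the spectral theorem there is a projection-valued measure $E$ on $\sigma(A)\subset I$ with $A=\int \lambda\,dE(\lambda)$. Given $u$ with $\|u\|_H=1$, we form the scalar measure $\mu_u(B):=\langle E(B)u,u\rangle_H$. This is a Borel probability measure supported on $\sigma(A)\subset I$, because $\mu_u(\mathbb R)=\|u\|_H^2=1$.

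Next we express both sides through $\mu_u$. Since $u\in\operatorname{Dom}(A)$, the function $\lambda$ lies in $L^2(\mu_u)\subset L^1(\mu_u)$, and $\langle Au,u\rangle_H=\int\lambda\,d\mu_u(\lambda)$. Call this number $m$. It lies in the closed convex hull of $\sigma(A)$, which is contained in $I$ when $I$ is an interval containing $\sigma(A)$; if $I$ is open and $\sigma(A)$ touches an endpoint, $m$ is still in $I$ because $\mu_u$ is supported in $I$. Likewise, $u\in\operatorname{Dom}(f(A))$ gives $\int|f(\lambda)|^2\,d\mu_u<\infty$, so $f\in L^1(\mu_u)$ and $\langle f(A)u,u\rangle_H=\int f(\lambda)\,d\mu_u(\lambda)$ by the functional calculus.

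The inequality is then the classical Jensen inequality $\int f\,d\mu_u\ge f\!\left(\int\lambda\,d\mu_u\right)$ for the probability measure $\mu_u$. To keep the argument self-contained, we use a supporting line. If $m$ is an interior point of $I$, convexity gives a constant $\beta$ (for example a one-sided derivative of $f$ at $m$) with $f(\lambda)\ge f(m)+\beta(\lambda-m)$ for all $\lambda\in I$. Integrating against $\mu_u$ kills the linear term, since $\int(\lambda-m)\,d\mu_u=0$. If $m$ is an endpoint of $I$, then $\mu_u$ must be the Dirac mass at $m$, and equality holds trivially.

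The only point that needs care is the domain bookkeeping: we must confirm that the integrals defining both quadratic forms converge absolutely, so that the functional-calculus identities hold. The hypotheses on $u$ are exactly what guarantee this. Everything else is routine.
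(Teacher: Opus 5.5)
Your proposal is correct and is the argument the paper has in mind. The paper writes out no proof; it simply calls the lemma a standard consequence of the spectral theorem and scalar Jensen, and your reduction to the spectral probability measure $\mu_u$, with a supporting line at $m=\langle Au,u\rangle_H$ (and the Dirac-mass case when $m$ is an endpoint), fills this in with correct domain bookkeeping, including $\lambda, f\in L^1(\mu_u)$. The clause about $\sigma(A)$ touching an endpoint of an open $I$ is unnecessary, since $\sigma(A)$ is closed and cannot do so, but it does no harm.
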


The following lemma establishes a comparison relation between the positive operators $\Lambda_{\Omega_I,g}$ defined on different domains. This will be used to compare their spectral zeta functions.
\begin{lemma}\label{lem::positiveness}
Let $I,J\subset\{1,2,\dots,n\}$ be nonempty index sets with $J\subset I$. For every $f\in C^\infty(\partial\Omega_I)$ supported on $\partial\Omega_J$, one has
\begin{equation}\label{eq::ineq1st}
\left(\Lambda_{\Omega_I,g}f,f\right)_{L^2(\partial\Omega_I)}
\geq
\left(\Lambda_{\Omega_J,g}f,f\right)_{L^2(\partial\Omega_I)}.
\end{equation}
If $J\subsetneq I$, equality holds only for $f\equiv0$.
\end{lemma}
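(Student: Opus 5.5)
The plan is to rewrite both quadratic forms as Dirichlet energies and compare them through the Dirichlet principle. Since $J\subset I$, we have $\Omega_I\subset\Omega_J$ and $\partial\Omega_J\subset\partial\Omega_I$. Let $f\in C^\infty(\partial\Omega_I)$ be supported on $\partial\Omega_J$; then $f$ vanishes on the components $M_i$ with $i\in I\setminus J$. By the convention that $\Lambda_{\Omega_J,g}$ is extended by zero, the right-hand side of \eqref{eq::ineq1st} equals $(\Lambda_{\Omega_J,g}(f|_{\partial\Omega_J}),f|_{\partial\Omega_J})_{L^2(\partial\Omega_J)}$.

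Let $u_J=\mathcal H_{\Omega_J}(f|_{\partial\Omega_J})$ and $u_I=\mathcal H_{\Omega_I}f$. Green's formula gives
\[
(\Lambda_{\Omega_J,g}f,f)=\int_{\Omega_J}|\nabla u_J|_g^2\,dA_g,\qquad (\Lambda_{\Omega_I,g}f,f)=\int_{\Omega_I}|\nabla u_I|_g^2\,dA_g .
\]
Now extend $u_I$ by zero to $\tilde u$ on $\Omega_J=\Omega_I\cup\bigcup_{i\in I\setminus J}\bar U_i$. Because $u_I=0$ on each $M_i$ with $i\in I\setminus J$, the function $\tilde u$ is continuous and piecewise smooth, so $\tilde u\in H^1(\Omega_J)$. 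Its trace on $\partial\Omega_J$ is $f|_{\partial\Omega_J}$, and $\int_{\Omega_J}|\nabla\tilde u|^2=\int_{\Omega_I}|\nabla u_I|^2$. The Dirichlet principle says that the harmonic extension minimizes energy among $H^1$ functions with given trace. Hence
\[
\int_{\Omega_J}|\nabla u_J|^2\le\int_{\Omega_J}|\nabla \tilde u|^2,
\]
which is exactly \eqref{eq::ineq1st}.

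For the equality case, suppose $J\subsetneq I$ and equality holds. By uniqueness of the energy minimizer (strict convexity of the Dirichlet energy on the affine space of functions with fixed trace), $\tilde u=u_J$, so $\tilde u$ is harmonic on $\Omega_J$. But $\tilde u$ vanishes on the nonempty open set $U_i$ for some $i\in I\setminus J$, and $\Omega_J$ is connected. Unique continuation for harmonic functions (real analyticity in isothermal coordinates) then forces $\tilde u\equiv0$, hence $f=\tilde u|_{\partial\Omega_J}\equiv0$.

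The only delicate step is justifying that the zero extension lies in $H^1$ with the correct trace and does not contribute energy across the $M_i$. This follows from the standard fact that gluing two $H^1$ functions whose traces agree along a smooth curve gives an $H^1$ function. Connectedness of $\Omega_J$ holds because removing finitely many disjoint disks from a connected surface leaves it connected.
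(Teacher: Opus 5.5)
Your proposal is correct and follows essentially the same route as the paper. Both arguments extend $u_I$ by zero across the disks $U_i$ for $i\in I\setminus J$, apply the Dirichlet principle on $\Omega_J$, and handle the equality case via uniqueness of the energy minimizer together with unique continuation from the open set where the extension vanishes.
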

\begin{proof}
Let $u_I$ and $u_J$ be the harmonic extensions of $f$ to $\Omega_I$ and $\Omega_J$, respectively. Extend $u_I$ by zero across every disk $U_i$ with $i\in I\setminus J$. Because $u_I$ has zero trace on $M_i$ for such $i$, the resulting function, denoted by $\widetilde u_I$, belongs to $H^1(\Omega_J)$ and has boundary value $f$ on $\partial\Omega_J$.

By the Dirichlet principle,
\[
\int_{\Omega_J}|\nabla u_J|_g^2\,dV_g
\leq\int_{\Omega_J}|\nabla\widetilde u_I|_g^2\,dV_g=\int_{\Omega_I}|\nabla u_I|_g^2\,dV_g.
\]
Using the energy identity for the Dirichlet-to-Neumann operator gives \eqref{eq::ineq1st}.

Suppose now that $J\subsetneq I$ and equality holds. Then $\widetilde u_I$ is also an energy minimizer on $\Omega_J$, hence $\widetilde u_I=u_J$. Since $\widetilde u_I$ vanishes on the nonempty open set $\bigcup_{i\in I\setminus J}U_i$, unique continuation implies that $u_J\equiv0$. Therefore $f\equiv0$.
\end{proof}

Let $\mathcal P=\{J_k\}$ be a partition of $I$, so that
\[
I=\bigsqcup_{J_k\in\mathcal P}J_k.
\]
Every $f\in C^\infty(\partial\Omega_I)$ then decomposes uniquely as
\[
f=\sum_{J_k\in\mathcal P}f_k,
\]
where $f_k$ is supported on
\[
\partial\Omega_{J_k}=\bigcup_{i\in J_k}M_i.
\]
Define the operator
\[
\Lambda_{\mathcal P}:=\bigoplus_{J_k\in\mathcal P}\Lambda_{\Omega_{J_k},g}
\]
on $\partial\Omega_I$ by
\[
\Lambda_{\mathcal P}f:=\sum_{J_k\in\mathcal P}\Lambda_{\Omega_{J_k},g}f_k.
\]

The previous lemma immediately implies the following trace inequality.
\begin{proposition}\label{prop::inequfortrace}
For any partition $\mathcal P$ of $I$, any $c\in C^\infty(\partial\Omega_I;\R_+)$ and $s\ge1$,
\[
\Tr\left((c\Lambda_{\Omega_I,g}c)^s-(c\Lambda_{\mathcal P}c)^s\right)\geq 0.
\]
Equality holds if and only if $\mathcal{P}=\{I\}$.
\end{proposition}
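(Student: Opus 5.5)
From Lemma~\ref{lem::positiveness} we will first derive the operator inequality $\Lambda_{\Omega_I,g}\ge\Lambda_{\mathcal P}$ in the quadratic-form sense on $L^2(\partial\Omega_I)$. Write $f=\sum_k f_k$ according to $\mathcal P$. It suffices to show $(\Lambda_{\Omega_I,g}f,f)\ge\sum_k(\Lambda_{\Omega_{J_k},g}f_k,f_k)$. The plan is to use the Dirichlet principle once more rather than the lemma term by term, since cross terms appear. Let $u$ be the harmonic extension of $f$ to $\Omega_I$, and choose disjoint open neighbourhoods of the boundary pieces; in fact no cutoff is needed. For each $k$, the energy of $u$ on $\Omega_I$ dominates the energy of the harmonic extension to $\Omega_{J_k}$ of $u$'s boundary data on $\partial\Omega_{J_k}$. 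This by itself does not split the energy, so I would instead argue variationally. $\Lambda_{\Omega_I,g}-\Lambda_{\mathcal P}$ is a difference of the energy quadratic forms, and $(\Lambda_{\mathcal P}f,f)=\sum_k\min\{E_{\Omega_{J_k}}(v):v|_{\partial\Omega_{J_k}}=f_k\}$. I expect to realize each minimization via the harmonic measure decomposition $u=\sum_k u_k$, where $u_k$ is harmonic in $\Omega_I$ with data $f_k$ on $\partial\Omega_{J_k}$ and $0$ elsewhere. Then $E(u)=\sum_k E(u_k)+\sum_{k\ne l}(\Lambda_{\Omega_I}f_k,f_l)$. The cross terms are the main obstacle, since the off-diagonal kernel of the DN map is negative for nonnegative functions. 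If the direct form inequality fails, the fallback is to prove only the trace inequality. In that case I would use Lemma~\ref{lem::positiveness} diagonally, in the form $(\Lambda_{\Omega_I}f_k,f_k)\ge(\Lambda_{\Omega_{J_k}}f_k,f_k)$ with strict inequality when $J_k\ne I$. Then I would compare traces in the eigenbasis of $c\Lambda_{\mathcal P}c$, which is block diagonal with respect to the partition.

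Concretely, let $A=c\Lambda_{\Omega_I,g}c$ and $B=c\Lambda_{\mathcal P}c$. $B$ commutes with the orthogonal projections $\Pi_k$ onto $L^2(\partial\Omega_{J_k})$, so we can take an orthonormal eigenbasis $\{e_j\}$ of $B$ with each $e_j$ supported in a single block. For such $e_j$, with $e_j$ in block $k$, Lemma~\ref{lem::positiveness} applied to $c\,e_j$, which is supported on $\partial\Omega_{J_k}$, gives
\[
\langle Ae_j,e_j\rangle=(\Lambda_{\Omega_I,g}ce_j,ce_j)\ge(\Lambda_{\Omega_{J_k},g}ce_j,ce_j)=\langle Be_j,e_j\rangle .
\]
Then, since $x\mapsto x^s$ is convex on $[0,\infty)$ for $s\ge1$, Lemma~\ref{jensen} yields
\[
\langle A^se_j,e_j\rangle\ge\langle Ae_j,e_j\rangle^s\ge\langle Be_j,e_j\rangle^s=\langle B^se_j,e_j\rangle .
\]
Summing over $j$ gives $\Tr(A^s-B^s)\ge0$, so no cross-term control is needed; this is the plan I would commit to. For the trace of $A^s-B^s$ to make sense, I will use that $A$ and $B$ have the same full symbol up to smoothing: the DN maps of nested domains differ by smoothing operators on each $M_i$ (the interior is unchanged near $M_i$). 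Thus $A^s-B^s$ is trace class, and the trace equals the sum of the diagonal entries in any orthonormal basis. Each term in the sum is finite because $e_j$ is smooth. The sum converges since the terms are nonnegative and, once trace class is known, the total equals the trace.

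For equality, if $\mathcal P=\{I\}$ then $A=B$. Conversely, if $\mathcal P\ne\{I\}$, then every $J_k\subsetneq I$. Pick any eigenvector $e_j\ne0$. The strict case of Lemma~\ref{lem::positiveness} gives $\langle Ae_j,e_j\rangle>\langle Be_j,e_j\rangle$, and since $c>0$ we have $ce_j\ne0$. Hence one diagonal term is strictly positive. The step I expect to need the most care is the justification that $A^s-B^s$ is trace class and that its trace may be computed as this basis sum for noninteger $s$. That should follow from the smoothing difference and the functional calculus for elliptic pseudodifferential operators, via the same mechanism used in Proposition~\ref{prop::zetaexpress}.
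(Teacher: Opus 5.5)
Your committed argument is the paper's proof. It takes an orthonormal eigenbasis of $c\Lambda_{\mathcal P}c$ with each eigenfunction supported in one block, applies Lemma~\ref{lem::positiveness} to $ce_j$ and then Jensen's inequality, sums over the basis, and gets strictness from the strict case of the lemma, since every block is proper when $\mathcal P\neq\{I\}$. The only difference is cosmetic: you get the trace-class property from locality of the Dirichlet-to-Neumann symbol near each $M_i$, whereas the paper compares both operators with the tangential derivative via Lemma~\ref{symbol}. You were also right to abandon the form inequality $\Lambda_{\Omega_I,g}\geq\Lambda_{\mathcal P}$, which genuinely fails: on a round annulus with $f=e^{ik\theta}$ ($k\neq0$) on both circles, the difference of quadratic forms is $4\pi|k|\bigl(\tanh(|k|l/2)-1\bigr)<0$, where $l$ is the conformal modulus.
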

\begin{proof}
Apply Lemma~\ref{symbol} with $c^2$ in place of $c$. Since
\[
(c\Lambda_{\Omega_I,g}c)^2=c^{-1}(c^2\Lambda_{\Omega_I,g})^2c,\qquad(cD_{\Omega_I,g}c)^2=c^{-1}(c^2D_{\Omega_I,g})^2c,
\]
the operators $(c\Lambda_{\Omega_I,g}c)^2$ and $(cD_{\Omega_I,g}c)^2$ have the same full symbol. The same argument applies to every block of $\Lambda_{\mathcal P}$. The pseudodifferential functional calculus for positive operators then shows that $(c\Lambda_{\Omega_I,g}c)^s$ and $(c\Lambda_{\mathcal P}c)^s$ have the same full symbol. Their difference is therefore smoothing and, in particular, trace class.

For each $J_k\in\mathcal P$, choose an orthonormal eigenbasis $\{\eta_{k,j}\}_j$ of $c\Lambda_{\Omega_{J_k},g}c$ in $L^2(\partial\Omega_{J_k})$, and extend every eigenfunction by zero to $\partial\Omega_I$. The union of these bases is an orthonormal basis of $L^2(\partial\Omega_I)$. If
\[
c\Lambda_{\Omega_{J_k},g}c\,\eta_{k,j}=\lambda_{k,j}\eta_{k,j},
\]
then Jensen's inequality and Lemma~\ref{lem::positiveness} give
\begin{align*}
\left((c\Lambda_{\Omega_I,g}c)^s\eta_{k,j},\eta_{k,j}\right)
&\geq\left(c\Lambda_{\Omega_I,g}c\,\eta_{k,j},\eta_{k,j}\right)^s\\
&\geq\left(c\Lambda_{\Omega_{J_k},g}c\,\eta_{k,j},\eta_{k,j}\right)^s\\
&=\lambda_{k,j}^s
=\left((c\Lambda_{\mathcal P}c)^s\eta_{k,j},\eta_{k,j}\right).
\end{align*}
Summing over $k$ and $j$ proves the trace inequality. If $\mathcal P\neq\{I\}$, then every block $J_k$ is a proper subset of $I$, and Lemma~\ref{lem::positiveness} makes the second inequality strict for every nonzero $c\eta_{k,j}$. Hence equality occurs precisely when $\mathcal P=\{I\}$.
\end{proof}

Combining Proposition~\ref{prop::zetaexpress} and Proposition~\ref{prop::inequfortrace} and setting $c=1$, we obtain the following inequality for spectral zeta functions.
\begin{corollary}\label{zetasum}
Let $(\mathcal{S}_{\mathtt{g}},g)$ be a closed surface, and let $\Omega_I$ be one of the domains defined above. For any partition $\mathcal{P}=\{J_k\}$ of $I$ and any $s\leq -1$, we have
    \[
\zeta_{\Omega_I,g}(s)\geq \sum_{J_k\in \mathcal{P}}\zeta_{\Omega_{J_k},g}(s).
    \]
Equality holds if and only if $\mathcal{P}=\{I\}$.
\end{corollary}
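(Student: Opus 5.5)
The plan is to apply Proposition~\ref{prop::zetaexpress} with the trivial conformal factor $a\equiv1$ (so $g_0=g$) to each of the domains $\Omega_I$ and $\Omega_{J_k}$. Writing $t:=-s\geq1$, this gives
\[
\zeta_{\Omega_I,g}(s)=2\zeta_R(s)\sum_{i\in I}\Big(\frac{2\pi}{L_g(M_i)}\Big)^{t}+\Tr\big(\Lambda_{\Omega_I,g}^{t}-|D_{\Omega_I,g}|^{t}\big),
\]
and the analogous identity for every $J_k$. I would then compare the two sides of the desired inequality term by term.

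First, the Riemann-zeta parts cancel exactly. Since $I=\bigsqcup_k J_k$, the sum over $i\in I$ splits as the sum over $k$ of the sums over $i\in J_k$. Moreover, $L_g(M_i)$ depends only on the boundary curve $M_i$ and the metric $g$, not on which disks have been removed. Hence these boundary-length contributions coincide on both sides.

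Second, I would handle the correction traces. The key observation is that the tangential derivative is a local operator on each component $M_i$, with arc length taken with respect to the same metric $g$. Hence $D_{\Omega_I,g}=\bigoplus_k D_{\Omega_{J_k},g}$ on $L^2(\partial\Omega_I)=\bigoplus_k L^2(\partial\Omega_{J_k})$, and therefore $|D_{\Omega_I,g}|^t=\bigoplus_k|D_{\Omega_{J_k},g}|^t$. Likewise $\Lambda_{\mathcal P}^t=\bigoplus_k\Lambda_{\Omega_{J_k},g}^t$, since powers of a block-diagonal operator are block diagonal. By Lemma~\ref{symbol} and the symbol argument in the proof of Proposition~\ref{prop::inequfortrace}, each of the operators
\[
\Lambda_{\Omega_I,g}^t-|D_{\Omega_I,g}|^t,\qquad \Lambda_{\Omega_{J_k},g}^t-|D_{\Omega_{J_k},g}|^t,\qquad \Lambda_{\Omega_I,g}^t-\Lambda_{\mathcal P}^t
\]
is smoothing, hence trace class. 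Traces of trace-class operators are additive, and the trace of a block-diagonal trace-class operator is the sum of the block traces. Using these two facts we get
\[
\zeta_{\Omega_I,g}(s)-\sum_k\zeta_{\Omega_{J_k},g}(s)=\Tr\big(\Lambda_{\Omega_I,g}^t-|D_{\Omega_I,g}|^t\big)-\Tr\big(\Lambda_{\mathcal P}^t-|D_{\Omega_I,g}|^t\big)=\Tr\big(\Lambda_{\Omega_I,g}^t-\Lambda_{\mathcal P}^t\big).
\]
By Proposition~\ref{prop::inequfortrace} with $c\equiv1$ and exponent $t\ge1$, the right-hand side is $\geq0$, with equality if and only if $\mathcal P=\{I\}$. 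This proves both the inequality and the equality statement.

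The step needing the most care is the bookkeeping that justifies using Proposition~\ref{prop::zetaexpress} at non-integer real $s\le-1$ and the convention for zero modes. The zeta function sums only over positive eigenvalues, and $\Lambda^{-s}=\Lambda^t$ with $t>0$ annihilates the kernel. So the kernel of $\Lambda$ (the constants on each domain) contributes nothing, consistently on both sides. The same holds for $|D|^t$, whose kernel consists of locally constant functions. The formula of Proposition~\ref{prop::zetaexpress} is stated for every $s\in\C$, so I would simply read it off at $s=-t$. The only point to double-check there is that the operator powers $\Lambda^{t}$, $|D|^{t}$ are interpreted with the same zero-mode convention as in that proposition.
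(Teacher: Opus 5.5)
Your proposal is correct and follows the paper's argument exactly: apply Proposition~\ref{prop::zetaexpress} with $a\equiv 1$, cancel the Riemann-zeta terms and the block-diagonal $|D|^{-s}$ terms, and conclude from Proposition~\ref{prop::inequfortrace} with $c\equiv 1$, which also gives the equality case. The zero-mode concern you raise is harmless, since for $-s>0$ the operators $\Lambda^{-s}$ and $|D|^{-s}$ vanish on their kernels, consistent with the convention that the zeta function sums only over positive eigenvalues.
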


\section{Compactness theorem for genus-zero flat surfaces within a fixed conformal class}
\label{sec::compactnessfixedconclass}

In this section, we focus on the genus-zero case and prove a compactness
theorem for Steklov isospectral families within a fixed conformal class, using
the spectral zeta function associated with the Dirichlet-to-Neumann operator.

Let $(\mathcal{S}_0,g)$ be a connected closed surface equipped with the metric $g$. Recall that $\Omega\subset \mathcal{S}_0$ is a domain obtained by removing all disks $U_i$, $i=1,2,\dots,n$. When $n=2$, the surface $(\Omega,g)$ is an annular surface in the sense of \cite{annular}, and the compactness of the family of Steklov isospectral flat metrics in this setting is proved in \cite{annular}. Therefore, the main interest of this section is the case $n\geq3$. In contrast to the annular case, domains with multiple boundary components exhibit substantially greater complexity. In particular, it is not known whether the conformal class is determined by the Steklov spectrum in this setting. For this reason, throughout this section we restrict our attention to families of Steklov isospectral flat metrics within a fixed conformal class.

We continue to write
\[
M=\partial\Omega=M_1\cup M_2\cup\cdots\cup M_n.
\]
For each two-element subset $I\subset\{1,2,\dots,n\}$,
Theorem~\ref{thm::OPS} provides a flat cylinder
$\mathcal C_I:=S^1\times[0,l_I]$ with its canonical product metric $g_I$, a
conformal map $F_I:\mathcal C_I\to\Omega_I$, and a smooth function
$\kappa_I\in C^\infty(\mathcal C_I;\R)$ such that
\[
F_I^*g=e^{2\kappa_I}g_I.
\]

Let $\La_{\mathcal{C}_I}$ denote the Dirichlet-to-Neumann operator on $\partial\mathcal{C}_I$ with respect to the metric $g_I$ and let $D_{\mathcal{C}_I}$ denote the tangential derivative operator $\frac{1}{\sqrt{-1}}\d_\theta$ along $\partial\mathcal{C}_I$ with respect to the same metric. By the proof of Lemma 5.2 in \cite{annular}, for any nonzero $u\in C^\infty( \partial\mathcal{C}_I)$ supported on a single connected component of $\partial\mathcal{C}_I$, one has
\begin{equation}\label{lemma2.5}
    (\La_{\mathcal{C}_I} u,u)_{L^2(\partial\mathcal{C}_I)}> (|D_{\mathcal{C}_I}|u,u)_{L^2(\partial\mathcal{C}_I)}.
\end{equation}

Setting $c_I=e^{-\kappa_I}|_{\partial\mathcal{C}_I}$, the conformal covariance of the Dirichlet-to-Neumann operator gives
\begin{equation}\label{eq::ladrelation}
    \La_{\Omega_I,g}=(F_I^{-1})^*\circ( c_I\La_{\mathcal{C}_I})\circ F_I^*
\end{equation}
while the tangential derivative operator satisfies
\begin{equation}\label{eq::ladrelation2}
    D_{\Omega_I,g}=(F_I^{-1})^*\circ( c_I D_{\mathcal{C}_I})\circ F_I^*.
\end{equation}
These identities imply the following comparison.
\begin{lemma}\label{lem::positiveness2}
Suppose that $I\subset \{1,2,\dots ,n\}$ and $\# I=2$. Then for any nonzero function $f\in C^\infty(M)$ supported on $M_i$, $i\in I$, the following inequality holds:
\begin{equation}
    \left(\La_{\Omega_I,g} f, f\right)_{L^2(M)} > |\left(D_{\Omega_I,g} f, f\right)_{L^2(M)}|.
\end{equation}
\end{lemma}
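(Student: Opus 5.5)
We transport the inequality to the flat cylinder $\mathcal C_I$ with \eqref{eq::ladrelation} and \eqref{eq::ladrelation2}, and then invoke the cylinder inequality \eqref{lemma2.5}. The only care needed is in keeping track of the $L^2$ measures, which change under $F_I$.

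The boundary measures are related as follows. Write $ds_g$ for arc length on $M_i$ with respect to $g$, and $d\theta$ for arc length on $\partial\mathcal C_I$ with respect to $g_I$. Since $F_I^*g=e^{2\kappa_I}g_I$, we have $F_I^*(ds_g)=e^{\kappa_I}d\theta=c_I^{-1}\,d\theta$. Put $u:=F_I^*f$. This is a nonzero smooth function supported on the single component $F_I^{-1}(M_i)$ of $\partial\mathcal C_I$.

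By \eqref{eq::ladrelation},
\[
(\La_{\Omega_I,g}f,f)_{L^2(M)}=\int_{\partial\mathcal C_I}(c_I\La_{\mathcal C_I}u)\,\bar u\,c_I^{-1}\,d\theta=(\La_{\mathcal C_I}u,u)_{L^2(\partial\mathcal C_I)}.
\]
The weight $c_I$ cancels exactly. Here we use that $f$ vanishes on $M\setminus\partial\Omega_I$, so that the extension-by-zero convention causes no trouble. In the same way, \eqref{eq::ladrelation2} gives
\[
(D_{\Omega_I,g}f,f)_{L^2(M)}=(D_{\mathcal C_I}u,u)_{L^2(\partial\mathcal C_I)}.
\]

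It remains to compare the two cylinder quantities. Expand $u$ in Fourier modes $e^{ik\theta}$ on its boundary circle. Then
\[
|(D_{\mathcal C_I}u,u)|=\Bigl|\sum_k k|\hat u_k|^2\Bigr|\le\sum_k|k||\hat u_k|^2=(|D_{\mathcal C_I}|u,u).
\]
Combining this with the strict inequality \eqref{lemma2.5}, $(\La_{\mathcal C_I}u,u)>(|D_{\mathcal C_I}|u,u)$, gives the claim.

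The only delicate point is the measure bookkeeping. One has to check that the factor $c_I$ in the operator identities is exactly compensated by the Jacobian $c_I^{-1}$ of the boundary arc length. The strictness of the final inequality comes directly from \eqref{lemma2.5}, which applies because $u$ is nonzero and supported on a single component.
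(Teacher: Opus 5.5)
Your proposal is correct and follows the paper's proof: you transport both quadratic forms to the cylinder via \eqref{eq::ladrelation}--\eqref{eq::ladrelation2}, with the arc-length Jacobian $c_I^{-1}$ cancelling the factor $c_I$, and then apply \eqref{lemma2.5}. The only cosmetic difference is that you justify $|(D_{\mathcal C_I}u,u)|\le(|D_{\mathcal C_I}|u,u)$ by a Fourier expansion, whereas the paper cites Lemma~\ref{jensen}.
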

\begin{proof}
By \eqref{eq::ladrelation} we have
\begin{align*}
    (\La_{\Omega_I,g}f,f)_{L^2(M,g)}&=\left((F_I^{-1})^*c_I\La_{\mathcal{C}_I} F_I^*f,f\right)_{L^2(\partial\Omega_I,g)}\\
    &=\left(c_I\La_{\mathcal{C}_I} F_I^*f,F_I^*f\right)_{L^2(\partial\mathcal{C}_I,c_I^{-2}g_I)}\\
    &=(\La_{\mathcal{C}_I} F_I^*f,F_I^*f)_{L^2(\partial\mathcal{C}_I,g_I)}.
\end{align*}
Similarly, by \eqref{eq::ladrelation2} we have
\begin{align*}
	\left|(D_{\Omega_I,g}f,f)_{L^2(M,g)}\right|
	=|(D_{\mathcal{C}_I}F^*_If,F^*_If)_{L^2(\partial\mathcal{C}_I,g_I)}|.
\end{align*}
It follows from \eqref{lemma2.5} and Lemma~\ref{jensen} that
\begin{align*}
  (\La_{\mathcal{C}_I} F_I^*f,F_I^*f)_{L^2(\partial\mathcal{C}_I,g_I)}>
    (|D_{\mathcal{C}_I}|F^*_If,F^*_If)_{L^2(\partial\mathcal{C}_I,g_I)}
    \geq\left|(D_{\mathcal{C}_I}F^*_If,F^*_If)_{L^2(\partial\mathcal{C}_I,g_I)}\right|.
\end{align*}
This completes the proof.
\end{proof}
The next lemma records the corresponding simply connected case. In contrast
with the annular case above, one only obtains a non-strict inequality.
\begin{lemma}\label{positivenesslemmsimply}
    Suppose that $I\subset \{1,2,\dots ,n\}$ and $\# I=1$. Then for any function $f\in C^\infty(M)$ supported on $\partial\Omega_I$, the following inequality holds:
\begin{equation*}
    \left(\La_{\Omega_I,g} f, f\right)_{L^2(M)} \geq |\left(D_{\Omega_I,g} f, f\right)_{L^2(M)}|.
\end{equation*}
\end{lemma}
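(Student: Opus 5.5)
Following the proof of Lemma~\ref{lem::positiveness2}, we replace the cylinder by the unit disk. Since $\#I=1$, say $I=\{i\}$, the surface $\Omega_I=\mathcal S_0\setminus U_i$ is a closed topological disk. By the Riemann mapping theorem (equivalently, Theorem~\ref{thm::OPS} in the flat Type II case), there is a conformal diffeomorphism $F_I:\overline{\D}\to\Omega_I$ with $F_I^*g=e^{2\kappa_I}g_{E}$, where $g_E$ is the Euclidean metric on the unit disk. Set $c_I=e^{-\kappa_I}|_{S^1}$. The same conformal covariance argument as for \eqref{eq::ladrelation} and \eqref{eq::ladrelation2} then gives
\[
\La_{\Omega_I,g}=(F_I^{-1})^*\circ(c_I\La_{\D})\circ F_I^*,\qquad D_{\Omega_I,g}=(F_I^{-1})^*\circ(c_I D_{\D})\circ F_I^*,
\]
where $\La_\D$ is the Dirichlet-to-Neumann operator of the unit disk and $D_\D=\frac1{\sqrt{-1}}\d_\theta$.

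Next we transfer the quadratic forms, exactly as in the computation in Lemma~\ref{lem::positiveness2}. The boundary measure changes by $c_I^{-1}$, which cancels the factor $c_I$, so with $u=F_I^*f$ we get
\[
(\La_{\Omega_I,g}f,f)_{L^2(M)}=(\La_\D u,u)_{L^2(S^1)},\qquad (D_{\Omega_I,g}f,f)_{L^2(M)}=(D_\D u,u)_{L^2(S^1)}.
\]

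Finally we compare the two forms on the disk. The harmonic extension of $e^{\sqrt{-1}k\theta}$ is $r^{|k|}e^{\sqrt{-1}k\theta}$, so $\La_\D e^{\sqrt{-1}k\theta}=|k|e^{\sqrt{-1}k\theta}$. Hence $\La_\D=|D_\D|$ exactly, in contrast with the strict inequality \eqref{lemma2.5} on the cylinder. Expanding $u$ in Fourier series gives
\[
(\La_\D u,u)=2\pi\sum_k|k||\hat u_k|^2\ \ge\ \Big|2\pi\sum_k k|\hat u_k|^2\Big|=|(D_\D u,u)|.
\]
This bound can also be read off from Lemma~\ref{jensen} with $f(x)=|x|$, after normalizing $u$. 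Equality holds, for instance, when $u$ contains only nonnegative frequencies, which explains why the inequality is only non-strict.

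We do not expect a real obstacle here. The only point needing care is the conformal covariance step: the boundary value of the conformal factor must be smooth up to the boundary, which holds because $\Omega_I$ has smooth boundary (Kellogg–Warschawski). We must also track the weights in the $L^2$ pairings so that the factors $c_I$ cancel correctly, as they do in the cylinder case.
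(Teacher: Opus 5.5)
Your proposal is correct and follows the same route as the paper. You transfer both quadratic forms to the unit disk by the conformal map, with the factor $c_I$ cancelling against the boundary measure exactly as in Lemma~\ref{lem::positiveness2}. You then use $\La_\D=|D_\D|$ and the inequality $(|D_\D|u,u)\geq|(D_\D u,u)|$, which the paper gets from Lemma~\ref{jensen} and which you also verify directly by Fourier series.
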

\begin{proof}
Since the domain $\Omega_I$ is simply connected, it is conformal to the unit disk $\D$. Let $\Lambda_\D$ and $D_\D$ denote the Dirichlet-to-Neumann operator and the tangential derivative operator on $\partial\D$, respectively, with respect to the canonical Euclidean metric. Arguing similarly as in the proof of Lemma~\ref{lem::positiveness2}, it suffices to show that for any smooth function $f'\in C^\infty(\partial\D)$,
    \[
    \left(\La_\D f',f'\right)_{L^2(\partial\D)}\geq\left|\left(D_\D f',f'\right)_{L^2(\partial\D)}\right|.
    \]
The identity $\Lambda_\D=|D_\D|$, followed by Lemma~\ref{jensen}, proves the claim.
\end{proof}
The preceding operator inequalities imply the following trace estimate.
\begin{proposition}\label{proppositivenessforzetafunction}
    Suppose that $I\subset \{1,2,\dots ,n\}$ and $\# I\leq2$. Then for any positive smooth function $c\in C^\infty(M)$ and $s\geq 1$, we have
    \[
    \Tr \left((c\La_{\Omega_I,g}c)^s-|cD_{\Omega_I,g}c|^s\right)\geq 0.
    \]
\end{proposition}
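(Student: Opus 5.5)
The plan mirrors the proof of Proposition~\ref{prop::inequfortrace}: a trace-class reduction followed by a diagonal comparison in a suitable orthonormal basis using Jensen's inequality. The difference is that we now compare $c\La_{\Omega_I,g}c$ with $cD_{\Omega_I,g}c$ rather than with a block operator.

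\textbf{Trace class.} By Lemma~\ref{symbol} with $c^2$ in place of $c$, and the conjugation identities used in the proof of Proposition~\ref{prop::inequfortrace}, the operators $(c\La_{\Omega_I,g}c)^2$ and $(cD_{\Omega_I,g}c)^2$ have the same full symbol. Since $|cD_{\Omega_I,g}c|^s=((cD_{\Omega_I,g}c)^2)^{s/2}$, the pseudodifferential functional calculus shows that $(c\La_{\Omega_I,g}c)^s-|cD_{\Omega_I,g}c|^s$ is smoothing, hence trace class. Its trace may therefore be computed in any orthonormal basis of $L^2(\partial\Omega_I)$.

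\textbf{Choice of basis.} $cD_{\Omega_I,g}c$ is a first-order self-adjoint elliptic differential operator on the closed curve $\partial\Omega_I$, and it acts componentwise. So we take an orthonormal eigenbasis $\{\eta_j\}$ of $cD_{\Omega_I,g}c$ in which each $\eta_j$ is supported on a single component $M_i$, $i\in I$. If $cD_{\Omega_I,g}c\,\eta_j=\mu_j\eta_j$, then $|cD_{\Omega_I,g}c|^s\eta_j=|\mu_j|^s\eta_j$. Lemma~\ref{jensen} applied with the convex function $t\mapsto t^s$ on $[0,\infty)$ then gives
\[
\big((c\La_{\Omega_I,g}c)^s\eta_j,\eta_j\big)\ge \big(c\La_{\Omega_I,g}c\,\eta_j,\eta_j\big)^s=\big(\La_{\Omega_I,g}(c\eta_j),c\eta_j\big)^s .
\]

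\textbf{Diagonal comparison.} The function $c\eta_j$ is supported on a single component $M_i$, $i\in I$. Lemma~\ref{lem::positiveness2} (when $\#I=2$) or Lemma~\ref{positivenesslemmsimply} (when $\#I=1$) then yields
\[
\big(\La_{\Omega_I,g}(c\eta_j),c\eta_j\big)\ge \big|\big(D_{\Omega_I,g}(c\eta_j),c\eta_j\big)\big|=|\mu_j| .
\]
Raising this to the power $s$ and combining with the previous display shows that every diagonal entry of $(c\La_{\Omega_I,g}c)^s-|cD_{\Omega_I,g}c|^s$ is nonnegative. Summing over $j$ proves the proposition.

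\textbf{Main obstacle.} The delicate step is the basis choice. The lemmas only control functions supported on a single boundary component. We therefore need an eigenbasis of $cD_{\Omega_I,g}c$ that is componentwise, which holds because a differential operator is local. It is also essential that we pair with the exact eigenvalues $|\mu_j|$, rather than invoking Jensen on $|cDc|$, so that no inequality is needed in the wrong direction. The remaining check is the domain condition in Lemma~\ref{jensen}: the $\eta_j$ are smooth, so they lie in every Sobolev domain.
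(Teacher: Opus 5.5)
Your proposal is correct and follows essentially the same argument as the paper: a componentwise orthonormal eigenbasis of $cD_{\Omega_I,g}c$, then Jensen's inequality (Lemma~\ref{jensen}), then Lemma~\ref{lem::positiveness2} or Lemma~\ref{positivenesslemmsimply} to compare the diagonal entries. The paper leaves the trace-class justification implicit here, though it carries it out in Proposition~\ref{prop::inequfortrace}, so your explicit reduction via Lemma~\ref{symbol} is a useful addition.
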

\begin{proof}
    Let $\{\eta_{I,k}\}$ be the normalized eigenfunctions of the operator $cD_{\Omega_I,g}c$, chosen so that each $\eta_{I,k}$ is supported on a single boundary component of $\Omega_I$. These functions form an orthonormal basis for $L^2(\partial\Omega_I)$. By Lemma~\ref{jensen}, Lemma~\ref{lem::positiveness2} and Lemma~\ref{positivenesslemmsimply}, we obtain
    \begin{align*}
        \left((c\La_{\Omega_I,g}c)^s\eta_{I,k},\eta_{I,k}\right)_{L^2(\partial\Omega_I)}&\geq \left(\La_{\Omega_I,g}c\eta_{I,k},c\eta_{I,k}\right)^s_{L^2(\partial\Omega_I)}\\
        & \geq |\left(D_{\Omega_I,g}c\eta_{I,k},c\eta_{I,k}\right)|^s_{L^2(\partial\Omega_I)}\\
        & =\left(|cD_{\Omega_I,g}c|^s\eta_{I,k},\eta_{I,k}\right)_{L^2(\partial\Omega_I)},
    \end{align*}
which completes the proof.
\end{proof}

As a consequence, we obtain the following monotonicity property for negative zeta values.
\begin{corollary}\label{cor::zetamonotonicity}
Let $\varnothing\neq J\subset I\subset \{1,2,\dots,n\}$. 
Then, for every $s\leq -1$,
\[
\zeta_{\Omega_I,g}(s)\geq \zeta_{\Omega_J,g}(s)+2\zeta_R(s)\sum_{i\in I\setminus J}
\left(\frac{2\pi}{L_g(M_i)}\right)^{-s}.
\]
\end{corollary}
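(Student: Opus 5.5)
Apply Corollary~\ref{zetasum} to the partition $\mathcal P=\{J\}\cup\{\{i\}: i\in I\setminus J\}$ of $I$. This gives, for $s\le -1$,
\[
\zeta_{\Omega_I,g}(s)\ \ge\ \zeta_{\Omega_J,g}(s)+\sum_{i\in I\setminus J}\zeta_{\Omega_{\{i\}},g}(s).
\]
If $J=I$ the partition is trivial and the claim is an equality, so assume $J\subsetneq I$. It then suffices to prove, for each single index $i$ and each $s\le-1$, the lower bound
\[
\zeta_{\Omega_{\{i\}},g}(s)\ \ge\ 2\zeta_R(s)\Bigl(\frac{2\pi}{L_g(M_i)}\Bigr)^{-s}.
\]

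For this one-hole bound, start from Proposition~\ref{prop::zetaexpress} with $I=\{i\}$:
\[
\zeta_{\Omega_{\{i\}},g}(s)-2\zeta_R(s)\Bigl(\frac{2\pi}{L_g(M_i)}\Bigr)^{-s}=\Tr\bigl(\Lambda_{\Omega_{\{i\}},g}^{-s}-|D_{\Omega_{\{i\}},g}|^{-s}\bigr).
\]
Write $t=-s\ge1$. The right-hand side is $\Tr\bigl(\Lambda^{t}-|D|^{t}\bigr)$, which is nonnegative by Proposition~\ref{proppositivenessforzetafunction} with $c\equiv1$ and $\#I=1$.

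The main subtlety is that for non-integer $t$, or odd $t$ such as $t=1$, the value of $\zeta$ at $s=-t$ is defined only by meromorphic continuation. One must confirm that the identity of Proposition~\ref{prop::zetaexpress} holds at every $s\le-1$, with the trace term meaning the genuine trace of the smoothing operator $\Lambda^t-|D|^t$. Proposition~\ref{prop::zetaexpress} asserts the formula for each $s\in\C$. The only pole is at $s=1$, so $2\zeta_R(s)(\cdot)^{-s}$ is regular for $s\le -1$. By Lemma~\ref{symbol}, $\Lambda^t-|D|^t$ is smoothing, so its trace is finite and is given by the entire continuation.

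It remains to handle the kernels. $\Lambda$ and $|D|$ each have kernel equal to the constants on each component. For $t>0$ the operators $\Lambda^{t}$ and $|D|^{t}$ vanish on these kernels, so the zero modes contribute nothing to either side. Combining the one-hole bound with the partition inequality over $i\in I\setminus J$ gives the corollary.
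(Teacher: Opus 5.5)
Your argument is correct and is essentially the paper's proof. Both split $I$ by a partition containing $J$, control the cross term via Proposition~\ref{prop::inequfortrace} (which you use in its packaged form, Corollary~\ref{zetasum}), and discard the remaining blocks using the nonnegativity in Proposition~\ref{proppositivenessforzetafunction}. The only difference is that the paper allows blocks of size at most two in $I\setminus J$, whereas your singleton blocks need only the disk case (Lemma~\ref{positivenesslemmsimply}), which is all that is required here.
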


\begin{proof}
If $I=J$, the assertion is immediate. We therefore assume that $I\setminus J$ is nonempty. By Proposition~\ref{prop::zetaexpress}, we have
\[
\zeta_{\Omega_I,g}(s)=2\zeta_R(s)\sum_{i\in I}
\left(\frac{2\pi}{L_g(M_i)}\right)^{-s}+\Tr\bigl(\Lambda_{\Omega_I,g}^{-s}-|D_{\Omega_I,g}|^{-s}\bigr).
\]
Hence it suffices to prove that
\[
\Tr\bigl(\Lambda_{\Omega_I,g}^{-s}-|D_{\Omega_I,g}|^{-s}\bigr)\geq\Tr\bigl(\Lambda_{\Omega_J,g}^{-s}-|D_{\Omega_J,g}|^{-s}\bigr).
\]

To this end, choose a partition
\[
\mathcal P_{IJ}=\{J,J_1,\dots,J_k\}
\]
of $I$, where $J_1,\dots,J_k$ form a partition of $I\setminus J$ and each $J_\alpha$ satisfies $\# J_\alpha\leq 2$. By Proposition~\ref{prop::inequfortrace} and Proposition~\ref{proppositivenessforzetafunction}, for $s\leq -1$ we obtain
\begin{align*}
\Tr\bigl(\Lambda_{\Omega_I,g}^{-s}-|D_{\Omega_I,g}|^{-s}\bigr)
&=\Tr\bigl(\Lambda_{\Omega_I,g}^{-s}-\Lambda_{\mathcal P_{IJ}}^{-s}\bigr)
+\Tr\bigl(\Lambda_{\mathcal P_{IJ}}^{-s}-|D_{\Omega_I,g}|^{-s}\bigr) \\
&\geq\Tr\bigl(\Lambda_{\Omega_J,g}^{-s}-|D_{\Omega_J,g}|^{-s}\bigr)+\sum_{i=1}^k\Tr\bigl(\Lambda_{\Omega_{J_i},g}^{-s}-|D_{\Omega_{J_i},g}|^{-s}\bigr)\\
&\geq\Tr\bigl(\Lambda_{\Omega_J,g}^{-s}-|D_{\Omega_J,g}|^{-s}\bigr).
\end{align*}
This proves the corollary.
\end{proof}

We also require the following estimates on flat cylinders, which will play a crucial role in controlling the conformal factor and proving the compactness theorem.
\begin{proposition}[\cite{annular}, Proposition 5.3]\label{prop::ineq1}
    Let $\mathcal C_l=S^1\times [0,l]$ be a flat cylinder with the canonical product metric. Then there exist constants $d_{m,l}, d_m'>0$ such that for any positive function $a\in C^\infty (\partial\mathcal C_l;\R_+)$ and $m\in \Z_+$, we have
    \[
    \|a^m\|^2_{H^{m+{\frac12}}(\partial\mathcal C_l)}\leq d_{m,l}\left(\Tr \left((a\Lambda_{\mathcal C_l})^2-(aD_{\mathcal C_l})^2\right)\right)^{m}+d_m' \Tr\left((a\Lambda_{\mathcal C_l})^{2m}-(aD_{\mathcal C_l})^{2m}\right).
    \]
\end{proposition}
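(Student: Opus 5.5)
This is Proposition 5.3 of \cite{annular}, which we may quote, but we sketch how we would prove it directly on $\mathcal C_l$. Since everything is explicit there, we work in Fourier modes on the two circles $\partial\mathcal C_l=S^1\times\{0\}\sqcup S^1\times\{l\}$. On the mode $e^{ik\theta}$, with $k\neq0$, the Dirichlet-to-Neumann operator $\Lambda_{\mathcal C_l}$ acts as the $2\times2$ matrix
\[
|k|\begin{pmatrix}\coth(|k|l) & -\operatorname{csch}(|k|l)\\ -\operatorname{csch}(|k|l) & \coth(|k|l)\end{pmatrix}.
\]
On the zero mode it acts as $\frac1l\begin{pmatrix}1&-1\\-1&1\end{pmatrix}$. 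Hence
\[
\Lambda_{\mathcal C_l}=|D_{\mathcal C_l}|+R,
\]
where $R$ is smoothing with exponentially decaying matrix entries $O(|k|e^{-|k|l})$. Its constants depend on $l$, which explains the $l$-dependence of $d_{m,l}$.

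\textbf{Step 1: the quadratic term.} Expanding gives
\[
(a\Lambda)^2-(aD)^2=a|D|a|D|-aDaD+(\text{terms involving }R).
\]
Computing the trace of $a|D|a|D|-aDaD$ in the Fourier basis yields
\[
\sum_{k,j}|\hat a(k-j)|^2\bigl(|k||j|-kj\bigr)=\sum_{kj<0}2|k||j|\,|\hat a(k-j)|^2 .
\]
Grouping by $n=k-j$, each $n$ contributes $\sim |n|^3|\hat a(n)|^2$. So this term is comparable to $\|a\|_{\dot H^{3/2}}^2$ on each boundary circle. The $R$-terms contribute something bounded below by a negative multiple of $\|a\|_{L^2}^2$, or are nonnegative. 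To avoid this, we would instead use the positivity structure from Lemma~\ref{lem::positiveness2}: the strict inequality there shows that the $R$-contributions can be arranged with a favorable sign. This gives $m=1$, up to lower-order norms.

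The lower-order part must also be controlled. We would obtain it from the same trace: the zero-mode/constant part of $a$ enters through the $R$ and zero-mode terms with a positive coefficient depending on $l$. This gives $\|a\|_{H^{3/2}}^2\lesssim_l \Tr\bigl((a\Lambda)^2-(aD)^2\bigr)$.

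\textbf{Step 2: general $m$.} Expanding $(a\Lambda)^{2m}-(aD)^{2m}$ into commutator terms, the principal contribution is again a positive quadratic form in $a^m$: heuristically $\sum |n|^{2m+1}|\widehat{a^m}(n)|^2$, that is, $\|a^m\|^2_{\dot H^{m+1/2}}$. The remaining terms are multilinear expressions in $a$ of lower total order. We would bound them by Gagliardo--Nirenberg/Sobolev interpolation, in the form
\[
\epsilon\|a^m\|^2_{H^{m+1/2}}+C_\epsilon\|a\|_{H^{3/2}}^{2m}.
\]
By Step 1, the second term is at most
\[
C\bigl(\Tr\bigl((a\Lambda)^2-(aD)^2\bigr)\bigr)^m .
\]
Absorbing the $\epsilon$-term then gives the stated inequality.

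\textbf{Main obstacle.} The main difficulty is the combinatorics of this expansion in Step 2. One needs an exact identity, similar to the $m=1$ Fourier computation, that isolates a positive principal term. Because $\Lambda$ and $D$ differ only by the smoothing operator $R$, this reduces essentially to the closed-circle identity for $\Tr\bigl((a|D|)^{2m}-(aD)^{2m}\bigr)$. That identity is the analogue of the Edward/Jollivet--Sharafutdinov computation for the disk, and we would adapt their argument.
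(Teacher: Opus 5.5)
The paper gives no argument for this statement beyond the citation to \cite{annular}, so I am judging your sketch on its own. Its architecture is the right one: $m=1$ by an explicit Fourier computation, general $m$ through the Jollivet--Sharafutdinov invariants, and low-order terms through the $m=1$ bound. But two steps do not go through as written.

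\textbf{Step 1.} Write $a_0,a_l$ for the restrictions of $a$ to $S^1\times\{0\}$ and $S^1\times\{l\}$, with Fourier coefficients $\hat a_0(n),\hat a_l(n)$. Since $\Lambda_{\mathcal C_l}$ commutes with rotations, $\Tr\bigl((a\Lambda_{\mathcal C_l})^2-(aD_{\mathcal C_l})^2\bigr)$ splits exactly into a sum over $n\in\Z$ of Hermitian forms in the pair $(\hat a_0(n),\hat a_l(n))$.

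\begin{itemize}
\item The circle part you computed gives $\frac13(|n|^3-|n|)$ per circle, which vanishes for $|n|\le1$. So it does not give $\|a\|^2_{\dot H^{3/2}}$.
\item The modes $n=0,\pm1$ are controlled \emph{only} by the cylinder correction. This requires computing those coefficients explicitly; it is the origin of the constants $B_l(0)$, $B_l(1)$, $B_l'(1)$ that reappear in the proof of Lemma~\ref{lem:noncollapseineqj}.
\item The $n=0$ form is exactly $\frac14B_l(0)\,|\hat a_0(0)+\hat a_l(0)|^2$, consistent with \eqref{eq:jw2640.10}. Only the \emph{sum} of the two means is seen.
\end{itemize}

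For example, take $a\equiv1$ on one circle and $a\equiv-1$ on the other: the trace vanishes. So the $m=1$ bound is false for real-valued $a$, and the proof must use $a>0$, which makes both means positive so each is bounded by their sum. Your sketch never uses positivity. The strict inequality of Lemma~\ref{lem::positiveness2}, fed through Jensen, gives only nonnegativity of the correction, not a quantitative bound on these modes.

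\textbf{Step 2.} The reduction to the circle cannot rest on $R$ being smoothing, and the Gagliardo--Nirenberg absorption does not work. The left-hand side is a norm of $a^m$, and the constants must be uniform over all positive $a$ with no lower bound on $a$; such a bound only comes later, from Proposition~\ref{prop::ineq2}. Without it, Sobolev norms of $a$ itself above $H^{3/2}$ are not controlled by $\|a^m\|_{H^{m+1/2}}$. So you can absorb neither multilinear remainders with many derivatives on single factors of $a$, nor traces with $R$ interlaced between copies of $a$ and $|D_{\mathcal C_l}|$.

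What is needed is a sign. Conjugate by $a^{1/2}$ (again using $a>0$) and take an orthonormal eigenbasis of $a^{1/2}|D_{\mathcal C_l}|a^{1/2}$ whose elements are each supported on a single circle. Lemma~\ref{jensen} and \eqref{lemma2.5} then give, as in Proposition~\ref{proppositivenessforzetafunction},
\[
\Tr\bigl((a\Lambda_{\mathcal C_l})^{2m}-(a|D_{\mathcal C_l}|)^{2m}\bigr)\geq0 .
\]
Hence the $2m$-trace is at least $Z_m(a_0)+Z_m(a_l)$, the Jollivet--Sharafutdinov invariants on the two circles. Their lower bound is an exact positive quadratic form in the Fourier coefficients of $a^m$, with weights of order $|k|^{2m+1}$ except at finitely many low frequencies. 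Those low frequencies are bounded by
\[
\|a^m\|_{L^2}^2\lesssim\|a\|_{L^\infty}^{2m}\lesssim_l\bigl(\Tr((a\Lambda_{\mathcal C_l})^2-(aD_{\mathcal C_l})^2)\bigr)^m
\]
via the $m=1$ case. This is why $d_m'$ is independent of $l$ while all $l$-dependence sits in $d_{m,l}$. No perturbative expansion of $(a\Lambda_{\mathcal C_l})^{2m}-(aD_{\mathcal C_l})^{2m}$ is needed.
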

\begin{proposition}[\cite{annular}, Proposition 5.4]\label{prop::ineq2}
 Let $\mathcal C_l=S^1\times [0,l]$ be a flat cylinder with the canonical product metric. For any positive function $a\in C^\infty (\partial\mathcal C_l;\R_+)$, we have
\[
a\geq K_l\left(\int_{\partial\mathcal C_l} a^{-1}\,d\theta,\,\Tr (a^{\frac12}\La_{\mathcal C_l} a^{\frac12}-|a^{\frac12}D_{\mathcal C_l} a^{\frac12}|),\, \Tr \left((a\Lambda_{\mathcal C_l})^2-(aD_{\mathcal C_l})^2\right)\right).
\]
Here $K_l$ is monotonically decreasing in each variable
$x,y,z$ and is given by
\[K_l(x,y,z):=\frac{2\pi}{x}\exp\left[-\sqrt{2\pi x}\left(6y+\frac{2}{\sqrt{B_l(0)}}\sqrt{z}\right)^{\frac12}\right],\]
where
\[
 B_l(0):=\sum_{k\in\Z\setminus\{0\} }\left(k^2\tanh^2\frac{kl}{2}+k^2\coth^2\frac{kl}{2}-2k^2\right)+\frac{4}{l^2}.
\]
\end{proposition}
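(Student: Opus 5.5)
The plan is to obtain the pointwise lower bound in two steps: first bound the total variation of $\log a$ on each boundary circle, then use $\int a^{-1}\,d\theta=x$ to locate one point where $a$ is not small.

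\textbf{Anchor point and Cauchy--Schwarz.} On each component of $\partial\mathcal C_l$ there is a point $\theta_0$ with $a(\theta_0)^{-1}\le x/(2\pi)$, i.e. $a(\theta_0)\ge 2\pi/x$. Hence for every $\theta$ on that component,
\[
\log a(\theta)\ge \log\frac{2\pi}{x}-\int_{S^1}\frac{|a'|}{a}\,d\theta .
\]
By Cauchy--Schwarz,
\[
\int\frac{|a'|}{a}\le\Bigl(\int a^{-1}\Bigr)^{1/2}\Bigl(\int\frac{|a'|^2}{a}\Bigr)^{1/2}\le\sqrt{x}\,\Bigl(\int\frac{|a'|^2}{a}\Bigr)^{1/2}.
\]
Note that $\int |a'|^2/a=4\int|(a^{1/2})'|^2$. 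So everything reduces to the key estimate
\[
\int_{\partial\mathcal C_l}|(a^{1/2})'|^2\,d\theta\;\lesssim\; 6y+\frac{2}{\sqrt{B_l(0)}}\sqrt z .
\]
The constants $2\pi$ and $\sqrt{2\pi}$ in $K_l$ come from tracking normalizations, including Fourier conventions and the fact that there are two circles. Monotonicity of $K_l$ in $x,y,z$ is then immediate from the formula.

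\textbf{Explicit Fourier computation.} To prove the key estimate I would compute both traces in the Fourier basis $e^{ik\theta}$ on the two circles. On $\mathcal C_l$ the operator $\Lambda_{\mathcal C_l}$ acts on the $k$-th mode by the $2\times2$ matrix
\[
|k|\begin{pmatrix}\coth kl&-\operatorname{csch} kl\\-\operatorname{csch} kl&\coth kl\end{pmatrix},\qquad k\neq0,
\]
and by $\frac1l\begin{pmatrix}1&-1\\-1&1\end{pmatrix}$ for $k=0$, while $|D_{\mathcal C_l}|$ is $|k|$ times the identity. Set $b=a^{1/2}$. Writing $\operatorname{Tr}(b\Lambda b-|bDb|)$ and $\operatorname{Tr}((a\Lambda)^2-(aD)^2)$ as explicit quadratic, respectively quartic, forms in the Fourier coefficients of $b$ and $a$ produces kernels in which the combinations $\tanh\frac{kl}{2}$, $\coth\frac{kl}{2}$ and $4/l^2$ appear; this is visibly how $B_l(0)$ arises. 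For $z$, I expect a lower bound of the form
\[
z\ge B_l(0)\bigl(\text{quadratic Dirichlet-type quantity in } a\bigr).
\]
For $y$, I expect that the defect $\operatorname{Tr}(b\Lambda b)-\operatorname{Tr}|bDb|$ dominates $\tfrac16\int|b'|^2$ up to a correction. This mirrors the disk case, where $\Lambda_\D=|D_\D|$ and Jensen (Lemma~\ref{jensen}) applied to $|bDb|$ versus $b|D|b$ gives an $H^{1}$-type gain through a commutator expansion.

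\textbf{Main obstacle.} The hardest step is this lower bound of $y$ in terms of $\int|b'|^2$. The difficulty is that $\operatorname{Tr}|bDb|$ is not diagonal in Fourier modes. I would try to handle it by comparing $|bDb|$ with the operator $bD b$ restricted to positive and negative frequencies, using a Jensen/variational argument in which $\operatorname{Tr}|T|\le\sum_j|(T e_j,e_j)|$ fails, so one needs the reverse via a chosen basis. Concretely, I would test $b\Lambda b$ against the eigenbasis of $bDb$. The part of the defect that this cannot capture, coming from the coupling between the two circles and the zero modes, is then absorbed by the $z$-term via $\sqrt{z/B_l(0)}$. Once the key estimate is in hand, inserting it into the Cauchy--Schwarz bound gives
\[
a\ge\frac{2\pi}{x}\exp\bigl[-\sqrt{2\pi x}\,(6y+2\sqrt{z/B_l(0)})^{1/2}\bigr],
\]
which is the stated bound $a\ge K_l(x,y,z)$.
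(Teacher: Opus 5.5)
Your outer reduction is the right frame, and with one refinement it reproduces $K_l$ exactly. On each circle, pick $\theta_0$ with $a(\theta_0)\ge 2\pi/x$. Bound $|\log a(\theta)-\log a(\theta_0)|$ by \emph{half} of $\int_{S^1}|a'|/a$, using the shorter arc, and apply Cauchy--Schwarz. This gives $a\ge K_l(x,y,z)$ as soon as, with $b=a^{1/2}$ and $\hat f(n)=\frac1{2\pi}\int f e^{-in\theta}\,d\theta$,
\[
\sum_{i=1,2}\sum_{n\in\Z}n^2|\widehat{b_i}(n)|^2\le 6y+\frac{2}{\sqrt{B_l(0)}}\sqrt z .
\]
But this inequality is the whole content of the proposition, and your proposal does not prove it. It also misdescribes its structure. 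Since $\tanh^2\frac{kl}2+\coth^2\frac{kl}2-2=4\operatorname{csch}^2(kl)$, $B_l(0)$ is exactly the multiplier of the zero Fourier mode of $z$: that block equals $\frac14B_l(0)\bigl(\widehat{a_1}(0)+\widehat{a_2}(0)\bigr)^2$. So, granted nonnegativity of the $n\neq0$ blocks, $\frac{2}{\sqrt{B_l(0)}}\sqrt z$ bounds $\frac1{2\pi}\int_{\partial\mathcal C_l}a=\sum_{i,n}|\widehat{b_i}(n)|^2$. That is an $L^2$ quantity, not a Dirichlet-type one. A bound on $a'$ would not help anyway, since passing to $\int|a'|^2/a$ needs the very lower bound on $a$ you are proving. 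The real target is therefore
\[
\Tr\bigl(b_i|D|b_i-|b_iDb_i|\bigr)\ \ge\ \frac16\sum_{n\in\Z}(n^2-1)|\widehat{b_i}(n)|^2\quad\text{on each circle,}
\]
an estimate of Edward type, with $z$ paying only for the deficit at $|n|\le1$.

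That deficit is forced, and it has nothing to do with coupling between the circles. By cyclicity, $\Tr(b(\Lambda-|D|)b)=\Tr(a(\Lambda-|D|))$ sees only the diagonal blocks of $\Lambda-|D|$. Hence exactly
\[
y=c_l\int_{\partial\mathcal C_l}a+\sum_i\Tr\bigl(b_i|D|b_i-|b_iDb_i|\bigr),\qquad c_l>0.
\]
The single-circle defect vanishes identically for $a=|1-\bar we^{i\theta}|^2/(1-|w|^2)$, $|w|<1$, because then $a\Lambda_\D$ is isospectral to $\Lambda_\D$. Yet $\int|b'|^2=\pi|w|^2/(1-|w|^2)$ is unbounded along this family. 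So no estimate of the form ``$y\gtrsim\int|b'|^2$ up to a coupling/zero-mode correction'' can hold for the disk-type part. A commutator expansion around constant $a$ gives only perturbative information, whereas the bound must hold uniformly, e.g.\ along this M\"obius degeneration. Testing against the eigenbasis of $bDb$ (where $be_k=L^{-1/2}e^{ik\phi}$ and $\phi$ is the normalized arclength) yields only the identity
\[
\Tr(b|D|b-|bDb|)=\frac{8\pi}{L}\sum_{k,m\ge1}m\,|\widehat{e^{ik\phi}}(-m)|^2 .
\]
Converting this into the displayed Fourier lower bound in terms of $a^{1/2}$ is the missing idea. Without it, your argument is only a reduction.
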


With these preparations in place, we are now ready to prove the compactness theorem.
\begin{theorem}\label{thm::maincompactnessflat}
Let $(\Omega,g)$ be a fixed surface of genus zero with $n\geq 3$ smooth boundary components. Then any Steklov isospectral family of flat Riemannian surfaces which are conformal to $(\Omega,g)$ is compact in the $C^\infty$ topology.
\end{theorem}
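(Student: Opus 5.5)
Write each metric in the family as $g_k=e^{2\phi_k}g_0$, where $g_0\in[g]$ is a fixed flat reference metric with constant-curvature boundary (the Type II representative of Theorem~\ref{thm::OPS}). Since $g_k$ and $g_0$ are both flat, each $\phi_k$ is harmonic in $\Omega$. It is therefore determined by its boundary values $a_k:=e^{\phi_k}|_M$, so it suffices to obtain uniform bounds $C^{-1}\le a_k\le C$ and uniform $H^{s}(M)$ bounds for every $s$. Given these, harmonicity and elliptic regularity yield $C^\infty$ bounds on $\phi_k$ on $\Omega$, and Arzel\`a--Ascoli together with the isometries $P_k=\mathrm{id}$ gives a $C^\infty$-convergent subsequence.

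\textbf{Step 1: isospectral bounds on cylinder traces.} The Steklov spectrum fixes all the values $\zeta_{\Omega,g_k}(s)$. By the heat invariants, or by the result of \cite{GIROUARD_PARNOVSKI_POLTEROVICH_SHER_2014}, it also fixes the multiset of boundary lengths $L_{g_k}(M_i)$, so the Riemann-zeta terms in Corollary~\ref{cor::zetamonotonicity} are constant along the family. For each pair $I=\{i,j\}$, apply Corollary~\ref{cor::zetamonotonicity} with $\{1,\dots,n\}\supset I$ and $s=-1$ and $s=-2m$. This bounds $\zeta_{\Omega_I,g_k}(-1)$ and $\zeta_{\Omega_I,g_k}(-2m)$ from above, uniformly in $k$. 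Next transport to the cylinder $\mathcal C_I$: on $\mathcal C_I$ we have $F_I^*g_k=e^{2(\phi_k\circ F_I+\kappa_I)}g_I$, so Proposition~\ref{prop::zetaexpress}, formulas \eqref{generalzeta-2m} and \eqref{generalzeta-1}, expresses these zeta values as $\Tr((b\Lambda_{\mathcal C_I})^{2m}-(bD_{\mathcal C_I})^{2m})$ and $\Tr(b^{1/2}\Lambda_{\mathcal C_I}b^{1/2}-|b^{1/2}D_{\mathcal C_I}b^{1/2}|)$ plus fixed length terms, with $b=b_{k,I}=(a_k\circ F_I)e^{\kappa_I}$ on $\partial\mathcal C_I$. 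By Proposition~\ref{proppositivenessforzetafunction} (with $c=b^{1/2}$ and $s=1$, or $s=2m$ via $c=$ const after the similarity used in Proposition~\ref{prop::inequfortrace}), these traces are nonnegative. Hence all of them are bounded above uniformly in $k$.

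\textbf{Step 2: upper, lower and Sobolev bounds.} Proposition~\ref{prop::ineq1}, with $l=l_I$ fixed since the conformal class is fixed, bounds $\|b^m\|_{H^{m+1/2}(\partial\mathcal C_I)}$ uniformly for every $m$. This gives a uniform upper bound on $b$, hence on $a_k$ on $M_i\cup M_j$, and Sobolev control. For the lower bound we apply Proposition~\ref{prop::ineq2}. This needs an upper bound on $\int_{\partial\mathcal C_I}b^{-1}d\theta$, which is the main obstacle. To obtain it, note that $b\,d\theta$ is $g_k$-arclength, so $\int b\,d\theta$ on each circle equals the fixed length $L(M_i)$. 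Combined with the uniform $H^{3/2}$ (hence $C^{0,\alpha}$) bound on $b$ from the $m=1$ case, $b$ cannot be small on a large set without being small everywhere. I would try a Moser–Trudinger/John–Nirenberg type argument on $\log b$: the $H^{1/2}$-type trace quantity bounded in Step 1 (the $s=-1$ trace) controls $\|\log b\|_{H^{1/2}}$, and exponential integrability then bounds $\int b^{-1}$ in terms of $\int b$. If this fails, I would instead run the argument of \cite{annular} directly. Once $\int b^{-1}$ is bounded, Proposition~\ref{prop::ineq2} gives $b\ge K>0$ uniformly. Since $\kappa_I$ is a fixed smooth function, the bounds transfer to $a_k$ on $M_i\cup M_j$.

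\textbf{Step 3: conclusion.} Covering all boundary components by pairs, which is possible since $n\ge3$, gives $C^{-1}\le a_k\le C$ and $\|a_k\|_{H^s(M)}\le C_s$ for all $s$. Then $\phi_k=\mathcal H(\log a_k)$ is bounded in every $C^r(\Omega)$ by elliptic estimates, and a diagonal subsequence converges in $C^\infty$ to a harmonic $\phi$. The limit metric $e^{2\phi}g_0$ is smooth and flat, which proves the $C^\infty$ compactness asserted in Theorem~\ref{thm::maincompactnessflat}.
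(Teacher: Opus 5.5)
Your architecture is the paper's: you bound each two-component trace by global zeta invariants (your use of Corollary~\ref{cor::zetamonotonicity} is equivalent to the paper's inequality \eqref{nodd}), apply Propositions~\ref{prop::ineq1} and~\ref{prop::ineq2} on the fixed cylinders $\mathcal C_I$, and finish by elliptic regularity. However, Step~2 has a genuine gap, and it comes from an inverted conformal factor in Step~1.

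If $F_I^*g_k=e^{2w}g_I$ with $w=\phi_k\circ F_I+\kappa_I$, the harmonic extension is unchanged while the outward unit normal scales by $e^{-w}$. Hence $\Lambda_{\mathcal C_I,F_I^*g_k}=e^{-w}\Lambda_{\mathcal C_I}=b^{-1}\Lambda_{\mathcal C_I}$. The multiplier entering \eqref{generalzeta-2m}, \eqref{generalzeta-1} and Propositions~\ref{prop::ineq1}, \ref{prop::ineq2} is therefore $b^{-1}$, not $b$. This is why the paper sets $a=e^{-\phi}|_M$, so that $\Lambda_{\Omega,g'}=a\Lambda_{\Omega,g}$.

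Working with $b$, you are led to look for a bound on $\int b^{-1}\,d\theta$, and your sketch does not supply one. Nothing available shows that the $s=-1$ trace controls $\|\log b\|_{H^{1/2}}$, and you give no argument for it. Moreover, a uniform $H^{3/2}$ bound together with $\int b\,d\theta=L$ does not prevent $b$ from vanishing at a point (take $b$ proportional to $1-\cos\theta$), in which case $\int b^{-1}\,d\theta=\infty$. As written, the lower bound is not established.

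Once the multiplier is corrected, the obstacle disappears. Proposition~\ref{prop::ineq1} bounds $\|b^{-m}\|_{H^{m+1/2}(\partial\mathcal C_I)}$, which gives $b\ge c>0$ together with the Sobolev control. The first argument of $K_{l_I}$ in Proposition~\ref{prop::ineq2} becomes
$\int_{\partial\mathcal C_I}(b^{-1})^{-1}\,d\theta=\int_{\partial\mathcal C_I}b\,d\theta=L_{g_k}(M_i)+L_{g_k}(M_j)\le L_{g_k}(M)$,
which is a Steklov spectral invariant. So you get $b^{-1}\ge K>0$, i.e.\ $b\le C$, and no Moser--Trudinger argument is needed. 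This is exactly how the paper derives \eqref{eq::zeta-1ieq} from $L_{g'}(M)=\int_M a^{-1}\,d\tau$.

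Similarly, nonnegativity of the $s=2m$ traces comes from Proposition~\ref{proppositivenessforzetafunction} with $c=b^{-1/2}$, via the similarity between $(b^{-1}\Lambda)^{2m}$ and $(b^{-1/2}\Lambda b^{-1/2})^{2m}$, not with constant $c$. With these corrections the rest of your argument is the paper's proof. Your use of a flat reference metric, so that $\phi_k$ is harmonic, is only a cosmetic variant of the paper's equation $-\Delta_g\phi=-K_g$.
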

\begin{proof}
We regard $(\Omega,g)$ as a domain in a fixed closed genus-zero surface
$(\mathcal S_0,g)$. Denote by $(\mathcal S_0,\Omega,g)$ the embedding data, namely the ambient surface
$(\mathcal S_0,g)$ together with the embedding $\Omega\hookrightarrow
\mathcal S_0$. Let $\mathcal H_g$ be a Steklov isospectral family of flat
metrics in the conformal class of $[g]_\Omega$. Thus every $g'\in\mathcal H_g$
can be written as
\[
g'=e^{2\phi}g|_\Omega
\]
for some $\phi\in C^\infty(\Omega;\mathbb R)$.

It is enough to prove the compactness of $\mathcal H_g$. Equivalently, it suffices to prove uniform compactness for the corresponding conformal factors. More precisely, we first obtain uniform Sobolev bounds for their boundary traces, and then use elliptic regularity to pass these bounds to the interior. The main point is that the trace terms appearing in the zeta identities can be decomposed into nonnegative contributions associated with subsurfaces of $\mathcal S_0$ having at most two boundary components. This allows us to apply the estimates from the previous subsection to each such contribution.

Set $a:=e^{-\phi}|_M$, which is a positive smooth function on $M$. Then
\[
\Lambda_{\Omega,g'}=a\Lambda_{\Omega,g},\qquad D_{\Omega,g'}=aD_{\Omega,g}.
\]

Fix an index set $I\subset\{1,\dots,n\}$ with $\# I=2$, and let $\mathcal P_{[n]}:=\{I, J_1, \cdots, J_{n-2}\}$ be the partition of $\{1,\dots,n\}$ consisting of $I$ together with the remaining singleton sets. By Proposition~\ref{prop::zetaexpress}, for any $s\geq 1$,
\begin{equation}\label{spectralinvzeta}
    \zeta_{\Omega,g'}(-s)-2\zeta_{R}(-s)\sum_{i=1}^n\left(\frac{2\pi}{L_{g'}(M_i)}\right)^{s}=\Tr\left(\left(a^{\frac12}\La_{\Omega,g}a^{\frac12}\right)^{s}-\left|a^{\frac12}D_{\Omega,g}a^{\frac12}\right|^{s} \right).
\end{equation}
We may then decompose the trace term as
\begin{align*}
    \Tr\left(\left(a^{\frac12}\La_{\Omega,g}a^{\frac12}\right)^{s}-\left|a^{\frac12}D_{\Omega,g}a^{\frac12}\right|^{s} \right)= &\Tr\left(\left(a^{\frac12}\La_{\Omega,g}a^{\frac12}\right)^{s}-\left(a^{\frac12}\La_{\mathcal P_{[n]}}a^{\frac12}\right)^{s}\right)\notag\\
    &+\Tr\left(\left(a^{\frac12}\La_{\mathcal P_{[n]}}a^{\frac12}\right)^{s}-\left|a^{\frac12}D_{\Omega,g}a^{\frac12}\right|^{s} \right).
\end{align*}
By Proposition~\ref{prop::inequfortrace}, the first term on the right-hand side is strictly positive,
\[
\Tr\left(\left(a^{\frac12}\La_{\Omega,g}a^{\frac12}\right)^{s}-\left(a^{\frac12}\La_{\mathcal P_{[n]}}a^{\frac12}\right)^{s}\right)>0.
\]
Consequently, the second term is bounded above by the spectral quantity appearing on the left-hand side of \eqref{spectralinvzeta}. By the definition of $\Lambda_{\mathcal P_{[n]}}$, the second term further decomposes according to the partition.
\begin{align}
\Tr\left(\left(a^{\frac12}\La_{\mathcal P_{[n]}}a^{\frac12}\right)^{s}-\left|a^{\frac12}D_{\Omega,g}a^{\frac12}\right|^{s} \right)=&
    \sum_{k=1}^{n-2}\Tr\left(\left(a^{\frac12}\La_{\Omega_{J_k},g}a^{\frac12}\right)^{s}-\left|a^{\frac12}D_{\Omega_{J_k},g}a^{\frac12}\right|^{s} \right)\notag\\
    &+\Tr\left(\left(a^{\frac12}\La_{\Omega_{I},g}a^{\frac12}\right)^{s}-\left|a^{\frac12}D_{\Omega_{I},g}a^{\frac12}\right|^{s} \right).\notag
\end{align}
Each of these terms is nonnegative by Proposition~\ref{proppositivenessforzetafunction}. Therefore each individual trace is controlled by the same spectral quantity. In particular, we obtain the estimate
\begin{equation}\label{nodd}
    \Tr\left(\left(a^{\frac12}\La_{\Omega_{I},g}a^{\frac12}\right)^{s}-\left|a^{\frac12}D_{\Omega_{I},g}a^{\frac12}\right|^{s} \right)\leq \zeta_{\Omega,g'}(-s)-2\zeta_{R}(-s)\sum_{i=1}^n\left(\frac{2\pi}{L_{g'}(M_i)}\right)^{s},
\end{equation}
for every two-element index set $I$.

We first apply this estimate with $s=2m$, where $m\in\mathbb Z_+$.
For each such $I$, the surface $(\Omega_I,g)$ is conformal to the flat
cylinder $\mathcal C_I:=S^1\times[0,l_I]$. Because the embedding data
$(\mathcal S_0,\Omega,g)$ are fixed, so are the conformal map
$F_I:\mathcal C_I\to\Omega_I$, the cylinder height $l_I$, and the positive
factor $c_I$ appearing in \eqref{eq::ladrelation} and
\eqref{eq::ladrelation2}.
Therefore, the corresponding trace term may be expressed in cylinder coordinates as
\begin{align}\label{eq::zeta-2formul}
    \Tr \left((a\La_{\Omega_{I},g})^{2m}-(aD_{\Omega_{I},g})^{2m} \right)
    =\Tr\left(\left((F_{I}^*a)c_{I}\La_{\mathcal C_I}\right)^{2m}-\left((F_{I}^*a)c_{I}D_{\mathcal C_I}\right)^{2m} \right).
\end{align}
It follows from Proposition~\ref{prop::ineq1} that, for each $m\in \Z_+$,
\begin{align*}
    \|(F_{I}^*a)^mc_{I}^m\|^2_{H^{m+\frac12}(\partial\mathcal C_I)}\leq &d_{m,l_{I}}\left(\Tr \left((a\La_{\Omega_{I},g})^{2}-(aD_{\Omega_{I},g})^{2} \right)\right)^{m}\\
    &+d'_m\Tr \left((a\La_{\Omega_{I},g})^{2m}-(aD_{\Omega_{I},g})^{2m} \right).
\end{align*}
Moreover, for $m\geq1$, the space $H^{m+\frac12}(\partial\mathcal C_I)$ is a Banach algebra (see, for example, \cite[Theorem 1.4.4.2]{Grisvard1985}). The following Sobolev multiplication estimate holds for some positive constant $R_m$:
\begin{equation*}
    \|(F_{I}^*a)^m\|_{H^{m+\frac12}(\partial\mathcal C_I)}\leq R_m \|(F_{I}^*a)^mc_{I}^m\|_{H^{m+\frac12}(\partial\mathcal C_I)}\|c_{I}^{-m}\|_{H^{m+\frac12}(\partial\mathcal C_I)}.
\end{equation*}
Thus, for each $m\in\Z_+$,
\begin{equation}\label{eq::con1}
    \|(F_{I}^*a)^m\|^2_{H^{m+\frac12}(\partial\mathcal C_I)}\leq R_m^2\|c_{I}^{-m}\|^2_{H^{m+\frac12}(\partial\mathcal C_I)}\left(d_{m,l_{I}}\left(\zeta_{\Omega,g'}(-2)\right)^m+d'_m\zeta_{\Omega,g'}(-2m)\right).
\end{equation}
In particular, taking $m=1$ and using the embedding $H^{\frac{3}{2}}(\partial\mathcal C_I) \subset C^0(\partial\mathcal C_I)$, we obtain a uniform upper bound for $a$ on $\partial\Omega_I$, namely, there exists some positive constant $R_{I}'$ such that
\begin{equation*}
    a|_{\partial\Omega_{I}}\leq R'_{I} \|c_{I}^{-1}\|_{H^{\frac32}(\partial\mathcal C_I)}\left(\zeta_{\Omega,g'}(-2)\right)^{\frac12}.
\end{equation*}
Letting $I$ range over all pairs of boundary components, these estimates cover all of $M$, and hence give the global upper bound
\begin{equation}\label{upperboundforamul}
    a\leq \max _{\# I=2} R'_{I} \|c_{I}^{-1}\|_{H^{\frac32}(\partial\mathcal C_I)}\left(\zeta_{\Omega,g'}(-2)\right)^{\frac12}.
\end{equation}

We next derive a uniform positive lower bound for $a$. Taking $s=1$ in \eqref{nodd}, we use the spectral quantities $L_{g'}(M)$, $\zeta_{\Omega,g'}(-1)$, and $\zeta_{\Omega,g'}(-2)$.
The total length of the boundary of $\Omega$ under the metric $g'$ is given by
\[
L_{g'}(M)=\int_{M}a^{-1}\,d\tau,
\]
where $\tau$ is the length parameter under the metric $g$. We have, for each two-element index set $I$,
\[
\int_{\partial\Omega_I}a^{-1}\,d\tau=\int_{\partial\mathcal{C}_I}F_I^*a^{-1} c_I^{-1}\, d\theta\leq L_{g'}(M).
\]
We write the corresponding trace term in \eqref{nodd} as
\begin{align*}
\Tr&\left(a^{\frac12}\La_{\Omega_{I},g}a^{\frac12}-\left|a^{\frac12}D_{\Omega_{I},g}a^{\frac12}\right| \right)\\
 &=\Tr \left((F^*_{I}a^{\frac12})c_{I}^{\frac12}\La_{\mathcal C_I}c_{I}^{\frac12}(F^*_{I}a^{\frac12})-\left|(F^*_{I}a^{\frac12})c_{I}^{\frac12}D_{\mathcal C_I}c_{I}^{\frac12}(F^*_{I}a^{\frac12})\right|\right).
\end{align*}
Therefore Proposition~\ref{prop::ineq2}, together with the estimate \eqref{nodd}, yields a pointwise lower bound for $a$,
\begin{align*}
    F_{I}^*(a)c_{I}&\geq K_{l_I}\Bigg(\int_{\partial \mathcal{C}_I} (F_{I}^*(a)c_{I})^{-1}\, d\theta,\Tr\left(a^{\frac12}\La_{\Omega_{I},g}a^{\frac12}-\left|a^{\frac12}D_{\Omega_{I},g}a^{\frac12}\right| \right),\\
    &\phantom{\geq K_{l_I}}\Tr \left((a\La_{\Omega_{I},g})^{2}-(aD_{\Omega_{I},g})^{2} \right)\Bigg)\\
    &\geq K_{l_I}\left(L
    _{g'}(M),\zeta_{\Omega,g'}(-1)+\frac{\pi}{3}\sum_{i=1}^n \frac1{L_{g'}(M_i)},\zeta_{\Omega,g'}(-2)\right).
\end{align*}
Hence,
\begin{equation}\label{eq::zeta-1ieq}
\begin{split}
a\geq \min_{\# I=2}\Bigg\{ 
\left(\inf_{\partial\mathcal C_I}c_I^{-1}\right)
K_{l_I}\Bigg(
L_{g'}(M),
\zeta_{\Omega,g'}(-1)+\frac{\pi}{3}\sum_{i=1}^n\frac1{L_{g'}(M_i)}, \zeta_{\Omega,g'}(-2)
\Bigg)\Bigg\}>0.
\end{split}
\end{equation}

Combining the upper bound \eqref{upperboundforamul} and the lower bound \eqref{eq::zeta-1ieq}, we may use the equivalence of the Sobolev weights $1+\|\cdot\|_{H^m}$ and $1+\|\log(\cdot)\|_{H^m}$ for positive smooth functions bounded above and below away from zero. We deduce that there exist positive constants $C_I$ and $C_m$, independent of $g'$, such that
\begin{align}
    \|\phi\|_{H^m(M)} =\|\log a\|_{H^m(M)} 
    &\leq\sum_{\# I=2}C_I\|\log F_I^*a\|_{H^m(\partial\mathcal{C}_I)}\notag\\
  &= \frac{1}{m} \sum_{\# I=2}C_I\|\log (F_I^*a)^m\|_{H^m(\partial\mathcal{C}_I)}\notag\\
   &\leq C_m \sum_{\# I=2}C_I\left(1+\| (F_I^*a)^m\|_{H^m(\partial\mathcal{C}_I)}\right).\label{bdphiest}
\end{align}
The right-hand side is uniformly bounded for each $m\in\mathbb Z_+$. Indeed, combining \eqref{bdphiest} with \eqref{eq::con1}, we find constants $\Theta_{(\mathcal{S}_0,\Omega,g),m}$ depending only on the fixed background data $(\mathcal{S}_0,\Omega,g)$ and on $m$, such that
\begin{equation}\label{bdfinalestimate}
    \|\phi\|_{H^m(M)}\leq \Theta_{(\mathcal{S}_0,\Omega,g),m}(1+(\zeta_{\Omega,g'}(-2))^m+\zeta_{\Omega,g'}(-2m)).
\end{equation}

Finally, since $g'$ is flat, the conformal factor $\phi$ satisfies
\[
-\Delta_g\phi=-K_g \quad \text{in }\Omega,
\]
where $K_g$ denotes the Gaussian curvature of $(\Omega,g)$.
Therefore, standard elliptic regularity estimates imply that for every $m\geq 2$, there exists a positive constant $d_{(\Omega,g),m}$, depending only on $(\Omega,g)$ and $m$ such that
\begin{equation}\label{finalestimate}
    \|\phi\|_{H^m(\Omega)}\leq d_{(\Omega,g),m}\left(\|K_g\|_{H^{m-2}(\Omega)}+\|\phi\|_{H^{m}(M)}\right).
\end{equation}
The curvature term $\|K_g\|_{H^{m-2}(\Omega)}$ is fixed, and the boundary term has already been uniformly bounded.
Consequently, for every $m\in\mathbb Z_+$, the Sobolev norm $\|\phi\|_{H^m(\Omega)}$ is uniformly bounded by quantities depending only on $(\mathcal S_0,\Omega,g)$ and the Steklov spectrum.
A diagonal application of the Rellich compactness theorem, followed by Sobolev embedding, yields a subsequence converging in $C^\infty$. This proves the theorem.
\end{proof}
\begin{remark}
    In the proof of Theorem~\ref{thm::maincompactnessflat}, we show that for a genus-zero surface $(\Omega,g)$, the trace
    \[
    \Tr\left(\left(a^{\frac12}\La_{\Omega,g}a^{\frac12}\right)^{s}-\left|a^{\frac12}D_{\Omega,g}a^{\frac12}\right|^{s} \right)
    \]
    is non-negative for all $s\geq 1$. Combined with Proposition~\ref{prop::zetaexpress}, this provides a lower bound for the spectral zeta function
    \[
    \zeta_{\Omega,g}(-s)\geq 2\zeta_{R}(-s)\sum_{i=1}^n\left(\frac{2\pi}{L_{g}(M_i)}\right)^{s},
    \]
    for all $s\geq1$. This extends the corresponding result of Jollivet--Sharafutdinov \cite{Sharafutdinov18} to genus-zero surfaces.
\end{remark}

\section{Asymptotic formula for the Steklov determinant on hyperbolic surfaces}\label{sectiondegenration}
In this section, we consider compact hyperbolic surfaces with geodesic boundary. We study the asymptotic behavior of the determinant associated with the Steklov spectrum under degeneration of the surface. This analysis will play a crucial role in our later study of the Steklov isospectral compactness problem for surfaces with negative Euler characteristic without fixing the conformal class.

\subsection{Hyperbolic setup and determinant formula}

Let $\Omega$ be a compact, connected, orientable surface of genus $\mathtt{g}$ whose boundary $M=M_1\cup M_2\cup\cdots\cup M_n$ has $n$ connected components. Assume that its Euler characteristic
\[
\chi(\Omega)=2-2\mathtt g-n
\]
is negative. Let $M_H(\Omega)$ denote the space of metrics $\hat g$ on $\Omega$ such
that $(\Omega,\hat g)$ is a compact hyperbolic surface with geodesic
boundary, endowed with the $C^\infty$ topology. Define the moduli space to be
\begin{equation*}
\mathcal M(\Omega):=
M_H(\Omega)/\operatorname{Diff}(\Omega),
\end{equation*}
where $\operatorname{Diff}(\Omega)$ acts by pullback, and equip
$\mathcal M(\Omega)$ with the quotient topology. Equivalently, this is
the usual topology described by Fenchel--Nielsen coordinates. Its real
dimension is
$
6\mathtt g-6+3n.
$

For any $\hat g\in M_H(\Omega)$, choose a discrete subgroup
$G_0\subset\operatorname{Isom}(\H^2)$ such that $(\Omega,\hat g)$ is
isometric to the hyperbolic surface $X:=G_0\backslash\H^2$. Denote by
\[
\partial X=\Gamma_1\cup \Gamma_2\cup\cdots\cup \Gamma_n,\quad b_i:=\ell(\Gamma_i)=L_{\hat{g}}(M_i),\quad i=1,2,\dots,n,
\]
the lengths of boundary components of $X$, and let
\[
\ell(\partial X):=\sum_{i=1}^n b_i
\]
be the total length of $\partial X$.
Let $G=G_0\cap \operatorname{Isom}^+(\H^2)$ be the index-$2$ subgroup of orientation-preserving isometries of $G_0$. Then the double of $X$ is the closed hyperbolic surface $\widetilde{X}:=G\backslash\H^2$. We say that the hyperbolic surface $X$ degenerates if its double $\widetilde X$ degenerates. By Mumford's compactness theorem
\cite{Mumford1971} (see also \cite{Buser_geometry_2010}), this is equivalent to the existence
of a pinching closed geodesic on $\widetilde X$ whose length tends to zero.
 After passing to the quotient by the reflection involution, such degeneration can be described in terms of degenerating geodesic data on $X$, such as shrinking interior closed geodesics or shrinking geodesic boundary components.
 
We now fix our notation for \emph{(zeta-regularized) determinants}.
Let $A$ be a nonnegative self-adjoint elliptic operator with discrete
spectrum. Its spectral zeta function is defined by
\[
\zeta_A(s):=\sum_{\lambda\in\operatorname{Spec}(A)\setminus\{0\}}
\lambda^{-s},
\]
where eigenvalues are counted with multiplicity, initially for
$\operatorname{Re}s$ sufficiently large and then by meromorphic
continuation. If $\zeta_A$ is regular at $s=0$, the zeta-regularized
determinant of $A$ is defined to be 
\[
{\det}' A:=\exp\bigl(-\zeta_A'(0)\bigr).
\]
The prime indicates that the zero eigenvalues are omitted. If $A$ is
invertible, we simply write
\[
\det A:=\exp\bigl(-\zeta_A'(0)\bigr).
\]

Let $\Lambda_X$ denote the Dirichlet-to-Neumann operator associated with
$X$. A determinant formula of Guillarmou--Guillop\'e
\cite[Theorem 1.3]{8180465} expresses the normalized zeta-regularized
determinant of $\Lambda_X$ directly in terms of the Selberg zeta functions. This formula allows us to
study the degeneration of $\det'\Lambda_X$ through the corresponding
closed-geodesic data. More precisely,
\begin{equation}\label{eq::3.31}
	\frac{{\det}'\Lambda_{X}}{{\ell}(\partial X)}=-\frac{Z_G'(1)}{(Z_{G_0}(1))^2}\frac{e^{\frac{\ell(\partial X)}{4}}}{2\pi\chi(X)},
\end{equation}
where $Z_G$ and $Z_{G_0}$ are the Selberg zeta functions associated with $\widetilde{X}$ and $X$, respectively. Their definitions are as follows. For the closed hyperbolic surface $\widetilde{X}$, we recall that there exists a bijection between the set of primitive conjugacy classes of the Fuchsian group $G$ and the set $[\Gamma]$ of primitive closed oriented geodesics $\widetilde\gamma$ (whose length is denoted by $r_{\widetilde\gamma}$). The dynamical Ruelle zeta function of $\widetilde{X}$ is defined by the formula
\[
R_G(s):=\prod_{\widetilde\gamma\in[\Gamma]}\left(1-e^{-sr_{\widetilde\gamma}}\right),
\]
where we adopt Ruelle’s original convention (see \cite{Ruelle1976GeneralizedZeta,Ruelle1976ZetaFunctions}), under which the Ruelle zeta function is the inverse of that frequently used in the modern literature.
The Selberg zeta function of $\widetilde{X}$ is given by
\[
Z_G(s):=\prod_{k\in\N_0}R_G(s+k).
\]
Guillop\'e \cite{10.1215/S0012-7094-86-05345-7} extended it to a hyperbolic surface $X$ with geodesic boundary as follows.\footnote{Following \cite{8180465}, the zeta functions used here are the
square roots of the corresponding zeta functions originally defined by
Guillop\'e. In the definition of $R_{\partial X}$ in
\cite{8180465}, the factor $e^{-(s+1)b_i}$ is misprinted with $s$
in place of $s+1$.}
\begin{equation}\label{SelbZetaFunc}
	Z_{G_0}(s):=\prod_{k\in \N_0} R_{\partial X}(s+2k)R_{G_0}(s+2k),
\end{equation}
where
\[ R_{\partial X}(s):=\prod_{i=1}^{n}\left(1-e^{-(s+1)b_i}\right)^2\]
counts the boundary geodesic contribution, and
\[ R_{G_0}(s):=\prod_{\gamma\in[C]} \left(1-(-1)^{n_\gamma}e^{-s r_\gamma}\right)\left(1-e^{-(s+1)r_\gamma}\right). \]
Here, $[C]$ denotes the set of primitive oriented closed geodesics $\gamma$ on $X$ that are not contained in $\partial X$, and for each $\gamma \in [C]$ we let $r_\gamma$ denote its length, and $n_\gamma\ge 0$ the number of geometric reflections of $\gamma$ on the boundary $\partial X$.
These zeta functions converge absolutely in the half-plane $\{\Re s>1\}$ and admit meromorphic continuations to $\C$.

\subsection{Short geodesics and collar geometry}

By the collar lemma, we may choose a universal constant
\[
0<r^*<\frac{\pi}{4}
\]
with the following property for every closed hyperbolic surface $\widetilde X$. There are at most $3\widetilde{\mathtt g}-3$ primitive closed unoriented geodesics on $\widetilde X$ of length less than $r^*$. For each such geodesic $\widetilde\gamma$, the standard collar
\[
C(\widetilde\gamma):=\left\{
x\in\widetilde X:d_{\widetilde X}(x,\widetilde\gamma)<
w(\widetilde\gamma)
\right\}
\]
is embedded, where
\[
w(\widetilde\gamma):=\operatorname{arsinh}
\left(
\frac{1}{\sinh(r_{\widetilde\gamma}/2)}
\right)
=\log\frac{4}{r_{\widetilde\gamma}}+
O(r_{\widetilde\gamma}^2)
\qquad
\text{as }r_{\widetilde\gamma}\to0.
\]
Moreover, the standard collars associated with such geodesics are pairwise disjoint.

Throughout the paper, we use $r^*$ as a smallness threshold and,
whenever necessary, decrease it without changing the notation.
After such reductions, $r^*$ is allowed to depend at most on
$n$ and $\mathtt g$.

Inside $C(\widetilde\gamma)$, we use conformal collar coordinates $(u,v)$ in which
\[
u\in\mathbb R/r_{\widetilde\gamma}\mathbb Z
\]
and the hyperbolic metric takes the form
\[
\frac{du^2+dv^2}{\sin^2 v}.
\]
We then define the standard subcollar
\[
SC(\widetilde\gamma):=
\left\{
(u,v)\in C(\widetilde\gamma):
2r_{\widetilde\gamma}<v<\pi-2r_{\widetilde\gamma}
\right\}.
\]

The following standard consequence of Wolpert's thick-part analysis will be used repeatedly; see \cite[Sections~2.6 and~2.7]{Wolpert1987}, \cite[Section~3]{Kim2008}.
\begin{lemma}[\cite{Wolpert1987}]\label{lem::thickgeometry}
For fixed topology, the complements of the standard subcollars form a precompact family in the $C^\infty$ topology, modulo diffeomorphisms. In particular, they have uniformly bounded geometry.
\end{lemma}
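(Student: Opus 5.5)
The plan is to reduce Lemma~\ref{lem::thickgeometry} to Mumford compactness together with the explicit collar model. Fix the topological type of the closed double $\widetilde X$. By Mumford's compactness theorem, any sequence of such surfaces has a subsequence that either converges in moduli space or degenerates by pinching a collection of at most $3\widetilde{\mathtt g}-3$ disjoint simple closed geodesics. Pass to a further subsequence in which the set of geodesics of length $<r^*$ has a fixed combinatorial type. Choose markings, that is, Fenchel--Nielsen coordinates adapted to a pants decomposition containing all these short curves. In these coordinates the lengths of the non-short curves lie in a compact subset of $(0,\infty)$ (bounded above by Bers' constant and below by $r^*$), and the twist parameters can be normalized to lie in a bounded fundamental interval. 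The short lengths tend to limits in $[0,r^*]$.

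Next I would decompose $\widetilde X$ minus the subcollars into pieces. Each pair of pants with boundary lengths $\le B$ is determined up to isometry by its three lengths, and this dependence is real-analytic, extending continuously to length $0$ (cusps). For each short $\widetilde\gamma$, the complement in the full collar $C(\widetilde\gamma)$ of $SC(\widetilde\gamma)$ consists of two annuli $\{v\le 2r_{\widetilde\gamma}\}$ and $\{v\ge\pi-2r_{\widetilde\gamma}\}$. Substituting $u=r_{\widetilde\gamma}x$, $v=r_{\widetilde\gamma}y$ with $x\in\R/\Z$ and $y\in(c,2]$, the metric $r_{\widetilde\gamma}^2(dx^2+dy^2)/\sin^2(r_{\widetilde\gamma}y)$ converges in $C^\infty$ to $(dx^2+dy^2)/y^2$ as $r_{\widetilde\gamma}\to0$. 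So each such boundary annulus converges smoothly to a horoball annulus of a cusp. The key claim is that the truncated pants pieces, glued with bounded twists, vary continuously in $C^\infty$ modulo diffeomorphisms on the compact parameter set, which includes the degenerate boundary where the short lengths equal $0$. Compactness of the parameter set then gives precompactness.

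For surfaces with geodesic boundary, I would apply the same argument to the double, using reflection-invariant pants decompositions, and then restrict to a fundamental domain. Bounded geometry (injectivity radius bounded below, curvature $\equiv-1$, and uniform bounds on the boundary second fundamental form) follows directly from precompactness.

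The main obstacle is the uniform smooth control near the inner edge $v=2r_{\widetilde\gamma}$ together with the gluing maps. One must show that the diffeomorphisms identifying nearby pants decompositions can be chosen to depend smoothly on the length and twist parameters, uniformly as lengths tend to $0$. I would handle this by writing each pants piece as a quotient of a fixed truncated ideal-type region via explicit matrices in $\mathrm{PSL}_2(\R)$ that depend analytically on the lengths, following Wolpert's construction, and by citing \cite{Wolpert1987} for the remaining details.
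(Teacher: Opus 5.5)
Your outline is correct and amounts to the argument the paper relies on. The paper gives no proof of its own; it cites Wolpert's thick-part analysis, and you reconstruct its content: Mumford compactness, bounded-length pants coordinates away from the short curves, and smooth convergence of the rescaled collar ends $r^2(dx^2+dy^2)/\sin^2(ry)\to(dx^2+dy^2)/y^2$ to cusp models. Like the paper, you defer the uniform gluing control to \cite{Wolpert1987}.

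The one step to tighten is the bordered case. An $\iota$-invariant pants decomposition of $\widetilde X$ with uniformly bounded lengths containing all short curves does exist, but it is not automatic when some $\mathscr C_2$-curve is short: the boundary components it crosses are then long and cannot be pants curves. So either invoke a symmetric Bers-type bound, or observe that $\iota$ preserves $\widetilde X\setminus SC$ and pass these isometric involutions to the limit by Arzel\`a--Ascoli before restricting to $X$.
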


\begin{definition}\label{def::shortgeodesics}
A primitive closed geodesic on $\widetilde X$ is called \emph{short}
if its length is less than $r^*$. A primitive closed geodesic on $X$
is called \emph{short} if its corresponding closed geodesic on
$\widetilde X$ is short. A boundary component $\Gamma_i$ is called
\emph{short} if $b_i<r^*$.

For the short oriented geodesics in $[C]$, set
\begin{enumerate}
    \item $\mathscr C_1:=\{\gamma:\gamma \text{ is short and }\gamma\cap\partial X=\varnothing\}$;
    \item $\mathscr C_2:=\{\gamma:\gamma \text{ is short and meets }\partial X\text{ transversely}\}$.
\end{enumerate}
\end{definition}

If $\gamma \in \mathscr{C}_1$, then the geometric reflections $n_\gamma=0$. If $\gamma\in \mathscr{C}_2$, then by the collar lemma we have $n_\gamma=2$, and $\gamma$ intersects the boundary $\partial X$ orthogonally at the two reflection points. In this case, the length of $\gamma$ is twice the length of the corresponding orthogeodesic.

It follows from the above discussion that any oriented primitive closed geodesic in $[C]$ with $n_\gamma\neq 0,2$ must have length greater than $r^*$, and therefore cannot be short.

For later use, we distinguish oriented and unoriented representatives. We introduce an equivalence relation on each $\mathscr C_j$, $j=1,2$, by declaring $\gamma_1\sim \gamma_2$ if and only if $\gamma_1=\gamma_2^{-1}$. The quotient sets
\[
\mathscr{C}_j/\sim, \quad j=1,2
\]
therefore represent the corresponding sets of short unoriented closed geodesics in $[C]$.
For geodesics in $\mathscr{C}_1$, the two orientations give two distinct elements. Hence $\mathscr{C}_1$ contains twice as many elements as $\mathscr{C}_1/\sim$. In contrast, $\mathscr{C}_2$ and $\mathscr{C}_2/\sim$ are naturally identified.

It is classical that the number of primitive closed geodesics on a compact hyperbolic surface grows at most exponentially with respect to the length.We shall use the following form of this estimate. For a closed hyperbolic surface Y, Buser's estimates \cite[Theorems 4.1.6 and 6.6.4]{Buser_geometry_2010}, as summarized in \cite[Corollary 15]{WuXue}, give
\begin{equation}\label{eq::closedgeoestimate}
    \pi_Y(r)\leq (\mathtt{g}_Y-1) e^{r+7},
\end{equation}
where $\pi_Y(r)$ denotes the number of primitive closed geodesics on $Y$ of length less than $r$, and $\mathtt{g}_Y$ denotes the genus of $Y$.
 We emphasize that this is a uniform topological estimate, rather than a consequence of the prime geodesic theorem, whose error terms are not uniform on the moduli space.

For the bordered case, the doubling argument separates the reflected closed geodesics according to the parity of $n_\gamma$. If $n_\gamma$ is even, then $\gamma$ unfolds to a closed geodesic on $\widetilde X$ of the same length $r_\gamma$, and the closed-surface counting estimate \eqref{eq::closedgeoestimate} immediately gives an $O(e^r)$ bound. If $n_\gamma$ is odd, however, one period of $\gamma$ does not close up on $\widetilde X$. It joins a point to its image under the deck involution. Hence only after two periods does it give a closed geodesic on $\widetilde X$, now of length $2r_\gamma$. A direct use of \eqref{eq::closedgeoestimate} therefore yields only the rough bound $O(e^{2r})$ for the odd-reflection contribution, which is too crude for our purposes. We thus need the following estimate.

\begin{lemma}\label{lem:rough-count-boundary} Let $\pi_X(r)$ denote the number of primitive closed geodesics on $X=G_0\backslash\H^2$ of length less than $r$, including both those reflecting at the boundary and those contained in the interior of $X$. Then there exists a constant $B>0$, depending only on the genus and the number of boundary components of $X$, such that
\begin{equation}\label{roughestimate}
\pi_X(r)\leq B e^r.
\end{equation}
\end{lemma}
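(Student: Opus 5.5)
The plan is to sort primitive closed geodesics on $X$ by the parity of $n_\gamma$ and treat the two classes separately.

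\emph{Even reflection number.} If $n_\gamma$ is even, then $\gamma$ unfolds to a closed geodesic on the double $\widetilde X$ of the same length $r_\gamma$. The assignment $\gamma\mapsto$ (unfolded geodesic) is at most $2$-to-$1$, since a closed geodesic on $\widetilde X$ and its image under the reflection involution $\sigma$ project to the same curve on $X$. The genus of $\widetilde X$ is $\widetilde{\mathtt g}=2\mathtt g+n-1$, so \eqref{eq::closedgeoestimate} bounds this class by $2(\widetilde{\mathtt g}-1)e^{r+7}$, which has the required form.

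\emph{Odd reflection number: reduction to a lattice count.} If $n_\gamma$ is odd, $\gamma$ corresponds to a glide reflection $h\in G_0\setminus G$ with translation length $r_\gamma$. I will not pass to $h^2$, since that doubles the length and only gives $e^{2r}$. Instead I cut $\gamma$ at one reflection point $p\in\Gamma_i$, which is possible because $n_\gamma\geq1$. Unfolded in $\widetilde X$, one period of $\gamma$ becomes a geodesic arc of length $r_\gamma$ that starts and ends on the fixed curve $\Gamma_i$. The outgoing and incoming directions at $p$ are mirror images of each other, and this matching condition pins down the arc within its homotopy class rel $\Gamma_i$. Lifting to $\H^2$, each such arc gives a pair consisting of a fixed lift $\widehat\Gamma_i$ (with a fundamental segment $\widehat I_i$ of length $b_i$) and a translate $g\widehat\Gamma_i$, $g\in G$, meeting a ball of radius $r$ around $\widehat I_i$. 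The map from odd geodesics to such pairs $(i,g\widehat\Gamma_i)$ is finite-to-one, with multiplicity bounded by the number of reflection points, and that number can be absorbed. So it suffices to bound, for each $i$, the number of $G$-translates of $\widehat\Gamma_i$ that come within distance $r$ of $\widehat I_i$ by $B e^{r}$.

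\emph{The lattice count and its uniformity.} I would do this count by an area argument. Disjoint embedded half-collars of $\Gamma_i$ (of width $w(\Gamma_i)$ in the double) have translates that are pairwise disjoint. Each translate meeting the $r$-neighbourhood of $\widehat I_i$ contributes area $\gtrsim b_i\,\sinh(\text{collar width})$. The $r$-neighbourhood itself has area $\lesssim (b_i+1)e^r$. When $b_i\geq r^*$, the collar width and $b_i$ are controlled from below only up to Lemma~\ref{lem::thickgeometry}, which gives uniform bounded geometry away from the subcollars, so the ratio is $O(e^r)$ with constants depending only on $(n,\mathtt g)$.

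The main obstacle is uniformity when $X$ is degenerate, that is, when $b_i$ is small or when the arc crosses thin collars of short interior curves. There the naive area ratio blows up like $1/b_i$. I would handle this as in Buser's proof of \eqref{eq::closedgeoestimate}. Decompose each arc into pieces in the thick part and pieces crossing standard collars. A crossing of a collar of a short curve $\delta$ that twists $k$ times costs length at least $2\log(1/r_\delta)+\log(1+k^2r_\delta^2)$ or so, so such crossings contribute only polynomially many choices per unit of exponential length. Counting thick-part pieces with the uniform area bound then gives $O(e^{r})$ in total. For short boundary $b_i<r^*$, the arc leaves the half-collar of $\Gamma_i$ essentially orthogonally. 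The number of distinct starting positions along $\widehat I_i$ that yield distinct arcs is controlled by the twist count inside that half-collar, which is again absorbed into the length budget.

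Summing the even and odd contributions gives $\pi_X(r)\leq Be^r$ with $B=B(n,\mathtt g)$.
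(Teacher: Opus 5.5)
Your even-parity step is the same as the paper's and is fine. The gap is in the odd case. You reduce to bounding the number of $G$-translates of $\widehat\Gamma_i$ within distance $r$ of $\widehat I_i$, but that number is \emph{not} bounded by $B(n,\mathtt g)e^r$, so this step cannot be completed as stated.

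Here is a counterexample to the intermediate count. Take a short reflected geodesic $\delta\in\mathscr C_2$ crossing $\Gamma_i$; this happens as soon as $b_i$ is large (see the proof of Lemma~\ref{lem::LC3}). Let $\hat\delta$ be a lift meeting $\widehat I_i$ at $a_0$, and let $d\in G$ be the translation along $\hat\delta$ by $r_\delta$. The lines $d^j\widehat\Gamma_i$ are pairwise distinct lifts of $\Gamma_i$, all orthogonal to $\hat\delta$, at distance $|j|r_\delta$ from $a_0$. Already for $r=1$ this gives $\gtrsim 1/r_\delta$ translates, which is unbounded along the degeneration. Each $d^j\widehat\Gamma_i$ is approached from the $\iota(X)$ side, so these translates carry only even-reflection arcs and the lemma itself is not contradicted; your count simply does not see parity.

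This is exactly where your claim that the case $b_i\geq r^*$ follows from Lemma~\ref{lem::thickgeometry} breaks. A long boundary geodesic is not in the thick part: it runs through the standard subcollars of such $\delta$. Your own accounting, area $\approx b_i\sinh w(\Gamma_i)\approx 2b_ie^{-b_i/2}$ per translate against $(b_i+1)e^r$, only yields $e^{r+b_i/2}$. The twist-counting patch is a heuristic with no mechanism producing $e^r$ rather than $C^r$. It is also not how \eqref{eq::closedgeoestimate} is proved.

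Two smaller points. First, the homotopy class rel $\Gamma_i$ does not pin down the arc. With $c$ the translation along $\widehat\Gamma_i$ and $\tilde\iota_i$ the reflection in it, the glide reflections $gc^k\tilde\iota_i$, $k\in\Z$, lie in one such class and their axes all cross $\widehat\Gamma_i$. What bounds the fiber is fixing both $\widehat I_i$ and the individual translate. Second, the short-$b_i$ case needs no patch. Unit-width strips along unit pieces of distinct translates are disjoint once the collar of $\Gamma_i$ has width at least $1$, which gives $O(e^r)$ directly.

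The missing idea is to base the count in the thick part rather than on the boundary. The paper proceeds as follows:
\begin{enumerate}
\item Take a maximal $\frac{r^*}{2}$-separated set $p_1,\dots,p_k$ outside the standard subcollars of $\widetilde X$. There the injectivity radius is at least $\frac{r^*}{2}$, and $k$ is bounded by area.
\item Observe that an odd-reflection geodesic cannot stay inside a standard subcollar. Its doubled unfolding would be a power of the core, and cores are boundary, $\mathscr C_1$ or $\mathscr C_2$ curves.
\item Attach to $\gamma$ the path $\alpha\cdot\sigma_\gamma\cdot\iota(\alpha)^{-1}$ from $p_i$ to $\iota(p_i)$, of length $<r+r^*$.
\item Its relative class determines $h=g\tilde\iota$, and hence $\gamma$ as the axis of the glide reflection $h$.
\item The endpoints $g\tilde\iota(\tilde p_i)$ are $r^*$-separated points of $B_{\H^2}(\tilde p_i,r+r^*)$, so there are at most $C(r^*)e^r$ of them for each $p_i$.
\end{enumerate}
This is uniform in every degeneration, including long boundary, and needs no collar bookkeeping.
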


\begin{proof}
Let $\iota$ be the involution of $\widetilde X$ induced by reflection across
$\partial X$. By the previous discussion, it suffices to count the primitive closed geodesics with odd $n_\gamma$.

By the collar lemma, outside the standard subcollars the injectivity radius of $\widetilde X$ is at least $\frac{r^*}{2}$.
Choose a maximal set of points $p_1,\ldots,p_k$ outside the standard subcollars such that
\begin{equation*}
d(p_i,p_j)\geq \frac{r^*}{2},\qquad i\neq j.
\end{equation*}
Then the balls $B(p_i,\frac{r^*}{4})$ are pairwise disjoint, and every point outside
the standard subcollars is within distance $\frac{r^*}{2}$ from some $p_i$. Since
\begin{equation*}
\operatorname{Area}(\widetilde X)=4\pi(\widetilde{\mathtt g}-1),
\end{equation*}
we have
\begin{equation*}
k\leq
\frac{2(\widetilde{\mathtt g}-1)}
{\cosh(\frac{r^*}{4})-1}.
\end{equation*}

Let $\gamma$ be a primitive reflected closed geodesic on $X$ with odd
$n_\gamma$ and $r_\gamma\leq r$. By the preceding observation, such a geodesic
cannot be entirely contained in a standard subcollar. Hence its unfolding to $\widetilde X$
meets the complement of the standard subcollars. Choose a point $x$ on the unfolded
segment outside the standard subcollars, and choose $p_i$ such that
\begin{equation*}
d(x,p_i)\leq \frac{r^*}{2}.
\end{equation*}
Since $n_\gamma$ is odd, after cyclically shifting the period, the unfolded
segment may be viewed as a geodesic segment from $x$ to $\iota(x)$ of length
$r_\gamma$. 

Choose a path $\alpha$ from $p_i$ to $x$ of length at most $\frac{r^*}{2}$.
Let $\sigma_\gamma$ denote the unfolded geodesic segment from $x$ to
$\iota(x)$ corresponding to one period of $\gamma$. Consider the path
\[
\beta_\gamma:=\alpha \cdot \sigma_\gamma  \cdot\iota(\alpha)^{-1}
\]
from $p_i$ to $\iota(p_i)$. Its length is at most
\[
r_\gamma+r^*< r+r^*.
\]
Hence the geodesic representative of its relative homotopy class also has
length less than $r+r^*$.

We claim that, for fixed $p_i$, the relative homotopy class
$[\beta_\gamma]$ determines $\gamma$ uniquely, up to orientation.
Fix a lift $\widetilde p_i\in\mathbb H^2$ and a lift
$\widetilde\iota$ of $\iota$. Lifting $\beta_\gamma$ from
$\widetilde p_i$, its endpoint is uniquely of the form
\[
g\widetilde\iota(\widetilde p_i),
\qquad g\in G.
\]
Thus the relative homotopy class determines the orientation-reversing
isometry
\[
h:=g\widetilde\iota.
\]

To see how $h$ determines the original reflected geodesic, let
$\widetilde\alpha$ be the lift of $\alpha$ starting at
$\widetilde p_i$, and write $\widetilde x$ for its endpoint. Let
$\widetilde\sigma_\gamma$ be the lift of $\sigma_\gamma$ starting at
$\widetilde x$. Since the lift of $\beta_\gamma$ ends at
$h\widetilde p_i$, the uniqueness of path lifting applied to the final
segment $\iota(\alpha)^{-1}$ shows that
$\widetilde\sigma_\gamma$ ends at $h\widetilde x$.

We next compare the tangent vectors at the two endpoints. Since
$\sigma_\gamma$ is obtained by unfolding one period of the closed
reflected geodesic, folding the terminal point back by $\iota$ identifies
its terminal tangent vector with its initial tangent vector. Thus,
writing $r_\gamma$ for the length of $\sigma_\gamma$, we have
\begin{equation*}
d\iota_{\iota(x)}
\bigl(\dot\sigma_\gamma(r_\gamma)\bigr)=\dot\sigma_\gamma(0),
\end{equation*}
or equivalently,
\begin{equation*}
\dot\sigma_\gamma(r_\gamma)=
d\iota_x\bigl(\dot\sigma_\gamma(0)\bigr).
\end{equation*}
Let $\Pi:\mathbb H^2\to\widetilde X$ be the covering map. Since
$h=g\widetilde\iota$ is a lift of $\iota$, we have
$\Pi\circ h=\iota\circ \Pi$. Using
$\widetilde\sigma_\gamma(r_\gamma)=h\widetilde x$ and differentiating
$\Pi\circ h=\iota\circ \Pi$, we obtain
\begin{equation*}
\begin{aligned}
d\Pi_{h\widetilde x}
\bigl(\dot{\widetilde\sigma}_\gamma(r_\gamma)\bigr) =
\dot\sigma_\gamma(r_\gamma)=d\iota_x\bigl(\dot\sigma_\gamma(0)\bigr)  =
d\Pi_{h\widetilde x}\,
dh_{\widetilde x}
\bigl(\dot{\widetilde\sigma}_\gamma(0)\bigr).
\end{aligned}
\end{equation*}
Since $\Pi$ is a covering map, $d\Pi_{h\widetilde x}$ is an isomorphism, and
therefore
\begin{equation*}
\dot{\widetilde\sigma}_\gamma(r_\gamma)=dh_{\widetilde x}
\bigl(\dot{\widetilde\sigma}_\gamma(0)\bigr).
\end{equation*}

Let $\widetilde\gamma:\mathbb R\to\mathbb H^2$ denote the complete
geodesic extension of $\widetilde\sigma_\gamma$. The two geodesics
$t\mapsto\widetilde\gamma(t+r_\gamma)$ and
$t\mapsto h\widetilde\gamma(t)$ have the same position and tangent vector
at $t=0$. By uniqueness of the geodesic with prescribed initial data,
\begin{equation}
\widetilde\gamma(t+r_\gamma)
=h\widetilde\gamma(t)
\qquad
\text{for all } t\in\mathbb R.
\end{equation}
Hence the image of $\widetilde\gamma$ is an invariant geodesic of $h$.

Since $h$ is orientation reversing and has positive translation length, it is a glide reflection. Such an isometry has a unique invariant geodesic, namely its axis. It follows that the relative homotopy class $[\beta_\gamma]$, which uniquely
determines $h$, can correspond to at most one reflected closed geodesic on $X$,
up to orientation.
Therefore, the number of primitive odd-reflection geodesics is bounded by the total number of such relative homotopy classes, summed over the finitely many possible choices of $p_i$.

It remains to count these path classes. Fix $p_i$ and choose a lift
$\widetilde p_i\in\mathbb H^2$.
Since $\iota(p_i)$ also lies outside the standard subcollars, the injectivity radius
there is at least $\frac{r^*}{2}$. Therefore for different $g\in G$, these endpoints $g\widetilde\iota(\widetilde p_i)$ are pairwise separated by
at least $r^*$, and the balls of radius $\frac{r^*}{2}$ centered at them are pairwise
disjoint.

If the corresponding path has length at most $r+r^*$, then the endpoint
$g\widetilde\iota(\widetilde p_i)$ lies in
$B_{\mathbb H^2}(\widetilde p_i,r+r^*)$. Hence the disjoint balls of radius
$\frac{r^*}{2}$ centered at these endpoints are contained in
\begin{equation*}
B_{\mathbb H^2}\left(\widetilde p_i,r+\frac{3r^*}{2}\right).
\end{equation*}
It follows that, for each fixed $p_i$,
\begin{equation*}
\#\{\text{such path classes}\}\leq\frac{\cosh(r+\frac{3r^*}{2})-1}
{\cosh(\frac{r^*}{2})-1}
\leq C(r^*)e^r.
\end{equation*}

Summing over the possible choices of $p_i$, and using
$\widetilde{\mathtt g}=2\mathtt g+n-1$, we obtain an
$O_{\mathtt g,n}(e^r)$ bound for the odd-reflection contribution. Combining
this with the even-reflection and interior contributions gives
\begin{equation*}
\pi_X(r)\leq B e^r,
\end{equation*}
where $B>0$ depends only on $\mathtt g$ and $n$.
\end{proof}

\subsection{Selberg zeta estimates}

We now analyze the asymptotic behavior of \eqref{eq::3.31} as the surface $X$ degenerates. Throughout the sequel, for two positive functions $f$ and $g$, we write
\[
f\asymp_c g
\]
to mean that $f$ and $g$ are uniformly comparable up to positive constants depending only on $c$.

The following estimate of Wolpert expresses $Z'_G(1)$ in terms of the short geodesics and the small Laplace eigenvalues.
\begin{proposition}[\cite{Wolpert1987}]\label{eq::3.32}
Let $\widetilde{X}$ be a closed hyperbolic surface of genus $\widetilde{\mathtt g}$, and suppose that $\widetilde{X}$ possesses short closed geodesics of lengths $r_j$, $j=1,\dots,m$. Then
    \begin{equation}
    {Z_G'(1)}\asymp_{\widetilde{\mathtt g}}\ \prod_{j=1}^m e^{-\frac{\pi^2}{3r_j}}r_j^{-1}\!\!\prod_{0<\lambda_k(\widetilde{X})<\frac14}\lambda_k(\widetilde{X}),
\end{equation}
where $\lambda_k(\widetilde X)$ are the Laplace eigenvalues of $\widetilde X$.
\end{proposition}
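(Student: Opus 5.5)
The plan is to reduce the estimate to the degeneration asymptotics of the regularized determinant of the Laplacian, and to check it against a direct factorization of the Selberg product.

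\textbf{Step 1: reduction to the determinant.} By the D'Hoker--Phong/Sarnak identity on a closed hyperbolic surface,
\[
{\det}'\Delta_{\widetilde X}=Z_G'(1)\,e^{E(2\widetilde{\mathtt g}-2)},
\]
where $E$ is a universal constant. Hence $Z_G'(1)\asymp_{\widetilde{\mathtt g}}{\det}'\Delta_{\widetilde X}$, and it suffices to prove
\[
-\zeta_{\Delta}'(0)=\sum_{j}\Bigl(-\frac{\pi^2}{3r_j}-\log r_j\Bigr)+\sum_{0<\lambda_k<\frac14}\log\lambda_k+O_{\widetilde{\mathtt g}}(1).
\]

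\textbf{Step 2: heat-trace decomposition.} I would write $\zeta'_\Delta(0)$ through the heat trace $\operatorname{Tr}e^{-t\Delta}-1$, and split the time integral at $t=1$.
\begin{enumerate}[(a)]
\item Large time. Isolate the eigenvalues $\lambda_k<\frac14$; there are at most $2\widetilde{\mathtt g}-2$ of them. Each contributes $\log\lambda_k$ up to a bounded error. The remaining spectrum lies in $[\frac14,\infty)$, so its contribution for $t\geq1$ is bounded, provided its count is controlled.
\item Small time. Localize the heat trace to the thick part and to the standard subcollars $SC(\widetilde\gamma)$. By Lemma~\ref{lem::thickgeometry}, the thick part has uniformly bounded geometry, so its contribution is uniformly bounded. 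On each collar, compare with the model $\frac{du^2+dv^2}{\sin^2 v}$ and compute the regularized contribution explicitly. This model computation should produce $-\frac{\pi^2}{3r}-\log r+O(1)$.
\end{enumerate}

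\textbf{Step 3: cross-check via the Selberg product.} The product structure of $Z_G$ gives an independent derivation of the short-geodesic term. For a short $\widetilde\gamma$ of length $r$, counted with both orientations, the factor in $Z_G(1)$ is $\prod_{k\geq1}(1-e^{-kr})^2$. The modular transformation of the Dedekind eta function gives
\[
\prod_{k\geq1}(1-e^{-kr})\asymp\sqrt{2\pi/r}\;e^{-\pi^2/(6r)},
\]
so the squared factor is $\asymp r^{-1}e^{-\pi^2/(3r)}$, matching the claimed term. The remaining Euler product over geodesics of length $\ge r^*$ converges only for $\Re s>1$. At $s=1$ I would therefore handle it through the Selberg trace formula. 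The small eigenvalues $\lambda_k=s_k(1-s_k)$ give zeros of $Z_G$ at $s_k$ near $1$. Factoring them out contributes $\prod(1-s_k)\asymp\prod\lambda_k$ to $Z_G'(1)$, since $1-s_k\sim\lambda_k$ when $\lambda_k$ is small. The counting estimate \eqref{eq::closedgeoestimate} controls the long-geodesic sum uniformly in $\widetilde{\mathtt g}$ only.

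\textbf{Main obstacle.} The hardest step is uniformity. The quotient of $Z_G$ by the short-geodesic factors and by the factors $(s-s_k)$ must be shown to be bounded above and below at $s=1$, uniformly on the whole moduli space. The prime geodesic theorem is not uniform there, so it cannot be used. I would try first to use the collar-localized heat-kernel comparison of Step 2(b) for the continuous-spectrum part. The uniform lower bound on the spectrum above $\frac14$, together with \eqref{eq::closedgeoestimate}, should then bound the residual contributions.
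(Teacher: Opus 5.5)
There is a genuine gap, and it sits in Step~2(a), which the rest of your argument relies on.

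\textbf{Step 2(a) fails.} The spectrum in $[\frac14,\infty)$ is not uniformly controlled for $t\geq1$. The standard collar of a geodesic of length $r$ has width $\approx 2\log(1/r)$. By Dirichlet bracketing it forces $\gtrsim\log(1/r)$ eigenvalues of $\widetilde X$ into any fixed window $[\frac14,\frac14+\epsilon]$, since only boundedly many eigenvalues can lie below $\frac14$. Lemma~\ref{wollem3} is the quantitative form of this accumulation. Hence $\int_1^\infty\theta_{\geq\frac14}(t)\,\frac{dt}{t}$ is of order $\sum_j\log(1/r_j)$, not $O(1)$, and your proviso ``provided its count is controlled'' is never met.

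\textbf{The bookkeeping in 2(b) is off for the same reason.} Write the trace-formula term of a short geodesic as $H_r(t)=\frac{e^{-t/4}}{\sqrt{4\pi t}}\sum_{k\geq1}\frac{r\,e^{-k^2r^2/4t}}{\sinh(kr/2)}$. Over all times one has exactly $-\int_0^\infty H_r(t)\,\frac{dt}{t}=\log\prod_{m\geq1}(1-e^{-mr})^2$. For $t\geq1$, however, $H_r(t)\approx\frac{e^{-t/4}}{\sqrt{\pi t}}\log\frac1r$. So the part of this integral over $[1,\infty)$ equals $-\frac{\kappa}{\sqrt\pi}\log\frac1r+O(1)$, where $\kappa=\int_1^\infty t^{-3/2}e^{-t/4}\,dt>0$. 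Consequently the small-time collar contribution is $-\frac{\pi^2}{3r}+\bigl(1+\frac{\kappa}{\sqrt\pi}\bigr)\log\frac1r+O(1)$, not $-\frac{\pi^2}{3r}-\log r+O(1)$. Neither half of the split carries the correct coefficient of $\log r$.

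\textbf{Step 3 does not close the gap.} The Euler product diverges at $s=1$, so reading off the short factor there is only a heuristic. Uniform two-sided control of the remaining quotient at $s=1$ is equivalent to the proposition itself. A lower bound on the spectrum above $\frac14$ does not help, because what fails to be uniform is the spectral density just above $\frac14$.

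\textbf{The missing ingredient} is Wolpert's uniform all-time heat-trace estimate, in which the short-geodesic terms are retained for every $t>0$:
$\bigl|\theta(t)-1-\sum_{0<\lambda_k<\frac14}e^{-\lambda_kt}-\operatorname{Area}(\widetilde X)\,P(t;\mathbb H^2)-\sum_jH_{r_j}(t)\bigr|\leq Be^{-bt}$, with $B,b$ depending only on $\widetilde{\mathtt g}$. This is the closed-surface counterpart of Proposition~\ref{propheattrace}. For $t\leq1$ it follows from the trace formula, with the long geodesics absorbed by Gaussian decay and \eqref{eq::closedgeoestimate}. For $t\geq1$ it follows from Dirichlet--Neumann bracketing over the thick part and the standard subcollars. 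That step uses the explicit two-sided collar bounds (Lemmas~\ref{wollem1}--\ref{wollem3}) and the uniform gap of the thick part once the finitely many small eigenvalues are removed.

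Wolpert then evaluates $Z_G$ at a fixed $s_0>1$, where the product converges and each short geodesic contributes $\asymp e^{-\pi^2/(3r)}r^{1-2s_0}$ by \eqref{eq::3.35}. He integrates McKean's identity from $s_0$ to $1$. Feeding the heat-trace estimate into $\operatorname{Tr}(R(s)-R(s_0))$ produces three things: the simple zero at $s=1$, the factor $\prod_{0<\lambda_k<\frac14}\lambda_k$, and exactly the missing power $r^{2s_0-2}$.

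The paper only cites \cite{Wolpert1987} for this statement, but it carries out the same scheme for $Z_{G_0}(1)$ in Section~\ref{sectiondegenration} and Appendix~\ref{heattrace}. Once the all-time estimate is in hand, your direct route through $\log{\det}'\Delta_{\widetilde X}$ also works, with the usual Mellin regularization as $t\to0$. The exact identity for $-\int_0^\infty H_r\,\frac{dt}{t}$ together with your eta computation then gives $-\frac{\pi^2}{3r}-\log r+O(1)$ per short geodesic. Without that estimate, Steps~2 and~3 do not yield a uniform bound.
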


To calculate the contribution to $Z_{G_0}(s)$ of a closed geodesic $\gamma$ on $X$ of length $r_\gamma$, we define the function
\begin{equation}
    \mathcal{Z}(s,r_\gamma):=\prod_{k=0}^\infty \left(1-e^{-(s+k)r_\gamma}\right).
\end{equation}
The following lemma shows that the contribution of long closed geodesics to the value of $Z_{G_0}(s)$ is uniformly bounded. In other words, it suffices to analyze the effect of those short geodesics as their lengths tend to zero.
\begin{lemma}\label{estimatetail}
For every compact subset $K\subset\{\Re s>1\}$, one has
\begin{equation}\label{eq::3.34}
  {| Z_{G_0}(s)|}\asymp_{K,n,\mathtt{g}}\ {\left|\prod_{\gamma\in \mathscr{C}_1\cup\mathscr{C}_2}\mathcal{Z}(s,r_\gamma)\prod_{b_i<r^*}\left(\mathcal{Z}\Bigl(\frac{s+1}{2},2b_i\Bigr)\right)^2\right|}.
\end{equation}
\end{lemma}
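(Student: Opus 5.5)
Since $Z_{G_0}(s)$ is a product, the plan is to split it into the factors coming from short geodesics and those coming from long ones, and to show that the product over long geodesics is bounded above and below on $K$ by constants depending only on $K,n,\mathtt g$. Fix $K$ compact in $\{\Re s>1\}$ and put $\sigma_0:=\min_{s\in K}\Re s>1$. Using $R_{\partial X}(s+2k)=\prod_i(1-e^{-(s+2k+1)b_i})^2$, for each boundary component we group the $k$-products as
\[
\prod_{k\ge0}\bigl(1-e^{-(s+1+2k)b_i}\bigr)=\prod_{k\ge0}\Bigl(1-e^{-(\frac{s+1}{2}+k)2b_i}\Bigr)=\mathcal Z\Bigl(\tfrac{s+1}{2},2b_i\Bigr).
\]
Hence the boundary part equals exactly $\prod_i\mathcal Z(\frac{s+1}{2},2b_i)^2$. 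For interior and reflected geodesics we group $\prod_k(1-(-1)^{n_\gamma}e^{-(s+2k)r_\gamma})(1-e^{-(s+2k+1)r_\gamma})$. When $n_\gamma$ is even this product is exactly $\mathcal Z(s,r_\gamma)$, which covers every element of $\mathscr C_1\cup\mathscr C_2$, since these have $n_\gamma\in\{0,2\}$. So $Z_{G_0}(s)$ is the displayed product on the right of \eqref{eq::3.34} times a remainder $E(s)$. This remainder collects the factors of the long boundary components ($b_i\ge r^*$) and of all non-short $\gamma\in[C]$, the latter possibly with odd $n_\gamma$, carrying the sign $-(-1)^{n_\gamma}$.

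It remains to show $|E(s)|\asymp 1$ uniformly. Each factor of $E$ has the form $1+\epsilon$ with $|\epsilon|\le e^{-(\sigma_0+j)r}$ for some $j\ge0$, where $r\ge r^*$ is the length in question. For the reflected geodesics this length bound is justified by the remark after Definition~\ref{def::shortgeodesics}: any $\gamma\in[C]$ that is not short has $r_\gamma\ge r^*$. Because $e^{-\sigma_0 r^*}<1$, we can use $|\log|1+\epsilon||\le C|\epsilon|$ for $|\epsilon|\le e^{-\sigma_0 r^*}$, with $C$ depending only on $\sigma_0$ and $r^*$. This gives
\[
\bigl|\log|E(s)|\bigr|\le C\sum_{r\ge r^*}\sum_{j\ge0}2e^{-(\sigma_0+j)r}\le C'\sum_{r_\gamma\ge r^*}e^{-\sigma_0 r_\gamma}+C'n,
\]
where the outer sum runs over primitive geodesics of $X$ (boundary ones included) of length at least $r^*$.

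The main step, and the one that requires uniformity over the moduli space, is to bound this sum. Here Lemma~\ref{lem:rough-count-boundary} applies: $\pi_X(r)\le Be^r$ with $B=B(\mathtt g,n)$. A Stieltjes summation (partial summation) then gives
\[
\sum_{r_\gamma\ge r^*}e^{-\sigma_0r_\gamma}=\int_{r^*}^\infty e^{-\sigma_0 r}\,d\pi_X(r)\le \sigma_0\int_{r^*}^\infty e^{-\sigma_0 r}Be^{r}\,dr<\infty,
\]
and this bound depends only on $K,\mathtt g,n$ because $\sigma_0>1$. The boundary-term contribution of the integration by parts vanishes, since $e^{-\sigma_0 r}\pi_X(r)\to0$ as $r\to\infty$. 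Note that this estimate would fail with the cruder $O(e^{2r})$ count, which is exactly why Lemma~\ref{lem:rough-count-boundary} was needed. We therefore get $e^{-C''}\le|E(s)|\le e^{C''}$ on $K$.

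Combining the two parts yields \eqref{eq::3.34}. The product over $\mathscr C_1\cup\mathscr C_2$ runs over oriented geodesics, matching $[C]$, so no extra bookkeeping is required. The short boundary factors appear squared, exactly as in \eqref{eq::3.34}.
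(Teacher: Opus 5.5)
Your proposal is correct and follows essentially the same route as the paper. You split off the factors of all geodesics (boundary or in $[C]$) of length $\ge r^*$, bound each logarithm by $C_K e^{-(\Re s+k)r}$, and sum using the uniform count $\pi_X(r)\le Be^r$ from Lemma~\ref{lem:rough-count-boundary}; your explicit regrouping of the short factors into $\mathcal Z(s,r_\gamma)$ and $\mathcal Z(\tfrac{s+1}{2},2b_i)^2$, and your use of $\log|E(s)|$ instead of a complex logarithm, spell out what the paper covers with the phrase ``the construction of $Z^{(<r^*)}_{G_0}(s)$''.
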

\begin{proof}

We denote by $Z_{G_0}^{(\geq r^*)}(s)$ and $ Z_{G_0}^{(< r^*)}(s)$ the contributions to $Z_{G_0}(s)$, as defined in \eqref{SelbZetaFunc}, of the primitive closed geodesics with length $r_\gamma\geq r^*$ and $r_\gamma< r^*$, respectively. Then
\[
\log Z_{G_0}(s)=\log Z_{G_0}^{(\geq r^*)}(s)+\log Z_{G_0}^{(< r^*)}(s).
\]

The infinite product in \eqref{SelbZetaFunc} is absolutely convergent in the half-plane $\{\Re s>1\}$. Fix an $s\in K$ and write $x=\Re s>1$. For each primitive closed geodesic $\gamma$ with $r_\gamma \ge r^*$ and each $k\geq 0$, we have the estimate
\[
\left|\log (1\pm e^{-(s+k)r_\gamma})\right| \leq \frac{|e^{-(s+k)r_\gamma}|}{1-|e^{-(s+k)r_\gamma}|}\leq C_Ke^{-(x+k)r_\gamma},
\]
where the constant $C_K$ is taken to be
\[C_K=\max_{s\in K} \frac{1}{1-e^{-\Re sr^*}}.\]
Therefore, we have
\begin{align*}
 |\log Z_{G_0}^{(\geq r^*)}(s)|&\leq \sum_{\substack{\gamma\in[C]\\ r_\gamma\geq r^*}}\sum_{k=0}^\infty \left|\log (1\pm e^{-(s+k)r_\gamma})\right|+2\sum_{b_i\geq r^*}\sum_{k=0}^\infty \left|\log (1-e^{-(s+2k+1)b_i})\right| \notag \\
 &\leq C_K\sum_{\substack{\gamma\in[C]\\ r_\gamma\geq r^*}}\sum_{k=0}^\infty e^{-(x+k)r_\gamma}+2C_K \sum_{b_i\geq r^*}\sum_{k=0}^\infty e^{-(x+2k+1)b_i}\notag \\
 &=C_K\sum_{\substack{\gamma\in[C]\\ r_\gamma\geq r^*}}\frac{e^{-xr_\gamma}}{1-e^{-r_\gamma}}+2C_K\sum_{b_i\geq r^*}\frac{e^{-(x+1)b_i}}{1-e^{-2b_i}}\notag\\
 &\leq\frac{C_K}{1-e^{-r^*}}\sum_{\substack{\gamma\in[C]\\ r_\gamma>r^*}}e^{-xr_\gamma}+\frac{2C_K}{1-e^{-2r^*}}\sum_{b_i\geq r^*}e^{-(x+1)b_i}.
\end{align*}
The second sum is uniformly bounded for $s\in K$ since the number of boundary components is finite.
By \eqref{roughestimate}, for any $x>1$ we have the estimate
\[
\sum_{\substack{\gamma\in[C]\\ r_\gamma>r^*}}e^{-xr_\gamma}\lesssim \int_{r^*} ^\infty e^{-(x-1)r} dr = \frac{e^{-(x-1)r^*}}{x-1}.
\]
This estimate immediately implies that
\[\left|\log Z_{G_0}^{(\geq r^*)}(s)\right| \lesssim\frac{e^{-(\Re s-1)r^*}}{\Re s-1},\]
uniformly for $s\in K$. Consequently, the construction of $Z_{G_0}^{(< r^*)}(s)$ gives \eqref{eq::3.34}.
\end{proof}
The asymptotic behavior of the function $\mathcal{Z}(s,r)$ as $r\to0$ is described by the following estimate.
\begin{lemma}[\cite{Wolpert1987}, Lemma 5.1]
     Given a compact subset $K$ of $\{\Re s>0\}$, we have
     \begin{equation}\label{eq::3.35}
        {|\mathcal{Z}(s,r)|}\asymp_K {\left|e^{-\frac{\pi^2}{6r}}r^{\frac12-s}\right|},
     \end{equation}
     for any $s\in K$ and $0<r<1$.
\end{lemma}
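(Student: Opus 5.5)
The plan is to identify $\mathcal Z(s,r)$ with a $q$-Pochhammer symbol and reduce the estimate to two classical facts: the modular transformation of the Dedekind eta function, and uniform convergence of the $q$-Gamma function to Euler's Gamma function. Set $q:=e^{-r}\in(e^{-1},1)$. Then
\[
\mathcal Z(s,r)=\prod_{k\ge0}(1-q^{s+k})=(q^s;q)_\infty .
\]
Recall Jackson's $q$-Gamma function,
\[
\Gamma_q(s)=\frac{(q;q)_\infty}{(q^s;q)_\infty}(1-q)^{1-s}.
\]
This gives the factorization
\[
\mathcal Z(s,r)=\frac{(q;q)_\infty\,(1-q)^{1-s}}{\Gamma_q(s)} .
\]
The factors will be estimated separately.

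\emph{Step 1: the factor $(q;q)_\infty$.} Write $(q;q)_\infty=q^{-1/24}\eta(\tau)$ with $\tau=\frac{ir}{2\pi}$, and apply $\eta(-1/\tau)=\sqrt{-i\tau}\,\eta(\tau)$. This gives the exact identity
\[
(q;q)_\infty=\sqrt{\tfrac{2\pi}{r}}\;e^{\frac{r}{24}}\,e^{-\frac{\pi^2}{6r}}\prod_{k\ge1}\bigl(1-e^{-4\pi^2k/r}\bigr).
\]
For $0<r<1$, the factor $e^{r/24}$ and the last product lie between two absolute positive constants. Hence $(q;q)_\infty\asymp r^{-1/2}e^{-\pi^2/(6r)}$.

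\emph{Step 2: the factor $(1-q)^{1-s}$.} Using the real branch of the logarithm,
\[
|(1-q)^{1-s}|=(1-e^{-r})^{1-\Re s}.
\]
Since $1-e^{-r}\asymp r$ on $(0,1)$ and $\Re s$ ranges over a bounded set for $s\in K$, this is comparable to $r^{1-\Re s}=|r^{1-s}|$, uniformly for $s\in K$. Combining with Step~1, the claim reduces to showing that $|\Gamma_q(s)|$ is bounded above and below by positive constants, uniformly for $s\in K$ and $q\in(e^{-1},1)$.

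\emph{Step 3: two-sided bounds for $\Gamma_q(s)$.} This is the step I expect to be the main obstacle, because it requires uniformity as $q\to1$. Define $F(s,q):=1/\Gamma_q(s)$ for $q\in[e^{-1},1)$ and $F(s,1):=1/\Gamma(s)$.
\begin{enumerate}[(i)]
\item First, $F(\cdot,q)$ has no zeros on $\{\Re s>0\}$ for each fixed $q$. Its zeros are those of $(q^s;q)_\infty$, namely $s=-k+2\pi i m/\log q$ with $k\in\N_0$, $m\in\Z$, and all of these have $\Re s\le0$. Likewise $1/\Gamma$ does not vanish on $\{\Re s>0\}$.
\item Second, $F$ is jointly continuous on $K\times[e^{-1},1]$. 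The only nontrivial point is the limit $q\to1$. There I would use the classical uniform convergence $\Gamma_q\to\Gamma$ on compact subsets of $\{\Re s>0\}$ (Koornwinder). Alternatively, it can be proved directly by Euler--Maclaurin summation: apply it to
\[
\log\Gamma_q(s)=(1-s)\log(1-q)+\sum_{k\ge0}\bigl[\log(1-q^{k+1})-\log(1-q^{k+s})\bigr],
\]
comparing this sum with Euler's product formula. The tails are controlled by $\sum_k q^{k}|1-q^{s-1}|\lesssim_K 1$, uniformly in $q$.
\end{enumerate}
Given (i) and (ii), $|F|$ is a continuous, nowhere-vanishing function on the compact set $K\times[e^{-1},1]$, so it attains a positive minimum and a finite maximum.

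\emph{Conclusion.} Multiplying the three estimates gives
\[
|\mathcal Z(s,r)|\asymp_K r^{-\frac12}e^{-\frac{\pi^2}{6r}}\,r^{1-\Re s}=\bigl|e^{-\frac{\pi^2}{6r}}r^{\frac12-s}\bigr|,
\]
uniformly for $s\in K$ and $0<r<1$, which is the asserted estimate \eqref{eq::3.35}.
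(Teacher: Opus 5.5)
Your proof is correct. The paper gives no argument of its own for this estimate; it imports it from Wolpert's Lemma~5.1. So your proof is an independent, self-contained derivation rather than a reconstruction of anything in the text.

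The factorization $\mathcal Z(s,r)=(q;q)_\infty(1-q)^{1-s}/\Gamma_q(s)$, with $q=e^{-r}$, separates the three sources of the asymptotics:
\begin{enumerate}
\item The factor $r^{-1/2}e^{-\pi^2/(6r)}$ comes from an exact identity, the modular transformation of $\eta$.
\item The factor $r^{1-s}$ is elementary.
\item All uniformity in $s\in K$ reduces to one fact: $1/\Gamma_q$ extends continuously and without zeros to $q=1$ on $K$.
\end{enumerate}
You check that the zeros $s=-k+2\pi i m/\log q$, and those of $1/\Gamma$, lie in $\{\Re s\le 0\}$. That is exactly what makes the compactness argument on $K\times[e^{-1},1]$ work.

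Compared with the bare citation, your route gives the sharp form $\mathcal Z(s,r)=\frac{\sqrt{2\pi}}{\Gamma(s)}\,r^{\frac12-s}e^{-\frac{\pi^2}{6r}}\,(1+o(1))$ as $r\to0$, uniformly on $K$. It also shows why a hypothesis like $\Re s>0$ is needed: it keeps $K$ away from the zeros of $1/\Gamma_q$ and $1/\Gamma$. For instance, $\mathcal Z(0,r)=0$ because of the $k=0$ factor.

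The price is reliance on the uniform limit $\Gamma_q\to\Gamma$ on compacta, and the Koornwinder reference is the right thing to lean on there. Your Euler--Maclaurin fallback, as written, only controls the range $k\gtrsim 1/r$. In the range $k\lesssim 1/r$ the terms behave like $\log\frac{k+1}{k+s}$, and the comparison with Euler's product there would still have to be carried out.
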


 Combining \eqref{eq::3.34} and \eqref{eq::3.35}, we have the following estimate.

 \begin{proposition}
 For any compact subset $K\subset\{\Re s>1\}$ and for each $s\in K$, the following holds:
 \begin{equation}\label{eq::Z_g0}
      {|Z_{G_0}(s)|}\asymp_{K,n,\mathtt{g}}{\left|\prod_{\gamma\in\mathscr{C}_1\cup \mathscr{C}_2}e^{-\frac{\pi^2}{6r_\gamma}}r_\gamma^{\frac12-s}\prod_{b_i<r^*}e^{-\frac{\pi^2}{6b_i}}b_i^{-s}\right|}.
 \end{equation}
 \end{proposition}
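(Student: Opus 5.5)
The plan is to substitute the small-$r$ asymptotic \eqref{eq::3.35} into each factor on the right-hand side of Lemma~\ref{estimatetail}, and to check that the resulting comparison constants depend only on $K$, $n$ and $\mathtt g$. By Lemma~\ref{estimatetail}, $|Z_{G_0}(s)|$ is comparable to
\[
\Bigl|\prod_{\gamma\in\mathscr C_1\cup\mathscr C_2}\mathcal Z(s,r_\gamma)\prod_{b_i<r^*}\mathcal Z\bigl(\tfrac{s+1}{2},2b_i\bigr)^2\Bigr|.
\]
The number of factors is bounded in terms of $n$ and $\mathtt g$ alone. Indeed, the short geodesics of $X$ lift to distinct short geodesics of $\widetilde X$ (each unoriented class contributes at most twice to $\mathscr C_1\cup\mathscr C_2$), and the collar lemma gives at most $3\widetilde{\mathtt g}-3$ of these, with $\widetilde{\mathtt g}=2\mathtt g+n-1$. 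Because the number of factors is uniformly bounded, multiplying uniform two-sided bounds for the individual factors yields a uniform two-sided bound for the product.

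For the interior and reflected short geodesics, fix the compact set $K\subset\{\Re s>1\}\subset\{\Re s>0\}$. Since $0<r_\gamma<r^*<1$, \eqref{eq::3.35} applies directly and gives
\[
|\mathcal Z(s,r_\gamma)|\asymp_K\bigl|e^{-\pi^2/(6r_\gamma)}r_\gamma^{1/2-s}\bigr|.
\]
This produces the first product in \eqref{eq::Z_g0}.

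For the short boundary components, set $s'=(s+1)/2$. As $s$ ranges over $K$, the point $s'$ ranges over the compact set $K'=\{(s+1)/2:s\in K\}$, which lies in $\{\Re s>1\}\subset\{\Re s>0\}$. Also $2b_i<2r^*<\pi/2$. This exceeds $1$, so I will first shrink $r^*$ below $1/2$, which the paper allows; alternatively one can note that $\mathcal Z(s',r)$ is continuous and nonvanishing on $K'\times[1,\pi/2]$, so the comparison extends to that range. Applying \eqref{eq::3.35} with $r=2b_i$ gives
\[
|\mathcal Z(s',2b_i)|^2\asymp_K\bigl|e^{-\pi^2/(6b_i)}(2b_i)^{1-2s'}\bigr|=\bigl|e^{-\pi^2/(6b_i)}\,2^{-s}b_i^{-s}\bigr|.
\]
The factor $|2^{-s}|$ is bounded above and below on $K$, so it can be absorbed into the constant. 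This gives the second product in \eqref{eq::Z_g0}.

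The only real care needed is this bookkeeping for the boundary factors: tracking the exponent $1-2s'=-s$, and ensuring that $(s+1)/2$ stays in a compact subset of the right half-plane. Both are routine. The uniformity across the moduli space is already supplied by Lemma~\ref{estimatetail} and the collar-lemma bound on the number of short geodesics.
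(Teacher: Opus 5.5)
Your proposal is correct and matches the paper's own proof, which obtains \eqref{eq::Z_g0} by inserting Wolpert's estimate \eqref{eq::3.35} into each factor of Lemma~\ref{estimatetail}, with the boundary factors evaluated at $s'=(s+1)/2$ and $r=2b_i$. Your additional bookkeeping makes explicit details the paper leaves implicit: the uniformly bounded number of factors, the absorption of $|2^{-s}|$, and the fact that $2b_i<\pi/2$ may exceed $1$ (handled by shrinking $r^*$ or by continuity).
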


Next, we need to evaluate the value of $Z_{G_0}(1)$ appearing in \eqref{eq::3.31}. However, the definition of $Z_{G_0}(s)$ in \eqref{SelbZetaFunc}, which is given by an infinite Euler product, does not converge at $s=1$. To overcome this difficulty, we first compute the value of $Z_{G_0}(s)$ at a point $s=s_0>1$ where the Euler product converges. We then recover the value at $s=1$ by integrating the logarithmic derivative $Z_{G_0}'(s)/Z_{G_0}(s)$ along the real axis from $s=s_0$ to $s=1$. This approach allows us to calculate $Z_{G_0}(1)$ via analytic continuation.

The following relation between the Selberg zeta function $Z_{G_0}(s)$ and the determinant of the Dirichlet Laplacian $\Delta_X^D$ on $X$ was established in \cite[Proposition 3.2]{8180465}:
\[
\det (\Delta_X^D-s(1-s))=Z_{G_0}(s)\left(e^{\eta-\frac{\ell(\partial X)}{8\chi(X)}(1-2s)+s(1-s)}\frac{(2\pi)^{s-1}}{\Theta(s)^2\Gamma(s)}\right)^{-\chi(X)},
\]
where $\eta=2\zeta_R'(-1)-\frac14+\frac12\log(2\pi)$ and $\Theta(s)$ denotes the Barnes function. For real $s\geq1$, one has $s(1-s)\leq0$, so $s(1-s)$ is not an eigenvalue of the positive Dirichlet Laplacian $\Delta_X^D$. Since $\Gamma(s)$ and $\Theta(s)$ are analytic and nonzero on the positive real axis, $Z_{G_0}(s)$ is analytic and nonvanishing on $[1,s_0]$ for every $s_0>1$. We will make essential use of the following McKean-type formula, which plays a crucial role in extending the values of $Z_{G_0}(s)$ to regions where the Euler product representation does not converge.
\begin{proposition}[\cite{10.1215/S0012-7094-86-05345-7}, Proposition 3.1] For any $s \in \mathbb C$ away from the poles of the resolvent,
\begin{align}
    \frac{Z'_{G_0}(s)}{(2s-1)Z_{G_0}(s)} = &\Tr(R_{{X}}(s) - R_{{X}}(s_0)) + \frac{Z'_{G_0}(s_0)}{(2s_0-1)Z_{G_0}(s_0)}\notag \\& - \chi({X})\left(\frac{\Gamma'(s)}{\Gamma(s)} - \frac{\Gamma'(s_0)}{\Gamma(s_0)}\right) + \frac{\ell(\partial X)}{4}\left(\frac{1}{2s-1} - \frac{1}{2s_0-1}\right),\notag
\end{align}
where $R_X(s)=(\Delta_X^D-s(1-s))^{-1}$.
\end{proposition}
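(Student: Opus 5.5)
The plan is to derive the identity by logarithmic differentiation of the determinant relation \cite[Proposition 3.2]{8180465} recalled above, on an interval of real $s$ where it holds. The identity then follows on the rest of $\C$ by analytic continuation, since both sides are meromorphic with poles only at poles of the resolvent. Throughout, write $\lambda(s):=s(1-s)$, so that $\lambda'(s)=1-2s$.

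\emph{Step 1 (trace-class difference).} The resolvent identity gives
\[
R_X(s)-R_X(s_0)=\bigl(\lambda(s)-\lambda(s_0)\bigr)R_X(s)R_X(s_0).
\]
Each factor $R_X(\cdot)$ is an operator of order $-2$ on a surface, so their product is trace class. Hence $\Tr(R_X(s)-R_X(s_0))$ is well defined and meromorphic in $s$, with poles exactly where $\lambda(s)$ is a Dirichlet eigenvalue.

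\emph{Step 2 (derivative of $\log\det$).} The goal here is
\[
\frac{d}{ds}\log\det(\Delta^D_X-\lambda(s))
-\frac{d}{ds}\Big|_{s_0}\log\det(\Delta^D_X-\lambda(s))\cdot\frac{2s-1}{2s_0-1}
=(2s-1)\Tr\bigl(R_X(s)-R_X(s_0)\bigr).
\]
Formally, $\frac{d}{ds}\log\det(\Delta^D_X-\lambda(s))=(2s-1)\,\mathrm{``Tr"}R_X(s)$. To justify this rigorously, I will write $\log\det(\Delta^D_X+\mu)=-\partial_w\zeta(w,\mu)|_{w=0}$ and use the heat-kernel expansion, which shows that $\partial_\mu\log\det(\Delta^D_X+\mu)$ equals the finite part of $\Tr(\Delta^D_X+\mu)^{-1}$ up to a term coming from the $t^0$ heat coefficient. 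That extra term is a constant $c$ independent of $\mu$, so
\[
\frac{1}{2s-1}\frac{d}{ds}\log\det(\Delta^D_X-\lambda(s))=\operatorname{FP}\Tr R_X(s)+c,
\]
and subtracting the same expression at $s_0$ cancels both the regularization and $c$. This justification is the main technical obstacle. The route I would try first is to differentiate once more in $\mu$: $\partial_\mu^2\log\det(\Delta^D_X+\mu)=-\Tr(\Delta^D_X+\mu)^{-2}$ holds honestly, since the squared resolvent is trace class. Integrating this once in $\mu$ between $\lambda(s_0)$ and $\lambda(s)$ gives the difference formula directly, without any finite-part bookkeeping.

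\emph{Step 3 (the explicit factor).} Take logarithms in Proposition 3.2 of \cite{8180465} and differentiate. The $s$-derivative of the bracket $\eta-\frac{\ell(\partial X)}{8\chi(X)}(1-2s)+s(1-s)+(s-1)\log 2\pi-2\log\Theta(s)-\log\Gamma(s)$ is computed with the Barnes identity
\[
\frac{\Theta'(s)}{\Theta(s)}=(s-1)\frac{\Gamma'(s)}{\Gamma(s)}-s+\tfrac12+\tfrac12\log 2\pi .
\]
The terms $1-2s$ and $\log 2\pi$ then cancel, and the bracket's derivative reduces to $\frac{\ell(\partial X)}{4\chi(X)}-(2s-1)\frac{\Gamma'(s)}{\Gamma(s)}$. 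Multiplying by $-\chi(X)$, adding $Z_{G_0}'/Z_{G_0}$, and dividing by $2s-1$ gives
\[
\frac{1}{2s-1}\frac{d}{ds}\log\det(\Delta_X^D-\lambda(s))=\frac{Z_{G_0}'(s)}{(2s-1)Z_{G_0}(s)}-\frac{\ell(\partial X)}{4(2s-1)}+\chi(X)\frac{\Gamma'(s)}{\Gamma(s)}.
\]

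\emph{Step 4 (conclusion).} Equate the expression from Step 3 with the one from Step 2, then subtract the same identity evaluated at $s_0$. The constants cancel, and rearranging gives exactly the stated McKean-type formula.
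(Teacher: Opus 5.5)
Your computations are correct. With $\Theta$ the Barnes $G$-function, the identity $\frac{\Theta'}{\Theta}(s)=(s-1)\frac{\Gamma'}{\Gamma}(s)-s+\frac12+\frac12\log 2\pi$ holds, and the bracket's derivative is $\frac{\ell(\partial X)}{4\chi(X)}-(2s-1)\frac{\Gamma'}{\Gamma}(s)$. The route through $\partial_\mu^2\log\det(\Delta_X^D+\mu)=-\Tr(\Delta_X^D+\mu)^{-2}$ legitimately converts the determinant into $\Tr(R_X(s)-R_X(s_0))$. (The extra constant you attribute to the $t^0$ heat coefficient does not actually occur. In zeta regularization $\partial_\mu\log\det(\Delta_X^D+\mu)$ is exactly the finite part of $\sum_k(\lambda_k^D+\mu)^{-w}$ at $w=1$, because the residue there, $\operatorname{Area}(X)/4\pi$, does not depend on $\mu$. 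This is harmless for your argument.) So if Guillarmou--Guillop\'e's determinant identity is taken as a black box, it does imply the stated formula.

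The gap is that the argument runs backwards, so as a proof it is circular. The paper does not prove this proposition; it quotes it from Guillop\'e. The determinant identity you start from is itself derived from this very McKean formula. One integrates it twice in $s$, as in Sarnak's closed-surface argument, which gives $\det(\Delta_X^D-s(1-s))=Z_{G_0}(s)(\cdots)^{-\chi(X)}e^{a s(1-s)+b}$. The two constants $a,b$ are then fixed from the large-$s$ asymptotics of $\log\det(\Delta_X^D+\mu)$. Your Steps 2--4 simply undo that integration. None of the geometric input is derived anywhere in your argument: the boundary geodesics, the reflected geodesics, and the area and boundary-length terms all enter only through the cited formula.

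The missing ingredient is the trace formula for the reflection group $G_0$. For real $s,s_0>1$, write $\Tr(R_X(s)-R_X(s_0))=\int_0^\infty\bigl(e^{ts(1-s)}-e^{ts_0(1-s_0)}\bigr)\theta(t;X)\,dt$. Insert Guillop\'e's heat trace formula, recalled in Appendix~\ref{heattrace}, and integrate term by term using
\[
\int_0^\infty e^{ts(1-s)}\frac{e^{-t/4}}{4\sqrt{\pi t}}e^{-\frac{a^2}{4t}}\,dt=\frac{e^{-(s-\frac12)a}}{2(2s-1)}.
\]
The terms then match as follows.
\begin{itemize}
\item The $-\frac{\ell(\partial X)}{2}$ term produces $-\frac{\ell(\partial X)}{4}\bigl(\frac{1}{2s-1}-\frac{1}{2s_0-1}\bigr)$.
\item The boundary-geodesic and reflected-geodesic sums reproduce, term by term, $\frac{Z'_{G_0}(s)}{(2s-1)Z_{G_0}(s)}-\frac{Z'_{G_0}(s_0)}{(2s_0-1)Z_{G_0}(s_0)}$; compare the series for $Z'_{G_0}/Z_{G_0}$ written out in Section~\ref{sectiondegenration}.
\item The area term $-2\pi\chi(X)P(t;\H^2)$ gives $\chi(X)\bigl(\frac{\Gamma'}{\Gamma}(s)-\frac{\Gamma'}{\Gamma}(s_0)\bigr)$.
\end{itemize}
Rearranging gives the proposition for real $s>1$, and meromorphic continuation gives it everywhere away from the poles. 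This proves the proposition without presupposing the determinant formula, which then becomes a consequence rather than an input.
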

Applying this identity and integrating with respect to $s$ from $s_0$ to $1$, we decompose
\begin{equation}\label{eq::z1zs_0}
    \log Z_{G_0}(1)-\log Z_{G_0}(s_0)=\int_{s_0}^1 \frac{Z_{G_0}'(s)}{Z_{G_0}(s)}ds=I_{tr}+I_{s_0}+I_{\Gamma}+I_{\partial},
\end{equation}
where the individual contributions are defined by
\begin{align*}
I_{tr}&:=\int_{s_0}^1 (2s-1)\Tr(R_{{X}}(s) - R_{{X}}(s_0))ds,\\
I_{s_0}&:=\int_{s_0}^1\frac{(2s-1)Z'_{G_0}(s_0)}{(2s_0-1)Z_{G_0}(s_0)}ds=\frac{s_0(1-s_0)}{2s_0-1}\frac{Z'_{G_0}(s_0)}{Z_{G_0}(s_0)},\\
I_{\Gamma}&:=- \chi({X})\int_{s_0}^1(2s-1)\left(\frac{\Gamma'(s)}{\Gamma(s)} - \frac{\Gamma'(s_0)}{\Gamma(s_0)}\right)ds,\\
I_{\partial}&:=\frac{\ell(\partial X)}{4}\int_{s_0}^1\left(1 - \frac{2s-1}{2s_0-1}\right)ds=-\frac{\ell(\partial X)}{4}\frac{(s_0-1)^2}{2s_0-1}.
\end{align*}

We first estimate the contribution of the term $I_{s_0}$.
A direct computation shows (see also \cite[Proposition 3.1]{10.1215/S0012-7094-86-05345-7}) that
\begin{align*}
\frac{Z'_{G_0}(s_0)}{Z_{G_0}(s_0)} =& \sum_{i=1}^n b_i\sum_{k=1}^\infty\frac{e^{-kb_is_0}}{\sinh(kb_i)}\\
 & +\sum_{\substack{\gamma\in [C]\\ n_\gamma\text{ is even}}}\!\frac{r_\gamma}{2}\sum_{k=1}^\infty\frac{e^{-kr_\gamma(s_0-\frac12)}}{\sinh \big(\frac{kr_\gamma}{2}\big)} \\
 &+\sum_{\substack{\gamma\in [C]\\ n_\gamma\text{ is odd}}}\frac{r_\gamma}{2}\sum_{k=1}^\infty\left\{\frac{e^{-2kr_\gamma(s_0-\frac12)}}{\sinh(kr_\gamma)}-\frac{e^{-(2k-1)r_\gamma(s_0-\frac12)}}{\cosh((k-\frac{1}{2})r_\gamma)}\right\}.
\end{align*}

The rough estimate for the number of closed geodesics \eqref{roughestimate} implies that, for fixed $s_0>1$, the contributions from closed geodesics not short in the above sums are uniformly bounded. Indeed, for any $r\geq r^*$, $c_1,c_2>0$, we have
\begin{equation*}
    \sum_{k=1}^\infty\frac{e^{-k rc_1}}{\sinh(krc_2)}\leq\frac{2e^{-r(c_1+c_2)}}{(1-e^{-2c_2r^*})(1-e^{-r^*(c_1+c_2)})},
\end{equation*}
and
\begin{equation*}
    \sum_{k=1}^\infty \frac{e^{-(k-\frac12)rc_1}}{\cosh((k-\frac12)rc_2)}\leq \frac{2e^{-r\frac{c_1+c_2}{2}}}{1-e^{-r^*(c_1+c_2)}}.
\end{equation*}
Using these controls together with \eqref{roughestimate}, we obtain uniform bounds for each sum,
\begin{align*}
    \sum_{b_i\geq r^*} b_i\sum_{k=1}^\infty\frac{e^{-kb_is_0}}{\sinh(kb_i)}&\leq \frac{2}{(1-e^{-2r^*})(1-e^{-r^*(s_0+1)})} \sum_{b_i\geq r^*} b_i e^{-b_i(s_0+1)}\\
    &\leq \frac{2n}{(1-e^{-2r^*})(1-e^{-r^*(s_0+1)})}\sup_{r>0} re^{-r(s_0+1)},
\end{align*}
\begin{align*}
     \sum_{\substack{\gamma\in [C]\\ n_\gamma\text{ is even}\\ r_\gamma\geq r^*}}\!\frac{r_\gamma}{2}\sum_{k=1}^\infty\frac{e^{-kr_\gamma(s_0-\frac12)}}{\sinh \big(\frac{kr_\gamma}{2}\big)}
     &\leq \frac{1}{(1-e^{-r^*})(1-e^{-r^*s_0})} \sum_{\substack{\gamma\in [C]\\ n_\gamma\text{ is even}\\ r_\gamma\geq r^*}}r_\gamma{e^{-r_\gamma s_0} }\\
    &\lesssim \frac{1}{(1-e^{-r^*})(1-e^{-r^* s_0})}\int_{r^*}^\infty re^{-r(s_0-1)}dr,
\end{align*}
 similarly,
\begin{equation*}
     \sum_{\substack{\gamma\in [C]\\ n_\gamma\text{ is odd}}}\frac{r_\gamma}{2}\sum_{k=1}^\infty\frac{e^{-2kr_\gamma(s_0-\frac12)}}{\sinh (kr_\gamma)}
    \lesssim \frac{1}{(1-e^{-2r^*})(1-e^{-2r^* s_0})}\int_{r^*}^\infty re^{-r(2s_0-1)}dr,
\end{equation*}
and
\begin{align*}
     \sum_{\substack{\gamma\in [C]\\ n_\gamma\text{ is odd}}}\frac{r_\gamma}{2}\sum_{k=1}^\infty\frac{e^{-(2k-1)r_\gamma(s_0-\frac12)}}{\cosh ((k-\frac12)r_\gamma)}&\leq \frac{1}{1-e^{-2r^*s_0}} \sum_{\substack{\gamma\in [C]\\ n_\gamma\text{ is odd}}}r_\gamma{e^{-r_\gamma s_0} }\\
    &\lesssim \frac{1}{1-e^{-2r^* s_0}}\int_{r^*}^\infty re^{-r(s_0-1)}dr.
\end{align*}
As a result, there exists a constant $B_\zeta(s_0,n,\mathtt{g})$ that depends only on the selected point $s_0>1$ and the topology $(n,\mathtt{g})$ of $X$, such that
 \begin{align}
\Bigg| \frac{Z'_{G_0}(s_0)}{Z_{G_0}(s_0)} -\Big(\sum_{b_i<r^*}& b_i\sum_{k=1}^\infty\frac{e^{-kb_is_0}}{\sinh(kb_i)} \notag
 \\ &+\sum_{\substack{\gamma\in [C]\\ n_\gamma\text{ is even}\\r_\gamma<r^*}}\frac{r_\gamma}{2}\sum_{k=1}^\infty\frac{e^{-kr_\gamma(s_0-\frac12)}}{\sinh \big(\frac{kr_\gamma}{2}\big)}\Big)\Bigg|\leq B_\zeta(s_0,n,\mathtt{g}).\label{zs_0equ}
\end{align}

Therefore, we only need to consider the short geodesics $\gamma$ that either lie in the boundary or belong to $[C]$ with $n_{\gamma}=0,2$ (that is, $\gamma\in\mathscr{C}_1\cup\mathscr{C}_2$). The asymptotic behavior of the corresponding series is described by the following lemma.
\begin{lemma}\label{lem:small-r-sinh-sum}
Let $s>0$ be fixed. As $r\to 0^+$,
\[
S(r,s):=r\sum_{k=1}^{\infty}\frac{e^{-krs}}{\sinh(kr)}=-\log(2r)-\frac{\Gamma'}{\Gamma}\left(\frac{s+1}{2}\right)+O(r).
\]
\end{lemma}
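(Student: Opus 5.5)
Write $1/\sinh(kr)=2e^{-kr}/(1-e^{-2kr})=2\sum_{j\ge0}e^{-(2j+1)kr}$ and sum over $k$ first:
\[
S(r,s)=2r\sum_{j\ge0}\sum_{k\ge1}e^{-kr(s+2j+1)}=2r\sum_{j\ge0}\frac{1}{e^{r(s+2j+1)}-1}.
\]
Set $h:=2r$ and $a:=(s+1)/2$, so that $r(s+2j+1)=h(j+a)$ and
\[
S(r,s)=h\sum_{j\ge0}\phi(h(j+a)),\qquad \phi(x):=\frac{1}{e^x-1}.
\]
This is a Riemann sum for the function $\phi$, whose behaviour near $0$ is
\[
\phi(x)=\frac1x-\frac12+O(x).
\]

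The natural approach is to separate the singular part $1/x$ from the regular part. Write $\phi(x)=\psi(x)+\frac{e^{-x}}{x}$, where $\psi(x):=\frac{1}{e^x-1}-\frac{e^{-x}}{x}$ is smooth on $[0,\infty)$ and decays exponentially.

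\textbf{The regular part.} For $\psi$, the Euler--Maclaurin formula (or the midpoint-type estimate for smooth, rapidly decaying integrands) gives
\[
h\sum_{j\ge0}\psi(h(j+a))=\int_0^\infty\psi\,dx+O(h).
\]
The constant $\int_0^\infty\psi\,dx$ can be evaluated, but the cleaner route is to avoid computing it directly. We should pin down the constant by a reference computation, as follows.

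\textbf{The singular part, and the constant.} The whole sum should be regarded as a Mellin-type transform. Define
\[
F(h):=h\sum_{j\ge0}\phi(h(j+a)).
\]
Its Mellin transform is
\[
\int_0^\infty h^{w-1}F(h)\,dh=\Gamma(w)\,\zeta_R(w)\,\zeta_H(w,a),
\]
where $\zeta_H$ is the Hurwitz zeta function. This follows because $\int_0^\infty x^{w-1}\phi(x)\,dx=\Gamma(w)\zeta_R(w)$, and scaling each term $j$ contributes a factor $(j+a)^{-w}$, valid for $\Re w>1$ after accounting for the extra factor $h$.

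The small-$h$ asymptotics of $F$ come from the poles of $\Gamma(w-1)\zeta_R(w-1)\zeta_H(w-1,a)$ (obtained by shifting by one for the factor $h$), or more simply from the poles of $G(w):=\Gamma(w)\zeta_R(w)\zeta_H(w,a)$ at $w=1$, after reindexing. At $w=1$, $G$ has a double pole, since both $\zeta_R$ and $\zeta_H$ have simple poles there. Using
\[
\zeta_R(w)=\frac{1}{w-1}+\gamma+\cdots,\qquad \zeta_H(w,a)=\frac{1}{w-1}-\frac{\Gamma'}{\Gamma}(a)+\cdots,\qquad \Gamma(w)=1-\gamma(w-1)+\cdots,
\]
the residue of $h^{-w}G(w)$ at $w=1$ is
\[
h^{-1}\Bigl(-\log h+\gamma-\frac{\Gamma'}{\Gamma}(a)-\gamma\Bigr).
\]
With the normalization $F(h)=h\cdot(\text{sum})$, this yields
\[
F(h)=-\log h-\frac{\Gamma'}{\Gamma}(a)+O(h).
\]
The next pole is at $w=0$; it gives an $O(h)$ term after multiplication by $h$, with a bounded coefficient because $\zeta_R(0)$ and $\zeta_H(0,a)$ are finite. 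The Euler constants cancel.

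Substituting $h=2r$ and $a=\frac{s+1}{2}$ gives exactly
\[
S(r,s)=-\log(2r)-\frac{\Gamma'}{\Gamma}\Bigl(\frac{s+1}{2}\Bigr)+O(r).
\]

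\textbf{Main obstacle.} The step requiring care is justifying the contour shift. This needs the decay of $\Gamma(w)$ on vertical lines, which dominates the polynomial growth of $\zeta_R$ and $\zeta_H$, and a check that the remainder after passing the pole at $w=0$ is $O(h)$ uniformly for fixed $s>0$. As an alternative, one could do an elementary Euler--Maclaurin comparison of the sum $h\sum_j \frac{1}{h(j+a)}\mathbf 1_{\{j\le N/h\}}$ with the digamma asymptotics. This comparison can serve as a cross-check of the constant $-\frac{\Gamma'}{\Gamma}(a)$.
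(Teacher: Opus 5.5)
Your proof is correct, but it takes a different route from the paper.

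\textbf{How the two arguments differ.} The paper never resums over $k$. It writes $S(r,s)=r\sum_{k\ge1}f_s(kr)+\sum_{k\ge1}\frac{e^{-kr}}{k}$ with $f_s(x)=\frac{e^{-sx}}{\sinh x}-\frac{e^{-x}}{x}$. Since $f_s$ is smooth at $0$ and decays exponentially, Euler--Maclaurin gives $\int_0^\infty f_s\,dx+O(r)$. The singular part sums in closed form to $-\log(1-e^{-r})=-\log r+O(r)$. The constant then comes from the classical integral $\int_0^\infty f_s\,dx=-\frac{\Gamma'}{\Gamma}\bigl(\frac{s+1}{2}\bigr)-\log 2$.

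You instead expand $1/\sinh$ geometrically and pass to the dual sum $F(h)=h\sum_{j\ge0}(e^{h(j+a)}-1)^{-1}$. You then read off the constant from the Laurent expansions at the double pole $w=1$. There the digamma value is the constant term of $\zeta_H(w,a)$, and the Euler constants cancel, as you say.

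\textbf{What each route buys.} The paper's route is more elementary: one smooth Riemann sum and one tabulated integral, with nothing to justify beyond smoothness and decay of $f_s$. Yours needs the contour shift. That is standard here: $\Gamma$ decays exponentially on vertical lines, while $\zeta_R$ and $\zeta_H(\cdot,a)$ grow polynomially in strips, and $a=\frac{s+1}{2}>\frac12$. In return, your method gives the full expansion in powers of $r$ from the further poles at $w=0,-1,\dots$.

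Your abandoned ``regular part'' paragraph is essentially the paper's argument moved to the $j$-variable. Completing it would require two constants:
\[
\int_0^\infty\bigl(\tfrac{1}{e^x-1}-\tfrac{e^{-x}}{x}\bigr)\,dx=\gamma
\quad\text{and}\quad
\sum_{j\ge0}\frac{e^{-h(j+a)}}{j+a}=-\log h-\frac{\Gamma'}{\Gamma}(a)-\gamma+O(h),
\]
with $\gamma$ Euler's constant. The Mellin computation handles this bookkeeping automatically.

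\textbf{A normalization slip to fix.} For $\Re w>1$, $\Gamma(w)\zeta_R(w)\zeta_H(w,a)$ is the Mellin transform of $F(h)/h=\sum_{j\ge0}(e^{h(j+a)}-1)^{-1}$, not of $F$. The Mellin transform of $F$ is $G(w+1)$, so the shift is $w\mapsto w+1$, not $w-1$. Your residue computation is actually done for $F/h$ and then multiplied by $h$, so the conclusion $F(h)=-\log h-\frac{\Gamma'}{\Gamma}(a)+O(h)$ stands.
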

\begin{proof}
Write
\[
S(r,s)=r\sum_{k=1}^{\infty}f_s(kr)+\sum_{k=1}^{\infty}\frac{e^{-kr}}{k},\qquad
f_s(x):=\frac{e^{-sx}}{\sinh x}-\frac{e^{-x}}{x}.
\]
The function $f_s$ is smooth at $x=0$ and decays exponentially at infinity. Hence Euler--Maclaurin formula gives
\[
r\sum_{k=1}^{\infty}f_s(kr)=\int_0^\infty f_s(x)\,dx+O(r).
\]
Also,
\[
\sum_{k=1}^{\infty}\frac{e^{-kr}}{k}=-\log(1-e^{-r})=-\log r+O(r).
\]
Therefore
\[
S(r,s)=-\log r+\int_0^\infty\left(\frac{e^{-sx}}{\sinh x}-\frac{e^{-x}}{x}\right)\,dx+O(r).
\]
The lemma then follows from the integral representation of $\Gamma'/\Gamma$,
\[
\int_0^\infty
\left(
\frac{e^{-sx}}{\sinh x}-\frac{e^{-x}}{x}
\right)dx=-\frac{\Gamma'}{\Gamma}\left(\frac{s+1}{2}\right)-\log 2.
\]
\end{proof}

Combining \eqref{zs_0equ} and Lemma~\ref{lem:small-r-sinh-sum}, we get the estimate for $e^{I_{s_0}}$,
\begin{equation}\label{Is0}
\exp\left(\frac{s_0(1-s_0)}{2s_0-1}\frac{Z'_{G_0}(s_0)}{Z_{G_0}(s_0)}\right)\asymp_{s_0,n,\mathtt{g}}\prod_{\gamma\in\mathscr{C}_1\cup\mathscr{C}_2}r_\gamma^{-\frac{s_0(1-s_0)}{2s_0-1}}\prod_{b_i<r^*}b_i^{- \frac{s_0(1-s_0)}{2s_0-1}}.
\end{equation}

We now estimate $I_{tr}$ and $I_\partial$. We first record a uniform
bound on the number of small Laplace eigenvalues that will also be used
later.

\begin{lemma}\label{lem:number-small-eigenvalues}
Let $X$ be a compact orientable hyperbolic surface of genus $\mathtt g$
with $n$ geodesic boundary components. Then the numbers of Neumann and
Dirichlet eigenvalues of $X$ in $[0,\frac14]$, counted with multiplicity,
are uniformly bounded in terms of $(n,\mathtt g)$.
\end{lemma}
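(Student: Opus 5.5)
The plan is to reduce the bordered problem to the closed double $\widetilde X=G\backslash\H^2$, where a uniform count of small eigenvalues is classical. The Neumann and Dirichlet eigenfunctions of $X$ extend, by even and odd reflection across $\partial X$ respectively, to eigenfunctions of the Laplacian on $\widetilde X$ with the same eigenvalue. An even or odd extension of a Neumann or Dirichlet eigenfunction is smooth across the geodesic boundary. This follows because $\partial X$ is totally geodesic and the reflection $\iota$ is an isometry: the extended function is a weak eigenfunction, hence smooth by elliptic regularity.

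Conversely, the spectrum of $\widetilde X$ splits orthogonally into the $\iota$-even and $\iota$-odd eigenspaces, and these are exactly the Neumann and Dirichlet spectra of $X$. Consequently, for every $\lambda$,
\[
\#\{k:\lambda_k^N(X)\le\lambda\}+\#\{k:\lambda_k^D(X)\le\lambda\}=\#\{k:\lambda_k(\widetilde X)\le\lambda\}.
\]
So it suffices to bound the number of eigenvalues of $\widetilde X$ in $[0,\tfrac14]$ uniformly in the genus $\widetilde{\mathtt g}=2\mathtt g+n-1$.

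For the closed surface I will invoke the classical uniform bound of Buser (also Otal--Rosas): a closed hyperbolic surface of genus $\widetilde{\mathtt g}$ has at most $2\widetilde{\mathtt g}-2$ eigenvalues in $[0,\tfrac14]$, counted with multiplicity, including $\lambda_0=0$. Any bound depending only on $\widetilde{\mathtt g}$ suffices, for example Buser's $4\widetilde{\mathtt g}-2$. If a self-contained argument is preferred, I would use min-max as in Schoen--Wolpert--Yau and Buser, in three steps.

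First, decompose $\widetilde X$ into $2\widetilde{\mathtt g}-2$ pairs of pants via a Bers-bounded pants decomposition. Each pair of pants carries Neumann boundary conditions on its boundary geodesics.

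Second, show that the first nonzero Neumann eigenvalue of a hyperbolic pair of pants, or at least the second Neumann eigenvalue counted from $0$, is at least $\tfrac14$. This is Buser's lemma on Y-pieces, which holds uniformly even when the cuffs degenerate, because the collars have $\lambda_1\ge\tfrac14$ cross-sectional behaviour.

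Third, apply Dirichlet--Neumann bracketing, which gives $N_{\widetilde X}(\tfrac14)\le\sum_{\text{pants}}N^{\mathrm{Neu}}_P(\tfrac14)$, and this is bounded by a constant times $\widetilde{\mathtt g}$.

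The main obstacle is the Y-piece estimate, which must be uniform as the boundary lengths tend to $0$ or become large. For this step I will simply cite Buser's book, where the count of eigenvalues below $\tfrac14$ is proved uniformly over Teichmüller space, rather than reprove it. The reflection argument itself is routine.
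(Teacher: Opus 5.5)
Your argument is correct and is essentially the paper's proof: double $X$, split the spectrum of $\widetilde X$ into $\iota$-even (Neumann) and $\iota$-odd (Dirichlet) parts, and apply the Otal--Rosas bound of $2\widetilde{\mathtt g}-2$ eigenvalues in $[0,\frac14]$ with $\widetilde{\mathtt g}=2\mathtt g+n-1$. The optional pants-decomposition sketch is not needed, but its first alternative ($\lambda_1^N\ge\frac14$ on every pair of pants) is false as stated, since that eigenvalue tends to $0$ when all three cuffs become long, so that route would only work with a bounded count of small Neumann eigenvalues per pair of pants.
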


\begin{proof}
Let $\widetilde X$ be the double of $X$. Otal and Rosas
\cite{OtalRosas2009} proved that $\widetilde X$ has at most
$2\widetilde{\mathtt g}-2$ eigenvalues in $[0,\frac14]$, counted with
multiplicity. Since the reflection involution commutes with the Laplacian,
the spectrum of $\widetilde X$ decomposes into its even and odd parts,
which correspond respectively to the Neumann and Dirichlet spectra of
$X$. Since $\widetilde{\mathtt g}$ is determined by $(n,\mathtt g)$,
the assertion follows.
\end{proof}

By symbol calculus, the operator $R_{{X}}(s) - R_{{X}}(s_0)$ is a trace class operator. If we denote the $k$-th Dirichlet eigenvalue of the Laplacian $\Delta_X^D$ by $\lambda_k^D(X)$ and set $\eta=s_0(1-s_0)<0$, then
\begin{align}
   I_{tr}&=\int_{s_0}^1 (2s-1) \sum_{k=1}^\infty \left(\frac{1}{\lambda_k^D(X)-s(1-s)}-\frac{1}{\lambda_k^D(X)-\eta}\right)ds \notag\\
   &=\sum_{k=1}^\infty \left(\log \frac{\lambda_k^D(X)}{\lambda_k^D(X)-\eta}-\frac{\eta}{\lambda_k^D(X)-\eta}\right).\notag
\end{align}

To separate the contribution of the small eigenvalues from the rest of the
spectrum, we introduce the following notation. For technical convenience, set
\[
\theta(t;X):=\sum_{k=1}^\infty e^{-\lambda_k^D(X)t},\qquad\theta_{\geq\frac14}(t;X):=\sum_{\lambda_k^D(X)\geq \frac14}e^{-\lambda_k^D(X)t},
\]
which denotes the heat trace and the truncated heat trace, respectively. We also
denote by
\[
I_{s}:=\sum_{\lambda_k^D(X)<\frac14} \left(\log \frac{\lambda_k^D(X)}{\lambda_k^D(X)-\eta}-\frac{\eta}{\lambda_k^D(X)-\eta}\right)
\]
the contribution of the small Dirichlet eigenvalues.

Using the identity
\[
\log \frac{\lambda_k^D(X)}{\lambda_k^D(X)-\eta}=\int_0^\infty \frac{e^{-(\lambda_k^D(X)-\eta)t}-e^{-\lambda_k^D(X)t}}{t}dt,
\]
we can express the remaining part of the trace term in terms of the truncated
heat trace. Indeed,
\begin{align}
    I_{tr}-I_s&=\sum_{\lambda_k^D(X)\geq \frac14} \int_0^\infty \Big(\frac{e^{-(\lambda_k^D(X)-\eta)t}-e^{-\lambda_k^D(X)t}}{t}-\eta e^{-(\lambda_k^D(X)-\eta)t}\Big)dt\notag\\
    &=\int_0^\infty \left(\frac{e^{\eta t}-1}{t}-\eta e^{\eta t}\right)\theta_{\geq\frac14}(t;X) dt.\label{trexpression}
\end{align}
Therefore, in order to estimate $I_{tr}$, it suffices to obtain suitable uniform bounds for $\theta_{\geq\frac14}(t;X)$. The following proposition provides the required estimate, whose proof will be given in Appendix~\ref{heattrace}.
\begin{proposition}\label{propheattrace}
There exist positive constants $B_{tr}$ and $b_{tr}$ that depend only on the topology of $X$ such that for all $t>0$,
\begin{align*}
    \bigg|\theta(t;X)-\Bigg(&\sum_{\lambda_k^D(X)<\frac14}e^{-\lambda_k^D(X)t}-2\pi \chi (X) P(t;\mathbb{H}^2)-\frac{e^{-\frac{t}{4}}}{8\sqrt{\pi t}}\ell(\partial X)\\ &+\frac{e^{-\frac{t}{4}}}{2\sqrt{\pi t}}\sum_{b_i<r^*} b_i\sum_{k=1}^\infty\frac{e^{-\frac{(kb_i)^2}{4t}- \frac{kb_i}{2}}}{\sinh(kb_i)}\\
  &+\frac{e^{-\frac{t}{4}}}{4\sqrt{\pi t}}\sum_{\gamma\in\mathscr{C}_1\cup\mathscr{C}_2}r_\gamma\sum_{k=1}^\infty\frac{e^{-\frac{(kr_\gamma)^2}{4t}}}{\sinh\big(\frac{kr_\gamma}{2}\big)} \Bigg)\Bigg|\leq B_{tr}e^{-b_{tr}t},
\end{align*}
where $P(t;\mathbb H^2)$ is the on-diagonal heat kernel of the hyperbolic plane:
\[
P(t;\mathbb H^2)
:=
\frac{e^{-t/4}}{(4\pi t)^{3/2}}
\int_0^\infty \frac{q}{\sinh(q/2)}e^{-\frac{q^2}{4t}}\,dq.
\]
\end{proposition}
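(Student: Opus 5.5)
The plan is to unfold the Dirichlet heat trace of $X$ into a Selberg-type trace formula on the double $\widetilde X$, and then separate the uniformly controlled long-geodesic contributions from the explicit short ones.

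\textbf{Step 1: unfolding.} Dirichlet eigenfunctions of $X$ are the odd eigenfunctions of $\widetilde X$ under the reflection $\iota$. Hence the Dirichlet heat kernel is the odd projection of the heat kernel $p_{\widetilde X}$, and
\[
\theta(t;X)=\tfrac12\Bigl(\Tr e^{-t\Delta_{\widetilde X}}-\Tr\bigl(\iota^*e^{-t\Delta_{\widetilde X}}\bigr)\Bigr).
\]
Equivalently, lifting to $\mathbb H^2$, $\theta(t;X)=\int_{\mathcal F}\sum_{h\in G_0}\det(h)\,P(t;z,hz)\,dz$ over a fundamental domain of $G_0$.

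\textbf{Step 2: term-by-term evaluation.} I will classify the elements $h$ as in Guillop\'e's trace formula.
\begin{enumerate}[(i)]
\item The identity gives $\operatorname{Area}(X)P(t;\mathbb H^2)=-2\pi\chi(X)P(t;\mathbb H^2)$.
\item Reflections across the boundary geodesics, and their conjugates, give the boundary term. Integrating $-P(t;z,\bar z)$ over a strip of width $b_i$ and infinite transversal extent should give $-\frac{e^{-t/4}}{8\sqrt{\pi t}}b_i$ per boundary component; this is a standard explicit half-line integral.
\item Hyperbolic elements of $G$ give the usual closed-geodesic terms $\frac{e^{-t/4}}{4\sqrt{\pi t}}\,r_\gamma\,\frac{e^{-(kr_\gamma)^2/4t}}{\sinh(kr_\gamma/2)}$. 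The boundary geodesic contributes its powers, with sign $\pm$ according to whether $\iota$ enters.
\item Glide reflections give the cosh-type terms.
\end{enumerate}
Collecting the contributions of the boundary geodesic $\Gamma_i$, namely its hyperbolic powers combined with the reflection composites, should produce exactly $-\frac{e^{-t/4}}{2\sqrt{\pi t}}\,b_i\sum_k \frac{e^{-(kb_i)^2/4t-kb_i/2}}{\sinh(kb_i)}$. I will check this with the $\tanh/\coth$-type identities, mirroring the $Z'_{G_0}/Z_{G_0}$ expansion above.

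\textbf{Step 3: the error estimate.} With the identity, boundary-length term, short boundary terms and $\mathscr C_1\cup\mathscr C_2$ terms isolated, what remains is the sum over non-short geodesics, meaning $r_\gamma\ge r^*$ or odd/higher $n_\gamma$. I must show that this remainder equals $\sum_{\lambda^D_k<1/4}e^{-\lambda_k^D t}-\theta(t;X)$ up to $O(e^{-b_{tr}t})$. I will split by time.
\begin{itemize}
\item For small $t$ (say $t\le 1$), Lemma~\ref{lem:rough-count-boundary} together with $e^{-r^2/4t}$ makes the long-geodesic sum $O(e^{-c/t})$. The small-eigenvalue sum is bounded by Lemma~\ref{lem:number-small-eigenvalues}.
\item For large $t$, I will not use the geometric side. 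The left side equals $\theta_{\ge 1/4}(t)$ minus the explicit terms, and each explicit term decays like $e^{-t/4}$ times a power of $t$, which is harmless. It then suffices to bound $\theta_{\ge1/4}(t)\le e^{-(t-1)/4}\,\theta_{\ge1/4}(1)\le e^{-(t-1)/4}\,\theta(1;X)$, with $\theta(1;X)$ uniformly bounded.
\end{itemize}

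\textbf{Main obstacle.} The hard step is this uniform bound on $\theta(1;X)$ (and likewise on the intermediate-$t$ range) over moduli space. The difficulty is that $\theta(1;X)$ itself contains the explicit short-geodesic terms, which blow up like $1/r_\gamma$.

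So the bound must be stated for $\theta(t;X)$ minus those explicit collar terms. I would prove it by a domain decomposition: Dirichlet bracketing on the thick part plus standard subcollars, with Lemma~\ref{lem::thickgeometry} giving uniform heat-kernel bounds on the thick part. On each collar I would compare with the explicit model heat kernel of the hyperbolic cylinder, or of the half-collar at a short boundary. Its trace reproduces exactly the short-geodesic terms up to uniformly bounded errors, via Duhamel/finite-propagation arguments.

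In the range $t\le T_0$ this gives uniform boundedness of the remainder, and the large-$t$ argument then yields the exponential decay. The delicate points are the gluing errors at the subcollar boundaries and the half-collars meeting $\partial X$.
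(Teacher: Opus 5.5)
Your treatment of $0<t\le 1$ is essentially the paper's. The paper cites Guillop\'e's trace formula instead of re-deriving it by unfolding, then kills the non-short terms with Gaussian decay and Lemma~\ref{lem:rough-count-boundary}. One correction there: the combined boundary-geodesic term has a plus sign, as in the statement, since $\frac{1}{\sinh(x/2)}-\frac{1}{\cosh(x/2)}=\frac{2e^{-x/2}}{\sinh x}$.

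The genuine gap is the large-$t$ step, because the explicit terms are not harmless: their coefficients are unbounded on moduli space. Uniformly in $t\geq1$ one has $r\sum_k e^{-(kr)^2/4t}/\sinh(kr/2)=2\log\frac1r+O(1)$. Also $\ell(\partial X)=\sum_{\mathscr C_2/\sim}4\log\frac4{r_\gamma}+O(1)$ by Lemma~\ref{lem::LC3}. So each explicit term is only $O\bigl(\log\frac1r\,e^{-t/4}t^{-1/2}\bigr)$. These quantities grow logarithmically, not like $1/r_\gamma$, and $\theta_{\geq\frac14}(1;X)$ is likewise of size $\log\frac1r$.

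The proposition therefore asserts a cancellation between $\theta_{\geq\frac14}(t;X)$ and the explicit collar terms at every $t$ in the window $1\le t\lesssim\log^2\frac1r$. This window is unbounded along a degenerating family. A uniform bound on a fixed interval $[1,T_0]$ cannot be carried past $T_0$ by the semigroup property. Indeed, $e^{t/4}\theta_{\geq\frac14}(t;X)$ is merely nonincreasing, whereas $e^{t/4}$ times the explicit terms decays like $t^{-1/2}\log\frac1r$. The comparison $\theta_{\geq\frac14}(t)\le e^{-(t-1)/4}\theta_{\geq\frac14}(1)$ therefore leaves an uncancelled error of order $(1-t^{-1/2})\log\frac1r$.

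Conversely, the step you flag as the main obstacle is easy. On $t\le T_0$ your small-time argument works verbatim, because $e^{-r^2/4T_0}$ still beats the $e^{r}$ growth in Lemma~\ref{lem:rough-count-boundary}, so no Duhamel comparison is needed there. Duhamel or finite-propagation errors also grow with $t$, so they could not supply all-time control anyway.

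What is missing is an argument valid for all $t\geq1$. The paper obtains it as follows:
\begin{enumerate}[(1)]
\item It applies Dirichlet--Neumann bracketing to $X$ against the thick part $R$ and the standard subcollars. A uniformly bounded number of small eigenvalues is removed, so that every remaining eigenvalue is $\geq\rho(n,\mathtt g)$ (Lemma~\ref{heattracedeco}).
\item It uses Wolpert's two-sided collar heat-trace bounds (Lemmas~\ref{wollem1}--\ref{wollem3}). These hold for all $t$ and reproduce the $\frac{e^{-t/4}}{\sqrt{\pi t}}\log\frac1r$ main terms with $O(e^{-ct})$ errors.
\item It treats the $\mathscr C_2$ collars by the cusp model of Lemma~\ref{estimateDSC3}.
\item It uses Lemma~\ref{lem::LC3} to cancel $\ell(\partial X)$ against the $\mathscr C_2$ terms.
\end{enumerate}
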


Next we substitute this estimate into \eqref{trexpression}.
To streamline the computation, fix $\eta_0=s_0(1-s_0)<0$. For $u\geq0$, define
\begin{align*}
\mathcal{I}(u,\eta_0):&=\int_0^\infty \frac{e^{-\frac{t}{4}}}{4\sqrt{\pi t}} e^{-\frac{u^2}{t}}\left(\frac{e^{\eta_0 t}-1}{t}-\eta_0 e^{\eta_0 t}\right)dt\\
&=\frac{e^{-(2s_0-1)u}-e^{-u}}{4u}-\frac{s_0(1-s_0)}{2(2s_0-1)}e^{-(2s_0-1)u}.
\end{align*}
With the continuous extension at $u=0$, this function satisfies
\begin{align*}
    \mathcal{I}(u,\eta_0)=-\frac{1}{2}\frac{(s_0-1)^2}{2s_0-1}+O(u),\quad u\to 0^+.
\end{align*}
Consequently, the contribution of the total boundary length term to the integral $I_{tr}$ is given by
\[
-\frac{\ell(\partial X)}{2} \mathcal{I}(0,\eta_0)=\frac{\ell(\partial X)}{4}\frac{(s_0-1)^2}{2s_0-1}.
\]
This term is precisely canceled by the boundary term $I_\partial$.
The contribution of the term $-2\pi \chi (X) P(t;\mathbb{H}^2)$ to the integral $I_{tr}$ is independent of the degeneration of $X$. Hence we do not need to compute it.

In summary, there exists a constant $B_{tr}(s_0,n,\mathtt{g})$, such that
\begin{align*}
\Bigg| I_{tr}+I_\partial-\Bigl(I_{s}
+\sum_{b_i<r^*}2b_i\sum_{k=1}^\infty
\frac{\mathcal{I}(\frac{kb_i}{2},\eta_0)e^{-\frac{kb_i}{2}}}{\sinh(kb_i)} \\
+\sum_{\gamma\in\mathscr{C}_1\cup\mathscr{C}_2}
r_\gamma&\sum_{k=1}^\infty
\frac{\mathcal{I}(\frac{kr_\gamma}{2},\eta_0)}{\sinh(\frac{kr_\gamma}{2})}
\Bigr)\Bigg|
\leq B_{tr}(s_0,n,\mathtt{g}).
\end{align*}
Since $\mathcal{I}(u,\eta_0)$ decays exponentially as $u\to +\infty$, Lemma~\ref{lem:small-r-sinh-sum} gives, uniformly for $b_i<r^*$,
\begin{equation*}
    2b_i\sum_{k=1}^\infty
\frac{\mathcal{I}(\frac{kb_i}{2},\eta_0)e^{-\frac{kb_i}{2}}}{\sinh(kb_i)}=2b_i\sum_{k=1}^\infty \frac{\mathcal{I}(0,\eta_0)e^{-kb_is_0}}{\sinh(kb_i)}+O(1)=\frac{(s_0-1)^2}{2s_0-1}\log b_i+O(1).
\end{equation*}
Similarly, uniformly for $r_\gamma<r^*$,
\begin{equation*}
    r_\gamma \sum_{k=1}^\infty
\frac{\mathcal{I}(\frac{kr_\gamma}{2},\eta_0)}{\sinh(\frac{kr_\gamma}{2})}=\frac{(s_0-1)^2}{2s_0-1} \log r_\gamma +O(1).
\end{equation*}
It follows that
\begin{equation}\label{Itr+Id}
    {e^{I_{tr}+I_\partial}}\asymp_{s_0,n,\mathtt{g}} \prod_{\lambda_k^D(X)<\frac14}\lambda_k^D(X) \prod_{\gamma\in\mathscr{C}_1\cup\mathscr{C}_2}r_\gamma^{\frac{(s_0-1)^2}{2s_0-1}}\prod_{b_i<r^*}b_i^{\frac{(s_0-1)^2}{2s_0-1}}.
\end{equation}

The term $I_\Gamma$ depends only on $s_0$ and the topology, so it can be
absorbed into the comparison constant. Combining \eqref{eq::Z_g0},
\eqref{eq::z1zs_0}, \eqref{Is0}, and \eqref{Itr+Id}, the exponent of each
short boundary length is
\[
-s_0+\frac{s_0(s_0-1)}{2s_0-1}
+\frac{(s_0-1)^2}{2s_0-1}=-1,
\]
whereas the exponent of each short interior geodesic length is
\[
\frac12-s_0+\frac{s_0(s_0-1)}{2s_0-1}+\frac{(s_0-1)^2}{2s_0-1}=-\frac12.
\]
Thus all dependence on the auxiliary parameter $s_0$ cancels, and we obtain
the following bordered analogue of Wolpert's estimate in
Proposition~\ref{eq::3.32}.
\begin{proposition}\label{eq::Z_G01}
Let $X$ be a connected orientable compact hyperbolic surface of genus $\mathtt g$ with $n$ geodesic boundary components of lengths $b_1,\ldots,b_n$. Then
\[
  {Z_{G_0}(1)}\asymp_{n,\mathtt{g}}\prod_{\lambda_k^D(X)<\frac14}\lambda_k^D(X) \prod_{\gamma\in\mathscr{C}_1\cup\mathscr{C}_2}e^{-\frac{\pi^2}{6r_\gamma}}r_\gamma^{-\frac12}\prod_{b_i<r^*}e^{-\frac{\pi^2}{6b_i}}b_i^{-1}.\]
\end{proposition}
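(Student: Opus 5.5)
The estimate follows by assembling the pieces already prepared, with one auxiliary point fixed once and for all. I would take $s_0=2$, so that $K=\{s_0\}$ is a compact subset of $\{\Re s>1\}$. All constants will then depend only on $(n,\mathtt g)$.

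The starting point is the decomposition \eqref{eq::z1zs_0},
\[
Z_{G_0}(1)=Z_{G_0}(s_0)\,e^{I_{s_0}}\,e^{I_{tr}+I_\partial}\,e^{I_\Gamma}.
\]
Each factor is handled in turn. First, $Z_{G_0}(s_0)>0$ is real, and by \eqref{eq::Z_g0} it is comparable to
\[
\prod_{\mathscr C_1\cup\mathscr C_2}e^{-\pi^2/(6r_\gamma)}r_\gamma^{1/2-s_0}\prod_{b_i<r^*}e^{-\pi^2/(6b_i)}b_i^{-s_0}.
\]
Second, $e^{I_{s_0}}$ is given by \eqref{Is0}. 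Here I would verify that the bounded error $B_\zeta$ in \eqref{zs_0equ}, multiplied by the fixed factor $s_0(1-s_0)/(2s_0-1)$, only changes constants. I would also check that the short even-reflection geodesics appearing in \eqref{zs_0equ} are exactly $\mathscr C_1\cup\mathscr C_2$, with the normalization $r_\gamma/2$ and $\sinh(kr_\gamma/2)$ matching Lemma~\ref{lem:small-r-sinh-sum} after substituting $r=r_\gamma/2$. Third, $e^{I_{tr}+I_\partial}$ is given by \eqref{Itr+Id}. Fourth, $I_\Gamma$ is a constant depending only on $s_0$ and $\chi(X)$.

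Multiplying these four factors, the exponential factors $e^{-\pi^2/(6\cdot)}$ come only from $Z_{G_0}(s_0)$. The power of each short length is computed by the exponent bookkeeping displayed just before the statement. For boundary lengths it is $-s_0+\frac{s_0(s_0-1)}{2s_0-1}+\frac{(s_0-1)^2}{2s_0-1}=-1$. For short geodesics in $\mathscr C_1\cup\mathscr C_2$ it is $-\tfrac12$. The small Dirichlet eigenvalues enter only through $I_s$ inside \eqref{Itr+Id}. Their factor is $\prod\lambda_k^D/(\lambda_k^D-\eta_0)\cdot e^{-\eta_0/(\lambda_k^D-\eta_0)}$. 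Since $\lambda_k^D<\frac14$ and $-\eta_0=2$, the denominators and exponentials are bounded above and below. Their number is also bounded, by Lemma~\ref{lem:number-small-eigenvalues}. This leaves $\prod\lambda_k^D$ up to constants. All Dirichlet eigenvalues are positive, so no zero mode arises.

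The main obstacle is the uniformity of the error terms in \eqref{Itr+Id}. Specifically, substituting Proposition~\ref{propheattrace} into \eqref{trexpression} must produce a remainder bounded independently of the degeneration. The heat trace error is $O(e^{-b_{tr}t})$, but the kernel $(e^{\eta_0 t}-1)/t-\eta_0e^{\eta_0t}$ is $O(t)$ near $0$ and decays like $e^{\eta_0 t}$ at infinity. The remainder integral therefore converges uniformly. I would also confirm two further points. The removed small-eigenvalue part of $\theta$ is exactly $\sum_{\lambda<1/4}e^{-\lambda t}$, so it cancels the $I_s$ separation. The term $\chi(X)P(t;\mathbb H^2)$ contributes a topological constant. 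Finally, the $O(1)$ asymptotics from Lemma~\ref{lem:small-r-sinh-sum} are uniform for $r<r^*$, after reducing $r^*$ if necessary. Granting these checks, the product gives the stated comparison.
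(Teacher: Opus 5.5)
Your proposal is correct and is essentially the paper's own argument: fix $s_0>1$, write $Z_{G_0}(1)=Z_{G_0}(s_0)\,e^{I_{s_0}}\,e^{I_{tr}+I_\partial}\,e^{I_\Gamma}$ via \eqref{eq::z1zs_0}, insert \eqref{eq::Z_g0}, \eqref{Is0}, and \eqref{Itr+Id}, and check that the exponents of the short lengths collapse to $-1$ for boundary components and to $-\frac12$ for geodesics in $\mathscr C_1\cup\mathscr C_2$. One small slip, which does not affect the argument: the kernel $(e^{\eta_0 t}-1)/t-\eta_0e^{\eta_0 t}$ decays only like $1/t$ as $t\to\infty$, not like $e^{\eta_0 t}$; uniform convergence still holds because the remainder in Proposition~\ref{propheattrace} is $O(e^{-b_{tr}t})$ and every main term carries a factor $e^{-t/4}$.
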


\subsection{Determinant asymptotics}

Having established this uniform control, we now compare the short oriented primitive closed geodesics on $X$ with their counterparts on $\widetilde X$. Let $\pi:\widetilde X\to X$ be the natural projection. For each short oriented closed geodesic $\gamma$ on $X$, we distinguish the following cases.
\begin{itemize}
	\item If $\gamma\in \mathscr{C}_1$, then $\gamma^{-1}\in \mathscr{C}_1$ is another short oriented closed geodesic with the opposite orientation. So in this case there exist four short oriented closed geodesics $\widetilde\gamma_1, \widetilde\gamma_2,\widetilde\gamma_1^{-1}, \widetilde\gamma_2^{-1}$ on $\widetilde{X}$ with the same length $r_\gamma$ as $\gamma$, such that $\pi(\widetilde\gamma_1)=\pi(\widetilde\gamma_2)=\gamma$ and $\pi(\widetilde\gamma_1^{-1})=\pi(\widetilde\gamma_2^{-1})=\gamma^{-1}$.
	\item If $\gamma\in \mathscr{C}_2$, we have $\gamma=\gamma^{-1}$. So in this case there are two corresponding oriented closed geodesics $\widetilde\gamma$ and $\widetilde\gamma^{-1}$ on $\widetilde{X}$, both of length $r_\gamma$, such that $\pi(\widetilde\gamma)=\pi(\widetilde\gamma^{-1})=\gamma$.
\end{itemize}
Each short boundary component of $X$ becomes a short closed geodesic of the same length on $\widetilde X$.

The above discussion, together with \eqref{eq::3.31}, Proposition~\ref{eq::3.32} and Proposition~\ref{eq::Z_G01}, yields the following uniform estimate for $\frac{{\det}'\Lambda_{X}}{\ell(\partial X)}$.
\begin{theorem}\label{thm::shortgeodesics}
	Let $X$ be a connected orientable compact hyperbolic surface with geodesic boundary. Then the following uniform comparison holds:
\begin{equation}
 \frac{{\det}'\Lambda_{X}}{\ell(\partial X)}\asymp_{n,\mathtt{g}} {e^{\frac{\ell(\partial X)}{4}}\prod_{b_i<r^*}{b_i}\frac{\prod_{0<\lambda^N_k(X)<\frac14}\lambda^N_k(X)}{\prod_{\lambda^D_k(X)<\frac14}\lambda^D_k(X)}}.
    \end{equation}
\end{theorem}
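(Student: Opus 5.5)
The plan is to substitute Proposition~\ref{eq::3.32} and Proposition~\ref{eq::Z_G01} into the Guillarmou--Guillop\'e formula \eqref{eq::3.31} and then check that all exponential factors $e^{-\pi^2/(3r)}$ and $e^{-\pi^2/(6r)}$ cancel exactly. Since $-2\pi\chi(X)$ is a positive constant depending only on $(n,\mathtt g)$, it can be absorbed into the comparison constant. This reduces the claim to showing
\[
\frac{-Z_G'(1)}{Z_{G_0}(1)^2}\asymp_{n,\mathtt g}\prod_{b_i<r^*}b_i\,\frac{\prod_{0<\lambda^N_k<\frac14}\lambda^N_k}{\prod_{\lambda^D_k<\frac14}\lambda^D_k}.
\]
The factor $e^{\ell(\partial X)/4}$ passes through unchanged.

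The first step is to list the short closed geodesics of $\widetilde X$, which have length $<r^*$, using the case analysis just given. Each $\gamma\in\mathscr C_1/\!\sim$ yields two unoriented short geodesics on $\widetilde X$ of length $r_\gamma$. Each $\gamma\in\mathscr C_2$ yields one. Each short boundary component $\Gamma_i$ yields one, of length $b_i$. We also need these to be all of them. A short geodesic on $\widetilde X$ is either $\iota$-invariant or not. If it is not invariant, it projects to an interior geodesic in $\mathscr C_1$. If it is invariant, then by the collar lemma it either lies in the fixed set, giving a boundary component, or it crosses the fixed set orthogonally twice, giving an element of $\mathscr C_2$. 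Odd $n_\gamma$ is already excluded for short geodesics. The genus $\widetilde{\mathtt g}=2\mathtt g+n-1$ depends only on $(n,\mathtt g)$.

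Proposition~\ref{eq::3.32} then gives the product $\prod_{\gamma\in\mathscr C_1/\sim}e^{-2\pi^2/(3r_\gamma)}r_\gamma^{-2}$, times $\prod_{\mathscr C_2}e^{-\pi^2/(3r_\gamma)}r_\gamma^{-1}$, times $\prod_{b_i<r^*}e^{-\pi^2/(3b_i)}b_i^{-1}$, times the product of the small nonzero eigenvalues of $\widetilde X$. Proposition~\ref{eq::Z_G01} is indexed by oriented geodesics, so each class in $\mathscr C_1/\!\sim$ appears twice there, and each element of $\mathscr C_2$ appears once. Squaring $Z_{G_0}(1)$ therefore gives $\prod_{\mathscr C_1/\sim}e^{-2\pi^2/(3r)}r^{-2}$, times $\prod_{\mathscr C_2}e^{-\pi^2/(3r)}r^{-1}$, times $\prod_{b_i<r^*}e^{-\pi^2/(3b_i)}b_i^{-2}$, times $(\prod_{\lambda^D<\frac14}\lambda^D)^2$. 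In the ratio the interior-geodesic factors cancel completely. The boundary factors leave $b_i^{-1}/b_i^{-2}=b_i$.

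For the eigenvalues, as in Lemma~\ref{lem:number-small-eigenvalues}, the spectrum of $\widetilde X$ splits into even and odd parts under $\iota$, and these are exactly the Neumann and Dirichlet spectra of $X$. The product over the small nonzero eigenvalues of $\widetilde X$ is therefore $\prod_{0<\lambda^N<\frac14}\lambda^N\prod_{\lambda^D<\frac14}\lambda^D$, and dividing by $(\prod\lambda^D)^2$ gives the stated quotient.

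The main obstacle is the bookkeeping of multiplicities, namely oriented versus unoriented geodesics and geodesics on $X$ versus on $\widetilde X$. The exponential factors cancel only if these counts match exactly. A mismatch would leave a factor such as $e^{\pm\pi^2/(6r)}$, which no constant can absorb. The step I would verify most carefully is therefore the oriented-geodesic convention in Proposition~\ref{eq::Z_G01}, together with the interval conventions: eigenvalues exactly equal to $\tfrac14$, and the threshold $r^*$, which may need to be shrunk so that the two propositions use the same notion of short.
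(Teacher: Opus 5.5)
Your proposal is correct and follows the paper's own argument. You substitute Wolpert's estimate and Proposition~\ref{eq::Z_G01} into \eqref{eq::3.31}. You then match short geodesics on $X$ and $\widetilde X$: two unoriented lifts per class in $\mathscr C_1/\sim$, one per element of $\mathscr C_2$, one per short boundary component. With this matching the exponential factors cancel and each short boundary leaves a factor $b_i$, and splitting the small spectrum of $\widetilde X$ into Neumann and Dirichlet parts gives the eigenvalue quotient.

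One cosmetic slip: $Z_G'(1)>0$, and the minus sign is already absorbed into $-2\pi\chi(X)>0$. So the reduced claim should read $Z_G'(1)/Z_{G_0}(1)^2\asymp\cdots$, not $-Z_G'(1)/Z_{G_0}(1)^2$.
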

\begin{remark}
   Using Lemma~\ref{lem::LC3}, the estimate can equivalently be written as
\begin{equation*}
 \frac{{\det}'\Lambda_{X}}{\ell(\partial X)}\asymp_{n,\mathtt{g}} \frac{\prod_{b_i<r^*}b_i}{\prod_{\gamma\in\mathscr{C}_2/\sim}r_\gamma}\frac{\prod_{0<\lambda^N_k(X)<\frac14}\lambda^N_k(X)}{\prod_{\lambda^D_k(X)<\frac14}\lambda^D_k(X)}.
    \end{equation*}
\end{remark}
\begin{corollary}\label{cor::laplacian}
Let $X$ be a connected orientable compact hyperbolic surface with geodesic boundary. Then we have
\[
{\det}\, \Delta_X^D\asymp_{\mathtt{g},n} \prod_{b_i<r^*}e^{-\frac{\pi^2}{6b_i}}b_i^{-1}\prod_{\gamma\in\mathscr{C}_1/\sim}e^{-\frac{\pi^2}{3r_\gamma}}r_\gamma^{-1}\prod_{\gamma'\in\mathscr{C}_2/\sim}e^{-\frac{\pi^2}{6r_{\gamma'}}}\prod_{\lambda_k^D(X)<\frac14}\lambda_k^D(X),
\]
and
\[
{\det}' \Delta_X^N\asymp_{\mathtt{g},n} \prod_{b_i<r^*}e^{-\frac{\pi^2}{6b_i}}\prod_{\gamma\in\mathscr{C}_1/\sim}e^{-\frac{\pi^2}{3r_\gamma}}r_\gamma^{-1}\prod_{\gamma'\in\mathscr{C}_2/\sim}e^{-\frac{\pi^2}{6r_{\gamma'}}}r_{\gamma'}^{-1}\prod_{0<\lambda_k^N(X)<\frac14}\lambda_k^N(X).
\]
\end{corollary}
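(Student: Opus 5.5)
The plan is to derive both comparisons from two determinant--Selberg identities: the Dirichlet one of Guillarmou--Guillop\'e quoted above, and Sarnak's formula on the closed double $\widetilde X$. Into these we substitute the two uniform zeta estimates already proved, Proposition~\ref{eq::Z_G01} and Proposition~\ref{eq::3.32}.

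\emph{Dirichlet case.} First we set $s=1$ in
\[
\det (\Delta_X^D-s(1-s))=Z_{G_0}(s)\Bigl(e^{\eta-\frac{\ell(\partial X)}{8\chi(X)}(1-2s)+s(1-s)}\tfrac{(2\pi)^{s-1}}{\Theta(s)^2\Gamma(s)}\Bigr)^{-\chi(X)}.
\]
At $s=1$ everything except the boundary term is a constant depending only on $\chi(X)$, so
\[
\det\Delta_X^D\asymp_{\mathtt g,n} Z_{G_0}(1)\,e^{-\ell(\partial X)/8}.
\]
The factor $e^{-\ell(\partial X)/8}$ is not bounded on moduli space, since $\ell(\partial X)$ can be large. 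To handle it I would use Lemma~\ref{lem::LC3}. Comparing Theorem~\ref{thm::shortgeodesics} with the remark following it, that lemma gives
\[
e^{\ell(\partial X)/4}\asymp_{n,\mathtt g}\prod_{\gamma\in\mathscr C_2/\sim}r_\gamma^{-1},
\qquad\text{so}\qquad
e^{-\ell(\partial X)/8}\asymp\prod_{\gamma\in\mathscr C_2/\sim}r_\gamma^{1/2}.
\]
Next we insert Proposition~\ref{eq::Z_G01} and convert oriented products into unoriented ones. Each element of $\mathscr C_1/\sim$ appears twice in $\mathscr C_1$, giving $e^{-\pi^2/(3r_\gamma)}r_\gamma^{-1}$. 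Each element of $\mathscr C_2/\sim$ appears once, giving $e^{-\pi^2/(6r_\gamma)}r_\gamma^{-1/2}$, and the factor $r_\gamma^{1/2}$ from the boundary term cancels the $r_\gamma^{-1/2}$. The short boundary factors $e^{-\pi^2/(6b_i)}b_i^{-1}$ and the product of small Dirichlet eigenvalues pass through unchanged. This yields the first formula.

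\emph{Neumann case.} The reflection involution splits $L^2(\widetilde X)$ into even and odd parts, so the nonzero spectrum of $\widetilde X$ is the union of the nonzero Neumann spectrum and the Dirichlet spectrum of $X$. Zeta functions therefore add, and
\[
{\det}'\Delta_{\widetilde X}=\det\Delta_X^D\cdot{\det}'\Delta_X^N .
\]
By Sarnak's formula, ${\det}'\Delta_{\widetilde X}=Z_G'(1)\,e^{c\,\chi(\widetilde X)}$ with a universal constant $c$, so ${\det}'\Delta_{\widetilde X}\asymp_{\widetilde{\mathtt g}}Z_G'(1)$. We then apply Proposition~\ref{eq::3.32}, enumerating the short geodesics of $\widetilde X$ by the case analysis preceding Theorem~\ref{thm::shortgeodesics}:
\begin{itemize}
\item each short boundary $\Gamma_i$ gives one short geodesic of length $b_i$;
\item each class in $\mathscr C_1/\sim$ gives two short geodesics of length $r_\gamma$;
\item each class in $\mathscr C_2/\sim$ gives one short geodesic of length $r_\gamma$.
\end{itemize}
The small eigenvalues of $\widetilde X$ split into the small Neumann and small Dirichlet eigenvalues of $X$.

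Finally we divide by the Dirichlet formula just obtained. The factors work out as follows.
\begin{itemize}
\item A short boundary gives $e^{-\pi^2/(3b)}b^{-1}/(e^{-\pi^2/(6b)}b^{-1})=e^{-\pi^2/(6b)}$.
\item A class in $\mathscr C_1/\sim$ gives $e^{-2\pi^2/(3r)}r^{-2}/(e^{-\pi^2/(3r)}r^{-1})=e^{-\pi^2/(3r)}r^{-1}$.
\item A class in $\mathscr C_2/\sim$ gives $e^{-\pi^2/(3r)}r^{-1}/e^{-\pi^2/(6r)}=e^{-\pi^2/(6r)}r^{-1}$.
\item The small Dirichlet eigenvalues cancel, leaving $\prod_{0<\lambda_k^N<1/4}\lambda_k^N$.
\end{itemize}
This is the second formula.

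The main point needing care is the unbounded factor $e^{-\ell(\partial X)/8}$ in the Dirichlet identity. Its control rests entirely on Lemma~\ref{lem::LC3}, namely that a large total boundary length is forced by, and comparable to, short reflected orthogeodesics. All other steps are bookkeeping of multiplicities and constants depending only on $(\mathtt g,n)$.
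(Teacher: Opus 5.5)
Your proof is correct. The Neumann step is exactly the paper's: you split ${\det}'\Delta_{\widetilde X}={\det}'\Delta_X^N\det\Delta_X^D$, apply Sarnak's formula together with Proposition~\ref{eq::3.32}, and divide.

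The Dirichlet step takes a different route. The paper does not evaluate the Guillarmou--Guillop\'e identity at $s=1$. It instead substitutes three inputs into the Burghelea--Friedlander--Kappeler gluing formula
\[
\frac{{\det}'\Lambda_X}{\ell(\partial X)}=\frac{1}{-2\pi\chi(X)}\frac{{\det}'\Delta_{\widetilde X}}{(\det\Delta_X^D)^2}:
\]
Sarnak's formula, Wolpert's estimate for $Z_G'(1)$, and the already established Steklov asymptotics (Theorem~\ref{thm::shortgeodesics} in the form of the subsequent remark, where Lemma~\ref{lem::LC3} has been applied). It then solves for $(\det\Delta_X^D)^2$ and takes a square root.

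Your route is $\det\Delta_X^D=e^{-\eta\chi(X)}Z_{G_0}(1)e^{-\ell(\partial X)/8}$, followed by Proposition~\ref{eq::Z_G01} and Lemma~\ref{lem::LC3}. It is more economical: it uses only the bordered zeta estimate and needs neither BFK nor $Z_G'(1)$ for the Dirichlet half. It also shows that the Dirichlet estimate is logically upstream of Theorem~\ref{thm::shortgeodesics}.

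The two arguments are consistent. Combining BFK, Sarnak's formula and the identity you used reproduces \eqref{eq::3.31} exactly, so they differ only in which identities are treated as primitive. The paper's ordering presents the Laplacian determinants as a byproduct of the Steklov determinant asymptotics, which are its main object.

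One small point: $e^{\ell(\partial X)/4}\asymp\prod_{\gamma\in\mathscr C_2/\sim}r_\gamma^{-1}$ follows simply by exponentiating Lemma~\ref{lem::LC3}. You do not need to pass through Theorem~\ref{thm::shortgeodesics} and its remark to get it.
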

\begin{proof}
The Mayer--Vietoris formula for determinants by Burghelea, Friedlander, and Kappeler \cite[Theorem $\text{B}^*$]{BurgheleaFriedlanderKappeler1992} gives
\begin{equation}\label{eq::bfk}
    \frac{{\det}'\Lambda_X}{\ell(\partial X)}=\frac{1}{-2\pi \chi(X)}\frac{{\det} '\Delta_{\widetilde{X}}}{({\det}\, \Delta_X^D)^2}.
\end{equation}
By Sarnak \cite{Sarnak1987Determinants} (see also D'Hoker--Phong \cite{DHokerPhong1986Determinants}, Voros \cite{Voros1987Spectral}), we have
\begin{equation*}
    {\det} '\Delta_{\widetilde{X}}=Z_G'(1)e^{-2\eta\chi(X)},\quad \eta=2\zeta'_R(-1)-\frac14+\frac12\log(2\pi).
\end{equation*}
Combining this with Proposition~\ref{eq::3.32} and \eqref{eq::bfk}, we obtain
\begin{align*}
({\det}\, \Delta_X^D)^2&\asymp_{n,\mathtt{g}}\frac{\prod_{\gamma\in\mathscr{C}_2/\sim}r_\gamma}{\prod_{b_i<r^*}b_i}\frac{\prod_{\lambda^D_k(X)<\frac14}\lambda^D_k(X)}{\prod_{0<\lambda^N_k(X)<\frac14}\lambda^N_k(X)}\prod_{j=1}^me^{-\frac{\pi^2}{3r_j}}r_j^{-1}\prod_{0<\lambda_k(\widetilde{X})<\frac14}\lambda_k(\widetilde{X})\\
&=\prod_{b_i<r^*}e^{-\frac{\pi^2}{3b_i}}b_i^{-2}\prod_{\gamma\in\mathscr{C}_1/\sim}e^{-\frac{2\pi^2}{3r_\gamma}}r_\gamma^{-2}\prod_{\gamma'\in\mathscr{C}_2/\sim}e^{-\frac{\pi^2}{3r_{\gamma'}}}\left(\prod_{\lambda_k^D(X)<\frac14}\lambda_k^D(X)\right)^2.
\end{align*}
In the second line, we used the correspondence between short geodesics on
$X$ and $\widetilde X$, together with the decomposition of the small spectrum
of $\widetilde X$ into the Neumann and Dirichlet spectra of $X$.
All quantities are positive, so taking square roots gives the stated estimate for $\det\Delta_X^D$.

The reflection involution on $\widetilde X$ splits the nonzero spectrum into its even and odd parts, which are the Neumann and Dirichlet spectra on $X$, respectively. Consequently,
\[
{\det}'\Delta_{\widetilde X}={\det}'\Delta_X^N\,\det\Delta_X^D.
\]
Dividing the estimate for $\det'\Delta_{\widetilde X}$ by the one just obtained for $\det\Delta_X^D$ yields the asserted formula for $\det'\Delta_X^N$.
\end{proof}
\begin{remark}
Wentworth \cite{WentworthDN} established the corresponding determinant estimate for genus-zero hyperbolic surfaces with geodesic boundary, under the assumption that the length of each boundary component is fixed. Theorem~\ref{thm::shortgeodesics} extends his result to arbitrary compact orientable hyperbolic surfaces with geodesic boundary, without fixing the boundary lengths. Likewise, Corollary~\ref{cor::laplacian} extends the corresponding results of Wentworth \cite{WentworthDN} and Kim \cite{Kim2008} to this more general setting.
In particular, when the boundary lengths are fixed, the estimates in Appendix~\ref{smalleigenvaluesDN} uniformly control the additional small-eigenvalue factors in Theorem~\ref{thm::shortgeodesics} and Corollary~\ref{cor::laplacian}, so that Wentworth's estimates are recovered as a special case.
\end{remark}

\section{Compactness theorem for general genus-zero flat surfaces}
\label{sec::compactnessflat}

In this section, we prove a compactness result for genus-zero flat surfaces
with the same Steklov spectrum. 

The argument uses the
Osgood--Phillips--Sarnak uniformization theorem
(Theorem~\ref{thm::OPS}) to pass from flat metrics to hyperbolic metrics
with geodesic boundary. The key point is to rule out degeneration of the corresponding hyperbolic
surfaces. By Theorem~\ref{thm::shortgeodesics}, the normalized Steklov
determinant is controlled by the boundary lengths together with the small
Dirichlet and Neumann eigenvalues. We first introduce finite-dimensional
graph models for these small eigenvalues and then use them to rule out
degeneration.

\subsection{Graph models for the small eigenvalues}

The use of weighted graphs to describe small eigenvalues is motivated by
classical graph-comparison results of Colbois \cite{Colbois1985},
Dodziuk--Pignataro--Randol--Sullivan
\cite{DodziukPignataroRandolSullivan1987}, and Burger
\cite{Burger1990} in the boundaryless setting. The present setting is more delicate, as the geodesic boundary may degenerate and uniform control is required for both the Neumann and Dirichlet spectra.
In the Dirichlet case, the boundary components give rise to additional
weights, represented by a potential on the graph. 

Fix $T>0$, and let $X$ be a connected hyperbolic surface of genus zero
with $n\geq3$ geodesic boundary components
\begin{equation*}
    \partial X=\Gamma_1\cup\cdots\cup\Gamma_n,\qquad b_i:=\ell(\Gamma_i),
\end{equation*}
such that $b_i\leq T$ for every $i$.

Under this assumption, degenerations of $\mathscr C_2$-type are excluded.
Indeed, by the hyperbolic right-angled hexagon formula, pinching a closed
geodesic of $\mathscr C_2$-type forces the length of a boundary component
orthogonal to it to tend to infinity. Hence the only possible degenerating
parameters are the lengths of the interior short closed geodesics in
$\mathscr C_1/\sim$ and the boundary lengths $b_i$.

Write
\begin{equation*}
    \mathscr C_1/\sim=\{\gamma_1,\ldots,\gamma_m\},
\end{equation*}
and denote by $r_{\gamma_i}$ the length of $\gamma_i$. Since $X$ has genus
zero, every interior simple closed geodesic is separating. Thus cutting $X$
along the corresponding standard subcollars gives $m+1$ connected thick
pieces. More precisely, after also removing the standard boundary
subcollars corresponding to those $\Gamma_i$ with $b_i<r^*$, write
\begin{equation*}
    R=
    X\setminus
    \left(
        \bigcup_{i=1}^m SC_1(\gamma_i)\cup\bigcup_{b_i<r^*}SC_b(\Gamma_i)
    \right)
    =
    \bigcup_{\alpha\in\mathcal V}P_\alpha.
\end{equation*}

We associate to this decomposition the adjacency graph
\begin{equation*}
    \mathcal G_X=(\mathcal V,E),
\end{equation*}
whose vertices correspond to the pieces $P_\alpha$ and whose edges
correspond to the interior collars $SC_1(\gamma_i)$. The edge corresponding
to $\gamma_i$ joins the two pieces adjacent to that collar. Since $X$ has
genus zero and every $\gamma_i$ is separating, $\mathcal G_X$ is a tree. Moreover, $m\leq n-3$, so $\mathcal G_X$ has at most $n-2$ vertices.

For the edge corresponding to $\gamma_i$, set
\begin{equation*}
    q_i:=\frac{r_{\gamma_i}}{\pi-4r_{\gamma_i}}.
\end{equation*}
In particular,
\begin{equation*}
    q_i\asymp r_{\gamma_i}.
\end{equation*}
For vertex functions $s=(s_\alpha)_{\alpha\in\mathcal V}$ and
$t=(t_\alpha)_{\alpha\in\mathcal V}$, define
\begin{equation*}
    Q(s,t):= \sum_{i=1}^m q_i
    \bigl(s_{\alpha(i)}-s_{\beta(i)}\bigr)
    \overline{\bigl(t_{\alpha(i)}-t_{\beta(i)}\bigr)},
\end{equation*}
where $P_{\alpha(i)}$ and $P_{\beta(i)}$ are the two pieces adjacent to
$SC_1(\gamma_i)$.

Set
\begin{equation*}
    A_\alpha:=\operatorname{Area}(P_\alpha),
\end{equation*}
and equip the vertex space with the weighted inner product
\begin{equation*}
    \langle s,t\rangle_{\mathcal V} := \sum_{\alpha\in\mathcal V}  A_\alpha s_\alpha\overline{t_\alpha}.
\end{equation*}
Let
\begin{equation*}
    0=\mu_0(\mathcal G_X)  <
    \mu_1(\mathcal G_X) \leq\cdots\leq
    \mu_m(\mathcal G_X)
\end{equation*}
be the generalized eigenvalues of the problem 
\begin{equation*}
    Q(s,t)=\mu\langle s,t\rangle_{\mathcal V}.
\end{equation*}

The following result compares this graph spectrum with the small Neumann
spectrum of $X$. We defer the proof to Appendix~\ref{smalleigenvaluesDN}. 

\begin{proposition}
\label{prop:small-Neumann-product}
Fix $T>0$. Let $X$ be as above, and let
\begin{equation*}
    0=\lambda_0^N(X)  < \lambda_1^N(X) \leq \lambda_2^N(X) \leq\cdots
\end{equation*}
be the Neumann eigenvalues of $-\Delta_X^N$. Then
\begin{equation*}
    \lambda_k^N(X) \asymp_{n,T} \mu_k(\mathcal G_X), \qquad  1\leq k\leq m.
\end{equation*}
Moreover, there exists a constant $c>0$, depending only on $n$ and $T$,
such that
\begin{equation*}
    \lambda_{m+1}^N(X)\geq c.
\end{equation*}
\end{proposition}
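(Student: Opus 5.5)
The plan is a two-sided min-max comparison between the Rayleigh quotient of $-\Delta_X^N$ on $H^1(X)$ and the graph quotient $Q(s,s)/\langle s,s\rangle_{\mathcal V}$. It has three ingredients: uniform thick-part geometry from Lemma~\ref{lem::thickgeometry}, explicit collar computations in the coordinates $(u,v)$, and the bound $m\le n-3$. Throughout, the hypothesis $b_i\le T$ is used for two things. First, it excludes $\mathscr C_2$-degenerations, so the only thin parts are the interior collars $SC_1(\gamma_i)$ and the short boundary collars $SC_b(\Gamma_i)$. Second, together with Lemma~\ref{lem::thickgeometry}, it shows that each piece $P_\alpha$ has area $A_\alpha\asymp_{n,T}1$ and uniformly bounded geometry.

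\emph{Upper bound.} For the upper bound $\lambda_k^N\lesssim\mu_k$ I would transplant graph eigenvectors to test functions. Given $s\in\C^{\mathcal V}$, define $F_s$ as follows:
\begin{itemize}
\item on each $P_\alpha$, including its adjacent short boundary collars, set $F_s\equiv s_\alpha$;
\item across each $SC_1(\gamma_i)$, interpolate linearly in the conformal variable $v\in(2r_{\gamma_i},\pi-2r_{\gamma_i})$.
\end{itemize}
Since the Dirichlet energy is conformally invariant, the energy on the collar is exactly
\[
\frac{r_{\gamma_i}}{\pi-4r_{\gamma_i}}\,|s_\alpha-s_\beta|^2=q_i|s_\alpha-s_\beta|^2 .
\]
This is why $q_i$ has the stated form. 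The collar has hyperbolic area $O(1)$, but the $L^2$ mass of $F_s$ there is at most a constant times $\max|s|^2$. Hence $\|F_s\|^2\asymp\langle s,s\rangle_{\mathcal V}$ once the areas $A_\alpha$ are bounded below. Min-max applied to the $(k+1)$-dimensional span of graph eigenvectors then gives $\lambda_k^N\le C\mu_k$ for $k\le m$.

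\emph{Lower bound.} For $\mu_k\lesssim\lambda_k^N$ and for the gap $\lambda_{m+1}^N\ge c$, I would define a map in the reverse direction, $u\mapsto s(u)$ with $s_\alpha=A_\alpha^{-1}\int_{P_\alpha}u$. Two estimates are needed.

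The first is a uniform Poincaré inequality on each thick piece augmented by the adjacent half-collars:
\[
\|u-s_\alpha\|^2_{L^2}\le C\,E(u)
\]
on that region. The delicate part is the collars. On $SC_b(\Gamma_i)$, the Neumann boundary lets the estimate follow from a one-dimensional Poincaré inequality in $v$ with weight $\sin^{-2}v$; the mass concentrating near the geodesic is controlled because the area element near $v\sim\pi/2$ is comparable to the length. On the interior collars, split $u$ into its zero Fourier mode in $u$ and the nonzero modes. The nonzero modes have Rayleigh quotient at least $\tfrac14$ (the standard collar fact, as in Schoen--Wolpert--Yau and Burger). The zero mode is a function of $v$ alone, and the one-dimensional inequality
\[
|s_\alpha-s_\beta|^2\le \bigl(q_i^{-1}+O(1)\bigr)\,E_{\text{collar}}(u)
\]
follows from Cauchy--Schwarz in $v$.

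The second estimate combines these into
\[
Q(s(u),s(u))\le C\,E(u),\qquad \|u\|^2\le C\bigl(\langle s(u),s(u)\rangle_{\mathcal V}+E(u)\bigr).
\]
Given these, take the span of the first $k+1$ Neumann eigenfunctions with $\lambda_k^N\le\epsilon$. The map $u\mapsto s(u)$ is injective there and gives $\mu_k\le C\lambda_k^N$. The gap also follows: if $\lambda_{m+1}^N<c$, some $(m+2)$-dimensional space of functions with small energy would inject into the $(m+1)$-dimensional vertex space, which is impossible.

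\emph{Main obstacle.} I expect the main difficulty to be the uniform Poincaré/collar estimate in the lower bound, and in particular making it uniform as the boundary lengths $b_i\to0$. Near a short boundary geodesic the Neumann collar is a half-collar whose length grows like $\log(1/b_i)$. I must check that it produces no additional small eigenvalues; this is exactly why such collars are not edges of $\mathcal G_X$. My first approach is the Fourier-mode splitting on $SC_b(\Gamma_i)$. The zero mode then solves a one-dimensional Neumann problem with weight $\sin^{-2}v$ on an interval of bounded hyperbolic area, and I expect its Poincaré constant to be bounded uniformly in $b_i$. If that fails, I would instead use a capacity argument: the conformal modulus of the half-collar tends to infinity, which forces any low-energy function to be nearly constant there.
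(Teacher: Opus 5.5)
Your proposal is correct and is essentially the paper's proof. The upper bound $\lambda_k^N\le C\mu_k$ uses the same transplantation of graph vectors, with linear interpolation in $v$ across $SC_1(\gamma_i)$. The reverse bound and the gap use the same averaging map with the two estimates of Lemma~\ref{lem:graph-averaging-estimates}, injectivity on low eigenspaces, and a dimension count; the paper also spells out the trivial bound $\mu_k\le C_1$ for the case $\lambda_k^N\ge\epsilon$, and it handles the boundary half-collars by exactly the uniform weighted (Hardy) estimate from the thick end that you anticipated.

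Two small slips. First, your collar inequality needs the energies on $P_\alpha$ and $P_\beta$ on the right-hand side, not only $E_{\text{collar}}(u)$. Second, your fallback capacity argument points the wrong way: a large conformal modulus makes variation across $SC_b(\Gamma_i)$ cheap rather than forcing near-constancy. What actually rules out extra small Neumann eigenvalues is that such variation carries comparably small $L^2$ mass near $\Gamma_i$.
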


Since $\mathcal G_X$ is a tree, the product of its nonzero eigenvalues can
be computed by the matrix-tree theorem. In Appendix~\ref{smalleigenvaluesDN} we also prove the following consequence.

\begin{corollary}\label{corprodN}
Under the assumptions of Proposition~\ref{prop:small-Neumann-product},
\begin{equation*}
\prod_{0<\lambda_j^N(X)<\frac14}\lambda_j^N(X)  \asymp_{n,T} \prod_{j=1}^m\lambda_j^N(X) \asymp_{n,T}\prod_{i=1}^m q_i \asymp \prod_{i=1}^m r_{\gamma_i}.
\end{equation*}
\end{corollary}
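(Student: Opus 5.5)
The proof has three steps. The first identifies which small Neumann eigenvalues lie below $\frac14$. The second computes the product of the nonzero graph eigenvalues by the matrix-tree theorem. The third compares the vertex weights $A_\alpha$ and the edge weights $q_i$ with constants.

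\emph{Step 1: the small Neumann eigenvalues.} By Proposition~\ref{prop:small-Neumann-product}, $\lambda_k^N(X)\asymp_{n,T}\mu_k(\mathcal G_X)$ for $1\le k\le m$, and $\lambda_{m+1}^N(X)\ge c(n,T)$. By Lemma~\ref{lem:number-small-eigenvalues}, the number of Neumann eigenvalues in $(0,\frac14)$ is bounded in terms of $n$.

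Any such eigenvalue with index $j>m$ lies in $[\min(c,\tfrac14),\tfrac14)$, so it is bounded above and below by constants depending only on $n,T$. Conversely, an eigenvalue $\lambda_j^N$ with $j\le m$ that is not below $\frac14$ satisfies $\lambda_j^N\asymp\mu_j\lesssim \max_i q_i/\min_\alpha A_\alpha$, which is bounded above (see Step 3). Hence $\lambda_j^N$ lies in a compact subinterval of $(0,\infty)$ determined by $n,T$.

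Therefore the two products $\prod_{0<\lambda_j^N<\frac14}\lambda_j^N$ and $\prod_{j=1}^m\lambda_j^N$ differ only by a bounded number of factors, each comparable to $1$. This gives the first $\asymp$. The second $\asymp$ then reduces to showing $\prod_{j=1}^m\mu_j(\mathcal G_X)\asymp_{n,T}\prod_i q_i$.

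\emph{Step 2: matrix-tree computation.} Write the generalized eigenproblem as $Ls=\mu As$, where $L$ is the weighted graph Laplacian with edge weights $q_i$ and $A=\operatorname{diag}(A_\alpha)$. The nonzero eigenvalues are those of $A^{-1/2}LA^{-1/2}$ restricted to the complement of $A^{1/2}\mathbf 1$.

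By the weighted matrix-tree theorem, the product of the nonzero eigenvalues of $A^{-1/2}LA^{-1/2}$ equals
\[
\frac{\sum_\alpha A_\alpha}{\prod_\alpha A_\alpha}\,\tau(\mathcal G_X,q).
\]
This comes from the coefficient of $\mu$ in $\det(L-\mu A)$: that coefficient is $\sum_\alpha A_\alpha\det L^{(\alpha)}$, where $L^{(\alpha)}$ is $L$ with row and column $\alpha$ deleted, and every cofactor $\det L^{(\alpha)}$ equals the weighted spanning-tree count $\tau$. Since $\mathcal G_X$ is itself a tree, $\tau=\prod_{i=1}^m q_i$.

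\emph{Step 3: uniform bounds on the weights.} It remains to show that each $A_\alpha$ is bounded above and below by constants depending only on $n,T$. The upper bound follows from $A_\alpha\le \operatorname{Area}(X)=2\pi(n-2)$.

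The lower bound is the main point to check. I would use Lemma~\ref{lem::thickgeometry}: the pieces $P_\alpha$ are complements of standard subcollars, so they form a precompact family with bounded geometry. Each piece also contains a point of injectivity radius $\gtrsim r^*$, which bounds its area below. Alternatively, Gauss--Bonnet on each pair-of-pants-type piece, with the collar portions of area $O(1)$ removed, gives $A_\alpha\gtrsim 1$.

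With these bounds, the factor $(\sum A_\alpha)/\prod A_\alpha$ is $\asymp_{n}1$, and the number of vertices is at most $n-2$. Combined with $q_i\asymp r_{\gamma_i}$, which follows directly from the definition of $q_i$ since $r_{\gamma_i}<r^*<\pi/4$, this completes the chain of comparisons.
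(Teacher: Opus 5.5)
Your proposal is correct and follows the paper's proof essentially step for step. It uses the same reduction via Proposition~\ref{prop:small-Neumann-product} and Lemma~\ref{lem:number-small-eigenvalues}, the same identity $\prod_{j=1}^m\mu_j(\mathcal G_X)=\frac{\sum_\alpha A_\alpha}{\prod_\alpha A_\alpha}\prod_{i=1}^m q_i$ from the matrix-tree theorem, and the same uniform two-sided bounds on the areas $A_\alpha$. Your coefficient-of-$\mu$ derivation is just the paper's cofactor identity in another form; strictly, that coefficient is $-\sum_\alpha A_\alpha\det L^{(\alpha)}$, which matches $-\prod_\alpha A_\alpha\prod_j\mu_j$, so the sign slip is harmless.
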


For the Dirichlet spectrum, shrinking boundary components also contribute
to the small eigenvalues. We therefore add a potential term to the graph
form. For each boundary component $\Gamma_i$, let $P_{\alpha_b(i)}$ denote
the adjacent thick piece. If $b_i<r^*$, this is the piece adjacent to the
standard boundary subcollar $SC_b(\Gamma_i)$; if $b_i\geq r^*$, it is the
piece whose boundary contains $\Gamma_i$.

Define
\begin{equation}\label{eq:boundary-weight-main}
 p_i := \begin{cases}
 \dfrac{b_i}{\frac{\pi}{2}-2b_i}, & b_i<r^*,\\[6pt] 1,
    & b_i\geq r^*.  \end{cases}
\end{equation}
Thus
\begin{equation*}
    p_i\asymp_{n,T}\min\{1,b_i\}.
\end{equation*}

Let $\mathsf Q$ denote the matrix of the form $Q$ in the standard basis of
vertex functions, and define the diagonal matrix $\mathsf B$ by
\begin{equation*}
    \mathsf B_{\alpha\alpha} :=
    \sum_{\alpha_b(i)=\alpha}p_i.
\end{equation*}
Equivalently, define the Dirichlet graph form
\begin{equation*}
    Q_D(s,t) :=Q(s,t)+\sum_{i=1}^n
    p_i s_{\alpha_b(i)}\overline{t_{\alpha_b(i)}}.
\end{equation*}
Let
\begin{equation*}
    0< \nu_1(\mathcal G_X) \leq\cdots\leq\nu_{m+1}(\mathcal G_X)
\end{equation*}
be the eigenvalues of the problem 
\begin{equation*}
    Q_D(s,t)=\nu\langle s,t\rangle_{\mathcal V}.
\end{equation*}

\begin{proposition}
\label{prop:small-Dirichlet-product}
Fix $T>0$. Let $X$ be as above, and let
\begin{equation*}
    0<\lambda_1^D(X) \leq \lambda_2^D(X) \leq\cdots
\end{equation*}
be the Dirichlet eigenvalues of $-\Delta_X^D$. Then
\begin{equation*}
    \lambda_j^D(X) \asymp_{n,T} \nu_j(\mathcal G_X), \qquad 1\leq j\leq m+1.
\end{equation*}
Moreover, there exists a constant $c>0$, depending only on $n$ and $T$,
such that
\begin{equation*}
    \lambda_{m+2}^D(X)\geq c.
\end{equation*}
Consequently,
\begin{equation*}
    \prod_{\lambda_j^D(X)<\frac14}\lambda_j^D(X) \asymp_{n,T} \prod_{j=1}^{m+1}\lambda_j^D(X) \asymp_{n,T} \det(\mathsf Q+\mathsf B).
\end{equation*}
\end{proposition}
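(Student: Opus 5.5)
The proof follows the standard min--max comparison between $X$ and the graph $\mathcal G_X$, in both directions, run in parallel with the Neumann case (Proposition~\ref{prop:small-Neumann-product}). The only new feature is the boundary potential $p_i$. The basic tool is Lemma~\ref{lem::thickgeometry}: the thick pieces $P_\alpha$ have uniformly bounded geometry, with areas $A_\alpha\asymp_{n,T}1$, uniform Poincar\'e and trace inequalities, and uniform lower bounds for their first nonzero Neumann eigenvalues. The collars enter only through explicit one-dimensional computations in the coordinates $(u,v)$, where the metric is $(du^2+dv^2)/\sin^2v$ and the Dirichlet energy is conformally invariant.

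\textbf{Upper bound.} We build test functions from vertex vectors. Given $s\in\mathbb C^{\mathcal V}$, set $f=s_\alpha$ on $P_\alpha$. On an interior collar $SC_1(\gamma_i)$, let $f$ be the harmonic interpolation linear in $v$ between $s_{\alpha(i)}$ and $s_{\beta(i)}$. Since the $v$-interval has length $\pi-4r_{\gamma_i}$ and the $u$-period is $r_{\gamma_i}$, the energy is exactly $q_i|s_{\alpha(i)}-s_{\beta(i)}|^2$. On a short boundary collar, $f$ interpolates linearly from $s_{\alpha_b(i)}$ down to $0$ at $\Gamma_i$ over a $v$-interval of length $\frac{\pi}{2}-2b_i$, giving energy $p_i|s_{\alpha_b(i)}|^2$. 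For $b_i\ge r^*$, $f$ is cut off inside the thick piece with energy $O_{n,T}(1)=O(p_i)$. The collars have area $O(1)$ and $f$ is bounded by the $|s_\alpha|$, so the $L^2$ norm is comparable to $\langle s,s\rangle_{\mathcal V}$. Min--max then gives $\lambda_j^D\lesssim\nu_j$ for $j\le m+1$.

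\textbf{Lower bound and spectral gap.} We use the averaging map $\Phi f=(\bar f_\alpha)$, where $\bar f_\alpha$ is the mean of $f$ over $P_\alpha$. For $f\in H^1_0(X)$ we aim to prove
\[
Q_D(\Phi f,\Phi f)\lesssim\int_X|\nabla f|^2,\qquad \|f\|^2_{L^2}-C\langle \Phi f,\Phi f\rangle_{\mathcal V}\lesssim \int_X|\nabla f|^2 .
\]
The edge term follows the Neumann argument: compare averages across a collar by integrating along $v$ and applying Cauchy--Schwarz in $(u,v)$, which reproduces the weight $q_i$. The potential term uses the same one-dimensional argument on the boundary collar, now with $f=0$ on $\Gamma_i$. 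For long boundaries we use a uniform trace/Poincar\'e inequality on the thick piece with vanishing boundary trace. The $L^2$ control needs a Poincar\'e inequality on each thick piece, plus a weighted Hardy-type estimate on collars so that collar mass is controlled by energy plus the values at the collar ends. With these two inequalities, a standard min--max argument (as in Colbois and Burger) yields $\nu_j\lesssim\lambda_j^D$ for $j\le m+1$. Since $\Phi$ has rank $m+1$, the same argument gives $\lambda_{m+2}^D\ge c$: any $f$ orthogonal to suitable lifts of the vertex space has energy bounded below by a constant times its $L^2$ norm.

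\textbf{Determinant and main obstacle.} For the product formula, note first that $\nu_j\lesssim\lambda_j^D$ while $\nu_j\le\|\mathsf Q+\mathsf B\|\lesssim1$, so every $\lambda_j^D$ with $j\le m+1$ is bounded. After possibly shrinking $r^*$ and adjusting constants, the eigenvalues below $\frac14$ and the first $m+1$ eigenvalues differ only in finitely many factors that are each $\asymp1$; Lemma~\ref{lem:number-small-eigenvalues} bounds the number of such factors. Hence $\prod_{j\le m+1}\lambda_j^D\asymp\prod_j\nu_j=\det(\mathsf Q+\mathsf B)/\prod_\alpha A_\alpha\asymp\det(\mathsf Q+\mathsf B)$. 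The main obstacle is the lower bound when $b_i\to0$. The boundary collar has length about $\log(1/b_i)$, and on it the metric $1/\sin^2v$ blows up near $\Gamma_i$. One must show that the Dirichlet condition at a vanishing boundary costs exactly $p_i\asymp b_i$ of energy per unit of $|\bar f_{\alpha_b(i)}|^2$, and that no spurious small eigenvalue concentrates in the collar, with constants uniform in $b_i$. I would first attempt this by separation of variables on the collar. The $u$-independent mode reduces to an explicit ODE estimate. The nonzero $u$-modes have frequencies at least $2\pi/b_i$, which controls them by a uniform positive eigenvalue bound.
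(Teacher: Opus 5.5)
Your proposal is correct and follows essentially the paper's route: linear-in-$v$ test functions on the collars, which produce exactly the weights $q_i$ and $p_i$, give $\lambda_j^D\lesssim\nu_j$. The reverse bound uses the averaging map with $Q_D(m(f),m(f))\lesssim\int_X|\nabla f|^2$ and $\int_X|f|^2\le C_0\sum_\alpha A_\alpha|m_\alpha(f)|^2+C_0\int_X|\nabla f|^2$, followed by a dimension count for $\lambda_{m+2}^D\ge c$ and $\prod_j\nu_j=\det(\mathsf Q+\mathsf B)/\prod_\alpha A_\alpha$ (the paper proves the collar estimates with the fundamental theorem of calculus, Cauchy--Schwarz in $v$ and a one-dimensional Hardy inequality rather than Fourier modes, but either works).

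Two small corrections. First, boundedness of $\lambda_j^D$ for $j\le m+1$ follows from $\lambda_j^D\lesssim\nu_j\lesssim1$, not from $\nu_j\lesssim\lambda_j^D$ as you wrote. Second, $\Gamma_i$ is the core curve $v=\pi/2$, where $\sin v=1$, so the weight $1/\sin^2v$ blows up at the thick end $v=2b_i$, not near $\Gamma_i$; this is why the Hardy estimate must be applied to $f(u,v)-f(u,2b_i)$, with the constant part costing $\int_{2b_i}^{\pi/2}\sin^{-2}v\,dv=\cot(2b_i)\approx 1/(2b_i)$, matched by the $1/b_i$-normalized trace average.
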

Again the proof is deferred to Appendix~\ref{smalleigenvaluesDN}.

We now use these estimates to show that the small-eigenvalue contribution
in Theorem~\ref{thm::shortgeodesics} forces the normalized Steklov
determinant to vanish along every degenerating family with uniformly
bounded boundary lengths.

\begin{proposition}
\label{prop:ratio-tends-zero-upper-bound}
Fix $T>0$. Let $X$ be a connected hyperbolic surface of genus zero
with $n\geq3$ geodesic boundary components, and assume that every boundary
component has length at most $T$. Then, along any degenerating family in
this class,
\begin{equation*}
    \prod_{b_i<r^*} b_i\,\frac{ \prod_{0<\lambda_k^N(X)<\frac14}\lambda_k^N(X) }{ \prod_{\lambda_k^D(X)<\frac14}\lambda_k^D(X) }
    \longrightarrow0.
\end{equation*}
\end{proposition}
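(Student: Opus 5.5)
By Corollary~\ref{corprodN} and Proposition~\ref{prop:small-Dirichlet-product}, the quantity in the statement is comparable, with constants depending only on $(n,T)$, to
\[
\Phi(X):=\prod_{b_i<r^*}b_i\cdot\frac{\prod_{i=1}^m q_i}{\det(\mathsf Q+\mathsf B)}.
\]
So the plan is to prove $\Phi(X)\to0$ along every degenerating family. Since $\mathscr C_2$-degenerations are excluded when $b_i\le T$, degeneration means that some $r_{\gamma_i}\to0$ or some $b_i\to0$. Passing to subsequences, I may fix the combinatorial type: the tree $\mathcal G_X$, the attachment map $\alpha_b$, and which $b_i$ stay below $r^*$. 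The areas $A_\alpha$ do not enter the determinant, so no control on them is needed.

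The key step is a lower bound for $\det(\mathsf Q+\mathsf B)$, obtained by expanding it as a forest sum. The matrix $\mathsf Q+\mathsf B$ is the weighted Laplacian of the tree plus a diagonal potential. Equivalently, it is the reduced Laplacian of the graph $\mathcal G_X^*$ obtained by adjoining a ground vertex $\ast$ and joining each $P_{\alpha_b(i)}$ to $\ast$ by an edge of weight $p_i$. By the matrix-tree theorem,
\[
\det(\mathsf Q+\mathsf B)=\sum_{\mathcal T}\ \prod_{e\in\mathcal T}w_e,
\]
summed over spanning trees of $\mathcal G_X^*$. Every term is positive, so the determinant dominates any single term. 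I would choose the spanning tree consisting of all $m$ tree edges (weights $q_i$) together with a single ground edge to one boundary component $\Gamma_{i_0}$. This gives
\[
\det(\mathsf Q+\mathsf B)\ \ge\ p_{i_0}\prod_{i=1}^m q_i,
\qquad\text{hence}\qquad
\Phi(X)\ \lesssim_{n,T}\ \frac{\prod_{b_i<r^*}b_i}{p_{i_0}}.
\]

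It remains to choose $i_0$ in each case. If some $b_j\to0$, then $n\ge3$ lets me take $i_0\ne j$. Then either $p_{i_0}=1$, or $b_{i_0}<r^*$ and $p_{i_0}\asymp b_{i_0}$ cancels the matching factor in the numerator. What remains is a product of the other short $b_i$, which includes $b_j\to0$ and whose remaining factors are bounded by $r^*$. So $\Phi\to0$.

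If all $b_i$ stay bounded below, some $r_{\gamma_k}\to0$ and the numerator is bounded, so this tree does not suffice. Instead I would use the full forest expansion. Each spanning tree of $\mathcal G_X^*$ is obtained by deleting $j\ge1$ tree edges from $\mathcal G_X$ and then grounding each of the resulting $j+1$ components by one boundary edge. Genus zero guarantees that every component of $\mathcal G_X$ minus edges, being a union of thick pieces, carries at least one boundary curve, so such groundings exist. In particular, deleting the single edge $\gamma_k$ and grounding both sides gives a term of size $\asymp\prod_{i\ne k}q_i$. Therefore
\[
\Phi\ \lesssim\ \frac{\prod_i q_i}{\prod_{i\ne k}q_i}=q_k\asymp r_{\gamma_k}\to0.
\]
The same device also handles the mixed case, though the first case already suffices there.

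The main obstacle is the careful justification of the reduction. I need the comparability constants in Proposition~\ref{prop:small-Dirichlet-product} and Corollary~\ref{corprodN} to be uniform up to the relevant thresholds, including the borderline cases where $b_i$ is near $r^*$, where $p_i$ switches definition but stays $\asymp\min\{1,b_i\}$. I also need the combinatorial claim that every thick piece-cluster carries a boundary component; this follows because leaves of the tree $\mathcal G_X$ are pairs of pants with two boundary curves from $\partial X$.
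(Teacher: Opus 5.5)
Your proposal is correct and is essentially the paper's proof. The matrix-tree theorem on the grounded graph $\mathcal G_X^*$ is exactly Wentworth's forest expansion of $\det(\mathsf Q+\mathsf B)$ used there, and your two lower bounds come from the same two terms the paper uses: the full tree $F=\mathcal G_X$ grounded once, and $F=\mathcal G_X\setminus\{e_k\}$ with both sides grounded.

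One small correction: not every forest of $\mathcal G_X$ can be grounded, since a thick piece bounded by three short interior geodesics carries no curve of $\partial X$. You only need grounding for a single deleted edge, and there it holds because each side of the separating geodesic $\gamma_k$ is a genus-zero hyperbolic surface with $\chi<0$, hence contains at least two curves of $\partial X$.
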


\begin{proof}
By Corollary~\ref{corprodN} and
Proposition~\ref{prop:small-Dirichlet-product},
\begin{equation*}
    \prod_{0<\lambda_k^N(X)<\frac14}\lambda_k^N(X) \asymp_{n,T}\prod_{j=1}^m q_j
\end{equation*}
and
\begin{equation*}
    \prod_{\lambda_k^D(X)<\frac14}\lambda_k^D(X)\asymp_{n,T} \det(\mathsf Q+\mathsf B).
\end{equation*}
Moreover, by \eqref{eq:boundary-weight-main},
\begin{equation*}
    \prod_{b_i<r^*}b_i \asymp_{n,T} \prod_{i=1}^n p_i.
\end{equation*}
Therefore it suffices to prove that
\begin{equation}\label{eq:graph-ratio-main}
    \frac{ \left(\prod_{i=1}^n p_i\right)
        \left(\prod_{j=1}^m q_j\right) }{ \det(\mathsf Q+\mathsf B) }
    \longrightarrow0.
\end{equation}

By Wentworth's matrix-tree formula with potential
\cite[Theorem~6.1]{WentworthDN}, applied to the weighted tree
$\mathcal G_X$ with potential $\mathsf B_{\alpha\alpha}$ at the vertex
$\alpha$, we have
\begin{equation}\label{eq:forest-expansion-main}
    \det(\mathsf Q+\mathsf B)=\sum_F\left(\prod_{e\in F}q_e\right) \left(
        \prod_{C\in\operatorname{Con}(F)}
        \sum_{\alpha\in C}\mathsf B_{\alpha\alpha} \right),
\end{equation}
where the sum runs over all spanning forests $F$ of $\mathcal G_X$ and
$\operatorname{Con}(F)$ denotes the set of connected components of $F$.
Since $\mathcal G_X$ is a tree, all terms in this expansion are
nonnegative.

Taking $F=\mathcal G_X$ gives
\begin{equation*}
    \det(\mathsf Q+\mathsf B)\geq  \left(\prod_{j=1}^m q_j\right) \left(\sum_{i=1}^n p_i\right).
\end{equation*}
Hence
\begin{equation}\label{eq:ratio-bound-boundary}
    \frac{  \left(\prod_{i=1}^n p_i\right)  \left(\prod_{j=1}^m q_j\right)}{
        \det(\mathsf Q+\mathsf B)}
    \leq\frac{\prod_{i=1}^n p_i}{\sum_{i=1}^n p_i}.
\end{equation}

Next, fix an edge $e_j$ corresponding to $\gamma_j$ and take
\begin{equation*}
    F=\mathcal G_X\setminus\{e_j\}.
\end{equation*}
Since $X$ has genus zero, the two components of
$\mathcal G_X\setminus\{e_j\}$ each contain at least one boundary
component. Denote them by $C_j^+$ and $C_j^-$. The corresponding term in
\eqref{eq:forest-expansion-main} gives
\begin{equation*}
    \det(\mathsf Q+\mathsf B)\geq \left(\prod_{\ell\neq j}q_\ell\right)
    \left(\sum_{\alpha\in C_j^+}\mathsf B_{\alpha\alpha}\right)
    \left(\sum_{\alpha\in C_j^-}\mathsf B_{\alpha\alpha}\right).
\end{equation*}
Since each component contains at least one boundary component and all
$p_i$ are uniformly bounded above in terms of $n$ and $T$, we obtain
\begin{equation*}
    \frac{ \prod_{i=1}^n p_i}{
        \left(\sum_{\alpha\in C_j^+}\mathsf B_{\alpha\alpha}\right)
        \left(\sum_{\alpha\in C_j^-}\mathsf B_{\alpha\alpha}\right) } \leq C.
\end{equation*}
It follows that
\begin{equation}\label{eq:ratio-bound-interior}
    \frac{
        \left(\prod_{i=1}^n p_i\right)
        \left(\prod_{\ell=1}^m q_\ell\right) }{
        \det(\mathsf Q+\mathsf B) } \leq Cq_j.
\end{equation}

Combining \eqref{eq:ratio-bound-boundary} and
\eqref{eq:ratio-bound-interior}, we obtain
\begin{equation}\label{eq:ratio-final-bound}
    \frac{
        \left(\prod_{i=1}^n p_i\right) \left(\prod_{j=1}^m q_j\right)}{
        \det(\mathsf Q+\mathsf B) }
    \leq C\min\left\{
        \frac{\prod_{i=1}^n p_i}{\sum_{i=1}^n p_i},  \min_{1\leq j\leq m}q_j
    \right\},
\end{equation}
with the convention that the second minimum is omitted if $m=0$.

Under the assumption $b_i\leq T$, degeneration means that either some
boundary length tends to zero or some interior short closed geodesic length
tends to zero. In the first case, some $p_i\to0$, and since all $p_i$ are
uniformly bounded above,
\begin{equation*}
    \frac{\prod_{i=1}^n p_i}{\sum_{i=1}^n p_i}
    \leq C\min_i p_i \longrightarrow0.
\end{equation*}
In the second case, some $q_j\to0$. Thus the right-hand side of
\eqref{eq:ratio-final-bound} tends to zero in every degenerating family.
This proves \eqref{eq:graph-ratio-main}, and hence the proposition.
\end{proof}

\subsection{Properness of the normalized determinant and compactness}

We now apply the preceding graph estimates to the compactness problem.
Let $\Omega$ be a compact connected genus-zero surface with $n\geq3$ boundary components. Then $\chi(\Omega)=2-n<0$.

For every flat metric $g$ on $\Omega$, the uniformization theorem, Theorem~\ref{thm::OPS}, asserts that each conformal class $[g]$ contains a unique hyperbolic metric $\hat g$ of constant curvature $-1$ whose boundary is geodesic. Consequently, when the Steklov isospectral flat metrics $g$ under consideration are allowed to vary among different conformal classes, the essential issue is to control the possible degeneration of the associated hyperbolic metrics
$\hat g$. In other words, the compactness problem for such flat metrics can be reduced to a compactness problem in the moduli space of hyperbolic surfaces with geodesic boundary.

As before, we denote by $X$ the hyperbolic surface that is isometric to $(\Omega,\hat g)$, and by $\widetilde{X}$ the double of $X$. The strategy is to use the spectral quantities of $(\Omega,g)$ to prevent the pinching of the closed geodesics on $X$. We need the following conformal invariance of the determinant.
\begin{proposition}[\cite{8180465}]\label{prop::zetaconformalinv}
    Let $W$ be a compact connected surface with smooth boundary. For any two conformally related metrics $h_1$ and $h_2$ on $W$, we have
    \begin{equation*}
	\frac{{\det}' \La_{W,h_1}}{L_{h_1}(\partial W)}=\frac{{\det}' \La_{W,h_2}}{L_{h_2}(\partial W)}.
\end{equation*}
\end{proposition}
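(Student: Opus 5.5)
The plan is to reduce the statement to the variation of the zeta function under a conformal change, computed from the Polyakov--Alvarez type formula for the Dirichlet-to-Neumann operator. Write $h_2=e^{2\varphi}h_1$ and consider the linear path $h_t=e^{2t\varphi}h_1$, $t\in[0,1]$. Set $a_t=e^{-t\varphi}|_{\partial W}$. By conformal invariance of harmonic functions, $\Lambda_{W,h_t}=a_t\Lambda_{W,h_1}$ as an operator on $\partial W$. It is self-adjoint with respect to $L^2(\partial W,a_t^{-1}d\tau)$, and its kernel is always the constants. Then
\[
\frac{d}{dt}\Lambda_{W,h_t}=-\varphi\,\Lambda_{W,h_t},
\]
so the standard variational formula for $\zeta'(0)$ of a family $\dot A=-\varphi A$ gives
\[
\frac{d}{dt}\bigl(-\zeta'_{\Lambda_{W,h_t}}(0)\bigr)=-\Bigl(\mathrm{FP}_{s=0}\Tr(\varphi\,\Lambda_{W,h_t}^{-s})-\Tr(\varphi\,P_0^{(t)})\Bigr),
\]
where $P_0^{(t)}$ is the orthogonal projection onto constants in $L^2(\partial W,a_t^{-1}d\tau)$. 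One has
\[
\Tr(\varphi P_0^{(t)})=\frac{\int_{\partial W}\varphi\,a_t^{-1}d\tau}{L_{h_t}(\partial W)}.
\]
The factor $L_{h_t}(\partial W)$ in the statement is designed to cancel exactly this zero-mode term, since
\[
\frac{d}{dt}\log L_{h_t}(\partial W)=\frac{\int\varphi\,a_t^{-1}d\tau}{L_{h_t}(\partial W)}.
\]

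The key step is to show that the local term $\mathrm{FP}_{s=0}\Tr(\varphi\Lambda_{W,h_t}^{-s})$ vanishes identically. I would obtain this from Lemma~\ref{symbol}, in the form used in the proof of Proposition~\ref{prop::inequfortrace}. That lemma shows that $\Lambda_{W,h_t}$ and $|D_{W,h_t}|$ have the same full symbol. Hence $\varphi(\Lambda_{W,h_t}^{-s}-|D_{W,h_t}|^{-s})$ is smoothing and entire in $s$, and the finite part at $s=0$ can be computed for $|D_{W,h_t}|$ instead, up to a regular contribution. For $|D|$ one passes to arclength coordinates for $h_t$ on each boundary component $M_i$, where $|D|=|\tfrac1{\sqrt{-1}}\partial_\tau|$ on a circle of length $L_i$. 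There $\Tr(\varphi|D|^{-s})$ equals
\[
2\zeta_R(s)\Bigl(\frac{2\pi}{L_i}\Bigr)^{-s}\cdot\frac{1}{L_i}\int_{M_i}\varphi\,d\tau_t,
\]
which is regular at $s=0$ and has value $-\frac{1}{L_i}\int_{M_i}\varphi\,d\tau_t$. This is because $\zeta_R(0)=-\tfrac12$ and only the Fourier zero mode of $\varphi$ contributes to the trace.

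The regular difference term $\Tr\bigl(\varphi(\Lambda^{-s}-|D|^{-s})\bigr)|_{s=0}$ then has to be handled. At $s=0$ both powers become the identity on the complement of their kernels. The kernel of $\Lambda$ is the global constants, while the kernel of $|D|$ consists of the locally constant functions, of dimension $n$. The difference therefore reduces to
\[
\Tr\bigl(\varphi(P_{\ker|D|}-P_{\ker\Lambda})\bigr)=\sum_i\frac{1}{L_i}\int_{M_i}\varphi-\frac{\int\varphi}{L}.
\]
Adding everything up, the per-component terms cancel. What remains exactly matches $\frac{d}{dt}\log L_{h_t}(\partial W)$, so
\[
\frac{d}{dt}\log\frac{{\det}'\Lambda_{W,h_t}}{L_{h_t}(\partial W)}=0,
\]
and integrating in $t$ from $0$ to $1$ gives the claim.

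I expect the main obstacle to be justifying this last step rigorously, in particular making sense of the identity at $s=0$ for $\Lambda^{-s}$ with the zero modes removed. I would first try to verify it directly. The approach is to check that $\Lambda^{-s}(1-P_0)-|D|^{-s}(1-P_{\ker|D|})$ is a holomorphic family of smoothing operators on $\partial W$. At $s=0$ this family equals $P_{\ker|D|}-P_0$, which follows from the holomorphic functional calculus on the nonzero spectrum. If that is delicate, a fallback is to cite the conformal-variation computation of Guillarmou--Guillopé directly.
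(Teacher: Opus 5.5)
Your argument is correct. The paper gives no proof of this proposition; it simply quotes Guillarmou--Guillop\'e \cite{8180465}. What you wrote is the standard conformal-variation proof of that invariance, and it is built from the paper's own tools.

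After capping the boundary circles, $W$ is one of the domains $\Omega_I$. So Lemma~\ref{symbol}, applied with $c=1$ to each $h_t$, makes $\Lambda_t-|D_t|$ smoothing. Hence $\Lambda_t^{-s}(1-P_{\Lambda_t})-|D_t|^{-s}(1-P_{|D_t|})$ is a holomorphic family of smoothing operators, which is exactly the mechanism behind Proposition~\ref{prop::zetaexpress}.

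The step you flag is not a real obstacle:
\begin{itemize}
\item Holomorphy in trace norm follows from the resolvent identity $(\Lambda_t-\lambda)^{-1}-(|D_t|-\lambda)^{-1}=(\Lambda_t-\lambda)^{-1}(|D_t|-\Lambda_t)(|D_t|-\lambda)^{-1}$, since $|D_t|-\Lambda_t$ is smoothing.
\item Seeley's complex powers satisfy $A^{0}(1-P_A)=1-P_A$, so the value of the difference at $s=0$ is $P_{|D_t|}-P_{\Lambda_t}$.
\end{itemize}
The same decomposition shows that $F_t(s):=\operatorname{Tr}\bigl(\varphi\Lambda_t^{-s}(1-P_{\Lambda_t})\bigr)$ is regular at $s=0$. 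You need this, and should say so, to pass from $\partial_t\zeta_t(s)=sF_t(s)$ to $\partial_t\zeta_t'(0)=F_t(0)$.

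One piece of bookkeeping should be tidied. The number $-\frac{1}{L_i}\int_{M_i}\varphi\,d\tau_t$ you compute for $|D_t|$ is already the value with zero modes removed, not the ``local term''. So the sum of your second and third paragraphs is $F_t(0)=-L_{h_t}(\partial W)^{-1}\int_{\partial W}\varphi\,d\tau_t$ itself. It must not be combined a second time with the $-\operatorname{Tr}(\varphi P_0^{(t)})$ in your first display, or the zero-mode contribution is counted twice. Equivalently, the local term is $F_t(0)+\operatorname{Tr}(\varphi P_{\Lambda_t})=0$, as you intended.

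With this reading, $\partial_t\log{\det}'\Lambda_t=-F_t(0)=\partial_t\log L_{h_t}(\partial W)$, and integrating in $t$ gives the claim. As a sanity check, for $\varphi\equiv 1$ you recover $\zeta_{\Lambda}(0)=-1$, consistent with the formula of Proposition~\ref{prop::zetaexpress} at $s=0$.
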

Combining this with Theorem~\ref{thm::shortgeodesics}, we have the following relation.
\begin{equation}\label{maindet}
\frac{{\det}'\Lambda_{\Omega,g}}{L_{g}(M)}=\frac{{\det}'\Lambda_{X}}{\ell(\partial X)}\asymp_{n,\mathtt{g}} {e^{\frac{\ell(\partial X)}{4}}\prod_{b_i<r^*}{b_i}\frac{\prod_{0<\lambda^N_k(X)<\frac14}\lambda^N_k(X)}{\prod_{\lambda^D_k(X)<\frac14}\lambda^D_k(X)}}.
\end{equation}
Hence the key step in studying compactness of Steklov isospectral metrics on $\Omega$ is to analyze the properness of the function $\frac{{\det}' \Lambda_X}{\ell(\partial X)}$ on the moduli space $\mathcal{M}(\Omega)$.
Direct analysis via \eqref{maindet} is complicated, since different types of short closed geodesics may occur and the degeneration behaviors of the quotient of small eigenvalues are not clear.

To simplify the situation, for any positive constant $T$, we consider the set ${U}_{T}(\Omega)\subset{M}_H(\Omega)$ consisting of hyperbolic metrics whose total boundary length is no more than $T$, i.e.
\[
U_T(\Omega):=\{g_H\in M_H(\Omega):L_{g_H}(M)\leq T\},
\]
equipped with the $C^\infty$ topology. 
This induces the closed subspace
\begin{equation*}
	\mathcal{U}_{T}(\Omega):={U}_{T}(\Omega)/\operatorname{Diff}(\Omega).
\end{equation*}
\begin{lemma}\label{lem::properness}
For every $T>0$, the function
\[
X\longmapsto\frac{\det'\Lambda_X}{\ell(\partial X)}
\]
is proper on $\mathcal U_T(\Omega)$ as a map to $(0,\infty)$.
\end{lemma}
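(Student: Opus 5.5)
Properness into $(0,\infty)$ means: for every compact $[\epsilon,E]\subset(0,\infty)$, the preimage $\{X\in\mathcal U_T(\Omega):\epsilon\le \det'\Lambda_X/\ell(\partial X)\le E\}$ is compact in $\mathcal U_T(\Omega)$. We will in fact show that each superlevel set $\{X\in\mathcal U_T(\Omega): \det'\Lambda_X/\ell(\partial X)\ge\epsilon\}$ is compact. Since the function is continuous on $\mathcal U_T(\Omega)$, this superlevel set is closed in $\mathcal U_T(\Omega)$, and $\mathcal U_T(\Omega)$ is closed in $\mathcal M(\Omega)$. It therefore suffices to show that the superlevel set is contained in a compact subset of $\mathcal M(\Omega)$.

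Continuity can be seen in either of two ways: via the Guillarmou--Guillop\'e formula \eqref{eq::3.31}, since $Z_G'(1)$ and $Z_{G_0}(1)$ depend continuously on Fenchel--Nielsen coordinates through the McKean-type representation, or via the BFK formula \eqref{eq::bfk} together with continuity of determinants under smooth variation of the metric.

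For the compactness, we use the description of the moduli space by Fenchel--Nielsen coordinates. For genus zero, a pants decomposition consists of $n-3$ interior curves with lengths and twists, together with the boundary lengths $b_i$. By Mumford's compactness theorem applied to the double $\widetilde X$, a subset of $\mathcal M(\Omega)$ is relatively compact as soon as the systole of $\widetilde X$ is bounded below and $\ell(\partial X)$ is bounded above. The upper bound is $T$ on $\mathcal U_T(\Omega)$. It thus remains to exclude any sequence $X_k$ in the superlevel set along which some closed geodesic of $\widetilde X_k$ shrinks to zero, i.e. a degenerating family.

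We argue by contradiction. Let $X_k$ be a degenerating sequence in $\mathcal U_T(\Omega)$. Then each $b_i\le T$, so Proposition~\ref{prop:ratio-tends-zero-upper-bound} applies. It gives
\[
\prod_{b_i<r^*} b_i\,\frac{\prod_{0<\lambda_k^N(X)<\frac14}\lambda_k^N(X)}{\prod_{\lambda_k^D(X)<\frac14}\lambda_k^D(X)}\longrightarrow 0
\]
along the sequence. By Theorem~\ref{thm::shortgeodesics}, with constants depending only on $n$ (here $\mathtt g=0$), $\det'\Lambda_{X_k}/\ell(\partial X_k)$ is comparable to $e^{\ell(\partial X_k)/4}$ times this quantity. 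Since $e^{\ell(\partial X_k)/4}\le e^{T/4}$, the normalized determinant tends to zero, contradicting the lower bound $\epsilon$.

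The main obstacle is making sure that "degenerating family" in Proposition~\ref{prop:ratio-tends-zero-upper-bound} covers every way of leaving compact subsets of $\mathcal U_T(\Omega)$. There are three possible sources of degeneration.
\begin{enumerate}[(1)]
\item A boundary length $b_i\to 0$.
\item An interior geodesic in $\mathscr C_1$ shrinks.
\item A reflected geodesic in $\mathscr C_2$ shrinks.
\end{enumerate}
Case (3) is excluded by the bound $b_i\le T$ together with the right-angled hexagon formula, as discussed before the graph models. Twist parameters do not cause noncompactness modulo $\operatorname{Diff}(\Omega)$, since Dehn twists act by diffeomorphisms. So every escaping sequence has a subsequence that degenerates in the sense of case (1) or (2), and the argument of the previous paragraph applies to that subsequence. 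This completes the plan.
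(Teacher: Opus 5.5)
Your proposal is correct and takes essentially the same route as the paper. Proposition~\ref{prop:ratio-tends-zero-upper-bound} combined with \eqref{maindet} (using $e^{\ell(\partial X)/4}\le e^{T/4}$) forces the normalized determinant to zero along degenerating sequences, so preimages of compact sets cannot degenerate, and Mumford compactness plus continuity then gives compactness. Your extra remarks on twist parameters and on excluding $\mathscr C_2$-degenerations only make explicit what the paper takes for granted.
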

\begin{proof}
By Proposition~\ref{prop:ratio-tends-zero-upper-bound} and \eqref{maindet}, the normalized determinant tends to zero along every degenerating sequence in $\mathcal U_T(\Omega)$. Let $K\subset(0,\infty)$ be compact. Any sequence in the inverse image of $K$ has its normalized determinant bounded below by a positive constant, and therefore cannot degenerate. Mumford compactness then provides a convergent subsequence in $\mathcal U_T(\Omega)$. Since the normalized determinant is continuous, the inverse image of $K$ is closed and hence compact.
\end{proof}

Let $\widetilde{\mathcal U}_T$ denote the set of flat metrics whose canonical hyperbolic representatives lie in $U_T(\Omega)$:
\[
\widetilde{\mathcal U}_T:=\{(\Omega,g)\mid g\text{ is flat and its canonical representative }\hat g\in U_T(\Omega)\}.
\]
We can now state and prove the following theorem, which is equivalent to Theorem~\ref{maincompactnesshyperbolic}.
\begin{theorem}\label{thm::utcompactness}
   For every fixed $T>0$, any family of Steklov isospectral flat surfaces
contained in $\widetilde{\mathcal U}_T$ is compact in the $C^\infty$
topology.
\end{theorem}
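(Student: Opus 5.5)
Take a sequence $(\Omega,g_k)$ in a Steklov isospectral family contained in $\widetilde{\mathcal U}_T$. The plan has two steps: first use Lemma~\ref{lem::properness} to extract a convergent subsequence of the canonical hyperbolic representatives, then use the argument of Theorem~\ref{thm::maincompactnessflat}, made uniform over a compact set of conformal classes, to control the flat conformal factors.

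\textbf{Step 1 (conformal classes).} The Steklov spectrum determines $L_{g_k}(M)$ through the Weyl asymptotics, equivalently through the pole of $\zeta_{\Omega,g_k}$. It also determines $\det'\Lambda_{\Omega,g_k}=\exp(-\zeta'_{\Omega,g_k}(0))$. So the quantity $\det'\Lambda_{\Omega,g_k}/L_{g_k}(M)$ is a fixed positive number $c_0$. By Proposition~\ref{prop::zetaconformalinv}, the hyperbolic representatives $\hat g_k\in U_T(\Omega)$ satisfy $\det'\Lambda_{X_k}/\ell(\partial X_k)=c_0$. Lemma~\ref{lem::properness} then places $[\hat g_k]$ in the compact set of $\mathcal U_T(\Omega)$ where this function equals $c_0$. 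After passing to a subsequence and pulling back by diffeomorphisms, we get $\hat g_k\to\hat g_\infty$ in $C^\infty$ on a fixed reference surface $\Omega$. Since the uniformization in Theorem~\ref{thm::OPS} is canonical, we may write $g_k=e^{2\psi_k}\hat g_k$ with $\psi_k$ smooth.

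\textbf{Step 2 (conformal factors).} We would rerun the proof of Theorem~\ref{thm::maincompactnessflat} with background metric $\hat g_k$ instead of a fixed metric $g$. That proof depends on the background data only through three ingredients:
\begin{enumerate}[(i)]
\item the cylinder heights $l_I$ and conformal factors $c_I$ coming from the conformal maps $F_I:\mathcal C_I\to\Omega_I$, for $\# I=2$;
\item the constants in Propositions~\ref{prop::ineq1} and~\ref{prop::ineq2}, together with the Sobolev multiplication constants;
\item the elliptic estimate \eqref{finalestimate}, which involves $K_{\hat g_k}=-1$ and the geometry of $(\Omega,\hat g_k)$.
\end{enumerate}
The spectral side of \eqref{nodd}, \eqref{eq::con1} and \eqref{eq::zeta-1ieq} is fixed along the family. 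So it suffices to show that (i)--(iii) are uniform as $\hat g_k\to\hat g_\infty$. For (iii) and the Sobolev constants this is standard, since the metrics converge in $C^\infty$. For (i), fill each $M_i$ with a disk to embed $(\Omega,\hat g_k)$ in a sphere $\mathcal S_0$ with metrics extended so that they converge smoothly. The conformal modulus of $\Omega_I$ then depends continuously on the metric. Moreover, the uniformizing maps $F_I$, normalized by a rotation, converge in $C^\infty$ up to the boundary by elliptic boundary regularity for the Dirichlet problem defining them. Hence $l_I$ stays in a compact subset of $(0,\infty)$, and the norms $\|c_I^{\pm m}\|_{H^{m+1/2}}$ and $\inf c_I^{-1}$ are uniformly bounded. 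For (ii), we need $d_{m,l}$ and $K_l$ to be locally uniform in $l$. The formula for $K_l$ is explicit, and $B_l(0)$ is continuous and positive in $l$, so $K_l$ is bounded below on compact sets of $l$. For $d_{m,l}$ we would check from \cite{annular} that it is continuous in $l$, or at least locally bounded.

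With (i)--(iii) uniform, $\|\psi_k\|_{H^m(\Omega,\hat g_\infty)}$ is bounded for every $m$. Rellich compactness and a diagonal argument then give $g_k\to g_\infty$ in $C^\infty$. The limit metric is flat and has the same Steklov spectrum.

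\textbf{Main obstacle.} The delicate step is (i)--(ii): the uniformity of the cylinder constants $d_{m,l_I}$ and of the $C^\infty$ convergence of the conformal maps $F_I$ up to the boundary. I would handle this by writing $F_I$ via the harmonic function solving the Dirichlet problem on $\Omega_I$ with boundary values $0$ and $1$, together with its harmonic conjugate. This function depends smoothly on the metric.
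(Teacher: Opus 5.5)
Your proposal is correct and follows essentially the same route as the paper. Spectral invariance of $\det'\Lambda$ and $L(M)$, together with Lemma~\ref{lem::properness}, yields diffeomorphisms under which the hyperbolic representatives converge in $C^\infty$; then \eqref{bdfinalestimate} and \eqref{finalestimate} are rerun with these convergent background metrics. The constants stay uniform because the cylinder heights, conformal maps $F_I$ and boundary factors $c_I$ range over a compact $C^\infty$ family. Your more explicit justification of this uniformity, via the harmonic measure function and its conjugate and the local boundedness of $d_{m,l}$ and $K_l$ in $l$, spells out what the paper asserts in a single sentence.
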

\begin{proof}
Let $(\Omega,g_k)$ be a sequence of Steklov isospectral surfaces in
$\widetilde{\mathcal U}_{T}$. By Theorem~\ref{thm::OPS}, for each $k$
there exists a unique canonical hyperbolic metric
$\hat g_k\in U_T(\Omega)$ such that
\begin{equation*}
g_k=e^{2\phi_k}\hat g_k,
\end{equation*}
where $\phi_k\in C^\infty(\Omega;\mathbb R)$.

Both ${\det}'\Lambda_{\Omega,g_k}$ and $L_{g_k}(M)$ are Steklov spectral
invariants. Hence, by \eqref{maindet} and Lemma~\ref{lem::properness}, the
isometry classes of the hyperbolic metrics $\hat g_k$ remain in a compact
subset of $\mathcal M(\Omega)$. Therefore there exist diffeomorphisms $F_k:\Omega\to\Omega$ such that
the pullback metrics $F_k^*\hat g_k$ form a precompact family in the
$C^\infty$ topology. Since $F_k^*g_k$ is isometric to $g_k$, it suffices
to prove that the family $\{F_k^*g_k\}$ is precompact in the
$C^\infty$ topology. Moreover,
\begin{equation*}
F_k^*g_k=e^{2F_k^*\phi_k}F_k^*\hat g_k.
\end{equation*}

Embed $\Omega$ into a closed surface $\mathcal S_0$ diffeomorphic to $S^2$.
Using a fixed smooth extension procedure, we may extend the metrics
$F_k^*\hat g_k$ to $\mathcal S_0$ so that the resulting family, still
denoted by $F_k^*\hat g_k$, remains precompact in the $C^\infty$ topology
on $\mathcal S_0$.

Applying \eqref{bdfinalestimate} and \eqref{finalestimate} with background
metric $F_k^*\hat g_k$, for every $m\geq 1$ we obtain
\begin{equation*}
\|F_k^*\phi_k\|_{H^m(\Omega,F_k^*\hat g_k)}
\leq
C_m\Bigl(
1+\bigl(\zeta_{\Omega,F_k^*g_k}(-2)\bigr)^m+\zeta_{\Omega,F_k^*g_k}(-2m)
\Bigr),
\end{equation*}
where $C_m$ can be chosen independently of $k$. Indeed, after passing to the compact closure of $\{F_k^*\hat g_k\}$, the conformal cylinders used in \eqref{bdfinalestimate} have heights in compact subsets of $(0,\infty)$, and their conformal maps and boundary factors vary in bounded $C^\infty$ families. Thus the Sobolev multiplication and composition constants in the boundary estimates are uniform. The Sobolev norms associated with $F_k^*\hat g_k$ are uniformly equivalent, their curvature tensors are bounded in every $C^m$ norm, and the constants in the Dirichlet elliptic estimates \eqref{finalestimate} can also be chosen uniformly.

Moreover, since $F_k^*g_k$ is isometric to $g_k$,
\begin{equation*}
\zeta_{\Omega,F_k^*g_k}(-2)=\zeta_{\Omega,g_k}(-2),\qquad\zeta_{\Omega,F_k^*g_k}(-2m)=
\zeta_{\Omega,g_k}(-2m),\end{equation*}
and these quantities are Steklov spectral invariants.
Thus, for every $m\geq 1$,
\begin{equation*}
\|F_k^*\phi_k\|_{H^m(\Omega,F_k^*\hat g_k)}
\leq C_m'.
\end{equation*}
The equivalence of the above norms induced by $F_k^*\hat g_k$, together with Rellich compactness and Sobolev
embedding, therefore gives a subsequence such that
\begin{equation*}
F_k^*\phi_k\longrightarrow\phi_\infty
\qquad\text{in }C^\infty(\Omega).
\end{equation*}
Passing to a further subsequence if necessary, the $C^\infty$
precompactness of $\{F_k^*\hat g_k\}$ also gives
\begin{equation*}
F_k^*\hat g_k\longrightarrow\hat g_\infty
\qquad\text{in }C^\infty(\Omega).
\end{equation*}
Consequently,
\begin{equation*}
F_k^*g_k=e^{2F_k^*\phi_k}F_k^*\hat g_k
\longrightarrow
e^{2\phi_\infty}\hat g_\infty
\qquad\text{in }C^\infty(\Omega).
\end{equation*}
This proves the theorem.
\end{proof}

\section{Compactness theorem for planar domains}
In this section, we study the Steklov isospectral compactness problem for
planar domains, viewed as a special class of genus-zero flat surfaces. Let $(\Omega,g_E)$ be a
smooth planar domain with boundary
\[
M=M_1\cup M_2\cup\cdots\cup M_n,
\]
and suppose that its Steklov spectrum is known. We assume that
$n\geq 3$ and that $M_1$ is the outer boundary component enclosing all the
other boundary components.

\subsection{Boundary non-collapse}

We begin by placing this planar domain into the geometric framework used in
Section~\ref{traceineq}. Since the Steklov spectrum determines the multiset of boundary lengths, we choose labels so that the length of each component is fixed throughout the isospectral family. Let $L_{\max}$ and $L_{\min}$ denote the largest and smallest of these lengths. The diameter of the outer boundary is at most half its length, and $\Omega$ is contained in the convex hull of that boundary. Hence, after a translation, there exists $R>0$, depending only on $L_{\max}$, such that $\Omega\subset B_R$.

The metric on $B_{2R}$ is already Euclidean. We embed this disk isometrically into a closed surface $(\mathcal S_0,g)$ by attaching a fixed smooth cap outside $B_{2R}$, chosen so that $g$ has nonnegative Gaussian curvature.
By construction, $g$ coincides with the Euclidean metric on a neighborhood of
$\Omega_{\{1\}}$ in $\mathcal{S}_0$, while its curvature is non-negative on the disk $U_1$.
This allows us to apply the notation and the trace formulas developed in
Section~\ref{traceineq}.

We fix the following sign convention. Each boundary component is oriented so that $\Omega$ lies to its left, $\nu$ is the outward unit normal. Let $\tau$ be an arc-length parameter compatible with this orientation. The geodesic curvature of the boundary is defined by
\[
k_g=-g(\nabla_{\partial_\tau}\partial_\tau,\nu)
\]
With this convention, a convex outer boundary has positive geodesic curvature.

For boundary components $M_i$ and $M_j$, define
\[
\operatorname{dist}^{\perp}_{g_E}(M_i,M_j)
\]
to be the infimum of the lengths of  $g_E$-geodesic segments in $\Omega$ that join $M_i$ to $M_j$ and meet the boundary orthogonally at both endpoints. When $i=j$, the two endpoints are required to be distinct. If no such segment exists, we set $\operatorname{dist}^{\perp}_{g_E}(M_i,M_j)=+\infty$.
This distance measures two possible types of boundary degeneration in the interior of $\Omega$: mutual collapse of distinct boundary components and self-collapse of a single boundary component.

\begin{lemma}\label{lem:noncollapseineqj}
 Let $(\Omega,g_E)$ be a planar domain as above. Then there exists a constant
$\delta_{\mathrm{sep}}>0$, depending only on the Steklov spectrum of $(\Omega,g_E)$, such
that $\operatorname{dist}^{\perp}_{g_E}(M_i,M_j)\geq \delta_{\mathrm{sep}}$ for any $i\neq j$.
\end{lemma}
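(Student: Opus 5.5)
We argue by contradiction and reduce the claim to the two-boundary (annular) situation, where the cylinder estimates of \cite{annular} are available. Fix $i\neq j$ and set $I=\{i,j\}$. By Corollary~\ref{cor::zetamonotonicity} with $J=I\subset\{1,\dots,n\}$ and $s=-2$, together with Proposition~\ref{prop::zetaexpress}, we get
\[
\Tr\bigl(\Lambda_{\Omega_I,g}^{2}-|D_{\Omega_I,g}|^{2}\bigr)\le \zeta_{\Omega,g_E}(-2)-2\zeta_R(-2)\sum_{k=1}^n\Bigl(\frac{2\pi}{L(M_k)}\Bigr)^{2}.
\]
The right-hand side depends only on the Steklov spectrum, since the boundary lengths are spectral invariants. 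The same bound holds with $s=-1$. Thus the spectral trace invariants of the doubly connected surface $(\Omega_I,g)$, obtained by filling in all $U_k$ with $k\notin I$, are uniformly bounded along the isospectral family. The metric $g$ is flat near $\Omega_{\{1\}}$ and has nonnegative curvature on the cap. In particular, $g$ agrees with $g_E$ in a neighborhood of every orthogonal segment joining $M_i$ to $M_j$ inside $\Omega\subset\Omega_I$.

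Next we transport these bounds to the canonical cylinder. Write $\Omega_I$ conformally as $\mathcal C_{l_I}$ with $F_I^*g=e^{2\kappa_I}g_I$, and put $a=c_I=e^{-\kappa_I}|_{\partial\mathcal C_I}$. As in \eqref{eq::zeta-2formul}, the trace above equals $\Tr((a\Lambda_{\mathcal C_I})^2-(aD_{\mathcal C_I})^2)$. Moreover, $\int_{\partial\mathcal C_I}a^{-1}\,d\theta=L(M_i)+L(M_j)$ is fixed. The plan is to use the cylinder estimates behind Propositions~\ref{prop::ineq1} and \ref{prop::ineq2}, as in the annular compactness theorem of \cite{annular}, to obtain a two-sided bound on the modulus $l_I$ and uniform $H^{3/2}$ and pointwise bounds on $a$. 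Concretely, I expect the argument of \cite{annular} to show that $l_I\to0$ forces the $\zeta(-2)$ trace term to blow up; this is where the term $4/l^2$ in $B_l(0)$ enters. Granting this, the conformal factor $a$ is bounded above and below and $l_I\ge l_0>0$, with constants depending only on the spectrum.

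Finally, we convert conformal non-degeneration into a metric separation bound. Suppose $\gamma$ is a $g_E$-geodesic segment in $\Omega$ of length $\delta$ joining $M_i$ to $M_j$ orthogonally. Using the flatness of $g$ near $\gamma$ and the bounded curvature of the boundary curves, we construct a test function on $\Omega_I$ that equals $1$ near one endpoint on $M_i$ and $0$ near the other endpoint on $M_j$. Its Dirichlet energy across a thin neighborhood of $\gamma$ should be small. This produces boundary data whose Dirichlet-to-Neumann quadratic form is anomalously small compared with $|D|$, or equivalently a small extremal length between arcs of $M_i$ and $M_j$. That in turn contradicts either the lower bound on $l_I$ or the pointwise lower bound on $a$.

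I expect the main obstacle to be this last step: turning a short orthogonal chord, a local Euclidean condition, into a quantitative violation of the cylinder estimates. The boundary curvature is not a priori controlled, so the test-function or extremal-length estimate must use only the conformal factor bounds from the previous step. The first thing I would try is to argue via uniform $C^{1,\alpha}$ control of $F_I$ on $\partial\mathcal C_I$ coming from the $H^{m+1/2}$ bounds of Proposition~\ref{prop::ineq1}, for all $m$. This control yields uniform smoothness of $M_i$ and $M_j$ as curves. A short chord between two smooth curves of bounded curvature then forces a conformal map of the annulus with degenerating Harnack-type distortion, which is incompatible with the uniform bounds on $a$ and $l_I$.
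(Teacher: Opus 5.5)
Your first steps agree with the paper. Corollary~\ref{cor::zetamonotonicity} at $s=-2$ gives $\zeta_{\Omega_I,g}(-2)\le\zeta_{\Omega,g_E}(-2)$. The lower bound of \cite{annular} for this quantity in terms of $B_{l_I}(0)$, which blows up as $l_I\to0^+$, then yields $l_I\ge l_{\min}$.

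After that there is a genuine gap. You assert a two-sided bound on $l_I$ and a uniform upper bound on $a=e^{-\kappa_I}|_{\partial\mathcal C_I}$, but neither follows from what you cite. The available pointwise estimate, equations (5.6)--(5.7) of \cite{annular}, is $e^{-\kappa_I}\le r'_{l_I}\sqrt{\zeta_{\Omega,g_E}(-2)}$. From the large-$l$ asymptotics of $B_l(0)$, $B_l(1)$, $B_l'(1)$, the constant $r'_l$ grows linearly in $l$. So you only get $e^{\kappa_I}\ge\delta\,l_I^{-1}$ on $\partial\mathcal C_I$, with $\delta$ spectral.

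More seriously, your final step has no working mechanism. A test function that jumps from $1$ to $0$ across a gap of width $\delta$ has large Dirichlet energy, not small; small extremal length between the arcs means large energy. In any case, a single unusual value of the quadratic form does not contradict a bound on $\Tr(\Lambda^2-D^2)$. The alternative via $C^{1,\alpha}$ control of $F_I$ would need curvature bounds for $M_i$ and $M_j$. These involve the normal derivative of $\kappa_I$, hence its interior behaviour, including the curved cap when $1\notin I$, and none of this is provided. Finally, a boundary bound on $a$ alone cannot control the length of a chord, which runs through the interior where the conformal factor is so far uncontrolled.

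The missing idea is interior control of the conformal factor through the minimum principle. You observe that $g$ has nonnegative curvature on the cap but never use it. Since $g$ is flat elsewhere, $K_g\ge0$ on all of $\mathcal S_0$. Hence $-\Delta_{g_I}\kappa_I=e^{2\kappa_I}K_g\ge0$ on $\mathcal C_I$, i.e.\ $\kappa_I$ is superharmonic, and so $e^{\kappa_I}\ge\min_{\partial\mathcal C_I}e^{\kappa_I}\ge\delta\,l_I^{-1}$ on all of $\mathcal C_I$.

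Every curve in $\mathcal C_I$ joining the two boundary circles has $g_I$-length at least $l_I$. Its $g$-length is therefore at least $l_I\cdot\delta\,l_I^{-1}=\delta$. The factors of $l_I$ cancel, which is exactly why no upper bound on the modulus is needed, and no boundary regularity or test-function argument either. The bound applies to every curve from $M_i$ to $M_j$ in $\Omega_I\supset\Omega$, in particular to every orthogonal chord. This is the paper's argument.
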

\begin{proof}
Fix $J\subset\{1,2,\dots,n\}$ with $\#J=2$. By Theorem~\ref{thm::OPS}, there exist a cylinder \[\mathcal{C}_J:=S^1\times[0,l_J]\] with the canonical product metric $g_{\mathrm{can}}$ and a function $\phi_J\in C^\infty(\mathcal C_J;\mathbb R)$, such that
\[
    (\Omega_J,g) \quad\text{is isometric to}\quad
    (\mathcal C_J,e^{2\phi_J}g_{\mathrm{can}}).
\]

By Corollary~\ref{cor::zetamonotonicity}, applied at $s=-2$, we have
\begin{equation}\label{eq::inzetacJ-2}
    \zeta_{\mathcal{C}_J,e^{2\phi_J}g_{\mathrm{can}}}(-2)=\zeta_{\Omega_J,g}(-2)
    \leq \zeta_{\Omega,g}(-2)=\zeta_{\Omega,g_E}(-2).
\end{equation}
On the other hand, the explicit formula for the spectral zeta function associated with a
conformal cylinder, see \cite[(4.10) and (4.16)]{annular}, gives
\begin{equation}\label{eq:jw2640.10}
    \zeta_{\mathcal{C}_J,e^{2\phi_J}g_{\mathrm{can}}}(-2)\geq \pi^2B_{l_J}(0)\left(\sum_{j\in J}\bigl(L_{g_E}(M_j)\bigr)^{-1}\right)^2\geq \frac{4\pi^2}{L_{\max}^2}B_{l_J}(0),
\end{equation}
where
\begin{equation*}
B_{l_J}(0):=\sum_{k\in\Z\setminus\{0\} }\left(k^2\tanh^2\frac{kl_J}{2}+k^2\coth^2\frac{kl_J}{2}-2k^2\right)+\frac{4}{l_J^2}.
\end{equation*}
The function $B_{l_J}(0)$ is strictly decreasing in ${l_J}$, and satisfies
\begin{equation}\label{eq::blj0}
    B_{l_J}(0)=\frac{4}{l_J^2}+O(e^{-2l_J}), \qquad {l_J}\to\infty.
\end{equation}
Moreover, $B_{l_J}(0)\to+\infty$ as ${l_J}\to 0^+$. The inequality \eqref{eq:jw2640.10} implies that $B_{l_J}(0)$ is bounded above by a constant depending
only on the Steklov spectrum of $(\Omega,g_E)$. Consequently, there exists
$l_{\min}>0$, depending only on the Steklov spectrum, such that
$l_J\geq l_{\min}$ for every such $J$.

We next obtain a uniform lower bound for the boundary values of $e^{\phi_J}$.
Equations (5.6) and (5.7) in \cite{annular} imply that
\begin{equation}\label{inequephiJ}
    e^{-\phi_J}|_{\partial\mathcal{C}_J}\leq r'_{l_J}\sqrt{\zeta_{\mathcal{C}_J,e^{2\phi_J}g_{\mathrm{can}}}(-2)},
\end{equation}
where
\[
r'_{l_J}=C_{\mathrm{emb}}\sqrt{\frac{4}{c_1}+r_{l_J}},\qquad r_{l_J}:=\max\left\{\frac{8}{B_{l_J}(1)},\,\frac{8}{B_{l_J}'(1)},\,\frac{4}{B_{l_J}(0)}
\right\}.
\]
Here $C_{\mathrm{emb}}$ is the embedding constant for
$H^{\frac32}(S^1)\hookrightarrow C(S^1)$, and the positive constant $c_1$ may be taken to be $c_1=\frac49$ (see \cite[proof of Lemma 5.3]{JOLLIVET20181712}). The quantities
$B_{l_J}(1)$ and $B_{l_J}'(1)$ are given by
\begin{align*}
    B_{l_J}(1)
:=\frac{4}{l_J}\coth\frac{l_J}{2}&+2\sum_{k=1}^{\infty}k(k+1)
\Big[\tanh\frac{k l_J}{2}\tanh\frac{(k+1)l_J}{2}\\&+\coth\frac{k l_J}{2}\coth\frac{(k+1)l_J}{2}-2\Big],
\end{align*}
and
\begin{align*}
B_{l_J}'(1):=\frac{4}{{l_J}}\tanh\frac {l_J}2&+2\sum_{k=1}^{\infty}k(k+1)
\Big[
\tanh\frac{k{l_J}}{2}\coth\frac{(k+1){l_J}}{2}\\&+\coth\frac{k{l_J}}{2}\tanh\frac{(k+1){l_J}}{2}-2
\Big].
\end{align*}
Both $B_{l_J}(1)$ and $B_{l_J}'(1)$ are strictly decreasing in ${l_J}$, and, as
${l_J}\to\infty$,
\begin{equation}\label{eq::Blj1}
    B_{l_J}(1) =\frac{4}{l_J}+O({e^{-l_J}}/{l_J}),\qquad B_{l_J}'(1) =\frac{4}{l_J}+O({e^{-l_J}}/{l_J}).
\end{equation}

Combining \eqref{eq::inzetacJ-2}, \eqref{eq::blj0}, \eqref{inequephiJ} and \eqref{eq::Blj1}, for $l_J\geq l_{\min}$, there exists a positive constant $\delta_{\mathrm{sep}}$ that depends only on the Steklov spectrum of $(\Omega,g_E)$ such that
\[
e^{\phi_J}|_{\partial\mathcal{C}_J}\geq \delta_{\mathrm{sep}}l_J^{-1}.
\]

It remains to extend this lower bound from the boundary to the whole cylinder.
The conformal factor $\phi_J$ satisfies
\[
-\Delta_{g_\mathrm{can}}\phi_J=e^{2\phi_J}K_g,
\]
where $K_g$ is the Gaussian curvature of
$(\mathcal C_J,e^{2\phi_J}g_{\mathrm{can}})$. Under the standing assumptions,
$K_g\geq 0$. Hence
\[
-\Delta_{g_{\mathrm{can}}}\phi_J\geq 0.
\]
By the minimum principle, on $\mathcal{C}_J$ we have
\[
e^{\phi_J}\geq \min_{\partial\mathcal{C}_J} e^{\phi_J}\geq \delta_{\mathrm{sep}}l_J^{-1}.
\]

Now write $J=\{j_1,j_2\}$. Every curve in $\mathcal C_J$ joining the two
boundary components has $g_{\mathrm{can}}$-length at least $l_J$.
Therefore, via the
isometry between $(\Omega_J,g)$ and
$(\mathcal C_J,e^{2\phi_J}g_{\mathrm{can}})$, we obtain
\[
\operatorname{dist}^{\perp}_{g_E}(M_{j_1},M_{j_2})\geq l_J\min_{\mathcal{C}_J} e^{\phi_J}\geq\delta_{\mathrm{sep}}.
\]
The constant $\delta_{\mathrm{sep}}$ is independent of $J$. This completes the
proof.
\end{proof}

We will use the following consequence of the proof of the Steklov
isospectral compactness theorem of Jollivet and Sharafutdinov
\cite[Section~5]{JOLLIVET20181712}. Although their theorem is stated for
isospectral families, the proof only requires uniform bounds for the
spectral quantities involved. More precisely, Edward's estimates \cite[Propositions~3 and~4]{Edward01011993}, used in
\cite[Lemma~5.2]{JOLLIVET20181712} require a fixed boundary length and
uniform bounds for $\zeta_U(-1)$ and $\zeta_U(-2)$, while the subsequent
bootstrap uses only uniform bounds for the zeta-invariants
$\zeta_U(-2m)$, $m\geq1$. Consequently, their argument gives the
following criterion.

\begin{theorem}\label{JS18}
Let $\mathcal{F}$ be a family of smooth flat topological disks with the
same boundary length. Suppose that there exists a constant $C_1$ such
that
\begin{equation*}
\zeta_U(-1)\leq C_1,
\end{equation*}
and, for every integer $m\geq1$, a constant $C_{2m}$ such that
\begin{equation*}
\zeta_U(-2m)\leq C_{2m}
\end{equation*}
for all $U\in\mathcal F$. Then $\mathcal F$ is compact in the
$C^\infty$ topology.
\end{theorem}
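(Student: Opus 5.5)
Theorem~\ref{JS18} is obtained by rerunning the proof of Jollivet and Sharafutdinov \cite[Section~5]{JOLLIVET20181712} and recording that it uses only upper bounds on finitely many zeta values at each stage, never isospectrality itself. First normalize. By the Riemann mapping theorem, each $U\in\mathcal F$ is isometric to $(\D,e^{2\phi}g_E)$ with $\phi$ harmonic, since the metric is flat. Set $a:=e^{-\phi}|_{\partial\D}$, so that $\Lambda_U=a\Lambda_\D$ and $D_U=aD_\D$. The boundary length is $L=\int_{\partial\D}a^{-1}\,d\theta$, which is fixed by hypothesis. The Möbius freedom is fixed by the standard normalization of \cite{Edward01011993}, for example by requiring the harmonic measure center condition on $a^{-1}$. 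Proposition~\ref{prop::zetaexpress} with $g_0=g_E$ gives
\[
\zeta_U(-2m)=\Tr\bigl((a\Lambda_\D)^{2m}-(aD_\D)^{2m}\bigr),
\qquad
\zeta_U(-1)+\frac{\pi}{3L}=\Tr\bigl(a^{\frac12}\Lambda_\D a^{\frac12}-|a^{\frac12}D_\D a^{\frac12}|\bigr).
\]
By Proposition~\ref{proppositivenessforzetafunction} with $\#I=1$, both traces are nonnegative. The assumed upper bounds therefore give two-sided control of every trace quantity.

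Second, obtain pointwise bounds on $a$. Edward's estimates \cite[Propositions~3 and~4]{Edward01011993} are the disk analogues of Propositions~\ref{prop::ineq1} and~\ref{prop::ineq2}. The analogue of Proposition~\ref{prop::ineq1} with $m=1$ bounds $\|a\|_{H^{3/2}(\partial\D)}$ in terms of $\Tr\bigl((a\Lambda_\D)^2-(aD_\D)^2\bigr)\le C_2$. Sobolev embedding then gives $a\le C$. The analogue of Proposition~\ref{prop::ineq2} bounds $a$ from below by a positive function of $\int a^{-1}=L$, of the $s=1$ trace (at most $C_1+\pi/(3L)$) and of the $s=2$ trace (at most $C_2$), and this function is monotone in each argument. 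The upper bounds on the traces are what \cite[Lemma~5.2]{JOLLIVET20181712} uses, and I expect this lower bound to be the main point to check. The key observation is that Edward's functional inequalities bound $a$ through the \emph{values} of the traces and the normalization, not through equality of spectra, so upper bounds suffice.

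Third, bootstrap. With $c\le a\le C$, the disk version of Proposition~\ref{prop::ineq1} bounds $\|a^m\|_{H^{m+1/2}}$ by $C_2^m$ and $C_{2m}$ for every $m$. Exactly as in \eqref{bdphiest}, this yields bounds on $\|\phi\|_{H^m(\partial\D)}$ for all $m$. Since $\phi$ is harmonic, elliptic estimates give bounds on $\|\phi\|_{H^m(\D)}$. Rellich compactness and a diagonal argument then produce a subsequence with $e^{2\phi}g_E$ converging in $C^\infty$ to a smooth metric, which is flat because flatness is a closed condition. This proves that $\mathcal F$ is compact in the $C^\infty$ topology.
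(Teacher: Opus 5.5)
Your proposal follows the same route as the paper: the paper justifies Theorem~\ref{JS18} only by observing that the Jollivet--Sharafutdinov argument (Edward's estimates in \cite[Lemma~5.2]{JOLLIVET20181712}, followed by the bootstrap with $\zeta_U(-2m)$) uses nothing beyond the fixed boundary length and upper bounds on $\zeta_U(-1)$ and $\zeta_U(-2m)$, and your outline makes that observation explicit. One correction to your second step: on the disk, $\Tr\bigl((a\Lambda_\D)^2-(aD_\D)^2\bigr)$ is a positive multiple of $\sum_{n\in\Z}(|n|^3-|n|)|\hat a_n|^2$, so it cannot see the Fourier modes $n=0,\pm1$ of $a$ (it vanishes on constants and on $1/|M'|$ for M\"obius $M$), and therefore your $H^{3/2}$ and pointwise bounds on $a$ must use the fixed length $L$ and the M\"obius normalization you introduced, not that trace alone.
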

We will use this result to control the inner self-collapse of the boundary.
\begin{lemma}\label{lem:noncollapsei}
Let $(\Omega,g_E)$ be a planar domain as above. Then there exist constants
$\delta_{\mathrm{ret}},\bar k>0$, depending only on the Steklov spectrum of
$(\Omega,g_E)$, such that
\begin{equation*}
\operatorname{dist}^{\perp}_{g_E}(M_i,M_i)\geq \delta_{\mathrm{ret}}
\end{equation*}
for every $i\in\{1,\ldots,n\}$, and the geodesic curvature of
$M$ satisfies
\begin{equation*}
k_{g_E}\leq \bar k.
\end{equation*}
\end{lemma}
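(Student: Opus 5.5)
The plan is to reduce each boundary component to a flat topological disk and apply Theorem~\ref{JS18}. Fix $i$ and take $J=\{i\}$. Then $\Omega_{\{i\}}$ is the component of $\mathcal S_0\setminus U_i$ containing $\Omega$, and we regard it as a flat disk $D_i$ with boundary $M_i$ of fixed length $L_{g_E}(M_i)$. For $i\neq1$, $D_i$ is the planar region bounded by $M_i$ on the side containing $\Omega$, i.e.\ the exterior of $M_i$ capped off; it is Euclidean near $M_i$ but carries the curvature of the cap. To stay within the Euclidean disk setting, I would instead use the interior disk $U_i$ with its Euclidean metric. This disk has the same boundary, but it lies on the other side, which is the side relevant for self-collapse when viewed from $\Omega$. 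For $i=1$, $\Omega_{\{1\}}$ is the Euclidean interior of $M_1$, which is flat. The main tool is Corollary~\ref{cor::zetamonotonicity} with $J=\{i\}\subset I=\{1,\dots,n\}$. For $s=-1$ and $s=-2m$ it gives
\[
\zeta_{\Omega_{\{i\}},g}(s)\le \zeta_{\Omega,g_E}(s)-2\zeta_R(s)\sum_{j\neq i}\Bigl(\frac{2\pi}{L_{g_E}(M_j)}\Bigr)^{-s}.
\]
The right-hand side depends only on the Steklov spectrum, because it involves only the spectrum and the boundary lengths. We note $\zeta_R(-2m)=0$ and $\zeta_R(-1)=-\tfrac1{12}$, so in both cases the bound is explicit.

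Next comes the compactness step. Since $\zeta_{\Omega_{\{i\}}}(-1)$ and $\zeta_{\Omega_{\{i\}}}(-2m)$ are uniformly bounded and the boundary length is fixed, Theorem~\ref{JS18} applies to the family of disks $\Omega_{\{i\}}$ arising from all $\Omega$ in the isospectral family. The family is therefore $C^\infty$-compact. For $i=1$ the disk is genuinely flat. For $i\ge2$, the exterior disk $\Omega_{\{i\}}$ is not flat on the cap. I would either handle it through the Euclidean inversion $z\mapsto 1/(z-p)$ with $p$ inside $U_i$, or adapt the curvature-nonnegative cap. The cleanest route is to note that the Jollivet--Sharafutdinov argument only uses zeta bounds to control the boundary conformal factor relative to the unit disk. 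That factor is then a uniformly $C^\infty$-bounded function on $S^1$, which gives a uniform $C^\infty$ bound on the boundary curve $M_i$ parametrized by arc length, through the boundary values of the Riemann map. Extracting this boundary-curve control, rather than interior flatness, is the step I expect to be the main obstacle. The bound on the curvature of $M_i$ is immediate, $|k_{g_E}|\le\bar k$ on $M_i$, and in particular one-sided from above.

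Finally, the self-collapse bound follows by contradiction. If $\operatorname{dist}^\perp_{g_E}(M_i,M_i)\to0$ along a sequence, there are orthogonal chords of vanishing length inside $\Omega$ with distinct endpoints on $M_i$. By the uniform curvature bound, the two endpoints cannot be close along $M_i$. Indeed, a short orthogonal chord between nearby points of a curve with $|k|\le\bar k$ is impossible, since the normals would be nearly parallel, not opposite. So the endpoints are separated along $M_i$ by arclength at least $c(\bar k)>0$ while their Euclidean distance tends to $0$. The $C^\infty$ compactness of the arc-length parametrizations of $M_i$ modulo rigid motions lets us pass to a limit embedded smooth Jordan curve; the limit is embedded because it bounds a limit smooth disk in the compactness theorem. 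In that limit, two points at positive arclength separation would coincide, which is a contradiction. This yields $\delta_{\mathrm{ret}}>0$ depending only on the spectrum.
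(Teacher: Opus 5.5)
Your proposal has a genuine gap: it works only for the outer boundary $M_1$, and the inner components $i\ge2$, which make up most of the lemma, are not handled. For $i=1$ your plan works. There $g$ is already flat on $\Omega_{\{1\}}$, so Theorem~\ref{JS18} controls the Riemann map of the interior of $M_1$ and gives a two-sided curvature bound. The final contradiction should be run through intrinsic distance in the disk, which is legitimate because the chord lies inside $\Omega_{\{1\}}$.

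For $i\ge2$, none of your three routes gets through:
\begin{itemize}
\item \emph{The hole $U_i$.} Corollary~\ref{cor::zetamonotonicity} only bounds zeta values of the larger domains $\Omega_J\supset\Omega$, so it gives no control of the Dirichlet-to-Neumann operator of $U_i$. Moreover, collapse through $\Omega$ happens on the $\Omega_{\{i\}}$ side, not the $U_i$ side.
\item \emph{The inversion.} The map $z\mapsto1/(z-p)$ changes the boundary length element, and hence the operator.
\item \emph{The ``cleanest route''.} You are right that the zeta bounds only see the boundary conformal factor. What they then make compact is the intrinsic flat disk $(\Omega_{\{i\}},g_{i,\mathrm{flat}})$, where $g_{i,\mathrm{flat}}=e^{2\phi_i}g$, $\Delta_g\phi_i=K_g$, $\phi_i|_{M_i}=0$. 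It has the same Dirichlet-to-Neumann operator as $(\Omega_{\{i\}},g)$, but it is not the planar exterior of $M_i$. On $M_i$ one has $k_{g_E}=k_{g_{i,\mathrm{flat}}}-\partial_\nu\phi_i$. Already the total curvatures differ: $-2\pi$ for $M_i$ versus $2\pi$ for the flat disk, with the discrepancy $\int_{M_i}\partial_\nu\phi_i\,d\tau=4\pi$ coming from the cap. The pointwise size of $\partial_\nu\phi_i$ is interior information that the boundary factor does not determine.
\end{itemize}
So neither $|k_{g_E}|\le\bar k$ nor $C^\infty$ control of $M_i$ as a planar curve follows. The step you call immediate is exactly the obstacle you flagged.

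The missing idea is the sign built into the choice of cap. Since $K_g\ge0$, $\phi_i$ is subharmonic with zero boundary values. Hence $\phi_i\le0$, so $\partial_\nu\phi_i\ge0$ and $k_{g_E}\le k_{g_{i,\mathrm{flat}}}\le\bar k$. This bound is only one-sided, which is why the lemma is stated that way.

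A one-sided bound breaks both halves of your self-collapse argument. The nearby-endpoint case uses $|k|\le\bar k$ (nearly parallel normals). The far-endpoint case needs the planar limit of $M_i$ to be embedded, and that is not available. The compact family consists of intrinsic flat disks, and $M_i$ may well pinch on the side away from $\Omega$, which the lemma does not exclude.

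The paper instead uses the other consequence of $\phi_i\le0$, namely $g_{i,\mathrm{flat}}\le g$:
\begin{itemize}
\item A chord $\gamma_k$ with $\ell(\gamma_k)\to0$ forces the $g_{i,\mathrm{flat}}$-distance between its endpoints to $0$.
\item Compactness of the flat disks then forces the shorter boundary arc $\alpha_k$ between the endpoints to have length tending to $0$.
\item The loop $\alpha_k\cup\gamma_k$ bounds a small disk in $\Omega$; Lemma~\ref{lem:noncollapseineqj} keeps other holes out of it.
\item Gauss--Bonnet with two right angles gives $\int_{\alpha_k}k_{g_E}\,d\tau=\pi$, which contradicts $k_{g_E}\le\bar k$ as $\ell(\alpha_k)\to0$.
\end{itemize}
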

\begin{proof}
Fix $i\in\{1,\dots,n\}$. By construction, $(\Omega_{\{i\}},g)$ is a
topological disk and has nonnegative Gaussian curvature $K_g\geq 0$. Define a
conformal metric
\[
    g_{i,\mathrm{flat}}:=e^{2\phi_i}g
\]
on $\Omega_{\{i\}}$, where $\phi_i\in C^\infty(\Omega_{\{i\}};\mathbb R)$ is the
solution of
\[\left\{
\begin{aligned}
    -\Delta_g\phi_i&=-K_g, && \text{in } \Omega_{\{i\}},\\
    \phi_i&=0, & &\text{on } M_i.
\end{aligned}\right.
\]
Since
\[
    K_{g_{i,\mathrm{flat}}} =e^{-2\phi_i}(K_g-\Delta_g\phi_i)=0,
\]
the metric $g_{i,\mathrm{flat}}$ is flat. Moreover, $\Delta_g\phi_i=K_g\geq 0$
and $\phi_i|_{M_i}=0$, so the maximum principle gives
\[
    \phi_i\leq 0 \quad \text{on } \Omega_{\{i\}}.
\]
In particular, for the outward unit normal vector field $\nu$ along $M_i$, we have
\[
    \partial_\nu \phi_i\geq 0.
\]

Because $\phi_i$ vanishes on $M_i$, the Dirichlet-to-Neumann operators of
$(\Omega_{\{i\}},g)$ and $(\Omega_{\{i\}},g_{i,\mathrm{flat}})$ coincide on
$M_i$. Hence, by Corollary~\ref{cor::zetamonotonicity}, for every $s\leq -1$,
\begin{equation}\label{eq:giflatupperbound}
      \zeta_{\Omega_{\{i\}},g_{i,\mathrm{flat}}}(s)
  =\zeta_{\Omega_{\{i\}},g}(s)\leq\zeta_{\Omega,g}(s)-2\zeta_R(s)\sum_{j\in\{1,\dots,n\}\setminus\{i\}}
  \left(\frac{2\pi}{L_g(M_j)}\right)^{-s}.
\end{equation}
Since the boundary lengths $L_g(M_j)$ are Steklov spectrally determined and satisfy
\[
    L_{\min}\leq L_g(M_j)\leq L_{\max},
\]
the estimate \eqref{eq:giflatupperbound} gives spectral upper bounds for
$\zeta_{\Omega_{\{i\}},g_{i,\mathrm{flat}}}(s)$, $s\leq -1$. Therefore, by
Theorem~\ref{JS18}, the family of flat topological disks
$(\Omega_{\{i\}},g_{i,\mathrm{flat}})$ is compact in the $C^\infty$ topology.
Since there are only finitely many boundary components, this compactness is
uniform in $i$.

We next compare the boundary curvatures on $M_i$. Since $\phi_i|_{M_i}=0$, the
conformal transformation law for geodesic curvature gives
\[
    k_{g_{i,\mathrm{flat}}} = k_g+\partial_\nu\phi_i.
\]
Thus
\[
    k_g = k_{g_{i,\mathrm{flat}}}-\partial_\nu\phi_i
 \leq k_{g_{i,\mathrm{flat}}}.
\]
By the $C^\infty$-compactness of the flat disks 
$(\Omega_{\{i\}},g_{i,\mathrm{flat}})$, the boundary curvature
$k_{g_{i,\mathrm{flat}}}$ is uniformly bounded from above. Hence there exists a
constant $\bar k>0$, depending only on the Steklov spectrum of $(\Omega,g_E)$,
such that
\begin{equation}\label{eq:upperboundk}
  k_g\leq \bar k\quad \text{on } M_i.    
\end{equation}

It remains to exclude returning orthogeodesics of arbitrarily small length. Suppose, to the contrary, that there are domains in the isospectral family and returning Euclidean orthogeodesics $\gamma_k$, with endpoints $p_k,q_k\in M_i$, such that
\[
\ell_g(\gamma_k)\longrightarrow0.
\]
Since $\phi_i\leq0$, we have $g_{i,\mathrm{flat}}\leq g$, and hence the
$g_{i,\mathrm{flat}}$-distance between $p_k$ and $q_k$ tends to zero.
By the $C^\infty$-compactness of the flat disks, the intrinsic boundary
distance between $p_k$ and $q_k$ also tends to zero. Indeed, otherwise,
after pulling back a subsequence to a fixed disk, $C^\infty$ convergence
would give two coincident limit boundary points whose intrinsic boundary
distance stays bounded away from zero, a contradiction. Consequently, the
shorter boundary arc $\alpha_k\subset M_i$ joining $p_k$ to $q_k$ satisfies
\[
\ell_{g_{i,\mathrm{flat}}}(\alpha_k)\longrightarrow0.
\]
Since $\phi_i=0$ on $M_i$, also $\ell_g(\alpha_k)\to0$.

For all sufficiently large $k$, the simple loop $\alpha_k\cup\gamma_k$ bounds the local domain-side disk. Indeed, its diameter tends to zero, while Lemma~\ref{lem:noncollapseineqj} excludes any other boundary component from this loop. The segment $\gamma_k$ is Euclidean and meets $M_i$ orthogonally at both endpoints. Gauss--Bonnet, with the convention fixed above, gives
\[
\int_{\alpha_k}k_g\,d\tau+\frac\pi2+\frac\pi2=2\pi,
\qquad\text{so}\qquad
\int_{\alpha_k}k_g\,d\tau=\pi.
\]
On the other hand, \eqref{eq:upperboundk} implies
\[
\pi\leq\bar k\,\ell_g(\alpha_k),
\]
a contradiction. Thus all returning orthogeodesics have length at least a uniform constant $\delta_{\mathrm{ret}}>0$. Since $g=g_E$ on $\Omega$, we obtain
\[
k_{g_E}\leq\bar k,\qquad\operatorname{dist}^{\perp}_{g_E}(M_i,M_i)\geq\delta_{\mathrm{ret}}.
\]
This completes the proof.
\end{proof}

\subsection{Control of the hyperbolic boundary length}

The preceding non-collapse estimates provide uniform embedded Euclidean
collars along all boundary components. We now use these collars to control
the boundary length of the canonical hyperbolic representative and complete
the proof of the planar compactness theorem.

\begin{theorem}\label{thm::planar}
Any family of smooth Steklov isospectral planar domains with $n\geq3$ boundary components is compact in the $C^\infty$ topology.
\end{theorem}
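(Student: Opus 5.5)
By Theorem~\ref{thm::utcompactness}, it suffices to find $T>0$, depending only on the Steklov spectrum, such that the canonical hyperbolic representative $X$ of every domain in the family satisfies $\ell(\partial X)\leq T$. Planar domains are flat genus-zero surfaces, so once this bound is in hand Theorem~\ref{thm::utcompactness} gives $C^\infty$ compactness directly.

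\textbf{Step 1: a uniform Euclidean collar.} First we would combine Lemma~\ref{lem:noncollapseineqj}, Lemma~\ref{lem:noncollapsei} and the curvature bound $k_{g_E}\leq\bar k$ to produce a uniform collar. The target is a width $\delta>0$, depending only on the spectrum, such that the normal exponential map
\[
E_i:M_i\times[0,\delta)\to\Omega,\qquad (x,t)\mapsto x-t\nu(x),
\]
is an embedding for each $i$, and such that the images for different $i$ are disjoint. This is a focal-distance and normal-injectivity argument. The one-sided curvature bound $k\leq\bar k$ controls focal points on the domain side, since for our sign convention only positive curvature produces focal points inward. If $E_i$ failed to be injective at the smallest such distance $t_0<\min\{\delta_{\mathrm{sep}},\delta_{\mathrm{ret}},1/\bar k\}/2$, the standard first-touching argument would give either a focal point, which is impossible, or a doubled normal segment. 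The latter is an orthogeodesic of length $2t_0$, joining $M_i$ to itself or to some $M_j$, and this contradicts the two non-collapse lemmas.

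\textbf{Step 2: comparison metric on the collar.} On each collar $A_i:=E_i(M_i\times[0,\delta))$, Fermi coordinates $(x,t)$ give the flat metric $(1-k(x)t)^2dx^2+dt^2$. Since $k\leq\bar k$ and $t\leq\delta\leq 1/(2\bar k)$, we have $1-kt\geq 1/2$ on $A_i$. Next we would build an explicit conformal metric $h=e^{2\psi}g_E$ on $\Omega$ with the following properties: $h$ has curvature $\leq -1$ on $\Omega$, $h$ is complete or very large near $\partial\Omega$, and $h$ is bounded along a fixed interior curve $\Sigma_i$ parallel to $M_i$ at distance $\delta/2$. The natural model is the hyperbolic metric of a half-cylinder/strip, $\psi\approx-\log\bigl(\sin(\pi t/\delta)\bigr)+C$. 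The Schwarz--Ahlfors--Pick lemma (or the maximum principle applied to the difference of conformal factors with the complete metric $\hat g$ solving $K=-1$) then gives $\hat g\leq h$.

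\textbf{Step 3: the length bound.} Given $\hat g\leq h$, the $\hat g$-length of $\Sigma_i$ is at most $C\,L_{g_E}(M_i)/\delta$. The curve $\Sigma_i$ is freely homotopic to $M_i$, and in $X$ the boundary geodesic $\Gamma_i$ is the unique shortest curve in its free homotopy class. Hence
\[
b_i\leq \ell_{\hat g}(\Sigma_i)\leq C\,L_{\max}/\delta.
\]
Summing over $i$ gives $T=nCL_{\max}/\delta$.

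\textbf{Expected obstacle.} The delicate point is Step~2. The canonical $\hat g$ has geodesic boundary rather than being complete, so we should not compare with a complete metric directly. Instead we would compare $\hat g$ on $\Omega$ with $\hat g_\Omega^{\mathrm{int}}$, the complete hyperbolic metric on the interior of $\Omega$. Two facts are needed. First, the geodesic-boundary metric is dominated by the complete one: the doubling and monotonicity argument shows the complete hyperbolic metric of the interior is larger than the restriction of $X$'s metric, by Ahlfors--Schwarz applied to $\operatorname{int}\Omega\hookrightarrow\operatorname{int}\widetilde X$. Second, the complete metric is in turn dominated by the strip-type barrier $h$ on the collars. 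The first fact is a Schwarz-lemma comparison; the second needs a careful barrier construction using $1-kt\geq\tfrac12$, with both bounds uniform in the spectral data. If the barrier is awkward to build, the fallback is a simpler comparison: embed an explicit hyperbolic annulus of modulus bounded below, determined by $\delta$ and $L_{\max}$, around each $M_i$ inside $\Omega$, and invoke monotonicity of the hyperbolic length of the core curve under inclusion.
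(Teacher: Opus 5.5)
Your reduction to Theorem~\ref{thm::utcompactness} and your Step~1 match the paper. The paper takes $\delta=\frac14\min\{\delta_{\mathrm{sep}},\delta_{\mathrm{ret}},\bar k^{-1}\}$ and runs the same first-touching argument, and your use of length-minimality of $\Gamma_i$ in its free homotopy class is also fine. The gap is in Steps~2--3. These carry the only new estimate, the bound on $b_i$, and you leave it as an ``expected obstacle''.

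First, the comparison is stated in the wrong direction. A conformal metric with $K_h\le-1$ is dominated \emph{by} the hyperbolic metric (Ahlfors), so it cannot give $\hat g\le h$. To get $\phi_{\hat g}\le\psi$ by the maximum principle you need a supersolution, $\Delta_{g_E}\psi\le e^{2\psi}$, i.e.\ $K_h\ge-1$, which blows up on the boundary of the comparison region. The detour through the complete metric of $\operatorname{int}\Omega$ is also unnecessary. Since $\hat g$ is smooth on $\overline\Omega$, a barrier on the open collar $\{0<t<\delta\}$ that blows up at $t=0$ and $t=\delta$ compares with $\hat g$ directly.

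Second, $1-kt\ge\frac12$ does not give $\ell_{\hat g}(\Sigma_i)\le CL_{g_E}(M_i)/\delta$. You only control $k$ from above, and inner components may have arbitrarily negative $k$. So $1-kt$ has no pointwise upper bound, and the same term $-\frac{k}{1-kt}\psi'$ appears unboundedly in $\Delta_{g_E}\psi$ in Fermi coordinates. Two ingredients are missing:
\begin{itemize}
\item the turning-number identity $\int_{M_i}k\,d\tau=\pm2\pi$, which gives $L_{g_E}(M_i(r))\le L_{g_E}(M_i)+2\pi r$;
\item the elementary bound $-\frac1t\le\frac{k}{1-kt}\le0$ for $k\le0$.
\end{itemize}
With these, $\psi=\log(\sqrt2\pi/\delta)-\log\sin(\pi t/\delta)$ does satisfy $K_h\ge-1$ on the collar. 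Your route then closes with $b_i\le\frac{\sqrt2\pi}{\delta}\bigl(L_{g_E}(M_i)+\pi\delta\bigr)$.

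The paper avoids Fermi-coordinate curvature entirely:
\begin{itemize}
\item It uniformizes the collar $A_i$ as a flat cylinder $S^1\times[0,l_i]$ and writes $\hat g=e^{2\phi_H}(d\theta^2+dt^2)$, with $\Delta\phi_H=e^{2\phi_H}$ and $\partial_t\phi_H|_{t=0}=0$ because $M_i$ is $\hat g$-geodesic.
\item It compares $\phi_H$ with the explicit solution $v=\log\bigl(\frac{\pi}{2l_i}\sec\frac{\pi t}{2l_i}\bigr)$, using the maximum principle and the Hopf lemma at $t=0$. This gives $L_{\hat g}(M_i)\le\pi^2/l_i$ directly on $M_i$, with no appeal to homotopy minimality.
\item It bounds the modulus from below by the Dirichlet principle, with test function $r/\delta$ and the same parallel-curve length bound: $l_i\ge 2\pi\delta/(L_{g_E}(M_i)+2\pi\delta)$.
\end{itemize}
Your fallback (an annulus of controlled modulus plus Schwarz--Pick) is essentially this route. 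The missing step there is exactly this lower bound on the modulus.
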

\begin{proof}
Let $\mathcal{F}$ denote the family of smooth Steklov isospectral planar
domains. We set
\begin{equation*}
\delta:=\frac14
\min\{\delta_{\mathrm{sep}},\delta_{\mathrm{ret}},\bar k^{-1}\}.
\end{equation*}
We claim that, for every $(\Omega,g_E)\in\mathcal F$, the inward Euclidean
normal maps
\begin{equation*}
F_i:M_i\times[0,\delta]\longrightarrow\Omega,\qquad F_i(p,r)=p+r\nu_{\mathrm{in}}(p),
\end{equation*}
define pairwise disjoint smooth embedded collars.

Indeed,
\begin{equation*}
dF_i(\partial_\tau)
=
(1-rk_{g_E})\partial_\tau,
\end{equation*}
so no focal point occurs for $r\leq\delta<\bar k^{-1}$.

Suppose that the conclusion fails. Since there are no focal points, there
is a smallest $r\leq\delta$ for which some point $x\in\Omega$ has two
distinct nearest boundary points $p$ and $q$ at distance $r$.  
The segments from $p$ and $q$ to $x$  meet the
boundary orthogonally.
At $x$, these two segments must be tangent. Indeed, suppose that
\begin{equation*}
F_i(p,r)=F_j(q,r)=x,
\qquad p\neq q,
\end{equation*}
at the first intersection. If these two segments
intersected transversely, then the derivative with respect to $(p,q)$
of
\begin{equation*}
F_i(p,s)-F_j(q,s)
\end{equation*}
would be invertible at $s=r$. By the implicit function theorem, for
every $s<r$ sufficiently close to $r$, there would then exist nearby
boundary points $p_s$ and $q_s$ such that
\begin{equation*}
F_i(p_s,s)=F_j(q_s,s),
\end{equation*}
contradicting the minimality of $r$.
Hence the two inward normal directions at $p$ and $q$ are parallel. Namely,
\begin{equation*}
\nu_{\mathrm{in}}(q)=-\nu_{\mathrm{in}}(p).
\end{equation*}
Consequently, the two normal segments concatenate to form a Euclidean
geodesic segment from $p$ to $q$ of length $2r\leq2\delta$, orthogonal
to the boundary at both endpoints. This contradicts the definition of
$\delta_{\mathrm{sep}}$ if $p$ and $q$ lie on distinct boundary
components, and that of $\delta_{\mathrm{ret}}$ if they lie on the same
component. The claim follows.

By Theorem~\ref{thm::OPS}, there exists a unique hyperbolic metric
$\hat g\in[g_E]$ such that each boundary component $M_i$ is geodesic with
respect to $\hat g$. We now show that the total $\hat g$-length of the
boundary is bounded from above in terms of the Euclidean boundary lengths
$L_{g_E}(M_i)$ and $\delta$.

Denote by
\[
A_i:=F_i(M_i\times[0,\delta])
\]
the corresponding Euclidean collar. Again by Theorem~\ref{thm::OPS},
there exists a unique flat cylinder
\[
\mathcal C_i=S^1\times[0,l_i],\qquad |S^1|=2\pi,
\]
which is conformal to $A_i$. We write $(\theta,t)$ for the coordinates on
$\mathcal C_i$, and assume that $M_i$ corresponds to $S^1\times\{0\}$.

We first prove that
\[
L_{\hat g}(M_i)\leq \frac{\pi^2}{l_i}.
\]
Indeed, on $\mathcal C_i$ we may write
\[
\hat g=e^{2\phi_H}(d\theta^2+dt^2).
\]
Since $K_{\hat g}=-1$, and the background cylinder metric
$d\theta^2+dt^2$ is flat, we have
\[
\Delta\phi_H=e^{2\phi_H}.
\]
Moreover, $M_i$ is a $\hat g$-geodesic, while $S^1\times\{0\}$ is geodesic
with respect to the flat cylinder metric. Hence the conformal transformation
law for geodesic curvature gives
\[
\partial_t\phi_H|_{t=0}=0.
\]

Now consider the one-dimensional function
\[
v(t)=\log\left(\frac{\pi}{2l_i}
\sec\frac{\pi t}{2l_i}\right).
\]
Then
\[
v''=e^{2v},\qquad v'(0)=0,
\]
and $v(t)\to+\infty$ as $t\to l_i$. We claim that
\begin{equation}\label{eq:claimphih}
    \phi_H(\theta,t)\leq v(t)
\end{equation}
on $S^1\times[0,l_i)$.

To see this, set $w=\phi_H-v$. Since $\phi_H$ is smooth up to
$S^1\times\{l_i\}$ while $v(t)\to+\infty$ as $t\to l_i$, we have
$w<0$ near $S^1\times\{l_i\}$. If $w$ had a positive maximum at an interior
point, then at that point
\[
0\geq \Delta w
=\Delta\phi_H-\Delta v=e^{2\phi_H}-e^{2v}
>0,
\]
which is impossible.
A positive maximum on $S^1\times\{0\}$ is also impossible: since
$\partial_t w|_{t=0}=0$, the Hopf lemma would give
$-\partial_t w>0$ there, a contradiction. Therefore $w\leq0$, proving the claim \eqref{eq:claimphih}.

In particular,
\[
\phi_H(\theta,0)\leq v(0)=\log\frac{\pi}{2l_i}.
\]
Hence
\begin{equation}\label{eq:LgMiupperbound}
    L_{\hat g}(M_i)
=
\int_{S^1} e^{\phi_H(\theta,0)}\,d\theta
\leq
\int_{S^1}\frac{\pi}{2l_i}\,d\theta
=
\frac{\pi^2}{l_i}.
\end{equation}

It remains to estimate the cylinder height $l_i$ from below.
Let
\[
M_i(r):=F_i(M_i,r)
\]
be the parallel curve at distance $r$, and set
\[
L_i(r):=L_{g_E}(M_i(r)).
\]
By Gauss--Bonnet formula,
\begin{equation}\label{eq:parallelcurve}
    L_i(r)\leq L_{g_E}(M_i)+2\pi r
\leq L_{g_E}(M_i)+2\pi\delta,
\qquad 0\leq r\leq\delta.
\end{equation}

Let
\[
\Psi_i:S^1\times[0,l_i]\longrightarrow A_i
\]
be a conformal diffeomorphism such that
\[
\Psi_i(S^1\times\{0\})=M_i,\qquad\Psi_i(S^1\times\{l_i\})=M_i(\delta).
\]
Let $u_i$ be the harmonic function on $A_i$ with boundary values
\[
u_i|_{M_i}=0,\qquad u_i|_{M_i(\delta)}=1.
\]
Then $u_i\circ\Psi_i$ is harmonic on the flat cylinder
$S^1\times[0,l_i]$ and has boundary values $0$ and $1$ on the two boundary
components. By uniqueness of the Dirichlet problem,
\[
u_i\circ\Psi_i=\frac{t}{l_i}.
\]
Since the Dirichlet energy is conformally invariant in dimension two, it
follows that
\[
\int_{A_i}|\nabla u_i|_{g_E}^2\,dV_{g_E}=\int_{S^1\times[0,l_i]}
\left|\nabla\frac{t}{l_i}\right|^2\,d\theta dt
=\frac{2\pi}{l_i}.
\]

On the other hand, $u_i$ minimizes the Dirichlet energy among all functions
with the same boundary values. In the Euclidean collar coordinates
$(\tau,r)\in A_i$, consider the comparison function
\[
\eta_i(F_i(\tau,r))=\frac{r}{\delta}.
\]
Then $\eta_i|_{M_i}=0$ and $\eta_i|_{M_i(\delta)}=1$, and hence
\[
\frac{2\pi}{l_i}=\int_{A_i}|\nabla u_i|_{g_E}^2\,dA_{g_E}
\leq
\int_{A_i}|\nabla \eta_i|_{g_E}^2\,dA_{g_E}
=\frac{1}{\delta^2}\int_0^\delta L_i(r)\,dr.
\]
Using the bound \eqref{eq:parallelcurve} for $L_i(r)$, this gives
\[
\frac{2\pi}{l_i}
\leq
\frac{1}{\delta^2}\int_0^\delta
\left(L_{g_E}(M_i)+2\pi\delta\right)\,dr=\frac{L_{g_E}(M_i)+2\pi\delta}{\delta}.
\]
Therefore
\[
l_i\geq
\frac{2\pi\delta}{L_{g_E}(M_i)+2\pi\delta}.
\]
Combining this lower bound with \eqref{eq:LgMiupperbound}, we obtain
\[
L_{\hat g}(M_i)
\leq
\frac{\pi}{2\delta}
\left(L_{g_E}(M_i)+2\pi\delta\right).
\]
Summing over all boundary components yields
\[
L_{\hat g}(M)=\sum_{i=1}^n L_{\hat g}(M_i)
\leq\frac{\pi}{2\delta}
\sum_{i=1}^n L_{g_E}(M_i)
+n\pi^2=
\frac{\pi}{2\delta}L_{g_E}(M)+n\pi^2.
\]
This proves the desired uniform upper bound for the total $\hat g$-length of the boundary.

Finally, set $T=\frac{\pi}{2\delta}L_{g_E}(M)+n\pi^2$. By Theorem~\ref{thm::utcompactness}, the family
$\mathcal{F}\subset\widetilde{\mathcal{U}}_T$
is compact in the $C^\infty$ topology. This completes the proof.
\end{proof}

\appendix

\section{Proof of the heat trace estimate}\label{heattrace}
In this appendix, we prove Proposition~\ref{propheattrace}, which provides
a uniform all-time estimate for the Dirichlet heat trace on degenerating
hyperbolic surfaces with geodesic boundary. The proof starts from the classical heat trace formula \cite[Proposition 2.2]{10.1215/S0012-7094-86-05345-7} and recasts it in a form suited to the degeneration analysis in Section~\ref{sectiondegenration}.
\subsection{The heat trace formula and the short-time estimate}

Let $\theta(t;P)$ denote the heat trace of an eigenvalue problem $P$. In particular,
\[
\theta(t;X):=\operatorname{Tr}e^{-t\Delta_X^D}
\]
is the heat trace of the Dirichlet Laplacian on $X$.

The heat trace $\theta(t;X)$ for a compact hyperbolic surface with geodesic boundary is given by the following result.
\begin{proposition}[\cite{10.1215/S0012-7094-86-05345-7}, Proposition 2.2] Let $X$ be a compact orientable hyperbolic surface with geodesic boundary. Then
    \begin{align*}
\theta(t;X) =&-2\pi \chi (X) P(t;\mathbb{H}^2) \\
 & +\frac{e^{-\frac t4}}{4\sqrt{\pi t}}\left\{\sum_{i=1}^n 2b_i\sum_{k=1}^\infty\frac{e^{-\frac{(kb_i)^2}{4t}- \frac{kb_i}{2}}}{\sinh(kb_i)}\right.-\frac{\ell(\partial X)}{2} \\
 & +\sum_{\substack{\gamma\in [C]\\ n_\gamma\text{ is even}}}r_\gamma\sum_{k=1}^\infty\frac{e^{-\frac{(kr_\gamma)^2}{4t}}}{\sinh\bigl(\frac{kr_\gamma}{2}\bigr)} \\
 & +\sum_{\substack{\gamma\in [C]\\ n_\gamma\text{ is odd}}}\left.r_\gamma\sum_{k=1}^\infty\left(\frac{e^{-\frac{(kr_\gamma)^2}{t}}}{\sinh(kr_\gamma)}-\frac{e^{-\frac{((2k-1)r_\gamma)^2}{4t}}}{\cosh\bigl((k-\tfrac12)r_\gamma\bigr)}\right)\right\},
\end{align*}
where $P(t;\mathbb H^2)$ is the on-diagonal heat kernel of the hyperbolic plane.
\end{proposition}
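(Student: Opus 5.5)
The approach is the method of images on the double $\widetilde X=G\backslash\H^2$, followed by a Selberg-type orbital-integral computation over the conjugacy classes of the full group $G_0$, including its orientation-reversing elements.

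\emph{Reduction to $G_0$.} Let $\iota$ be the reflection involution of $\widetilde X$. A Dirichlet eigenfunction on $X$ extends to an $\iota$-odd eigenfunction on $\widetilde X$, and conversely. Hence the Dirichlet heat kernel is
\[
K^D_X(t;x,y)=K_{\widetilde X}(t;x,y)-K_{\widetilde X}(t;x,\iota y).
\]
Let $k_t(\rho)$ denote the hyperbolic heat kernel as a function of distance, and $\det h=\pm1$ according to whether $h$ preserves orientation. Writing $K_{\widetilde X}$ as a sum over $G$ and a lift $\widetilde\iota$ of $\iota$, we get on a fundamental domain $\mathcal F_0$ of $G_0$
\[
K^D_X(t;x,x)=\sum_{h\in G_0}\det(h)\,k_t\bigl(d(x,hx)\bigr),\qquad
\theta(t;X)=\int_{\mathcal F_0}\sum_{h\in G_0}\det(h)\,k_t\bigl(d(x,hx)\bigr)\,dx .
\]
The series converges absolutely for $t>0$, by the Gaussian decay of $k_t$ and the exponential orbit counting on $\H^2$. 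We then regroup it by $G_0$-conjugacy classes. Each class $[h]$ contributes the orbital integral $\det(h)\int_{Z(h)\backslash\H^2}k_t(d(x,hx))\,dx$, where $Z(h)$ is the centralizer of $h$.

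\emph{Classification of classes and orbital integrals.} There are four kinds of classes.
\begin{enumerate}[(1)]
\item The identity contributes $\operatorname{Area}(X)\,P(t;\H^2)=-2\pi\chi(X)P(t;\H^2)$.
\item Hyperbolic, orientation-preserving classes $\gamma_0^k$ with translation length $kr$. Using Fermi coordinates around the axis, the classical computation gives
\[
\frac{e^{-t/4}}{4\sqrt{\pi t}}\,\frac{r\,e^{-(kr)^2/4t}}{\sinh(kr/2)}.
\]
These classes come from closed geodesics of $X$ with even $n_\gamma$ (each of length $r_\gamma$). They also come from the even powers of odd-reflection geodesics, whose unfolded length is $2r_\gamma$, giving $\frac{r_\gamma e^{-(kr_\gamma)^2/t}}{\sinh(kr_\gamma)}$. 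Finally, they come from the translations along boundary geodesics.
\item Glide reflections with translation length $\ell$. Here $d(x,hx)$ in Fermi coordinates $(s,\rho)$ satisfies $\cosh d=\cosh^2\!\rho\cosh\ell+\sinh^2\!\rho$. The orbital integral then becomes
\[
-\frac{e^{-t/4}}{4\sqrt{\pi t}}\,\frac{\ell_0\,e^{-\ell^2/4t}}{\cosh(\ell/2)},
\]
where $\ell_0$ is the primitive length. This yields the $\cosh$ terms for odd $n_\gamma$ with $\ell=(2k-1)r_\gamma$. Together with the orientation-preserving classes along the same boundary axis, the glide reflections along boundary geodesics combine, after the elementary identity relating $1/\sinh(kb/2)$ and $1/\cosh(kb/2)$ sums, into $2b_i\sum_k e^{-(kb_i)^2/4t-kb_i/2}/\sinh(kb_i)$.
\item Pure reflections in boundary axes. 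The centralizer is generated by the translation of length $b_i$ along the axis and the reflection itself, and $d(x,hx)=2\rho$. The orbital integral is therefore $-\tfrac12 b_i\int_{\R}k_t(2|\rho|)\cosh\rho\,d\rho$. The half-line Abel transform of $k_t$ evaluates this to $-\frac{e^{-t/4}}{4\sqrt{\pi t}}\cdot\frac{b_i}{2}$, and summing over $i$ gives the $-\ell(\partial X)/2$ term.
\end{enumerate}

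\emph{Main obstacle.} The hard part is the bookkeeping in the orientation-reversing sector. Each primitive reflected geodesic of $X$ must be matched with exactly the right family of $G_0$-classes: hyperbolic classes for even $n_\gamma$; alternating glide reflection and hyperbolic classes for odd $n_\gamma$; and reflections, glide reflections and hyperbolic elements along boundary axes. The centralizer indices must also be tracked correctly, for example the factor coming from orientation-reversing elements in $Z(h)$. To do this, I would first identify each class with an $\iota$-equivariant closed geodesic on $\widetilde X$ via the glide-reflection axis argument used in Lemma~\ref{lem:rough-count-boundary}. I would then compute the four orbital integrals explicitly in Fermi coordinates via the Abel transform $\int_\rho^\infty k_t(u)\sinh u\,(\cosh u-\cosh\rho)^{-1/2}du$, which is the Gaussian $\frac{e^{-t/4}}{\sqrt{4\pi t}}e^{-\rho^2/4t}$ up to normalization. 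Finally, I would check the overall normalization against the known short-time Weyl asymptotics $\theta(t;X)\sim \frac{\operatorname{Area}}{4\pi t}-\frac{\ell(\partial X)}{8\sqrt{\pi t}}$.
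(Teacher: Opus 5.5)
Your proposal is correct: the class bookkeeping and orbital integrals check out, including the halved centralizer quotient along boundary axes, the identity $\frac{1}{\sinh(kb/2)}-\frac{1}{\cosh(kb/2)}=\frac{2e^{-kb/2}}{\sinh(kb)}$ that produces the boundary term, the reflection contribution $-\frac{b_i}{2}$, and the use of $\langle h\rangle$ rather than $\langle h^2\rangle$ as the centralizer for powers of an odd-reflection glide reflection $h$. The paper does not prove this proposition itself but cites it from Guillop\'e, and your method-of-images trace formula for the extended group $G_0$, with its reflections and glide reflections, is the standard derivation of that cited formula.
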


For $0<t\leq 1$, the contributions that we absorb into the uniformly bounded remainder are the terms associated with non-short geodesics:
\begin{enumerate}
    \item the boundary-geodesic terms corresponding to $b_i\geq r^*$;
    \item the terms $\gamma\in[C]$ with $n_\gamma=0,2$ and $r_\gamma\geq r^*$;
    \item the terms $\gamma\in[C]$ with $n_\gamma\neq 0,2$.
\end{enumerate}

For every $a\geq r^*$,
\[
\frac{1}{\sqrt t}e^{-\frac{a^2}{4t}}
=\left(\frac{1}{\sqrt t}e^{-\frac{a^2}{8t}}\right)
e^{-\frac{a^2}{8t}}
\leq C_{r^*}e^{-\frac{a^2}{8}}.
\]
We apply this with $a=kb_i$, $a=kr_\gamma$, $a=2kr_\gamma$, and
$a=(2k-1)r_\gamma$ in the corresponding four sums. Since
\[
\sinh x=\frac12e^x(1-e^{-2x}),\qquad
\cosh x\geq\frac12e^x,
\]
each such contribution is bounded by a Gaussian series with
constants depending only on $r^*$. The boundary sum is uniformly
bounded since there are exactly $n$ boundary components. For the sums
over $\gamma\in[C]$, grouping the geodesics according to unit length
intervals and using \eqref{roughestimate}, the exponential growth in
their number is dominated by the Gaussian decay. 

Consequently, there exists a constant $B_{tr}'(n,\mathtt g)>0$ such
that, for $0<t\leq1$,
\begin{align}
    \bigg|\theta(t;X)-\Big(&-2\pi \chi (X) P(t;\mathbb{H}^2)-\frac{e^{-\frac t4}}{8\sqrt{\pi t}}\ell(\partial X)\notag \\ &+\frac{e^{-\frac t4}}{2\sqrt{\pi t}}\sum_{b_i<r^*} b_i\sum_{k=1}^\infty\frac{e^{-\frac{(kb_i)^2}{4t}- \frac{kb_i}{2}}}{\sinh(kb_i)}\notag\\
  &+\frac{e^{-\frac t4}}{4\sqrt{\pi t}}\sum_{\gamma\in\mathscr{C}_1\cup\mathscr{C}_2}r_\gamma\sum_{k=1}^\infty\frac{e^{-\frac{(kr_\gamma)^2}{4t}}}{\sinh\bigl(\frac{kr_\gamma}{2}\bigr)} \Big)\Bigg|\leq B_{tr}'(n,\mathtt{g}).\label{heattractless1}
\end{align}

\subsection{Long-time decomposition and model estimates}

For $t>1$, we adopt a method similar to that of \cite{Wolpert1987}, decomposing the hyperbolic surface into several collars together with the remaining region and computing the heat kernel on each part separately. The key difference is that our analysis is carried out on a hyperbolic surface $X$ with boundary.

Let $SC$ be the union of all standard subcollars on the double $\widetilde{X}$. Define
\[
R:=X\setminus SC
\]
to be the complement of $SC$ in $X$. The connected components of $SC\cap{X}$ can be classified into three types, according to the way in which the corresponding short closed geodesics interact with the boundary $\partial X$.
More precisely, for each $\gamma\in \mathscr{C}_j/\sim$, $j=1,2$, with length $r_\gamma$, we denote by $SC_j(\gamma)$ the connected component of $SC\cap X$ associated with $\gamma$. When only the length of the underlying geodesic matters, we write this component simply as $SC_j(r_\gamma)$. Similarly, if $\Gamma_i\subset \partial X$ is a short boundary component, we denote the corresponding boundary subcollar by $SC_b(\Gamma_i)$.
We then write
\[
SC_I,\qquad SC_{II},\qquad SC_B\subset SC\cap X
\]
for the unions of all connected components of the forms $SC_1(\gamma)$, $SC_2(\gamma)$, and $SC_b(\Gamma)$, respectively. Thus
\[
SC\cap X=SC_I\sqcup SC_{II}\sqcup SC_B.
\]

We now introduce the following notation:

\begin{description}
\item[$\lambda_k(P)$] the $k$-th eigenvalue of the eigenvalue problem $P$;
\item[$q(\Omega)$] the number of connected components of $\Omega$;
\item[$q'(R)$] the number of connected components of $R$ that do not meet
$\partial X$;
\item[$D\Omega$] the Dirichlet eigenvalue problem on $\Omega$;
\item[$N\Omega$] the eigenvalue problem on $\Omega$ with Dirichlet conditions
on $\partial\Omega\cap\partial X$ and Neumann conditions on the remaining
part of $\partial\Omega$;
\item[$P_1\sqcup P_2$] the disjoint union of the eigenvalue problems $P_1$
and $P_2$.
\end{description}

With this notation, $DSC_I$ now denotes the Dirichlet eigenvalue problem on the connected components of $SC_I$. Similarly, $NSC_I$ is defined componentwise according to the preceding convention: Dirichlet conditions are imposed on the portion inherited from $\partial X$, and Neumann conditions on the artificial cutting boundary. That is,
\begin{equation*}
    DSC_I:= \bigsqcup_{\gamma\in \mathscr{C}_1/\sim}DSC_1(\gamma),\quad NSC_I:= \bigsqcup_{\gamma\in \mathscr{C}_1/\sim}NSC_1(\gamma).
\end{equation*}
The same definitions apply to $SC_{II}$ and $SC_{B}$.

By a standard argument based on Rayleigh quotients, one obtains the comparison
\begin{align}
    \lambda_k(NR\sqcup NSC_I\sqcup &NSC_{II}\sqcup NSC_{B})\leq \lambda_k^D(X)\notag\\
    &\leq \lambda_k(DR\sqcup DSC_I\sqcup DSC_{II}\sqcup DSC_{B}).\label{eigenvlauescomparison}
\end{align}
Consequently, the heat trace $\theta(t;X)$ can be estimated by summing the heat traces associated with each decomposed model region.

We next determine the small eigenvalues, by which we mean the eigenvalues that tend to zero as the surface degenerates. Since constant functions are eigenfunctions of $NSC_1(r)$, we have $\lambda_1(NSC_1(r))=0$. For the second eigenvalue, we have
\begin{lemma}\label{lemeigennsc1}
After decreasing $r^*$ if necessary, for every $0<r<r^*$ one has 
\[
\lambda_2(NSC_1(r))=\lambda_1(NSC_b(r))\longrightarrow 0
\qquad \text{as } r\to 0.
\]
\end{lemma}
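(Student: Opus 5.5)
Work in the collar coordinates $(u,v)$ with $u\in\mathbb R/r\mathbb Z$ and metric $\sin^{-2}v\,(du^2+dv^2)$. The interior subcollar $SC_1(r)$ is the region $2r<v<\pi-2r$. The boundary subcollar $SC_b(r)$ of a short boundary geodesic $\Gamma$ of length $r$ is the half-collar $2r<v\le \pi/2$, with $\Gamma=\{v=\pi/2\}$, so that its double across $\Gamma$ is exactly $SC_1(r)$. For $NSC_b(r)$ we impose Dirichlet conditions on $\Gamma\subset\partial X$ and Neumann conditions on the cut $\{v=2r\}$. For $NSC_1(r)$ we impose Neumann conditions at both ends $v=2r$ and $v=\pi-2r$.

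\emph{Step 1: symmetry decomposition.} The reflection $v\mapsto \pi-v$ is an isometry of $SC_1(r)$ preserving the Neumann problem. Hence the spectrum of $NSC_1(r)$ splits into even eigenfunctions, which are the eigenfunctions of the half-collar with Neumann conditions at $v=\pi/2$, and odd eigenfunctions, which are exactly the eigenfunctions of $NSC_b(r)$. Thus
\[
\operatorname{Spec}NSC_1(r)=\operatorname{Spec}(\text{Neumann half-collar})\cup\operatorname{Spec}NSC_b(r).
\]
The even part has first eigenvalue $0$ (the constants).

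\emph{Step 2: Fourier reduction.} Separate variables in $u$. The modes $e^{2\pi i ju/r}$ with $j\neq0$ have Rayleigh quotient at least
\[
(2\pi j/r)^2\sin^2v\ \ge\ (2\pi/r)^2\sin^2(2r)\approx 16\pi^2 .
\]
So every nonconstant-in-$u$ eigenvalue is bounded below by a universal constant $c_0>0$, uniformly for small $r$. It therefore suffices to study the $u$-independent ODE problems
\[
-\sin^2 v\,f''=\lambda f
\]
on $[2r,\pi/2]$. In the even case we impose $f'(2r)=f'(\pi/2)=0$; in the odd case (the problem $NSC_b$) we impose $f'(2r)=0$ and $f(\pi/2)=0$.

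\emph{Step 3: odd eigenvalue tends to zero.} For the odd problem, take the test function $f(v)=\cos v$ on $[2r,\pi/2]$, possibly modified near $v=2r$ (this is not needed, since the Rayleigh quotient requires no Neumann condition). Its Dirichlet energy is
\[
\int r\,|f'|^2\,dv=O(r),
\]
and its $L^2$ mass is
\[
\int r\,\cos^2v\,\sin^{-2}v\,dv\asymp r\cdot\frac1{2r}=\tfrac12 .
\]
Hence
\[
\lambda_1(NSC_b(r))=O(r)\to0,
\]
which is consistent with the $q_i\asymp r$ scaling used for the graph models. A matching lower bound $\gtrsim r$ follows from a Hardy-type inequality, though it is not needed here.

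\emph{Step 4: ordering (the main obstacle).} It remains to show that the second even eigenvalue, i.e.\ the first nonzero Neumann eigenvalue of the half-collar, stays bounded below by a constant $c_1>0$ uniformly in $r$. Combined with Step 2, this shows that $\lambda_2(NSC_1(r))$ is precisely the odd eigenvalue $\lambda_1(NSC_b(r))$ once $r^*$ is decreased.

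For this I would use the substitution $x=\log\cot(v/2)$, the hyperbolic distance to $\Gamma$, which turns the half-collar into a rotationally reduced $[0,\approx\log(1/r)]$ interval with weight $\cosh x$. A function with mean zero in weighted $L^2$ must then have energy comparable to its mass, by a weighted Poincaré inequality: the weight $\cosh x$ concentrates mass at the thin end where $x$ is large, and $\int\cosh x$ grows like the weight itself. This gives a constant independent of the interval length.

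Alternatively, compare with the explicit Legendre-type solutions: the even solutions of $\sin^2v\,f''+\lambda f=0$ are hypergeometric, and the condition $f'(\pi/2)=0$ together with $f'(2r)=0$ forces $\lambda\ge c_1$. I expect this uniform gap to be the delicate point. Once it is established, choose $r^*$ so that $O(r^*)<\min(c_0,c_1)$, which yields the lemma.
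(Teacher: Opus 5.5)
Your Steps 1--3 are fine. The Fourier bound removes the non-radial modes, and the parity splitting identifies the odd part of $NSC_1(r)$ with $NSC_b(r)$. The test function $\cos v$ gives $\lambda_1(NSC_b(r))=O(r)$, which is the correct order and sharper than the $(\log\cot r)^{-2}$ the paper gets from $\epsilon(v)$. Since $0$ is a simple eigenvalue of $NSC_1(r)$ and $\lambda_1(NSC_b(r))>0$ lies in its spectrum by odd reflection, these steps give $\lambda_2(NSC_1(r))\le\lambda_1(NSC_b(r))\to0$.

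The reverse inequality is the equality the lemma asserts, and it is exactly your Step 4. There you only state an expected uniform gap $c_1$ for the even radial Neumann problem and do not prove it, so the proposal has a genuine gap at the point you yourself flag. Your heuristic for it is also reversed. In $x=\log\cot(v/2)$, $x=0$ is the geodesic $\Gamma$ of length $r$, the thin end. The cut $\{v=2r\}$ sits at $x\approx\log\cot r$, has length about $\tfrac12$, and is the thick end where the mass concentrates. The gap is in fact true: a one-dimensional Cheeger bound for the weight $\cosh x$ gives $\ge\tfrac14$, since $\min\{\int_0^a\cosh,\int_a^L\cosh\}\le\sinh a<\cosh a$. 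But it is more than you need.

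The missing idea is that only the \emph{ordering} of the radial even and odd eigenvalues matters, and this is pure Sturm--Liouville theory, which is how the paper argues.
\begin{itemize}
\item Once the Fourier bound and the small test value show that the second eigenfunction of $NSC_1(r)$ is radial, it solves $-h''=\lambda\sin^{-2}v\,h$ with $h'(2r)=h'(\pi-2r)=0$. This eigenvalue is simple and $h$ has exactly one zero.
\item Reflection invariance under $v\mapsto\pi-v$ together with simplicity forces $h$ to be even or odd.
\item An even $h$ has $h'(\pi/2)=0$, so it cannot vanish at $\pi/2$ without vanishing identically, and its zeros come in symmetric pairs. Hence $h$ is odd and vanishes on $\gamma$.
\item Since $h$ has no zero in $(2r,\pi/2)$, its restriction there is, up to sign, a positive eigenfunction of $NSC_b(r)$, hence its ground state.
\end{itemize}
Equivalently, the second even eigenfunction on the half-collar vanishes at some $v_0\in(2r,\pi/2)$. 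Domain monotonicity for the Neumann--Dirichlet problem on $[2r,v_0]\subset[2r,\pi/2]$ then puts its eigenvalue at least at $\lambda_1(NSC_b(r))$.

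With this in place, the only smallness you need is $\lambda_1(NSC_b(r))<c_0$, which your Step 3 already provides.
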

\begin{proof}
    Let $\gamma\in\mathscr{C}_1/\sim$ with length $r=r_{\gamma}$. By the conformal invariance of the energy, on
\[
SC_1(\gamma)
=(\mathbb R/r\mathbb Z)\times (2r,\pi-2r),
\qquad g=\frac{1}{\sin^2 v}(du^2+dv^2),
\]
the Laplacian Rayleigh quotient with respect to the metric $g$ is
\begin{equation}\label{laplacerayleigh}
\mathcal R(f):=
\frac{\int_{SC_1(\gamma)}(|\partial_u f|^2+|\partial_v f|^2)\,du\,dv}
{\int_{SC_1(\gamma)}|f|^2\sin^{-2}v\,du\,dv}.
\end{equation}
Expand $f$ in Fourier modes in the $u$-variable,
\[
f(u,v)=\sum_{n\in\mathbb Z} f_n(v)e^{\frac{2\pi i n u}{r}}.
\]
For $0<r<r^*$ and $v\in[2r,\pi-2r]$, one has $\sin v\geq\sin(2r)$. Hence every nonzero Fourier mode satisfies
\[
\mathcal R\bigl(f_n(v)e^{\frac{2\pi i n u}{r}}\bigr)
\geq
\left(\frac{2\pi |n|}{r}\right)^2\sin^2(2r)\geq64n^2.
\]
Thus the non-zero Fourier modes have a uniform positive lower bound.

On the other hand, for the mode $n=0$, the test function
\[
\epsilon(v):=\frac{1}{\log\cot r}\log\tan\frac v2,
\]
is orthogonal to constants and gives
\[
\mathcal R(\epsilon)\lesssim \frac{1}{(\log\cot r)^2}\longrightarrow 0,
\qquad \text{as } r\to 0.
\]
Hence, by the min--max principle, after decreasing $r^*$ once more if
necessary, the second Neumann eigenfunction is radial for $0<r<r^*$; write it
as $h(v)$. Moreover,
$\lambda_2(NSC_1(r))\to0$ as $r\to0$.

The corresponding one-dimensional problem is
\[
-h''(v)=\lambda \sin^{-2}v\,h(v),
\qquad
h'(2r)=h'(\pi-2r)=0.
\]
It is invariant under the reflection $v\mapsto \pi-v$. Since the second Neumann eigenvalue is simple and its eigenfunction has exactly one zero, $h(v)$ is odd with respect to this reflection. Thus,
\[
h(\pi-v)=-h(v),
\]
and, in particular, the second Neumann eigenfunction $\widetilde h(u,v):=h(v)$ on $SC_1(\gamma)$ vanishes on the central geodesic
\[
\gamma=\{v=\frac{\pi}{2}\}.
\]
It follows that, for $0<r<r^*$, the restriction of $\widetilde h$ to
\[
(\mathbb R/r\mathbb Z)\times \left(2r,\frac{\pi}{2}\right)
\]
is an eigenfunction associated with $\lambda_1(NSC_b(r))$. This completes the proof.
\end{proof}
By \cite[Section 3.5]{Wolpert1987}, we get
\begin{equation}\label{eq:lambda3nsc1}
    \lambda_3(NSC_1(r))\geq\lambda_1(DSC_1(r))>\frac{1}{4}.
\end{equation}
Since $SC_b(r)$ is exactly half of $SC_1(r)$, we also have 
\begin{equation}\label{eq:lambda2nscb}
    \lambda_2(NSC_b(r))>\frac14.
\end{equation}

\begin{lemma}
For every $0<r<r^*$, one has
\begin{equation}\label{eq:lambda1nsc2}
\lambda_1(NSC_2(r))\geq64.
\end{equation}
\end{lemma}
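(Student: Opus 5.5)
The plan is to make the geometry of $SC_2(r)$ explicit in collar coordinates and then reduce the lower bound to a one-dimensional Dirichlet Poincaré inequality in the $u$-variable. The approach is the same Rayleigh-quotient estimate used for the nonzero Fourier modes in the proof of Lemma~\ref{lemeigennsc1}. The key input is that, for $\gamma\in\mathscr C_2$, the boundary of $X$ cuts the double collar transversally, so the $u$-direction carries a Dirichlet condition.

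\emph{Step 1: describe $SC_2(r)$.} Let $\gamma\in\mathscr C_2/\sim$ with $r=r_\gamma<r^*$. Its double is a short closed geodesic $\widetilde\gamma$ on $\widetilde X$ that is invariant under the reflection involution $\iota$. By the discussion after Definition~\ref{def::shortgeodesics}, $\widetilde\gamma$ crosses $\partial X$ orthogonally at exactly two points, which are distance $r/2$ apart along $\widetilde\gamma$. In the collar coordinates $(u,v)$, with $u\in\mathbb R/r\mathbb Z$ and metric $\sin^{-2}v\,(du^2+dv^2)$, the involution $\iota$ is an isometry of $C(\widetilde\gamma)$ that preserves $\widetilde\gamma$ pointwise-setwise and fixes these two points. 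Hence $\iota$ acts as $(u,v)\mapsto(-u,v)$ after normalizing $u$, and its fixed set inside the collar is the pair of geodesic arcs $\{u=0\}$ and $\{u=r/2\}$. These arcs lie in $\partial X$. Therefore
\[
SC_2(r)\cong[0,r/2]\times(2r,\pi-2r).
\]
Under the convention for $N\Omega$, the problem $NSC_2(r)$ has Dirichlet conditions on the sides $u=0$ and $u=r/2$, which are inherited from $\partial X$. It has Neumann conditions on the cut ends $v=2r$ and $v=\pi-2r$.

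\emph{Step 2: Rayleigh quotient bound.} The Rayleigh quotient has the same form as \eqref{laplacerayleigh}, restricted to this rectangle. The admissible functions vanish at $u=0$ and at $u=r/2$ for every fixed $v$. The one-dimensional Dirichlet Poincaré inequality on an interval of length $r/2$ gives
\[
\int_0^{r/2}|\partial_u f|^2\,du\ \ge\ \Bigl(\frac{2\pi}{r}\Bigr)^2\int_0^{r/2}|f|^2\,du.
\]
On the other side, for $v\in[2r,\pi-2r]$ we have $\sin^{-2}v\le\sin^{-2}(2r)$. Integrating in $v$ and discarding the $\partial_v$ term yields
\[
\mathcal R(f)\ \ge\ \Bigl(\frac{2\pi}{r}\Bigr)^2\sin^2(2r).
\]

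\emph{Step 3: numerics.} Since $r^*<\pi/4$, we have $2r<\pi/2$. Jordan's inequality $\sin x\ge 2x/\pi$ then gives $\sin(2r)\ge 4r/\pi$. Hence
\[
\mathcal R(f)\ge\Bigl(\frac{2\pi}{r}\cdot\frac{4r}{\pi}\Bigr)^2=64,
\]
and the min--max principle gives $\lambda_1(NSC_2(r))\ge 64$.

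The only step I expect to need care is Step 1. One must justify that the fixed set of $\iota$ meets the collar exactly in the two orthogonal arcs $u=0$ and $u=r/2$, so that the boundary portions of $SC_2(r)$ are precisely these sides. I would argue this from uniqueness of the collar and the orthogonality at the two reflection points. If needed, I would use the symmetry of the standard subcollar under isometries preserving $\widetilde\gamma$. This also confirms that the Neumann parts of $NSC_2(r)$ are only the ends $v=\mathrm{const}$, where no Poincaré inequality is needed.
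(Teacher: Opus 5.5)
Your proposal is correct and follows the paper's own argument. The paper likewise applies the Dirichlet Poincar\'e inequality in $u$ on an interval of length $r/2$, then uses $\sin v\geq\sin(2r)$ on $[2r,\pi-2r]$ to get $\mathcal R(f)\geq(2\pi/r)^2\sin^2(2r)$. It takes the rectangle description of $SC_2(r)$ as given and asserts the final bound $\geq 64$ without comment, so your Step~1 and your Jordan-inequality step supply details the paper omits.
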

\begin{proof}
    We write $NSC_2(r)$ as
\[
(u,v)\in(0,\frac{r}{2})\times (2r,\pi-2r),\qquad g=\frac{1}{\sin^2 v}(du^2+dv^2),
\]
with Dirichlet boundary conditions on $\{u=0\}\cup\{u=\frac{r}{2}\}$ and Neumann boundary conditions on the remaining two sides.
For any admissible test function $f$, the Dirichlet conditions allow us to apply the one-dimensional
Poincar\'e inequality to $u\mapsto f(u,v)$ for each fixed $v$.
Integrating in $v$, we obtain
\[
\begin{aligned}
\int_{NSC_2(r)} |\nabla f|^2\,du\,dv
&\geq\int_{NSC_2(r)} |\partial_u f|^2\,du\,dv \\
&\geq\left(\frac{2\pi}{r}\right)^2
\int_{NSC_2(r)} |f|^2\,du\,dv.
\end{aligned}
\]
It follows from $\sin v\geq \sin(2r)$ on $(2r,\pi-2r)$ that
\[
\mathcal R(f)\geq\left(\frac{2\pi}{r}\right)^2\sin^2(2r).
\]
For $0<r<r^*<\frac{\pi}{4}$, we have
\[
\lambda_1(NSC_2(r))\geq\left(\frac{2\pi}{r}\right)^2\sin^2(2r)\geq64.
\]
\end{proof}

The multiplicity of $0$ in the spectrum of $NR$ is $q'(R)$. By Lemma~\ref{lem::thickgeometry}, $R$ has uniformly bounded
geometry. Consequently, there exists
$\rho_0=\rho_0(n,\mathtt g)>0$ such that
\[
\lambda_1(DR)\geq\rho_0,\qquad\lambda_{q'(R)+1}(NR)\geq\rho_0.
\]
For use below, we decrease $r^*$ once more so that, for every $0<r<r^*$,
\begin{equation}\label{eq:lambdansc1drnr}
    \lambda_2(NSC_1(r))\leq \rho_0\leq\min\{\lambda_1(DR),\lambda_{q'(R)+1}(NR)\}.
\end{equation}
This is possible because $\lambda_2(NSC_1(r))\to0$ as $r\to 0$. This is the first point at which the choice of $r^*$ depends on the topology. From now on, we allow $r^*=r^*(n,\mathtt g)$.

To remove the contribution to the trace from small eigenvalues that tend to zero during degeneration, we consider the truncated heat trace. For an eigenvalue problem $P$, define the truncated heat trace as
\begin{equation*}
    \theta_e(t;P):=\sum_{k>e(P)}e^{-\lambda_k(P) t},
\end{equation*}
where $e(P)$ denotes the number of small eigenvalues that are excluded from the sum. In our setting, the exclusion numbers are given by
\begin{gather*}
    e(NSC_1):=2,\qquad e(NSC_b):=1,\qquad
    e(NSC_2):=0,\qquad e(NR):=q'(R),
\end{gather*}
and
\begin{equation*}
    e(X)=e(DR\sqcup DSC_I\sqcup DSC_{II}\sqcup DSC_B):=2q(SC_I)+q(SC_B)+q'(R).
\end{equation*}

\begin{lemma}\label{heattracedeco}
 With the above notation, we have
\begin{align*}
    \theta_e(t;DR&\sqcup DSC_I\sqcup DSC_{II}\sqcup DSC_{B})\leq\theta_e(t;X)\\
    &\leq \theta_e(t;NR) +\sum_{j=1,2}\sum_{\gamma\in\mathscr C_j/\sim}\theta_e(t;NSC_j(r_\gamma))
+\sum_{b_i<r^*}\theta_e(t;NSC_b(\Gamma_i)).
\end{align*} Moreover, there exists a positive constant $\rho$, depending only on the genus $\mathtt{g}$ and the number of boundary components $n$, such that all eigenvalues appearing in the sums above, as well as those of $DR$, are bounded below by $\rho$.
\end{lemma}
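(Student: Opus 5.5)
The plan is to obtain both inequalities from the eigenvalue comparison \eqref{eigenvlauescomparison} and the monotonicity of $\lambda\mapsto e^{-\lambda t}$. The only real issue is the bookkeeping of the excluded small eigenvalues.

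\textbf{Lower bound.} The comparison gives $\lambda_k^D(X)\le\lambda_k(DR\sqcup DSC_I\sqcup DSC_{II}\sqcup DSC_B)$ for every $k$, hence
\[
e^{-\lambda_k^D(X)t}\ge e^{-\lambda_k(D\cdots)t}.
\]
Both truncated traces discard the same number $e(X)$ of lowest terms, so summing over $k>e(X)$ gives the left inequality.

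\textbf{Upper bound.} The right side of the statement is a sum of truncated traces, one for each piece. Set
\[
\mathcal N:=NR\sqcup NSC_I\sqcup NSC_{II}\sqcup NSC_B .
\]
First I check that the exclusion numbers are consistent: $e(NR)+\sum_{\mathscr C_1/\sim}2+\sum_{\mathscr C_2/\sim}0+\sum_{b_i<r^*}1$ equals $q'(R)+2q(SC_I)+q(SC_B)=e(X)$. Next I show that the excluded eigenvalues are exactly the $e(X)$ smallest eigenvalues of $\mathcal N$. By Lemma~\ref{lemeigennsc1} and \eqref{eq:lambdansc1drnr}, the excluded eigenvalues of each piece are at most $\rho_0$. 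By \eqref{eq:lambda3nsc1}, \eqref{eq:lambda2nscb}, \eqref{eq:lambda1nsc2} and the bound on $NR$, the non-excluded eigenvalues are at least $\rho_0$ (taking $\rho_0\le\frac14$). Therefore the $k$-th eigenvalue of $\mathcal N$ for $k>e(X)$ is the $(k-e(X))$-th non-excluded eigenvalue of the union. Combining with $\lambda_k(\mathcal N)\le\lambda_k^D(X)$ from \eqref{eigenvlauescomparison} and summing over $k>e(X)$ gives
\[
\theta_e(t;X)\le\theta_e(t;\mathcal N),
\]
and $\theta_e(t;\mathcal N)$ is exactly the stated sum of piece traces.

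\textbf{Uniform gap $\rho$.} Take
\[
\rho:=\min\Bigl\{\rho_0,\ \tfrac14,\ 64,\ \inf\lambda_1(DSC_\ast)\Bigr\}.
\]
For the Dirichlet collar problems I use $\lambda_1(DSC_1)>\frac14$ from \eqref{eq:lambda3nsc1}, then $\lambda_1(DSC_b)\ge\lambda_2(NSC_b)>\frac14$, and $\lambda_1(DSC_2)\ge\lambda_1(NSC_2)\ge64$, using Dirichlet--Neumann bracketing together with the half-collar identification from the proof of Lemma~\ref{lemeigennsc1}. The first two comparisons need checking:
\begin{itemize}
\item $\lambda_1(DSC_b)\ge\lambda_2(NSC_b)$: on the half collar, Dirichlet conditions are imposed at $v=2r$ in the Dirichlet problem and at the geodesic boundary side in $NSC_b$. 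I would compare through the radial reduction, where the nonradial modes are $\ge 64$, and the one-dimensional Dirichlet problem, whose eigenvalue exceeds $\frac14$ by Wolpert's collar estimate (cited as in \eqref{eq:lambda3nsc1}).
\item $\lambda_1(DR)\ge\rho_0$: this is given above.
\end{itemize}

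\textbf{Main obstacle.} The step I expect to be delicate is the ordering claim in the upper bound: the excluded eigenvalues across different pieces must all lie below every retained eigenvalue of every other piece. This is exactly what the choice \eqref{eq:lambdansc1drnr} of $r^*$ arranges. Equality cases at $\rho_0$ are harmless because ties do not affect the sum.
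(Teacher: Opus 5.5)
Your route is the paper's: the bracketing \eqref{eigenvlauescomparison}, the identity $e(X)=q'(R)+2q(SC_I)+q(SC_B)$ matching the piecewise exclusions, and the piecewise gaps \eqref{eq:lambda3nsc1}--\eqref{eq:lambdansc1drnr} with $\rho=\min\{\rho_0,\frac14\}$. Both trace inequalities are established correctly.

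One auxiliary claim is backwards, however: $\lambda_1(DSC_b)\ge\lambda_2(NSC_b)$ is false.

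In the radial sector, changing the Neumann condition at $v=2r$ to a Dirichlet one is a rank-one change. Sturm--Liouville interlacing then gives $\lambda_1^{\mathrm{rad}}(DSC_b)<\lambda_2^{\mathrm{rad}}(NSC_b)$. Since the nonradial modes are $\ge 64$, this yields $\lambda_1(DSC_b)<\lambda_2(NSC_b)$ whenever $\lambda_1(DSC_b)<64$, which happens for small $r$.

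What your radial argument actually proves, and all you need, is $\lambda_1(DSC_b)\ge\lambda_1(DSC_1)>\frac14$. This is just Dirichlet domain monotonicity, since $SC_b(r)$ is half of $SC_1(r)$. You could even drop the Dirichlet collar bounds altogether. Indeed, $\lambda_k(DR\sqcup DSC_I\sqcup DSC_{II}\sqcup DSC_B)\ge\lambda_k^D(X)\ge\lambda_k(\mathcal N)$ for every $k$, so the retained Dirichlet eigenvalues inherit the lower bound $\rho$ from the Neumann side.

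The ordering issue you single out as the main obstacle is also not needed for the inequality. Discarding the $e(X)$ smallest eigenvalues of $\mathcal N$ discards the $e(X)$ largest terms $e^{-\lambda t}$. So $\sum_{k>e(X)}e^{-\lambda_k(\mathcal N)t}$ is bounded by the sum with any other $e(X)$ eigenvalues removed, in particular the piecewise ones. Likewise, $\lambda_{e(X)+1}(\mathcal N)\ge\rho$ holds simply because all retained piecewise eigenvalues are $\ge\rho$, so at most $e(X)$ eigenvalues of $\mathcal N$ lie below $\rho$.
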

\begin{proof}
Set
\begin{equation}\label{defrho}
    \rho:=\min\{\rho_0,\tfrac14\}.
\end{equation}
By \eqref{eq:lambda3nsc1}--\eqref{eq:lambdansc1drnr}, after the prescribed numbers of eigenvalues have been removed, every
eigenvalue in the decomposed problems is at least $\rho$. Since the same
total number of eigenvalues is excluded on both sides, the two inequalities
follow from \eqref{eigenvlauescomparison}.
\end{proof}

We collect several estimates from \cite{Wolpert1987} that will be used in the sequel. For convenience, we restate them in a form adapted to our notation.

\begin{lemma}[\cite{Wolpert1987}, Theorem 3.5]\label{wollem1}
The following estimate holds.
    \begin{equation*}
        \theta_e(t;NSC_1(r))\leq \theta(t;DSC_1(r))+4\sum_{k=1}^\infty e^{-8\pi^2k^2t}.
    \end{equation*}
\end{lemma}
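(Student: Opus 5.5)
The statement to prove is
\[
\theta_e(t;NSC_1(r))\leq \theta(t;DSC_1(r))+4\sum_{k\ge1}e^{-8\pi^2k^2t},
\]
which is Wolpert's Theorem 3.5 restated. I would prove it by separating Fourier modes in $u$ on the subcollar
\[
SC_1(r)=(\mathbb R/r\mathbb Z)\times(2r,\pi-2r),\qquad g=\sin^{-2}v\,(du^2+dv^2).
\]

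\textbf{Step 1: mode decomposition.} The metric is $u$-invariant, so both the Neumann and the Dirichlet problems split orthogonally into one-dimensional Sturm--Liouville problems indexed by $n\in\mathbb Z$:
\[
-h''+\Bigl(\tfrac{2\pi n}{r}\Bigr)^2h=\lambda\sin^{-2}v\,h \quad\text{on }(2r,\pi-2r),
\]
with Neumann, respectively Dirichlet, conditions at $v=2r$ and $v=\pi-2r$. Write $\lambda_j^{N,n}$ and $\lambda_j^{D,n}$, $j\ge1$, for the eigenvalues of the $n$-th mode.

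\textbf{Step 2: the $n=0$ mode.} By the proof of Lemma~\ref{lemeigennsc1}, the two smallest Neumann eigenvalues, which are the ones excluded by $e(NSC_1)=2$, both come from this mode. For the remaining ones I use the standard interlacing between Neumann and Dirichlet Sturm--Liouville eigenvalues on an interval, $\lambda_{j+1}^{N,0}\ge\lambda_j^{D,0}$. Combined with the index shift by two, this gives
\[
\sum_{j\ge3}e^{-\lambda_j^{N,0}t}\le\sum_{j\ge2}e^{-\lambda_j^{D,0}t}\le\sum_{j\ge1}e^{-\lambda_j^{D,0}t}.
\]

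\textbf{Step 3: the modes $n\neq0$.} Interlacing again gives $\lambda_{j+1}^{N,n}\ge\lambda_j^{D,n}$, so
\[
\sum_{j\ge1}e^{-\lambda_j^{N,n}t}\le e^{-\lambda_1^{N,n}t}+\sum_{j\ge1}e^{-\lambda_j^{D,n}t}.
\]
Only the first Neumann eigenvalue of each nonzero mode is left unmatched. Testing the Rayleigh quotient as in Lemma~\ref{lemeigennsc1} gives
\[
\lambda_1^{N,n}\ge\Bigl(\tfrac{2\pi n}{r}\Bigr)^2\sin^2(2r).
\]
For $r<r^*<\pi/4$ one has $\sin(2r)/r\ge 4\sqrt2/\pi$, hence $\lambda_1^{N,n}\ge 32n^2$. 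This is too weak to give the stated $e^{-8\pi^2n^2t}$ directly.

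\textbf{Main obstacle: sharpening the bound on $\lambda_1^{N,n}$.} I need $\lambda_1^{N,n}\ge 8\pi^2n^2$, or some other estimate whose exponent is a multiple of $n^2$. My plan is to use that $\sin v\le1$, so the weight satisfies $\sin^{-2}v\ge1$. Then for the $n$-th mode,
\[
\lambda\int|h|^2\sin^{-2}v\,dv\;\ge\;\int|h'|^2+\Bigl(\tfrac{2\pi n}{r}\Bigr)^2\int|h|^2\;\ge\;\Bigl(\tfrac{2\pi n}{r}\Bigr)^2\int|h|^2 .
\]
This alone does not control $\int|h|^2\sin^{-2}v$, so I must use the near-boundary region where the weight is large. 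The scale $r$ in $(2\pi n/r)^2$ should cancel against $\sin^2(2r)\approx4r^2$, giving roughly $16\pi^2n^2$. Computing the constant from $\sin x\ge(2\sqrt2/\pi)x$ on $[0,\pi/4]$ gives $32n^2$. I then shrink $r^*$ further: since $\sin(2r)/r\to2$ as $r\to0$, a small enough $r^*$ yields $\lambda_1^{N,n}\ge 8\pi^2n^2$. Counting $\pm n$, and allowing multiplicity two from the $u$-orientation, accounts for the factor $4$. Summing Steps 2 and 3 over all $n$ then proves the lemma.
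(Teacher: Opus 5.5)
Your Fourier-mode decomposition is the right framework (the paper gives no proof of this lemma and only cites Wolpert's Theorem~3.5). However, Step~3 rests on a false inequality. The interlacing $\lambda_{j+1}^{N,n}\ge\lambda_j^{D,n}$ is not a general Sturm--Liouville fact. Since $H^1_0$ has codimension two in $H^1$ of an interval, min--max only yields $\lambda_{j+2}^{N,n}\ge\lambda_j^{D,n}$, and here the stronger version actually fails.

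For $n=0$ and $j=1$, Lemma~\ref{lemeigennsc1} gives $\lambda_2^{N,0}\to0$, whereas $\lambda_1^{D,0}\ge\lambda_1(DSC_1(r))>\frac14$ by \eqref{eq:lambda3nsc1}. For $n\neq0$, where you genuinely need it, the term $(2\pi n/r)^2$ separates the two ends of the collar. After the rescaling $u=rx$, $v=ry$, each end is the mode-$n$ part of the truncated cusp $\{y>2\}$. Split into even and odd parts under $v\mapsto\pi-v$. Then both $\lambda_1^{N,n}$ and $\lambda_2^{N,n}$ tend, as $r\to0$, to the first eigenvalue of that cusp problem with a Neumann condition at $y=2$. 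Meanwhile $\lambda_1^{D,n}$ tends to the strictly larger Dirichlet one. Hence $\lambda_2^{N,n}<\lambda_1^{D,n}$ for small $r$, and every nonzero mode leaves \emph{two} unmatched Neumann eigenvalues, not one. Your account of the factor $4$ hides this by counting one symmetry twice: the modes $\pm n$ are precisely the two orientations of $u$.

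The repair is to use the codimension-two comparison everywhere. For $n=0$ it gives $\sum_{j\ge3}e^{-\lambda_j^{N,0}t}\le\sum_{j\ge1}e^{-\lambda_j^{D,0}t}$, which is all Step~2 needs; drop the intermediate sum over $j\ge2$. For $n\neq0$ it gives $\sum_{j\ge1}e^{-\lambda_j^{N,n}t}\le 2e^{-\lambda_1^{N,n}t}+\sum_{j\ge1}e^{-\lambda_j^{D,n}t}$. Summing over $\pm n$ then yields $2\cdot2\sum_{k\ge1}e^{-\lambda_1^{N,k}t}$, which is the real source of the constant $4$.

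For the eigenvalue bound you do not need the detour through $\sin v\le1$. Since $\sin^{-2}v\le\sin^{-2}(2r)$ on the collar, every mode-$n$ Rayleigh quotient is at least $(2\pi n/r)^2\sin^2(2r)$. This is $\ge8\pi^2n^2$ once $\sin(2r)\ge\sqrt2\,r$, for instance when $r^*\le\frac12$. Your intermediate constants are garbled: Jordan's inequality gives $\sin(2r)/r\ge4/\pi$, hence only $64n^2<8\pi^2n^2$, and the bound $\sin(2r)/r\ge4\sqrt2/\pi$ is false near $r=\pi/4$. The final step of shrinking $r^*$ is nevertheless correct. With these corrections your argument becomes a complete, self-contained proof of the lemma.
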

\begin{lemma}[\cite{Wolpert1987}, Lemma 4.2]\label{wollem2}
The following estimate holds.
    \begin{equation*}
        \theta(t;DSC_1(r))\leq P(t;\mathbb{H}^2)\operatorname{Area}(SC_1(r))+\frac{re^{-\frac t4}}{2\sqrt{\pi t}}\sum_{k=1}^\infty \frac{e^{-\frac{r^2k^2}{4t}}}{\sinh(\frac{kr}{2})}.
    \end{equation*}
\end{lemma}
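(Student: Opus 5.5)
The statement is Wolpert's Lemma 4.2:
\[
\theta(t;DSC_1(r))\leq P(t;\mathbb{H}^2)\operatorname{Area}(SC_1(r))+\frac{re^{-\frac t4}}{2\sqrt{\pi t}}\sum_{k=1}^\infty \frac{e^{-\frac{r^2k^2}{4t}}}{\sinh(\frac{kr}{2})}.
\]
The approach is domain monotonicity of the Dirichlet heat kernel together with the explicit heat kernel of the hyperbolic cylinder.

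First, embed the subcollar. Let $\mathcal C_r:=\langle A\rangle\backslash\mathbb H^2$ be the complete hyperbolic cylinder whose core geodesic has length $r$, with $A$ a hyperbolic translation of length $r$. In the coordinates $(u,v)$ used above, the metric $\frac{du^2+dv^2}{\sin^2 v}$ with $u\in\mathbb R/r\mathbb Z$ and $v\in(0,\pi)$ is exactly this cylinder. Hence $SC_1(r)=(\mathbb R/r\mathbb Z)\times(2r,\pi-2r)$ is an isometrically embedded relatively compact subdomain of $\mathcal C_r$.

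Second, apply domain monotonicity. By the parabolic maximum principle, the Dirichlet heat kernel of $SC_1(r)$ is pointwise dominated on the diagonal by the heat kernel $K_{\mathcal C_r}(t,x,y)$ of the complete cylinder. Integrating over the diagonal gives
\[
\theta(t;DSC_1(r))\leq\int_{SC_1(r)}K_{\mathcal C_r}(t,x,x)\,dA(x).
\]

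Third, compute the right-hand side. Write $K_{\mathcal C_r}(t,x,x)=\sum_{k\in\mathbb Z}p_t(d(\tilde x,A^k\tilde x))$, where $p_t$ is the heat kernel of $\mathbb H^2$. The $k=0$ term is $P(t;\mathbb H^2)$ and contributes $P(t;\mathbb H^2)\operatorname{Area}(SC_1(r))$. Since every term is positive, we may enlarge the domain of integration for $k\neq0$ from $SC_1(r)$ to a whole fundamental domain of $\mathcal C_r$. The integral of $\sum_{k\neq0}p_t(d(\tilde x,A^k\tilde x))$ over the full cylinder is the standard hyperbolic-element contribution in the Selberg trace formula. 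That is, it is the orbital integral for $A^{\pm k}$,
\[
\sum_{k\ge1}\frac{r\,e^{-t/4}}{2\sqrt{4\pi t}}\cdot\frac{2e^{-k^2r^2/4t}}{\sinh(kr/2)}.
\]
This simplifies to $\frac{re^{-t/4}}{2\sqrt{\pi t}}\sum_{k\ge1}\frac{e^{-k^2r^2/4t}}{\sinh(kr/2)}$.

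The main obstacle is the constant bookkeeping in this orbital integral. I would use Fermi coordinates $(s,\rho)$ about the axis of $A$. In these coordinates $\cosh d(x,A^kx)=\cosh^2\!\rho\cosh(kr)-\sinh^2\!\rho$ and $dA=\cosh\rho\,d\rho\,ds$. Then I would integrate $p_t$ written in McKean's integral form and interchange the order of integration, which yields the factor $\frac{r}{2\sinh(kr/2)}\frac{e^{-t/4}e^{-k^2r^2/4t}}{\sqrt{4\pi t}}$ per $k$. Finally, the two orientations $\pm k$ double this and produce the displayed constant.
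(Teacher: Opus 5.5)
Your argument is correct and is essentially the standard proof of Wolpert's Lemma~4.2, which the paper cites without proof. You dominate the Dirichlet heat kernel of $SC_1(r)$ by the heat kernel of the complete cylinder $\langle A\rangle\backslash\mathbb H^2$. The identity image then gives $P(t;\mathbb H^2)\operatorname{Area}(SC_1(r))$, and the nonidentity images, integrated over the whole cylinder, give the hyperbolic orbital integrals. Your per-$k$ constant $\frac{r}{2\sinh(kr/2)}\frac{e^{-t/4}e^{-k^2r^2/4t}}{\sqrt{4\pi t}}$ checks out, for instance via $w=\sinh\rho$ and $\int_0^W(W^2-w^2)^{-1/2}\,dw=\pi/2$, so summing over $\pm k$ gives exactly the stated bound.
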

\begin{lemma}[\cite{Wolpert1987}, Lemma 4.3]\label{wollem3}
The following estimate holds:
\begin{equation*}
\theta(t;DSC_b(r))
\geq\frac{e^{-\frac{t}{4}}}{2}
\left(\frac{1}{\sqrt{\pi t}}\log\frac{\pi}{4r}-1\right).
\end{equation*}
\end{lemma}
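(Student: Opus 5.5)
We would bound $\theta(t;DSC_b(r))$ from below by keeping only the $u$-independent (zeroth Fourier) modes. Since $\theta(t;DSC_b(r))$ is a sum of positive terms over the whole Dirichlet spectrum, discarding all nonzero Fourier modes in $u$ can only decrease it. This reduces the claim to a one-dimensional Dirichlet heat trace estimate.

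\textbf{Step 1: the radial problem in Fermi coordinates.} In the collar coordinates $SC_b(r)=(\mathbb R/r\mathbb Z)\times(2r,\frac{\pi}{2})$ with metric $\sin^{-2}v\,(du^2+dv^2)$, we pass to the signed distance $s$ from the core geodesic. With this normalization the metric becomes $ds^2+r^2\cosh^2 s\,d\phi^2$ for $\phi\in\mathbb R/\mathbb Z$, and $\cosh s=1/\sin v$. The core $v=\frac{\pi}{2}$ corresponds to $s=0$, and the cut $v=2r$ corresponds to
\[
s=L:=\operatorname{arcosh}\bigl(1/\sin 2r\bigr)\geq \log\frac{1}{\sin 2r}\geq\log\frac{1}{2r}\geq\log\frac{\pi}{4r}.
\]
On radial functions the Laplacian is $-\partial_s^2-\tanh s\,\partial_s$ on $L^2(\cosh s\,ds)$. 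Conjugating by $(\cosh s)^{1/2}$ turns it into the Schr\"odinger operator $H=-\partial_s^2+\frac14+W(s)$ on $L^2([0,L],ds)$ with Dirichlet conditions at both ends. Here $W$ is explicit, namely $W=\frac14\operatorname{sech}^2 s$ up to sign conventions that we will recheck, so $0\leq W\leq\frac14$ and $W$ decays exponentially. Consequently
\[
\theta(t;DSC_b(r))\geq e^{-t/4}\,\Tr e^{-t(-\partial_s^2+W)}.
\]

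\textbf{Step 2: the free interval.} For the free Dirichlet Laplacian on $[0,L]$, comparing the series with an integral gives
\[
\sum_{k\geq1}e^{-\pi^2k^2t/L^2}\geq\frac{L}{2\sqrt{\pi t}}-\frac12,
\]
because $\int_0^\infty e^{-\pi^2x^2t/L^2}dx=\frac{L}{2\sqrt{\pi t}}$ and the summand is decreasing with value at most $1$ at $x=0$. Inserting $L\geq\log\frac{\pi}{4r}$ already yields exactly $\frac{e^{-t/4}}{2}\bigl(\frac{1}{\sqrt{\pi t}}\log\frac{\pi}{4r}-1\bigr)$ when $W$ is absent.

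\textbf{Step 3: absorbing the potential $W$ (the main obstacle).} We must show that $W$ costs at most what the slack in Step 2 affords, for all $t>0$. The first thing we would try is the Peierls--Bogoliubov inequality
\[
\Tr e^{-t(H_0+W)}\geq \Tr e^{-tH_0}\exp\bigl(-t\,\langle W\rangle_t\bigr),
\]
where $H_0=-\partial_s^2$ with Dirichlet conditions and $\langle W\rangle_t$ is the Gibbs average of $W$ in the state $e^{-tH_0}/\Tr e^{-tH_0}$. That average is $\int_0^L W(s)\,p_t(s,s)\,ds/\Tr e^{-tH_0}$, where $p_t$ is the Dirichlet heat kernel. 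Since $W$ is integrable with $\int_0^\infty W\,ds\leq\frac14$, for large $L$ the average is $O(1/L)$ uniformly in $t$. The slack in Step 2 would then absorb this loss. For $t$ small relative to $L$, the exact integral gain exceeds $\frac{L}{2\sqrt{\pi t}}-\frac12$ by enough; for $t\gtrsim L^2$, the right-hand side of the claim is negative whenever $\sqrt{\pi t}>\log\frac{\pi}{4r}$ and there is nothing to prove.

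If Peierls--Bogoliubov proves awkward, the fallback is a domain-monotonicity argument. We would drop the region $s\in[0,s_0]$ by imposing an extra Dirichlet condition at a fixed $s_0$, which only lowers the trace. On $[s_0,L]$ we have $W\leq\frac14\operatorname{sech}^2 s_0$, which is tiny, and we can bound it by a constant. The proof then reduces to the elementary inequality on an interval of length $L-s_0\geq\log\frac{\pi}{4r}$; we would choose $s_0$ so that this holds, using the gap between $\operatorname{arcosh}(1/\sin 2r)$ and $\log\frac{\pi}{4r}$, which is about $\log\frac{4}{\pi}\cdot$const as $r\to0$. The final $-1$ absorbs the leftover constants.
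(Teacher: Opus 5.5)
Your reduction to the $u$-independent modes is the right start, and $W=\frac14\operatorname{sech}^2 s\ge 0$ is correct, but there is a genuine gap: Step~3 is never closed, and the accounting around it is off.

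\textbf{The slack is smaller than you assume.} The cut sits at $L=\log\cot r$. With $\ell=\log\frac{\pi}{4r}$, the excess $L-\ell=\log\frac{4r\cot r}{\pi}$ is always less than $\log\frac4\pi\approx 0.24$. Your chain breaks at $\log\frac1{2r}\ge\log\frac{\pi}{4r}$, which is false since $\frac12<\frac\pi4$.

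The bound $\sum_{k\ge1}e^{-\pi^2k^2t/L^2}\ge\frac{L}{2\sqrt{\pi t}}-\frac12$ does not follow from monotonicity of the summand, which only gives $-1$; it needs Poisson summation. Once you use it, the $-1$ in the claim is consumed exactly. So essentially the only room for $W$ is $\frac{L-\ell}{2\sqrt{\pi t}}$, which has an $O(1)$ numerator, and you need it on the whole range $t<\ell^2/\pi$ where the claim is nontrivial.

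\textbf{Peierls--Bogoliubov as stated does not close.} Knowing $\langle W\rangle_t=O(1/L)$ uniformly in $t$ only bounds the loss $t\Tr(We^{-tH_0})$ by $O(\sqrt t)$. That exceeds the available room once $t\gtrsim 1$.

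The route can be rescued only by using the Dirichlet condition at $s=0$, where $W$ is concentrated. The bound $p_t(s,s)\le(4\pi t)^{-1/2}(1-e^{-s^2/t})$ gives $t\Tr(We^{-tH_0})\le\frac{1}{2\sqrt{\pi t}}\int_0^\infty s^2W\,ds=\frac{\pi^2/48}{2\sqrt{\pi t}}$. Since $\frac{\pi^2}{48}\approx0.206<\log\frac{4r\cot r}{\pi}$ holds only for $r\lesssim 0.3$, this works only on a thin numerical margin that your sketch does not supply.

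\textbf{The fallback fails outright.} Keeping length $\ge\ell$ forces $s_0\le L-\ell<0.25$. Then $\sup_{[s_0,L]}W\ge\frac14\operatorname{sech}^2(0.25)\approx0.23$, which is not small. The resulting factor $e^{-0.23t}$ ruins the bound near $t=\ell^2/(4\pi)$, where the right-hand side equals $\frac12e^{-t/4}$. There is no spare constant in the $-1$ to absorb anything.

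\textbf{The missing idea.} Choose the variable in which the curvature correction has the favorable sign: $\sigma=\log v$ rather than the Fermi distance. For $u$-independent $h$ the Rayleigh quotient is $\int|h'|^2\,dv\big/\int|h|^2\sin^{-2}v\,dv$. Since $\sin v\le v$, it is bounded above by the cusp quotient $\int|h'|^2\,dv\big/\int|h|^2v^{-2}\,dv$.

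With $v=e^\sigma$ and $h=v^{1/2}g$, the cusp quotient becomes $\int(|g'|^2+\frac14|g|^2)\,d\sigma\big/\int|g|^2\,d\sigma$. This carries Dirichlet conditions on an interval of length exactly $\log\frac{\pi/2}{2r}=\log\frac{\pi}{4r}$. Min--max then shows that the $k$-th $u$-independent eigenvalue of $DSC_b(r)$ is at most $\frac14+\pi^2k^2/\ell^2$. Poisson summation gives the lemma exactly, for every $0<r<\frac\pi4$, with no potential to absorb.

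The paper simply cites Wolpert for this lemma. The constant $\log\frac{\pi}{4r}$, being exactly the length of $(2r,\frac\pi2)$ in the variable $\log v$, is the signature of this cusp comparison.
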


Note that $SC_1(r)$ is the double of $SC_b(r)$ across the central
geodesic. Accordingly, the spectrum on $SC_1(r)$ decomposes into the
even and odd parts, corresponding respectively to Neumann and Dirichlet
conditions on the central geodesic. The odd part is precisely the
spectrum of $SC_b(r)$, while the eigenvalues in the even part are no
larger than the corresponding ones in the odd part. Moreover,
Lemma~\ref{lemeigennsc1} gives
\[
\lambda_2(NSC_1(r))=\lambda_1(NSC_b(r)).
\]
After removing the corresponding small eigenvalues, it follows that
\begin{equation}\label{compareSC1SCB}
    \theta_e(t;NSC_b(r))
    \leq \frac12 \theta_e(t;NSC_1(r)),
    \qquad \theta(t;DSC_b(r)) \leq \frac12 \theta(t;DSC_1(r)).
\end{equation}

It remains to derive heat trace estimates for the boundary value problems on $SC_2$. The following lemma shows that for fixed $t\geq 1$, the heat trace of $SC_2$ is bounded under the degeneration of $X$.

We introduce the following model cusp region. Let
\[
    Q_{\mathrm{cusp}}
    := \{(x,y):0<x<\tfrac12,\ y>2\},  \qquad g_c:=\frac{dx^2+dy^2}{y^2}.
\]
We impose Dirichlet boundary conditions on the two
vertical sides
\[
    \{x=0\}\quad \text{and}\quad \{x=\tfrac12\}.
\]
For $\alpha\in\{D,N\}$, we denote by $Q_{\mathrm{cusp}}^\alpha$ the
corresponding eigenvalue problem on $(Q_{\mathrm{cusp}},g_c)$, where the
boundary condition on the remaining boundary component $\{y=2\}$ is
Dirichlet if $\alpha=D$, and Neumann if $\alpha=N$.
\begin{lemma}\label{estimateDSC3}
For each fixed $t\geq 1$, as $r\to0$ one has
\[
    \theta(t;DSC_2(r)) = 2\theta(t;Q_{\mathrm{cusp}}^{D})+o(1),
\]
and
\[
    \theta(t;NSC_2(r))  = 2\theta(t;Q_{\mathrm{cusp}}^{N})+o(1).
\]
\end{lemma}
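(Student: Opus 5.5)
The plan is to reduce both statements to a \emph{rescaled} comparison with the model cusp. On $SC_2(r)=(0,\tfrac r2)\times(2r,\pi-2r)$ with metric $\sin^{-2}v\,(du^2+dv^2)$, the reflection $v\mapsto\pi-v$ is an isometry, and the two halves $v<\tfrac\pi2$ and $v>\tfrac\pi2$ are congruent. This is the source of the factor $2$.

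On the lower half I will use the coordinates $x=u/r\in(0,\tfrac12)$, $y=v/r\in(2,\tfrac{\pi}{2r})$. In these coordinates the metric becomes
\[
\Bigl(\frac{v}{\sin v}\Bigr)^{2}\frac{dx^2+dy^2}{y^2}.
\]
The conformal factor $(v/\sin v)^2$ takes values in $[1,\pi^2/4]$ and equals $1+O(r^2y^2)$. Thus on any fixed region $2<y<Y$ the metric converges in $C^\infty$ to $g_c$ as $r\to0$. The boundary conditions match the model: Dirichlet on $x=0,\tfrac12$ (these sides come from $\partial X$), and Dirichlet or Neumann on $y=2$ according to $D$ or $N$.

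First I will fix a large $Y$ and use Dirichlet--Neumann bracketing, cutting along $y=Y$ in each half. The resulting problems are:
\begin{enumerate}[(1)]
\item two compact pieces $K_Y^{D/N}(r)\cong(0,\tfrac12)\times(2,Y)$, carrying the prescribed condition on $y=2$ and either Dirichlet or Neumann on $y=Y$;
\item a middle tail $T_Y(r)=\{rY<v<\pi-rY\}$, with Dirichlet on the vertical sides and either Dirichlet or Neumann on the two cuts.
\end{enumerate}
By min--max, the heat trace of $SC_2(r)$ lies between the sum of the all-Dirichlet-cut traces and the sum of the all-Neumann-cut traces. On $K_Y$ the metrics converge smoothly and the boundary conditions are fixed. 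Hence each eigenvalue converges to that of the corresponding cusp piece $Q^{\alpha}_{\mathrm{cusp}}\cap\{y<Y\}$ (with cut condition $D$ or $N$), and the heat traces converge for fixed $t\ge1$, using a uniform Weyl bound for domination. Likewise, bracketing on $Q_{\mathrm{cusp}}^\alpha$ itself shows that the truncated cusp traces converge to $\theta(t;Q^\alpha_{\mathrm{cusp}})$ as $Y\to\infty$, provided the cusp tail trace tends to $0$.

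The main obstacle is a bound on the tail trace that is uniform in $r$. The difficulty is that the hyperbolic length of $T_Y(r)$ in the $v$-direction is about $2\log(1/(rY))$, which is unbounded, so a naive Weyl estimate fails. I will expand in the Dirichlet Fourier modes $\sin(2\pi k u/r)$, $k\ge1$, and pass to the arclength variable $s$ in $v$. This produces one-dimensional operators
\[
-\partial_s^2+V_k(s),\qquad V_k\ge(2\pi k)^2\sin^2 v/r^2\ \ (\text{up to harmless first-order terms absorbed by a ground-state transform}).
\]
In terms of $s$, $V_k$ grows like $(2\pi k)^2Y^2e^{2|s-s_0|}$ away from the cuts. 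A Golden--Thompson / Feynman--Kac bound, with Neumann cut ends handled by even reflection at the cost of a factor $2$, then gives
\[
\theta(t;T_Y(r))\le\frac{C}{\sqrt t}\sum_{k\ge1}\int_0^\infty e^{-t(2\pi k)^2Y^2e^{2s}}\,ds,
\]
which is independent of $r$ and tends to $0$ as $Y\to\infty$ for $t\ge1$. The same computation applies to the cusp tail $\{y>Y\}$.

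Finally, I will combine these steps: let $r\to0$ with $Y$ fixed, then let $Y\to\infty$, and use the bracketing sandwich. This yields $\theta(t;DSC_2(r))\to2\theta(t;Q^D_{\mathrm{cusp}})$ and $\theta(t;NSC_2(r))\to2\theta(t;Q^N_{\mathrm{cusp}})$.
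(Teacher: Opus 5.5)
Your proof is correct and has the same overall scheme as the paper's. Both rescale $u=rx$, $v=ry$ to compare with $g_c$, bracket at $y=Y$, use smooth convergence of the metrics on the compact piece, and bound the tail uniformly in $r$ by separating the Dirichlet modes in $x$ and passing to an exponential (arclength) variable.

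It differs from the paper in two local choices.

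\emph{How the factor $2$ arises.} The paper first splits $SC_2(r)$ exactly into even and odd parts across $\{v=\frac{\pi}{2}\}$. It then only needs $\theta(t;Q_r^{\alpha\beta})\to\theta(t;Q^\alpha_{\mathrm{cusp}})$ on the half-collar. There the tail $T_{r,L}$ ends at the central geodesic and is reduced to a truncated cusp by the volume comparison $dV_{g_c}\le dV_{g_r}\le\frac{\pi^2}{4}dV_{g_c}$. You instead cut at both $v=rY$ and $v=\pi-rY$, get the factor $2$ from the two congruent compact pieces, and treat one middle tail of length about $2\log(1/(rY))$ directly in the exact collar metric. This works for two reasons. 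The lower bound $V_k\gtrsim k^2Y^2e^{2\operatorname{dist}(s,\text{cuts})}$ persists up to the central geodesic, since $\sin v/r\ge\pi^{-1}Ye^{\operatorname{dist}(s,\text{cuts})}$ there. And the Neumann heat kernel of a long interval is $O(t^{-1/2})$ on the diagonal.

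\emph{How the tail trace is bounded.} The paper bounds eigenvalues on unit $s$-intervals by Neumann bracketing to get a uniform bound. It then extracts $O_t(e^{-cL^2})$ decay from the $cL^2$ spectral gap via $\|e^{-t\Delta}\|_1\le\|e^{-\frac t2\Delta}\|\,\|e^{-\frac t2\Delta}\|_1$. Your Golden--Thompson bound gives an $r$-independent estimate that already decays in $Y$, in one step. The first-order term costs only a factor $e^{t/4}$ (e.g.\ via $|h'-\frac12\tanh s\,h|^2\ge\frac12|h'|^2-\frac14|h|^2$), which is harmless for fixed $t$.

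One step needs slightly more than you state, though the paper's final bracketing step relies on the same fact implicitly. Bracketing on $Q^\alpha_{\mathrm{cusp}}$ only gives
$\theta(t;Q^{\alpha D}_{c,Y})\le\theta(t;Q^\alpha_{\mathrm{cusp}})\le\theta(t;Q^{\alpha N}_{c,Y})+\theta(t;T_Y)$,
with $T_Y$ the cusp tail. This is the wrong direction for both truncations. The Dirichlet-cut traces increase to $\theta(t;Q^\alpha_{\mathrm{cusp}})$ by monotone exhaustion. For the Neumann-cut ones you need eigenvalue convergence. One way is to multiply low eigenfunctions by a cutoff supported in $\{y<Y\}$ and use that the Dirichlet vertical sides give $\int_{y>Y/2}|f|^2\,dV_{g_c}\le(\pi Y)^{-2}\int|\nabla f|^2\,dV_{g_c}$. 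A uniform trace bound then supplies the domination needed to pass to the traces.
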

\begin{proof}
Set
\[
    Q_r:=[0,\frac{r}{2}]\times[2r,\frac{\pi}{2}],\qquad  g=\frac{1}{\sin^2 v}(du^2+dv^2),
\]
which is one half of $SC_2(r)$. We impose Dirichlet boundary conditions
on the two vertical sides $\{u=0\}$ and $\{u=\frac{r}{2}\}$. For $\alpha,\beta\in\{D,N\}$, let
$Q_r^{\alpha\beta}$ denote the corresponding problem on $Q_r$, with
boundary condition $\alpha$ on $\{v=2r\}$ and $\beta$ on
$\{v=\frac{\pi}{2}\}$.

Decomposition into the even and odd parts with respect to
$\{v=\frac{\pi}{2}\}$ gives
\[
    \theta(t;DSC_2(r))  = \theta(t;Q_r^{DD})+\theta(t;Q_r^{DN}),
\]
and similarly
\[
    \theta(t;NSC_2(r))  = \theta(t;Q_r^{ND})+\theta(t;Q_r^{NN}).
\]
It is therefore enough to prove that
\[
    \theta(t;Q_r^{\alpha\beta}) \longrightarrow \theta(t;Q_{\mathrm{cusp}}^\alpha)
\]
for every $\alpha,\beta\in\{D,N\}$.

Under the rescaling
\[
    u=rx,\qquad v=ry,
\]
the domain becomes
   $ [0,\tfrac12]\times[2,\tfrac{\pi}{2r}]$
and
\[
    g_r   =  \frac{r^2(dx^2+dy^2)}{\sin^2(ry)} =  \left(\frac{ry}{\sin(ry)}\right)^2 g_c.
\]
For every fixed
\[
    Q_L:=[0,\tfrac12]\times[2,L],
\]
we have
\[
    g_r=g_c+O_L(r^2)
    \quad\text{in }C^\infty(Q_L).
\]
Hence, if $Q_{r,L}^{\alpha\gamma}$ and
$Q_{c,L}^{\alpha\gamma}$ denote the corresponding problems on $Q_L$,
then
\begin{equation}\label{eq:compact-QL}
    \theta(t;Q_{r,L}^{\alpha\gamma}) = \theta(t;Q_{c,L}^{\alpha\gamma})
    +o_{t,L}(1)
\end{equation}
for $\gamma\in\{D,N\}$.

It remains to control the tails uniformly. Set
\[
    T_{r,L}:=[0,\tfrac12]\times[L,\tfrac{\pi}{2r}].
\]
Since the two vertical sides are Dirichlet,
\[
    \int_{T_{r,L}}|\partial_xf|^2\,dxdy
    \geq 4\pi^2\int_{T_{r,L}}|f|^2\,dxdy.
\]
Moreover, on $T_{r,L}$,
\[
    \frac1{y^2} \leq\frac{r^2}{\sin^2(ry)}\leq\frac{\pi^2}{4y^2}
    \leq \frac{\pi^2}{4L^2}.
\]
Therefore
\begin{equation}\label{eq:spectral-gap}
    \frac{\int_{T_{r,L}}|\nabla f|^2\,dxdy}
    {\int_{T_{r,L}}|f|^2\,dV_{g_r}}
    \geq cL^2
\end{equation}
for some universal $c>0$. The same estimate holds for
\[
    T_L:=[0,\tfrac12]\times[L,\infty)
\]
with the cusp metric $g_c$.

We next obtain a uniform fixed-time heat-trace bound on $T_L$.
Expanding in the Dirichlet eigenfunctions
$\sin(2\pi nx)$, $n\geq1$, in the $x$-variable, write the $n$-th mode
as $f_n(y)\sin(2\pi nx)$. Under the change of variable $y=Le^s$, set
\[
    \widetilde f_n(s):=(Le^s)^{-\frac12}f_n(Le^s).
\]
This gives a unitary identification with $L^2([0,\infty),ds)$, under
which the quadratic form of the $n$-th mode becomes
\[
    \int_0^\infty
    \left(
        \left|\widetilde f_n'+\frac12\widetilde f_n\right|^2
        +4\pi^2n^2L^2e^{2s}|\widetilde f_n|^2
    \right)\,ds.
\]
Divide $[0,\infty)$ into the intervals $[k,k+1]$ and impose Neumann
conditions at the points $s=k$, $k\geq1$. Let
$\lambda_j^{n,k}$, $j\geq0$, denote the eigenvalues associated with the
restriction of this quadratic form to $H^1([k,k+1])$. Since
Since
\[
    \left|\widetilde f_n'+\frac12\widetilde f_n\right|^2
    \geq\frac12|\widetilde f_n'|^2-\frac14|\widetilde f_n|^2,
\]
the min--max principle and the Neumann eigenvalues
$\pi^2j^2$ of an interval of length one give
\[
    \lambda_j^{n,k}\geq\frac{\pi^2}{2}j^2 +4\pi^2n^2L^2e^{2k} -\frac14.
\]
Hence
\[
    \sum_{j=0}^\infty e^{-t\lambda_j^{n,k}}\leq
C_t e^{-4\pi^2tn^2L^2e^{2k}}.
\]
Dirichlet--Neumann bracketing therefore yields
\[
    \operatorname{Tr}(e^{-t\Delta_{T_L}}) \leq
    C_t  \sum_{n=1}^\infty\sum_{k=0}^\infty e^{-4\pi^2tn^2L^2e^{2k}} \leq C_t,
\]
uniformly for $L\geq2$. The same estimate holds on
\[
[0,\tfrac12]\times[L,L']
\]
with the cusp metric $g_c$, uniformly in $L'>L$ and with either Dirichlet or
Neumann boundary condition imposed on $\{y=L\}$ or $\{y=L'\}$.

For $T_{r,L}$, the Dirichlet energies for $g_r$ and $g_c$ agree, while
\[
    dV_{g_c}   \leq  dV_{g_r} \leq \frac{\pi^2}{4}dV_{g_c}.
\]
Thus the min--max principle gives the same uniform fixed-time
heat-trace bound for $T_{r,L}$. 
For either $T_{r,L}$ or $T_L$, by \eqref{eq:spectral-gap} and the spectral theorem,
\[
    \left\|e^{-\frac t2\Delta}\right\|
    \leq e^{-\frac12 c_0tL^2}.
\]
Using the semigroup property and the trace ideal inequality, we obtain
\[
\begin{aligned}
    \operatorname{Tr}(e^{-t\Delta})
    =\left\|e^{-t\Delta}\right\|_1\leq\left\|e^{-\frac t2\Delta}\right\|
    \left\|e^{-\frac t2\Delta}\right\|_1
    \leq e^{-\frac12 c_0tL^2} \operatorname{Tr}\left(e^{-\frac t2\Delta}\right).
\end{aligned}
\]
The uniform fixed-time heat-trace bound at time $\frac{t}{2}$ therefore gives
\begin{equation}\label{eq:tail-QL}
    \operatorname{Tr}(e^{-t\Delta})
    =O_t(e^{-cL^2}),
\end{equation}
after changing $c>0$.

Finally, applying Dirichlet--Neumann bracketing at $\{y=L\}$ and using
\eqref{eq:compact-QL} and \eqref{eq:tail-QL}, we obtain
\[
    \theta(t;Q_r^{\alpha\beta})= \theta(t;Q_{\mathrm{cusp}}^\alpha)+o_{t,L}(1)+O_t(e^{-cL^2}).
\]
Letting first $r\to0$ and then $L\to\infty$ gives
\[
    \theta(t;Q_r^{\alpha\beta}) =\theta(t;Q_{\mathrm{cusp}}^\alpha)+o(1),
\]
independently of $\beta$. Substituting
$(\alpha,\beta)=(D,D),(D,N)$ and
$(N,D),(N,N)$ into the two decompositions at the beginning proves the
lemma.
\end{proof}

In particular, the heat traces at $t=1$ are uniformly bounded for $0<r<r^*$. Together with the uniform lower bound $\lambda_1(NSC_2(r))>1$ and Dirichlet--Neumann monotonicity, this implies that there exist constants $C,c>0$, depending only on the topology, such that
\[
\theta(t;DSC_2(r))+\theta(t;NSC_2(r))\leq Ce^{-ct},\qquad t\geq1,
\]
uniformly for $0<r<r^*$.

\begin{lemma}\label{lem::LC3}
There exists a constant $B_\partial(n,\mathtt g)>0$, depending only on the topology of $X$, such that
\[
\left|
\ell(\partial X)-
\sum_{\gamma\in\mathscr C_2/\sim}4\log\frac{4}{r_\gamma}
\right|
\leq B_\partial(n,\mathtt g).
\]
\end{lemma}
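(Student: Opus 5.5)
The plan is to prove the estimate by a thick--thin decomposition of the geodesic boundary $\partial X$. Every boundary component $\Gamma_i$ is cut into two kinds of pieces. The first kind consists of arcs lying inside the collars $SC_2(\gamma)\cup C(\gamma)$ attached to short reflected geodesics $\gamma\in\mathscr C_2/\sim$. The second kind consists of arcs lying in the thick part. We show that the first kind contributes $4\log\frac{4}{r_\gamma}+O(1)$ per $\gamma$, and that the second kind has length bounded in terms of $(n,\mathtt g)$. Shrinking boundary components $b_i<r^*$ and the interior collars $SC_1$ only contribute $O(1)$, since they meet $\partial X$ in length at most $r^*$ or not at all.

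\textbf{Collar contribution.} Let $\gamma\in\mathscr C_2/\sim$, so $\widetilde\gamma$ is a short closed geodesic on $\widetilde X$ invariant under the reflection $\iota$. Because $\gamma$ meets $\partial X$ orthogonally at two points, $\partial X\cap C(\widetilde\gamma)$ is the fixed set of $\iota$ in the collar. In the coordinates $(u,v)$, with $u\in\mathbb R/r_{\widetilde\gamma}\mathbb Z$, this fixed set is the two segments $u=0$ and $u=r_{\widetilde\gamma}/2$, each running through the full collar. The hyperbolic length of one segment over $v\in(\epsilon,\pi-\epsilon)$ is
\[
\int_\epsilon^{\pi-\epsilon}\frac{dv}{\sin v}=2\log\cot\frac{\epsilon}{2}.
\]
The full collar $C(\widetilde\gamma)$ has width $w(\widetilde\gamma)=\log\frac4{r_\gamma}+O(r_\gamma^2)$ on each side. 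Hence each of the two segments has length $2w(\widetilde\gamma)=2\log\frac4{r_\gamma}+O(1)$, and together they give $4\log\frac{4}{r_\gamma}+O(1)$. The two segments are disjoint arcs of $\partial X$ because the collars are embedded. Collars of distinct short geodesics are pairwise disjoint, so these contributions do not overlap.

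\textbf{Thick part.} It remains to bound the length of $\partial X\setminus\bigcup_{\gamma\in\mathscr C_2}C(\widetilde\gamma)$ by a topological constant. I would argue with area: $\partial X$ is a geodesic on $\widetilde X$ and is the fixed set of $\iota$. Outside the standard subcollars the injectivity radius is at least $r^*/2$. A normal neighbourhood of width $\epsilon\le r^*/4$ around the remaining portion of $\partial X$ is then embedded, up to bounded overlap near the collar ends. Its area is comparable to $\epsilon$ times the length, and the area is at most $\operatorname{Area}(\widetilde X)=4\pi(\widetilde{\mathtt g}-1)$. This bounds the length by $C(n,\mathtt g)$. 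Alternatively, Lemma~\ref{lem::thickgeometry} can be invoked directly: the complement of the subcollars is precompact, so its boundary length is uniformly bounded. The transition region between the standard subcollar and the full collar $C(\widetilde\gamma)$ has length $O(1)$, since $\int dv/\sin v$ over $v\in[\epsilon_0,2r]$ matched against the collar width differs by a bounded amount. The boundary subcollars $SC_b(\Gamma_i)$ contain no boundary arc beyond $\Gamma_i$ itself, of length $<r^*$.

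\textbf{Main obstacle.} The hard step is the embeddedness of the tubular neighbourhood in the thick part. We must show that two boundary arcs in the thick region cannot come close without producing a short orthogeodesic, and hence a short $\mathscr C_2$ geodesic whose collar was already excised. The plan is to take a shortest orthogeodesic between distinct nearby boundary points; its double is a closed geodesic on $\widetilde X$ of length less than $r^*$. That geodesic therefore lies in $\mathscr C_2$, and the two points lie in its collar, which is a contradiction. Summing over the components then yields the stated bound with $B_\partial$ depending only on $(n,\mathtt g)$, the number of short geodesics being at most $3\widetilde{\mathtt g}-3$.
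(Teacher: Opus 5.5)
Your proposal is correct and follows essentially the same route as the paper. You split $\partial X$ into three parts: the arcs inside the collars of $\mathscr C_2$-geodesics, computed explicitly; the short boundary components, each of length $<r^*$; and the thick part, whose length you bound by an area argument for an embedded normal neighbourhood of width comparable to $r^*$. The differences are cosmetic. The paper integrates over the standard subcollar, obtaining exactly $4\log\cot r_\gamma$, which it then compares with $4\log\frac{4}{r_\gamma}$. You instead integrate over the full collar $C(\widetilde\gamma)$ and absorb the $O(1)$ transition arcs. Your first-contact orthogeodesic argument also supplies the embeddedness of the normal neighbourhood, which the paper only asserts.
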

\begin{proof}
Decompose the boundary as
\[
\partial X
=(\partial X\cap SC_B)\sqcup(\partial X\cap SC_{II})\sqcup(\partial X\cap R).
\]
The normal neighborhood
\[
N_{\frac{r^*}{4}}:=\{x\in R\mid d(x,\partial X\cap R)<\frac{r^*}{4}\}
\]
is embedded by the definition of the standard subcollars. Hence
\[
\operatorname{Area}(N_{\frac{r^*}{4}})\geq\operatorname{Length}(\partial X\cap R)\sinh\frac{r^*}{4}.
\]
Since $\operatorname{Area}(R)\leq\operatorname{Area}(X)=-2\pi\chi(X)$, the length of $\partial X\cap R$ is uniformly bounded. The total length of $\partial X\cap SC_B$ is at most $q(SC_B)r^*$ and is therefore uniformly bounded as well.

For each $\gamma\in\mathscr C_2/\sim$, the portion of the boundary contained in the corresponding standard subcollar has length $4\log \cot r_\gamma$. Moreover,
\[
\log \cot r_\gamma-\log\frac{4}{r_\gamma}
\]
is uniformly bounded for $0<r_\gamma<r^*$. Since the number of standard subcollars is bounded by the topology, summing these estimates proves the lemma.
\end{proof}
\subsection{Completion of the proof}

\begin{proof}[Proof of Proposition~\ref{propheattrace}]
For $t\in(0,1]$, Lemma~\ref{lem:number-small-eigenvalues} shows that
the number of Dirichlet eigenvalues below $\frac14$ is uniformly bounded,
while each corresponding heat factor is at most $1$.
Therefore the desired estimate follows directly from
\eqref{heattractless1}. It remains to treat $t\geq1$.

The difference between $\theta_e(t;X)$ and
$\theta_{\geq\frac14}(t;X)$ consists of a uniformly bounded number of terms
whose eigenvalues lie in $[\rho,\frac14]$; it is therefore
$O(e^{-\rho t})$. Combining \eqref{compareSC1SCB},
Lemma~\ref{heattracedeco}, Lemmas~\ref{wollem1} and~\ref{wollem2}, and
Lemma~\ref{estimateDSC3}, we obtain the following upper bound for
$\theta_e(t;X)$:
\begin{align}
    \theta_e(t;X)&\leq \theta_e(t;NR)+\sum_{\gamma\in \mathscr{C}_1/\sim}\Big(P(t;\mathbb{H}^2)\operatorname{Area}(SC_1(r_\gamma))+\frac{r_\gamma e^{-\frac t4}}{2\sqrt{\pi t}}\sum_{k=1}^\infty \frac{e^{-\frac{r_\gamma^2k^2}{4t}}}{\sinh (\frac{kr_\gamma}{2})}\Big)\notag \\
    &\phantom{\leq} +\sum_{b_i<r^*}\Big(P(t;\mathbb{H}^2)\operatorname{Area}(SC_b(\Gamma_i))+\frac{b_i e^{-\frac t4}}{4\sqrt{\pi t}}\sum_{k=1}^\infty \frac{e^{-\frac{b_i^2k^2}{4t}}}{\sinh (\frac{kb_i}{2})}\Big)\notag \\
     &\phantom{\leq}+ (4q(SC_I)+ 2q(SC_{B}))\sum_{k=1}^\infty e^{-8\pi^2k^2t} +C e^{-ct}.\label{thetaeupperb}
\end{align}

Next, we analyze the asymptotic behavior of the series appearing above.
As $r\to 0^+$, we have
\begin{equation*}
    \frac{e^{-\frac{(kr)^2}{4t}}}{\sinh(\frac{kr}{2})}=\frac{2}{kr}+O(1).
\end{equation*}
Consequently, we have
\begin{align}
    r\sum_{k=1}^\infty \frac{e^{-\frac{(kr)^2}{4t}}}{\sinh(\frac{kr}{2})}&=\sum_{k\leq 1/r}\frac{2}{k}+\sum_{k>1/r}\frac{re^{-\frac{(kr)^2}{4t}}}{\sinh(\frac{kr}{2})}+O(1)\notag\\
    &=2\log\frac{1}{r}+O(1).\label{c1c3asy}
\end{align}

On the other hand, as $r\to 0^+$, we also have
\begin{equation*}
    \frac{e^{-\frac{(kr)^2}{4t}-\frac{kr}{2}}}{\sinh(kr)}=\frac{1}{kr}+O(1).
\end{equation*}
This implies that
\begin{align}
    r\sum_{k=1}^\infty \frac{e^{-\frac{(kr)^2}{4t}-\frac{kr}{2}}}{\sinh(kr)}&=\sum_{k\leq 1/r}\frac{1}{k}+\sum_{k>1/r}\frac{r e^{-\frac{(kr)^2}{4t}-\frac{kr}{2}}}{\sinh(kr)}+O(1)\notag \\
    &=\log\frac{1}{r}+O(1).\label{c2asy}
\end{align}
Thus the leading term in \eqref{c2asy} is one half of that in \eqref{c1c3asy}.

By Lemma~\ref{lem::LC3}, we have
\begin{align*}
   \left|-\frac{e^{-\frac{t}{4}}}{8\sqrt{\pi t}}\ell(\partial X) +\sum_{\gamma\in \mathscr{C}_2/\sim}\frac{e^{-\frac t4}}{2\sqrt{\pi t}}\log \frac{1}{r_\gamma}\right|\leq \frac{B_\partial(n,\mathtt g)}{8\sqrt{\pi t}}e^{-\frac{t}{4}}.
\end{align*}
Consequently, by \eqref{c1c3asy}, there exists a constant $C_\partial>0$, depending only on $\mathtt g$ and $n$, such that
\begin{equation}\label{lmest}
   \Big| -\frac{e^{-\frac t4}}{8\sqrt{\pi t}} \ell(\partial X)+\frac{e^{-\frac t4}}{4\sqrt{\pi t}}\sum_{\gamma\in\mathscr{C}_2/\sim}r_\gamma\sum_{k=1}^\infty \frac{e^{-\frac{(kr_\gamma)^2}{4t}}}{\sinh\bigl(\frac{kr_\gamma}{2}\bigr)}\Big|\leq C_\partial e^{-\frac t4}.
\end{equation}

The sum of the area terms in \eqref{thetaeupperb} is bounded above by
\begin{equation*}
    P(t;\mathbb{H}^2)\operatorname{Area}(X)=-2\pi\chi(X)P(t;\mathbb{H}^2).
\end{equation*}
By \eqref{c1c3asy} and \eqref{c2asy}, the contribution of the short boundary length terms can be rewritten as
\begin{equation*}
   \sum_{b_i<r^*} b_i\sum_{k=1}^\infty \frac{e^{-\frac{b_i^2k^2}{4t}}}{\sinh (\frac{kb_i}{2})}=2\sum_{b_i<r^*}b_i\sum_{k=1}^\infty\frac{e^{-\frac{(kb_i)^2}{4t}-\frac{kb_i}{2}}}{\sinh(kb_i)}+O(1).
\end{equation*}

In addition, the number of connected components of $SC$ is bounded in
terms of the topology of $X$. The truncated heat trace of $NR$ is bounded by
$Ce^{-\rho t}$, and every bounded error above is multiplied by
$e^{-\frac{t}{4}}/\sqrt t$. Consequently, all remaining contributions can be absorbed
into an exponentially decaying term in $t$.
It follows that there exist positive constants $B'_{tr}$ and $b'_{tr}$, depending only on $\mathtt{g}$ and $n$, such that
\begin{align*}
    \theta_e(t;X)&\leq -2\pi\chi(X)P(t;\mathbb{H}^2)-\frac{e^{-\frac t4}}{8\sqrt{\pi t}}\ell(\partial X)\\
    &\phantom{\leq}+\frac{e^{-\frac t4}}{2\sqrt{\pi t}}\sum_{b_i<r^*} b_i\sum_{k=1}^\infty\frac{e^{-\frac{(kb_i)^2}{4t}- \frac{kb_i}{2}}}{\sinh(kb_i)}\\
  &\phantom{\leq}+\frac{e^{-\frac t4}}{4\sqrt{\pi t}}\sum_{\gamma\in \mathscr{C}_1\cup \mathscr{C}_2}r_\gamma\sum_{k=1}^\infty\frac{e^{-\frac{(kr_\gamma)^2}{4t}}}{\sinh\bigl(\frac{kr_\gamma}{2}\bigr)}+ B'_{tr}e^{-b'_{tr}t}.
\end{align*}
This establishes the desired upper bound.

For the lower bound, we combine Lemma~\ref{heattracedeco}, Lemma~\ref{wollem3} and Lemma~\ref{estimateDSC3} to prove
\begin{align*}
    \theta_e(t;X)\geq& \sum_{\gamma\in\mathscr{C}_1/\sim}e^{-\frac t4}\Big(\frac{1}{\sqrt{\pi t}}\log \frac{\pi}{4r_\gamma}-1\Big)+\frac12\sum_{b_i<r^*}e^{-\frac t4}\Big(\frac{1}{\sqrt{\pi t}}\log \frac{\pi}{4b_i}-1\Big)\\&+2q(SC_{II})\theta(t;Q_{\text{cusp}}^D)-\bigl(2q(SC_I)+q(SC_B)+q'(R)\bigr)e^{-\rho t}-Ce^{-ct},
\end{align*}
where $\rho$ is defined in \eqref{defrho}. By applying the asymptotic estimates \eqref{c1c3asy}, \eqref{c2asy}, and \eqref{lmest}, we can rewrite the above expression in a form consistent with the upper bound. More precisely, there exist positive constants $B''_{tr}$ and $b''_{tr}$, depending only on $\mathtt{g}$ and $n$, such that
\begin{align*}
     \theta_e(t;X)&\geq -2\pi\chi(X)P(t;\mathbb{H}^2)-\frac{e^{-\frac t4}}{8\sqrt{\pi t}}\ell(\partial X)\\
    &\phantom{\geq}+\frac{e^{-\frac t4}}{2\sqrt{\pi t}}\sum_{b_i<r^*} b_i\sum_{k=1}^\infty\frac{e^{-\frac{(kb_i)^2}{4t}- \frac{kb_i}{2}}}{\sinh(kb_i)}\\
  &\phantom{\geq}+\frac{e^{-\frac t4}}{4\sqrt{\pi t}}\sum_{\gamma\in \mathscr{C}_1\cup \mathscr{C}_2}r_\gamma\sum_{k=1}^\infty\frac{e^{-\frac{(kr_\gamma)^2}{4t}}}{\sinh\bigl(\frac{kr_\gamma}{2}\bigr)}- B''_{tr}e^{-b''_{tr}t}.
\end{align*}
Combining the upper and lower bounds completes the proof.

\end{proof}

\section{Proofs of the graph comparison estimates}
\label{smalleigenvaluesDN}

In this appendix, we prove Proposition~\ref{prop:small-Neumann-product},
Corollary~\ref{corprodN}, and
Proposition~\ref{prop:small-Dirichlet-product}. We retain throughout the
notation and assumptions introduced in Section~\ref{sec::compactnessflat}.

Throughout this appendix, $C$ and $c$ denote positive constants depending
only on $n$ and $T$, and may change from line to line. Since $r^*$ is fixed
in terms of the topology, it will not be displayed in the constants.

For $f\in H^1(X)$, define
\begin{equation*}
    m_\alpha(f) :=
    \frac{1}{A_\alpha}\int_{P_\alpha}f\,dV,
    \qquad m(f) :=
    \bigl(m_\alpha(f)\bigr)_{\alpha\in\mathcal V}.
\end{equation*}
We shall also use the matrix
\begin{equation*}
    \mathsf A:=\operatorname{diag}(A_\alpha).
\end{equation*}

\subsection{Common averaging estimates}

The following estimates are the common analytic input for the Neumann and
Dirichlet comparison arguments; compare
\cite[Lemmas~5 and~6]{Burger1990}.

\begin{lemma}\label{lem:graph-averaging-estimates}
There exist constants $C,C_0>0$, depending only on $n$ and $T$, such that
for every $f\in H^1(X)$,
\begin{equation}\label{eq:common-graph-energy}
    Q(m(f),m(f))
    \leq C\int_X|\nabla f|^2\,dV,
\end{equation}
and
\begin{equation}\label{eq:common-L2-average}
    \int_X|f|^2\,dV
    \leq C_0\sum_{\alpha\in\mathcal V}
    A_\alpha|m_\alpha(f)|^2  + C_0\int_X|\nabla f|^2\,dV.
\end{equation}
\end{lemma}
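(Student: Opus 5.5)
Both estimates will be proved by working piece by piece and collar by collar. On thick pieces we use Poincaré inequalities with uniform constants; on each pinched collar we use a one-dimensional argument.

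Recall the setup. $X$ is the union of the thick pieces $P_\alpha$, the interior subcollars $SC_1(\gamma_i)$, and the boundary subcollars $SC_b(\Gamma_i)$ for $b_i<r^*$. Under $b_i\le T$ and genus zero, no $\mathscr C_2$-collars occur. By Lemma~\ref{lem::thickgeometry}, together with $b_i\le T$ (which keeps the boundary geometry of the thick part bounded), the pieces $P_\alpha$ form a precompact family. They therefore have uniform Neumann Poincaré constants, uniformly comparable areas $A_\alpha\asymp 1$, and uniform trace inequalities onto their boundary curves.

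\textbf{Energy bound \eqref{eq:common-graph-energy}.} Fix an edge $\gamma_i$ joining $P_{\alpha(i)}$ and $P_{\beta(i)}$, and work in collar coordinates $(u,v)\in(\mathbb R/r\mathbb Z)\times[2r,\pi-2r]$ with $r=r_{\gamma_i}$. Let $\bar f(v)$ denote the $u$-average of $f$ divided by $r$.

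\emph{Step 1 (across the collar).} By Cauchy--Schwarz and the conformal invariance of the energy,
\[
|\bar f(\pi-2r)-\bar f(2r)|^2\le \frac{\pi-4r}{r}\int_{SC_1}|\partial_v f|^2\,du\,dv .
\]
Since $q_i=r/(\pi-4r)$, this gives
\[
q_i|\bar f(\pi-2r)-\bar f(2r)|^2\le \int_{SC_1(\gamma_i)}|\nabla f|^2 .
\]
This is exactly why the weight $q_i$ is chosen this way.

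\emph{Step 2 (collar edge to thick piece).} We must compare $\bar f(2r)$ with $m_{\alpha(i)}(f)$. The curve $v=2r$ is a boundary curve of $P_{\alpha(i)}$ with uniformly bounded hyperbolic length. By the uniform trace and Poincaré inequalities on $P_{\alpha(i)}$,
\[
|\bar f(2r)-m_{\alpha(i)}(f)|^2\le C\int_{P_{\alpha(i)}}|\nabla f|^2 .
\]
Since $q_i\le C$, this error enters multiplied by a bounded factor.

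Summing over the edges, which are at most $n-3$, and noting that each piece meets a bounded number of edges, gives \eqref{eq:common-graph-energy}.

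\textbf{$L^2$ bound \eqref{eq:common-L2-average}.} On each thick piece, Poincaré gives
\[
\int_{P_\alpha}|f|^2\le 2A_\alpha|m_\alpha(f)|^2+C\int_{P_\alpha}|\nabla f|^2 .
\]
The main obstacle is controlling the mass inside the long thin collars $SC_1$ and $SC_b$, where no uniform Poincaré constant is available. Following \cite[Lemma~5]{Burger1990}, we split $f$ on a collar into its $u$-mean $\bar f(v)$ and the remainder.

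\emph{Nonzero Fourier modes.} By the computation in Lemma~\ref{lemeigennsc1}, the nonzero modes have Rayleigh quotient at least $64$. Their $L^2$ mass is therefore bounded by the energy.

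\emph{Zero mode.} The hyperbolic area element is $r\sin^{-2}v\,dv$. Write
\[
\bar f(v)=\bar f(2r)+\int_{2r}^v\bar f'(s)\,ds .
\]
By Cauchy--Schwarz,
\[
|\bar f(v)-\bar f(2r)|^2\le \frac{v}{r}\int_{SC}|\partial_v f|^2 .
\]
Integrating against $r\sin^{-2}v\,dv$ over $v\in[2r,\pi/2]$ costs $\int v\sin^{-2}v\,dv\lesssim\log(1/r)$. This is too large, so a weighted Hardy-type inequality is needed instead. We use
\[
\int_{2r}^{\pi/2}|h(v)-h(2r)|^2\,\frac{r\,dv}{\sin^2 v}\le C\int_{2r}^{\pi/2}|h'|^2 r\,dv ,
\]
and prove it via the weight $\sin^{-2}v\approx v^{-2}$ and the one-dimensional Hardy inequality $\int |h-h(0)|^2v^{-2}\le 4\int|h'|^2$. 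The contribution near $v=2r$ is handled by shifting the origin to $v=2r$, after which the same estimate applies. The same argument treats the half-collar $v\in[\pi/2,\pi-2r]$ from the other end, and likewise $SC_b$.

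Combining the zero-mode bound with
\[
r|\bar f(2r)|^2\le C\Bigl(|m_\alpha(f)|^2+\int_{P_\alpha}|\nabla f|^2\Bigr)\quad\text{(Step 2)}
\]
and with the area bound $\int_{2r}^{\pi/2}r\sin^{-2}v\,dv\le C$, the collar mass is controlled by $C\bigl(A_\alpha|m_\alpha(f)|^2+\int|\nabla f|^2\bigr)$ for the adjacent pieces. Summing over the boundedly many pieces and collars yields \eqref{eq:common-L2-average}.
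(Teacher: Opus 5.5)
Your proposal is correct and follows the paper's route: the energy bound \eqref{eq:common-graph-energy} is proved exactly as in the paper (trace and Poincar\'e inequalities on the thick pieces, then Cauchy--Schwarz across each collar with weight $q_i$), and the $L^2$ bound uses the same one-dimensional Hardy inequality in $v$, anchored at the collar ends. The only difference is in the collars. The paper applies Hardy slice by slice to $f(u,\cdot)$ itself and bounds $\frac1r\int_0^r|f(u,2r)|^2\,du$ directly by the $L^2$ trace inequality on the adjacent piece, so your splitting into the $u$-mean and the nonzero Fourier modes is valid but unnecessary. Also, in your last step the estimate you need, which Step 2 already gives, is $|\bar f(2r)|^2\le C(|m_\alpha(f)|^2+\int_{P_\alpha}|\nabla f|^2)$, without the factor $r$.
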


\begin{proof}
By Lemma~\ref{lem::thickgeometry} and the assumption $b_i\leq T$, the
pieces $P_\alpha$ have uniformly controlled geometry. In particular, the
areas $A_\alpha$ are bounded above and below by positive constants depending
only on $n$ and $T$, and the trace and Poincar\'e inequalities on the
pieces $P_\alpha$ hold with uniform constants.

We first prove \eqref{eq:common-graph-energy}. Fix an interior standard
subcollar
\begin{equation*}
    SC_1(\gamma_i)
    = \mathbb R/(r_{\gamma_i}\mathbb Z)\times (2r_{\gamma_i},\pi-2r_{\gamma_i}),
\end{equation*}
and let $P_\alpha$ and $P_\beta$ be the two adjacent pieces, labeled so
that
\begin{equation*}
    \{v=2r_{\gamma_i}\}\subset\partial P_\alpha,
    \qquad \{v=\pi-2r_{\gamma_i}\}\subset\partial P_\beta.
\end{equation*}
Define the boundary averages
\begin{equation*}
    m_i^-(f) := \frac{1}{r_{\gamma_i}}
    \int_0^{r_{\gamma_i}}
    f(u,2r_{\gamma_i})\,du
\end{equation*}
and
\begin{equation*}
    m_i^+(f)
 :=\frac{1}{r_{\gamma_i}}
    \int_0^{r_{\gamma_i}}
    f(u,\pi-2r_{\gamma_i})\,du.
\end{equation*}
By Cauchy--Schwarz, the uniform trace inequality on $P_\alpha$, and the
Poincar\'e inequality applied to $f-m_\alpha(f)$,
\begin{equation*}
\begin{aligned}
    |m_i^-(f)-m_\alpha(f)|^2 &\leq
    \frac{1}{r_{\gamma_i}}
    \int_0^{r_{\gamma_i}}
 |f(u,2r_{\gamma_i})-m_\alpha(f)|^2\,du \\ &\leq C\|f-m_\alpha(f)\|_{H^1(P_\alpha)}^2 \\ &\leq C\int_{P_\alpha}|\nabla f|^2\,dV.
\end{aligned}
\end{equation*}
Similarly,
\begin{equation*}
    |m_\beta(f)-m_i^+(f)|^2
    \leq
    C\int_{P_\beta}|\nabla f|^2\,dV.
\end{equation*}

We next compare the two boundary averages across the collar. By the
fundamental theorem of calculus,
\begin{equation*}
    m_i^+(f)-m_i^-(f) =\frac{1}{r_{\gamma_i}}
    \int_0^{r_{\gamma_i}}
    \int_{2r_{\gamma_i}}^{\pi-2r_{\gamma_i}}
    \partial_vf(u,v)\,dv\,du.
\end{equation*}
Hence Cauchy--Schwarz gives
\begin{equation*}
    |m_i^+(f)-m_i^-(f)|^2 \leq\frac{\pi-4r_{\gamma_i}}{r_{\gamma_i}}
    \int_{SC_1(\gamma_i)}
    |\partial_vf|^2\,du\,dv.
\end{equation*}
Since
\begin{equation*}
    q_i=\frac{r_{\gamma_i}}{\pi-4r_{\gamma_i}},
\end{equation*}
and the Dirichlet energy is conformally invariant in dimension two,
\begin{equation*}
    q_i|m_i^+(f)-m_i^-(f)|^2 \leq\int_{SC_1(\gamma_i)}|\nabla f|^2\,dV.
\end{equation*}
Combining these estimates and using the uniform upper bound for $q_i$,
we obtain
\begin{equation*}
\begin{aligned}
    q_i|m_\alpha(f)-m_\beta(f)|^2 &\leq Cq_i|m_\alpha(f)-m_i^-(f)|^2 \\
    &\quad +Cq_i|m_i^-(f)-m_i^+(f)|^2 \\
    &\quad +Cq_i|m_i^+(f)-m_\beta(f)|^2 \\
    &\leq C\int_{P_\alpha\cup SC_1(\gamma_i)\cup P_\beta}
    |\nabla f|^2\,dV.
\end{aligned}
\end{equation*}
Summing over the interior collars proves
\eqref{eq:common-graph-energy}.

We now prove \eqref{eq:common-L2-average}. On each piece $P_\alpha$, the
uniform Poincar\'e inequality gives
\begin{equation*}
    \int_{P_\alpha}|f-m_\alpha(f)|^2\,dV \leq
    C\int_{P_\alpha}|\nabla f|^2\,dV,
\end{equation*}
and therefore
\begin{equation}\label{eq:piece-L2}
    \int_{P_\alpha}|f|^2\,dV  \leq
    CA_\alpha|m_\alpha(f)|^2
    +
 C\int_{P_\alpha}|\nabla f|^2\,dV.
\end{equation}

Next consider an interior standard subcollar $SC_1(\gamma_i)$ adjacent to
$P_\alpha$ and $P_\beta$. A one-dimensional estimate in the $v$-direction
gives
\begin{equation*}
\begin{aligned}
    \int_{SC_1(\gamma_i)}|f|^2\,dV &\leq \frac{C}{r_{\gamma_i}}
    \int_0^{r_{\gamma_i}}
    |f(u,2r_{\gamma_i})|^2\,du \\
    &\quad  +  \frac{C}{r_{\gamma_i}}
    \int_0^{r_{\gamma_i}}
    |f(u,\pi-2r_{\gamma_i})|^2\,du \\
    &\quad  + C\int_{SC_1(\gamma_i)} |\nabla f|^2\,dV.
\end{aligned}
\end{equation*}
The uniform trace and Poincar\'e inequalities on the adjacent pieces imply
\begin{equation*}
    \frac{1}{r_{\gamma_i}}
    \int_0^{r_{\gamma_i}}
    |f(u,2r_{\gamma_i})|^2\,du
    \leq CA_\alpha|m_\alpha(f)|^2 + C\int_{P_\alpha}|\nabla f|^2\,dV
\end{equation*}
and
\begin{equation*}
    \frac{1}{r_{\gamma_i}}
    \int_0^{r_{\gamma_i}}
    |f(u,\pi-2r_{\gamma_i})|^2\,du
    \leq   CA_\beta|m_\beta(f)|^2 +  C\int_{P_\beta}|\nabla f|^2\,dV.
\end{equation*}
Thus
\begin{equation}\label{eq:interior-collar-L2}
\begin{aligned}
    \int_{SC_1(\gamma_i)}|f|^2\,dV
    &\leq CA_\alpha|m_\alpha(f)|^2
    +  CA_\beta|m_\beta(f)|^2 \\
    &\quad  + C\int_{P_\alpha\cup SC_1(\gamma_i)\cup P_\beta}
    |\nabla f|^2\,dV.
\end{aligned}
\end{equation}

It remains to control the standard boundary subcollars. Let $b_i<r^*$,
and suppose that
\begin{equation*}
    SC_b(\Gamma_i)
    =  \mathbb R/(b_i\mathbb Z) \times (2b_i,\tfrac{\pi}{2})
\end{equation*}
is attached to $P_\alpha$. For fixed $u$, write
\begin{equation*}
    f(u,v) = f(u,2b_i)
    + \bigl(f(u,v)-f(u,2b_i)\bigr).
\end{equation*}
Since $\sin v\asymp v$ on $(0,\pi/2]$,
\begin{equation*}
    \int_{2b_i}^{\frac{\pi}{2}} \frac{|f(u,2b_i)|^2}{\sin^2v}\,dv  \leq\frac{C}{b_i}|f(u,2b_i)|^2.
\end{equation*}
For the second term, the one-dimensional Hardy inequality gives
\begin{equation*}
    \int_{2b_i}^{\frac{\pi}{2}}
    \frac{|f(u,v)-f(u,2b_i)|^2}{\sin^2v}\,dv \leq
    C\int_{2b_i}^{\frac{\pi}{2}} |\partial_vf(u,v)|^2\,dv.
\end{equation*}
Therefore
\begin{equation}\label{eq:boundary-collar-L2}
    \int_{SC_b(\Gamma_i)}|f|^2\,dV
    \leq\frac{C}{b_i}
    \int_0^{b_i}|f(u,2b_i)|^2\,du +
    C\int_{SC_b(\Gamma_i)}|\nabla f|^2\,dV.
\end{equation}
The trace and Poincar\'e inequalities on the adjacent piece imply
\begin{equation*}
    \frac{1}{b_i}
    \int_0^{b_i}|f(u,2b_i)|^2\,du
    \leq CA_\alpha|m_\alpha(f)|^2 + C\int_{P_\alpha}|\nabla f|^2\,dV.
\end{equation*}
Combining this with \eqref{eq:boundary-collar-L2}, and then summing
\eqref{eq:piece-L2}, \eqref{eq:interior-collar-L2}, and the corresponding
boundary-collar estimates over all pieces and collars, proves
\eqref{eq:common-L2-average}.
\end{proof}

\subsection{The Neumann estimates}

\begin{proof}[Proof of Proposition~\ref{prop:small-Neumann-product}]
We first prove the upper bound. Given a vertex function $s$, define a
function $S$ on $X$ by
\begin{equation*}
    S=s_\alpha \quad\text{on }P_\alpha.
\end{equation*}
Across each interior standard subcollar $SC_1(\gamma_i)$, extend $S$
linearly in the $v$-variable between the two adjacent values. Thus
\begin{equation*}
    \int_{SC_1(\gamma_i)}|\nabla S|^2\,dV
    =q_i|s_{\alpha(i)}-s_{\beta(i)}|^2.
\end{equation*}
On every boundary standard subcollar attached to $P_\alpha$, set
$S=s_\alpha$. Hence the boundary subcollars contribute no Dirichlet energy,
and
\begin{equation*}
    \int_X|\nabla S|^2\,dV = Q(s,s).
\end{equation*}
Moreover,
\begin{equation*}
    \int_X|S|^2\,dV
    \geq \sum_{\alpha\in\mathcal V}
    A_\alpha|s_\alpha|^2  = \|s\|_{\mathcal V}^2.
\end{equation*}
Taking $s$ in the span of the first $j+1$ graph eigenvectors and applying
the min--max principle gives
\begin{equation}\label{eq:Neu-upper}
    \lambda_j^N(X) \leq
    C\mu_j(\mathcal G_X), \qquad 1\leq j\leq m.
\end{equation}

We now prove the reverse estimate. First observe that the graph eigenvalues
are uniformly bounded above. Indeed, the number of vertices and edges is
bounded in terms of $n$, while $q_i\leq C$ and $A_\alpha\geq c$. Hence
\begin{equation*}
    Q(s,s) \leq C\sum_{\alpha\in\mathcal V}|s_\alpha|^2
    \leq C_1\sum_{\alpha\in\mathcal V} A_\alpha|s_\alpha|^2,
\end{equation*}
so that
\begin{equation*}
    \mu_j(\mathcal G_X)\leq C_1,\qquad 1\leq j\leq m.
\end{equation*}

Fix $1\leq j\leq m$. If
\begin{equation*}
    \lambda_j^N(X)\geq\frac{1}{2C_0},
\end{equation*}
then
\begin{equation*}
    \mu_j(\mathcal G_X) \leq C_1
    \leq  2C_0C_1\lambda_j^N(X),
\end{equation*}
and the desired reverse estimate follows.

Suppose now that
\begin{equation*}
    \lambda_j^N(X)<\frac{1}{2C_0}.
\end{equation*}
Let
\begin{equation*}
    E_j
    :=
    \operatorname{span}\{e_0,e_1,\ldots,e_j\},
\end{equation*}
where $e_k$ is a Neumann eigenfunction with eigenvalue
$\lambda_k^N(X)$. For every $f\in E_j$,
\begin{equation*}
    \int_X|\nabla f|^2\,dV\leq\lambda_j^N(X)\int_X|f|^2\,dV.
\end{equation*}
Applying \eqref{eq:common-L2-average} and using
$C_0\lambda_j^N(X)<\frac{1}{2}$, we obtain
\begin{equation*}
\begin{aligned}
    \int_X|f|^2\,dV
     &\leq C_0\sum_{\alpha\in\mathcal V} A_\alpha|m_\alpha(f)|^2
    + C_0\lambda_j^N(X)\int_X|f|^2\,dV \\
    &\leq C_0\sum_{\alpha\in\mathcal V} A_\alpha|m_\alpha(f)|^2
    + \frac12\int_X|f|^2\,dV.
\end{aligned}
\end{equation*}
Consequently,
\begin{equation}\label{eq:Neu-average-lower}
    \sum_{\alpha\in\mathcal V}A_\alpha|m_\alpha(f)|^2
    \geq c\int_X|f|^2\,dV,\qquad f\in E_j.
\end{equation}
In particular, the averaging map $f\mapsto m(f)$ is injective on $E_j$,
so $m(E_j)$ is a $(j+1)$-dimensional subspace of the graph space.

For $0\neq f\in E_j$, Lemma~\ref{lem:graph-averaging-estimates} and
\eqref{eq:Neu-average-lower} give
\begin{equation*}
    \frac{Q(m(f),m(f))}
         {\|m(f)\|_{\mathcal V}^2} \leq C
    \frac{\int_X|\nabla f|^2\,dV}
         {\int_X|f|^2\,dV}
    \leq C\lambda_j^N(X).
\end{equation*}
Applying the min--max principle to $m(E_j)$ gives the desired reverse
estimate in this case. Together with the preceding case,
\begin{equation}\label{eq:Neu-reverse}
    \mu_j(\mathcal G_X)
    \leq C\lambda_j^N(X),
    \qquad 1\leq j\leq m.
\end{equation}
Combining \eqref{eq:Neu-upper} and \eqref{eq:Neu-reverse}, we obtain
\begin{equation*}
    \lambda_j^N(X) \asymp_{n,T}
    \mu_j(\mathcal G_X),
    \qquad 1\leq j\leq m.
\end{equation*}

It remains to prove the uniform lower bound for the next eigenvalue.
Suppose, to the contrary, that
\begin{equation*}
    \lambda_{m+1}^N(X)<\frac{1}{2C_0}.
\end{equation*}
Applying the preceding argument to
\begin{equation*}
    E_{m+1} := \operatorname{span}\{e_0,e_1,\ldots,e_{m+1}\}
\end{equation*}
shows that the averaging map is injective on $E_{m+1}$. This is impossible,
since
\begin{equation*}
    \dim E_{m+1}=m+2,
\end{equation*}
whereas the graph space has dimension
\begin{equation*}
    \#\mathcal V=m+1.
\end{equation*}
Therefore
\begin{equation*}
    \lambda_{m+1}^N(X)\geq\frac{1}{2C_0},
\end{equation*}
which completes the proof.
\end{proof}

\begin{proof}[Proof of Corollary~\ref{corprodN}]
By Proposition~\ref{prop:small-Neumann-product},
\begin{equation*}
    \lambda_j^N(X)
    \asymp_{n,T} \mu_j(\mathcal G_X), \qquad 1\leq j\leq m,
\end{equation*}
and
\begin{equation*}
    \lambda_{m+1}^N(X)\geq c>0.
\end{equation*}
Hence only the first $m$ positive Neumann eigenvalues can tend to zero.
Moreover, by Lemma~\ref{lem:number-small-eigenvalues}, the number of
Neumann eigenvalues in $(0,\frac14)$ is uniformly bounded in terms of the
topology. Therefore every eigenvalue which occurs
in one of the products
\begin{equation*}
    \prod_{0<\lambda_j^N(X)<\frac14}\lambda_j^N(X), \qquad\prod_{j=1}^m\lambda_j^N(X),
\end{equation*}
but not in the other is bounded above and below by positive constants
depending only on $n$ and $T$. Thus
\begin{equation}\label{eq:Neu-small-product}
    \prod_{0<\lambda_j^N(X)<\frac14}\lambda_j^N(X) \asymp_{n,T}\prod_{j=1}^m\lambda_j^N(X).
\end{equation}

It remains to compute the product of the nonzero graph eigenvalues.
They are precisely the nonzero eigenvalues of
\begin{equation*}
    \mathsf A^{-\frac12} \mathsf Q \mathsf A^{-\frac12}.
\end{equation*}
For a vertex $\alpha$, let
\begin{equation*}
    \bigl(
    \mathsf A^{-\frac12}  \mathsf Q
    \mathsf A^{-\frac12}
    \bigr)^{(\alpha)}
\end{equation*}
and $\mathsf Q^{(\alpha)}$ denote the matrices obtained by deleting the
row and column corresponding to $\alpha$.

Since $\mathcal G_X$ is connected, the kernel of $\mathsf Q$ is spanned
by the constant vector $\mathbf 1$. Hence
\begin{equation*}
    \ker\bigl(
    \mathsf A^{-\frac12} \mathsf Q
    \mathsf A^{-\frac12}
    \bigr)
    = \operatorname{span}
    \{\mathsf A^{\frac12}\mathbf 1\}.
\end{equation*}
By the elementary cofactor identity for a rank-$m$ positive semidefinite
matrix, see for instance \cite[Section~0.8.2]{HornJohnson},
\begin{equation*}
    \det
    \bigl(
    \mathsf A^{-\frac12} \mathsf Q
    \mathsf A^{-\frac12}
    \bigr)^{(\alpha)}
    = \left(
    \prod_{j=1}^m\mu_j(\mathcal G_X)
    \right)
    \frac{A_\alpha}
         {\sum_{\beta\in\mathcal V}A_\beta}.
\end{equation*}
On the other hand,
\begin{equation*}
    \det
    \bigl(
    \mathsf A^{-\frac12}  \mathsf Q
    \mathsf A^{-\frac12}
    \bigr)^{(\alpha)}
    = \frac{\det\mathsf Q^{(\alpha)}}
         {\prod_{\beta\neq\alpha}A_\beta}.
\end{equation*}
Since $\mathcal G_X$ is a tree, the matrix-tree theorem gives
\begin{equation*}
    \det\mathsf Q^{(\alpha)}
    =  \prod_{i=1}^m q_i.
\end{equation*}
Combining these identities yields
\begin{equation*}
    \prod_{j=1}^m\mu_j(\mathcal G_X)
    = \frac{\sum_{\alpha\in\mathcal V}A_\alpha}
         {\prod_{\alpha\in\mathcal V}A_\alpha}
    \prod_{i=1}^m q_i.
\end{equation*}
Since the areas $A_\alpha$ are bounded above and below by positive
constants depending only on $n$ and $T$,
\begin{equation*}
    \prod_{j=1}^m\mu_j(\mathcal G_X) \asymp_{n,T} \prod_{i=1}^m q_i.
\end{equation*}
Consequently,
\begin{equation*}
    \prod_{j=1}^m\lambda_j^N(X)
    \asymp_{n,T}
    \prod_{i=1}^m q_i.
\end{equation*}
Finally,
\begin{equation*}
    q_i
    = \frac{r_{\gamma_i}}{\pi-4r_{\gamma_i}} \asymp r_{\gamma_i},
\end{equation*}
so
\begin{equation*}
    \prod_{j=1}^m\lambda_j^N(X) \asymp_{n,T} \prod_{i=1}^m q_i
    \asymp_{n,T}\prod_{i=1}^m r_{\gamma_i}.
\end{equation*}
Combining this with \eqref{eq:Neu-small-product} proves the corollary.
\end{proof}

\subsection{The Dirichlet estimates}

\begin{proof}[Proof of Proposition~\ref{prop:small-Dirichlet-product}]
We first prove the upper bound. Given a vertex function
$s=(s_\alpha)_{\alpha\in\mathcal V}$, construct
$S\in H_0^1(X)$ as follows. On each $P_\alpha$, set $S=s_\alpha$ away
from the original boundary. Across every interior standard subcollar
$SC_1(\gamma_i)$, extend $S$ linearly in the $v$-variable between the
two adjacent vertex values.

If $b_i<r^*$, then on
\begin{equation*}
    SC_b(\Gamma_i)=
  \mathbb R/(b_i\mathbb Z)
    \times
    (2b_i,\tfrac{\pi}{2}),
\end{equation*}
let $S$ be the linear interpolation between $s_{\alpha_b(i)}$ at
$v=2b_i$ and $0$ at $v=\pi/2$. Its energy is
\begin{equation*}
\begin{aligned}
    b_i
    \int_{2b_i}^{\frac{\pi}{2}} |\partial_vS|^2\,dv&=
    \frac{b_i}{\frac{\pi}{2}-2b_i} |s_{\alpha_b(i)}|^2 \\  &= p_i|s_{\alpha_b(i)}|^2.
\end{aligned}
\end{equation*}
If $b_i\geq r^*$, use a fixed cutoff near $\Gamma_i$ which vanishes on
$\Gamma_i$ and equals $s_{\alpha_b(i)}$ away from a uniformly controlled
collar. Since $p_i=1$ in this case, its energy is bounded by
\begin{equation*}
    Cp_i|s_{\alpha_b(i)}|^2.
\end{equation*}
The uniform geometry of the pieces therefore gives
\begin{equation*}
    \int_X|\nabla S|^2\,dV
    \leq
    C\left(
        Q(s,s)
        +  \sum_{i=1}^n  p_i|s_{\alpha_b(i)}|^2
    \right)
    =
    CQ_D(s,s).
\end{equation*}
Moreover,
\begin{equation*}
    \int_X|S|^2\,dV \geq
    c\sum_{\alpha\in\mathcal V}
    A_\alpha|s_\alpha|^2
    = c\|s\|_{\mathcal V}^2.
\end{equation*}
The min--max principle gives
\begin{equation}\label{eq:Dir-upper}
    \lambda_j^D(X)
    \leq C\nu_j(\mathcal G_X),
    \qquad 1\leq j\leq m+1.
\end{equation}

We now prove the reverse estimate. For $f\in H_0^1(X)$,
Lemma~\ref{lem:graph-averaging-estimates} already gives
\begin{equation*}
    Q(m(f),m(f)) \leq C\int_X|\nabla f|^2\,dV.
\end{equation*}
It therefore remains only to estimate the boundary potential.

Suppose first that $b_i<r^*$, and set
\begin{equation*}
    m_i^b(f) :=
    \frac{1}{b_i}\int_0^{b_i}f(u,2b_i)\,du.
\end{equation*}
By the uniform trace inequality on the adjacent piece
$P_{\alpha_b(i)}$ and the Poincar\'e inequality,
\begin{equation*}
    |m_{\alpha_b(i)}(f)-m_i^b(f)|^2 \leq C\int_{P_{\alpha_b(i)}}|\nabla f|^2\,dV.
\end{equation*}
Since $f=0$ on $\Gamma_i$, the fundamental theorem of calculus and
Cauchy--Schwarz give
\begin{equation*}
    |m_i^b(f)|^2
    \leq \frac{\frac{\pi}{2}-2b_i}{b_i}
    \int_{SC_b(\Gamma_i)} |\partial_vf|^2\,du\,dv.
\end{equation*}
Multiplying by $p_i$ and using its definition,
\begin{equation*}
    p_i|m_i^b(f)|^2
    \leq \int_{SC_b(\Gamma_i)}
    |\nabla f|^2\,dV.
\end{equation*}
Since the $p_i$ are uniformly bounded above,
\begin{equation*}
    p_i|m_{\alpha_b(i)}(f)|^2
    \leq   C\int_{P_{\alpha_b(i)}\cup SC_b(\Gamma_i)}
    |\nabla f|^2\,dV.
\end{equation*}

If $b_i\geq r^*$, then $\Gamma_i$ lies on the boundary of
$P_{\alpha_b(i)}$, and $f=0$ on $\Gamma_i$. The uniform Poincar\'e
inequality with a Dirichlet boundary part gives
\begin{equation*}
    |m_{\alpha_b(i)}(f)|^2 \leq
    C\int_{P_{\alpha_b(i)}}|\nabla f|^2\,dV.
\end{equation*}
Since $p_i=1$, the same estimate holds for
$p_i|m_{\alpha_b(i)}(f)|^2$. Summing over all boundary components gives
\begin{equation*}
    \sum_{i=1}^n
    p_i|m_{\alpha_b(i)}(f)|^2 \leq
    C\int_X|\nabla f|^2\,dV.
\end{equation*}
Together with \eqref{eq:common-graph-energy}, this proves
\begin{equation}\label{eq:Dir-graph-energy}
    Q_D(m(f),m(f))
    \leq C\int_X|\nabla f|^2\,dV.
\end{equation}

We now apply the same min--max argument as in the Neumann case. First,
the graph eigenvalues $\nu_j(\mathcal G_X)$ are uniformly bounded above,
since the number of vertices and edges is bounded in terms of $n$, while
$q_i$, $p_i$, and $A_\alpha^{-1}$ are uniformly bounded. Thus
\begin{equation*}
    \nu_j(\mathcal G_X)\leq C_1,
    \qquad 1\leq j\leq m+1.
\end{equation*}

Fix $1\leq j\leq m+1$. If
\begin{equation*}
    \lambda_j^D(X)\geq\frac{1}{2C_0},
\end{equation*}
the uniform upper bound for $\nu_j(\mathcal G_X)$ immediately gives the
desired reverse estimate. If
\begin{equation*}
    \lambda_j^D(X)<\frac{1}{2C_0},
\end{equation*}
let
\begin{equation*}
    E_j  :=\operatorname{span}\{e_1,\ldots,e_j\},
\end{equation*}
where $e_k$ is a Dirichlet eigenfunction with eigenvalue
$\lambda_k^D(X)$. For $f\in E_j$,
\begin{equation*}
    \int_X|\nabla f|^2\,dV
    \leq \lambda_j^D(X)\int_X|f|^2\,dV.
\end{equation*}
Equation~\eqref{eq:common-L2-average} then implies
\begin{equation*}
    \sum_{\alpha\in\mathcal V}
    A_\alpha|m_\alpha(f)|^2
    \geq c\int_X|f|^2\,dV.
\end{equation*}
Hence the averaging map is injective on $E_j$. Combining this estimate
with \eqref{eq:Dir-graph-energy} and applying the min--max principle to
$m(E_j)$ gives the desired reverse estimate in this case. Therefore
\begin{equation}\label{eq:Dir-reverse}
    \nu_j(\mathcal G_X)
    \leq   C\lambda_j^D(X),
    \qquad 1\leq j\leq m+1.
\end{equation}
Together with \eqref{eq:Dir-upper}, we obtain
\begin{equation*}
    \lambda_j^D(X)
    \asymp_{n,T} \nu_j(\mathcal G_X),
    \qquad  1\leq j\leq m+1.
\end{equation*}

The uniform lower bound for the next eigenvalue follows from the same
dimension argument. If
\begin{equation*}
    \lambda_{m+2}^D(X)<\frac{1}{2C_0},
\end{equation*}
then \eqref{eq:common-L2-average} would imply that the averaging map is
injective on
\begin{equation*}
    \operatorname{span}\{e_1,\ldots,e_{m+2}\}.
\end{equation*}
This is impossible, since this space has dimension $m+2$, whereas the
graph space has dimension $m+1$. Hence
\begin{equation*}
    \lambda_{m+2}^D(X)\geq\frac{1}{2C_0}.
\end{equation*}

It remains to compute the product. The eigenvalues
$\nu_j(\mathcal G_X)$ are the eigenvalues of
\begin{equation*}
    \mathsf A^{-\frac12}   (\mathsf Q+\mathsf B)
    \mathsf A^{-\frac12}.
\end{equation*}
Therefore
\begin{equation*}
\begin{aligned}
    \prod_{j=1}^{m+1}\nu_j(\mathcal G_X)
    &= \det\left(
        \mathsf A^{-\frac12}  (\mathsf Q+\mathsf B)
        \mathsf A^{-\frac12}
    \right) \\
    &= \frac{\det(\mathsf Q+\mathsf B)}
         {\prod_{\alpha\in\mathcal V}A_\alpha}.
\end{aligned}
\end{equation*}
Since the areas $A_\alpha$ are bounded above and below by positive
constants depending only on $n$ and $T$,
\begin{equation*}
    \prod_{j=1}^{m+1}\nu_j(\mathcal G_X)\asymp_{n,T} \det(\mathsf Q+\mathsf B).
\end{equation*}
Using the spectral comparison just proved,
\begin{equation*}
    \prod_{j=1}^{m+1}\lambda_j^D(X) \asymp_{n,T} \det(\mathsf Q+\mathsf B).
\end{equation*}

Finally, the lower bound for $\lambda_{m+2}^D(X)$ shows that only the
first $m+1$ Dirichlet eigenvalues can tend to zero. Moreover, by Lemma~\ref{lem:number-small-eigenvalues}, the number of
Dirichlet eigenvalues in $(0,\frac14)$ is uniformly bounded in terms of the
topology. Hence every eigenvalue which occurs in one of the products
\begin{equation*}
    \prod_{\lambda_j^D(X)<\frac14}\lambda_j^D(X),\qquad
    \prod_{j=1}^{m+1}\lambda_j^D(X),
\end{equation*}
but not in the other is bounded above and below by positive constants
depending only on $n$ and $T$. Consequently,
\begin{equation*}
    \prod_{\lambda_j^D(X)<\frac14}\lambda_j^D(X) \asymp_{n,T}\prod_{j=1}^{m+1}\lambda_j^D(X)
    \asymp_{n,T} \det(\mathsf Q+\mathsf B).
\end{equation*}
This completes the proof.
\end{proof}

\section*{Acknowledgments}
The first author would like to thank Professor Colin Guillarmou for helpful discussions.

\renewcommand{\refname}{References}

\end{document}